\documentclass{amsart}
\pdfoutput=1 
\usepackage{amssymb, latexsym, amsmath, verbatim, amsthm,
  amscd,stmaryrd,xspace, xfrac}
\usepackage{pinlabel}
\usepackage{hyperref}


\title{Growth Tight Actions}
%
\author{Goulnara N. Arzhantseva}
\email{\href{mailto:goulnara.arzhantseva@univie.ac.at}{goulnara.arzhantseva@univie.ac.at}}
\address{Fakult\"at f\"ur Mathematik\\
Universit\"at Wien}
\author{Christopher H. Cashen}
\email{\href{mailto:christopher.cashen@univie.ac.at}{christopher.cashen@univie.ac.at}}
\address{Fakult\"at f\"ur Mathematik\\
Universit\"at Wien}
\author{Jing Tao} 
\email{\href{mailto:jing@ou.edu}{jing@ou.edu}}
\address{Department of Mathematics\\University of Oklahoma}
\keywords{Growth tight, Hopf property, entropy, relatively hyperbolic groups, mapping class groups, snowflake groups, rank 1 isometries, CAT(0) groups.}
\date{March 9, 2015}

\subjclass[2010]{20F67, 20F65, 37C35, 20E06, 57Mxx.}
\thanks{The first two authors supported by the European Research
  Council (ERC) grant of Goulnara ARZHANTSEVA, grant agreement
  \#259527 and the Erwin
Schr\"odinger Institute workshop ``Geometry of Computation in
Groups''. The second author partially supported by the
Austrian Science Fund (FWF):M1717-N25. The third author supported by NSF grant DMS-1311834.}

\hypersetup{
    pdftitle={Growth Tight Actions},    
    pdfauthor={Goulnara Arzhantseva, Christopher H Cashen, and Jing Tao},     
    pdfkeywords={Growth tight, Hopf property, entropy, relatively hyperbolic groups, mapping class groups, snowflake groups, rank 1 isometries, CAT(0) groups}, 
    colorlinks=true,       
    linkcolor=black,          
    citecolor=black,        
    filecolor=black,      
    urlcolor=black           
}


\theoremstyle{plain}
\newtheorem{theorem}{Theorem}[section]
\newtheorem{lemma}{Lemma}[section]
\newtheorem{proposition}{Proposition}[section]
\newtheorem{corollary}{Corollary}[section]
\newtheorem*{reftheorem}{Theorem}

\newtheorem{question}{Question}

\theoremstyle{remark}
\newtheorem{claim}{Claim}[theorem]
\newtheorem{remark}{Remark}[section]

\theoremstyle{definition}
\newtheorem{definition}{Definition}[section]

\newtheorem{observation}{Observation}[section]

\def\makeautorefname#1#2{\expandafter\def\csname#1autorefname\endcsname{#2}}
\let\fullref\autoref

\makeautorefname{theorem}{Theorem} 
\makeautorefname{lemma}{Lemma} 
\makeautorefname{proposition}{Proposition} 
\makeautorefname{corollary}{Corollary} 
\makeautorefname{definition}{Definition}
\makeautorefname{example}{Example}
\makeautorefname{section}{Section}
\makeautorefname{subsection}{Section}
\makeautorefname{subsubsection}{Section}
\makeautorefname{claim}{Claim}
\makeautorefname{observation}{Observation}
\makeautorefname{remark}{Remark}

\makeatletter 
\let\c@lemma=\c@theorem 
\makeatother
\makeatletter 
\let\c@proposition=\c@theorem 
\makeatother
\makeatletter 
\let\c@corollary=\c@theorem 
\makeatother
\makeatletter 
\let\c@definition=\c@theorem 
\makeatother
\makeatletter 
\let\c@example=\c@theorem 
\makeatother
\makeatletter 
\let\c@observation=\c@theorem
\makeatother
\makeatletter 
\let\c@remark=\c@theorem
\makeatother
\makeatletter
\@addtoreset{claim}{proposition}
\makeatother
\makeatletter
\@addtoreset{claim}{lemma}
\makeatother

\newcommand{\ccpp}{coarsely a closest point projection\xspace}
\newcommand{\ball}[2]{\mathcal{B}_{#1}(#2)}
\newcommand{\clball}[2]{\closure{\mathcal{B}}_{#1}(#2)}
\newcommand{\nbhd}[2]{\ball{#1}{#2}}
\newcommand{\clnbhd}[2]{\closure{\mathcal{B}}_{#1}(#2)}
\newcommand{\thick}{\clnbhd{Q}{G.\bp}}
\newcommand{\thin}{\X\setminus\thick}
\newcommand{\gpcenter}{Z}

\def\act{\curvearrowright} 
\newcommand{\closure}[1]{\overline{#1}} 

\makeatletter
\newsavebox\myboxA
\newsavebox\myboxB
\newlength\mylenA

\newcommand*\xoverline[2][0.75]{%
    \sbox{\myboxA}{$\m@th#2$}%
    \setbox\myboxB\null
    \ht\myboxB=\ht\myboxA%
    \dp\myboxB=\dp\myboxA%
    \wd\myboxB=#1\wd\myboxA
    \sbox\myboxB{$\m@th\overline{\copy\myboxB}$}
    \setlength\mylenA{\the\wd\myboxA}
    \addtolength\mylenA{-\the\wd\myboxB}%
    \ifdim\wd\myboxB<\wd\myboxA%
       \rlap{\hskip 0.5\mylenA\usebox\myboxB}{\usebox\myboxA}%
    \else
        \hskip -0.5\mylenA\rlap{\usebox\myboxA}{\hskip 0.5\mylenA\usebox\myboxB}%
    \fi}
\makeatother
\newcommand{\inv}[1]{{#1}^{-1}}

\def\from{\colon\thinspace}
\def\onto{\twoheadrightarrow}

\newcommand{\constm}{M}
\newcommand{\consta}{C}
\newcommand{\constb}{K}
\newcommand{\constc}{D}
\newcommand{\rate}{\delta}
\newcommand{\bottleneck}{\Delta}

\newcommand{\geo}{\mathcal{L}}
\newcommand{\qgeo}{\mathcal{Q}}
\newcommand{\vor}{\mathcal{V}}
\newcommand{\zuruck}{\mathcal{Z}}
\def\H{\mathcal{H}}
\def\A{\mathcal{A}}
\def\bp{o}
\def\qtbp{\star}
\DeclareMathOperator{\diam}{diam}
\def\ps{\mathcal{PS}}
\def\path{\mathcal{P}}
\def\G{\mathcal{G}}

\def\qt{\mathcal{Y}}
\def\X{\mathcal{X}}
\def\Y{\mathcal{Y}}
\def\G{\mathcal{G}}
\def\pers{\underline{\mathcal{P}}}
\def\per{\mathcal{P}}
\def\parabs{\underline{P}}
\def\parab{P}



  \newcommand{\emul}{\stackrel{{}_\ast}{\asymp}}
  \newcommand{\gmul}{\stackrel{{}_\ast}{\succ}}
  \newcommand{\lmul}{\stackrel{{}_\ast}{\prec}}
  \newcommand{\eadd}{\stackrel{{}_+}{\asymp}}
  
  \newcommand{\ladd}{\stackrel{{}_+}{\prec}}
  \newcommand{\laddmul}{\prec}
  \newcommand{\eaddmul}{\asymp}
  \newcommand{\gaddmul}{\succ}


  \newcommand{\s}{\ensuremath{\mathcal{S}}\xspace} 
   
  \newcommand{\mcg}{\ensuremath{\operatorname{Mod}(\s)}\xspace}  
   
   
   
   
   

   
   

   
   
    
  \newcommand{\param}{{\mathchoice{\mkern1mu\mbox{\raise2.2pt\hbox{$
  \centerdot$}}
  \mkern1mu}{\mkern1mu\mbox{\raise2.2pt\hbox{$\centerdot$}}\mkern1mu}{
  \mkern1.5mu\centerdot\mkern1.5mu}{\mkern1.5mu\centerdot\mkern1.5mu}}}


\begin{document}
\begin{abstract}
We introduce and systematically study the concept of a growth tight action. 
This generalizes growth tightness for word metrics as initiated by Grigorchuk and de la Harpe.
Given a finitely generated, non-elementary group $G$ 
acting on a $G$--space $\mathcal{X}$, we prove that
if $G$ contains a strongly contracting element and if $G$ is not too
badly distorted in $\mathcal{X}$, then the action of $G$ on $\mathcal{X}$ is a growth tight action.
It follows that  if $\mathcal{X}$ is a cocompact, relatively hyperbolic $G$--space, then the action of $G$ on $\mathcal{X}$ is a growth tight action.
This generalizes all previously known results for growth tightness of cocompact actions: every already known example of
a group that admits a growth tight action and has some infinite, infinite index
normal subgroups is relatively hyperbolic, and, conversely, relatively hyperbolic
groups admit growth tight actions. 
This also allows us to prove that many CAT(0) groups, including
flip-graph-manifold groups and many Right Angled Artin Groups, and
snowflake groups admit cocompact, growth tight actions. 
These provide first examples of non relatively hyperbolic groups
admitting interesting growth tight actions.
Our main result applies  as well to cusp uniform actions on hyperbolic
spaces and to the action of the mapping class group on Teichm\"uller
space with the Teichm\"uller metric.
Towards the proof of our main result, we give equivalent characterizations of strongly contracting elements
and produce new examples of group actions with strongly contracting elements.

\end{abstract}

\maketitle
\setcounter{tocdepth}{1}
\tableofcontents


\setcounter{section}{-1}
\section{Introduction}

The growth exponent of a set $\A\subset\X$ with respect to a
pseudo-metric $d$ is
\[\rate_{\A,d} :=\limsup_{r\to\infty}\frac{\log \# \{a\in \A\mid d(\bp,a)\leqslant r\}}{r}\]
where $\#$ denotes cardinality and $\bp\in \X$ is some basepoint. The limit is independent of the choice of basepoint.

Let $G$ be a finitely generated group.
A left invariant pseudo-metric $d$ on $G$ induces a left invariant
pseudo-metric $\bar{d}$ on any quotient $G/\Gamma$ of $G$ by
$\bar{d}(g\Gamma,g'\Gamma):=d(g\Gamma,g'\Gamma)$.
    \begin{definition}
      $G$ is \emph{growth tight} with respect to $d$ if
      $\rate_{G, d}>\rate_{G/\Gamma, \bar{d}}$ for every infinite
      normal subgroup $\Gamma\trianglelefteqslant G$.
    \end{definition}

One natural way to put a left invariant metric on a finitely generated
group is to choose a finite generating set and consider the word
metric.
More generally, pseudo-metrics on a group are provided by actions of the
group on metric spaces.
Let $\X$ be a $G$--space, that is, a proper, geodesic metric space with a properly
discontinuous, isometric $G$--action $G\act \X$.
The choice of a basepoint $\bp\in \X$ induces a left invariant
pseudo-metric on $G$ by $d_G(g,g'):=d_\X(g.\bp,g'\hspace{-.2em}.\bp)$.

Define the growth exponent $\rate_G$ of $G$ with respect
to $\X$ to be
the growth exponent of $G$ with respect to an induced pseudo-metric $d_G$.
This depends only on the $G$--space $\X$, since a different choice of
basepoint in $\X$
defines a pseudo-metric that differs from $d_G$ by an additive
constant. 
Likewise, let $\rate_{G/\Gamma}$ denote the growth exponent of $G/\Gamma$
with respect to a pseudo-metric on $G/\Gamma$ induced by $d_\X$.

\begin{definition}
  $G\act \X$ is a \emph{growth tight action} if
  $\rate_G>\rate_{G/\Gamma}$ for every infinite normal subgroup
  $\Gamma\trianglelefteqslant G$.
\end{definition}

Some groups admit growth tight actions for the simple reason that they
lack any infinite, infinite index normal subgroups. 
For such a group $G$, every action on a $G$--space with positive
growth exponent will be growth tight.
Exponentially growing simple groups are examples, as, by the Margulis
Normal Subgroup Theorem \cite{Mar91},
 are irreducible
lattices in higher rank semi-simple Lie groups.

Growth tightness\footnote{Grigorchuk and de la Harpe define growth
  tightness in terms of `growth rate',
which is just the exponentiation of our growth exponent. The growth exponent definition is analogous to the notion of
`volume entropy' familiar in Riemannian geometry, and is more
compatible with the Poincar\'e series in \fullref{sec:poincare}.} for
word metrics was introduced and studied by Grigorchuk and de la
Harpe \cite{GriDeL97}, who showed, for example, that a finite rank free
group equipped with the word metric from a free generating set is
growth tight. 
On the other hand, they showed that the product of a free group with itself,
generated by free generating sets of the factors, is not growth tight.
Together with the Normal Subgroup Theorem, these results suggest that
for interesting examples of growth tightness we should examine `rank
1' type behavior.
Further evidence for this idea comes from the work of Sambusetti and
collaborators, who in a series of papers \cite{Sam02surface, Sam03,
  Sam04,DalPeiPic11} prove growth tightness for
the action of the fundamental group of a negatively curved Riemannian
manifold on its Riemannian universal cover.

In the study of non-positively curved, or CAT(0), spaces there is a
well established idea that a space may be non-positively curved but
have some specific directions that look negatively curved.
More precisely:
\begin{definition}[\cite{BalBri95}]
A hyperbolic isometry of a proper CAT(0) space is \emph{rank 1} if
it has an axis that does not bound a half-flat.
\end{definition}

In \fullref{def:contractinglement}, we introduce the notion for an
element of $G$ to be \emph{strongly contracting} with respect to 
$G\act \X$.
In the case that $\X$ is a CAT(0) $G$--space, the strongly contracting
elements of $G$ are precisely those that act as rank 1 isometries of $\X$ (see \fullref{Thm:rank1}).

In addition to having a strongly contracting element, we will 
assume that the orbit of $G$ in $\X$ is not too badly distorted.
There are two different ways to make this precise.

We say a $G$--space is $\consta$--\emph{quasi-convex} if there exists a
$\consta$--quasi-convex $G$--orbit (see \fullref{def:quasiconvex} and \fullref{def:qcgspace}).
This means that it is possible to travel along geodesics joining
points in the orbit of $G$ without leaving a neighborhood of the
orbit.

\begin{reftheorem}[(\fullref{mainqc})]
Let $G$ be a finitely generated, non-elementary group.
Let $\X$ be a quasi-convex $G$--space.
If $G$ contains a strongly contracting element
then $G\act \X$ is a growth tight action.
\end{reftheorem}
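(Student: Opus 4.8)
The plan is to show that killing an infinite normal subgroup $\Gamma$ produces a quotient $G/\Gamma$ whose growth is strictly smaller than that of $G$, by exhibiting ``enough'' elements of $G$ that survive to distinct, efficiently represented elements of $G/\Gamma$. The guiding principle is the classical Grigorchuk--de la Harpe trick: instead of counting all group elements, count only elements whose geodesic (orbit) representatives stay far from the cosets of $\Gamma$, and show this restricted count still grows at rate $\rate_G$; then these elements inject into $G/\Gamma$ and, crucially, their images can be realized by words that are only a definite multiplicative factor longer, forcing $\rate_{G/\Gamma}$ away from $\rate_G$ only if we instead run the argument the other way — so more precisely, I would fix a nontrivial $\gamma \in \Gamma$, let $h$ be the strongly contracting element, and ``decorate'' group elements $g$ with copies of $h^{N}\gamma h^{N}$ to produce, for each short $g$, a longer element of $G$ that maps to the \emph{same} element of $G/\Gamma$ as a controlled modification of $g$; counting these decorated elements shows $\rate_G \geq \rate_{G/\Gamma} + (\text{something positive})$ is false in the wrong direction, so the correct bookkeeping is: the decorated elements are distinct in $G$, have length roughly $|g| + 2N$, all lie in a single $\Gamma$--coset family, hence the number of $\Gamma$--cosets represented by elements of length $\leqslant r$ in $G$ is at most $\#\{g : |g| \leqslant r\} \big/ (\text{number of decorations of length} \leqslant r)$, and the strongly contracting property is exactly what guarantees these decorated elements are all \emph{distinct} and geodesically efficient.

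Concretely, here are the steps I would carry out. \textbf{Step 1:} Using quasi-convexity of the $G$--orbit and the assumption that $G$ is non-elementary, establish that $\rate_G > 0$ and that the orbit map is a quasi-isometric embedding up to the quasi-convexity constant, so that counting in $\X$ and counting in a word metric agree up to multiplicative constants. \textbf{Step 2:} Fix $1 \neq \gamma \in \Gamma$ and the strongly contracting element $h$. For a large integer $N$, consider the map that sends a word $w = s_1 \cdots s_k$ (in some fixed generating set) to the element $s_1 h^{N}\gamma h^{N} s_2 h^{N}\gamma h^{N} \cdots s_k h^{N}\gamma h^{N}$, or a similar interleaving; the point is that the inserted blocks $h^{N}\gamma h^{N}$ all map to $h^{2N}$ in $G/\Gamma$ (since $\gamma$ dies), so the image in $G/\Gamma$ is determined by $w$ together with the fixed blocks, giving a map from length-$\leqslant r$ elements of $G$ to length-$\leqslant (1+O(N))r$ elements that is essentially injective after quotienting. \textbf{Step 3 (the heart):} Use the strongly contracting property of $h$ — via the quasi-geodesic / bounded-projection machinery that \fullref{def:contractinglement} provides — to show that the orbit path tracing such an interleaved word is an unparametrized quasi-geodesic in $\X$ with uniform constants independent of $w$, so that its $\X$--length is comparable to $|w| + (\text{number of blocks})\cdot(2N\|h\| + \|\gamma\|)$, and moreover distinct choices of $w$ give elements at definite distance apart. \textbf{Step 4:} Count: the number of $\Gamma$--cosets of $G$--elements of $\X$--length $\leqslant r$ is bounded below by the number of such interleaved elements, which has growth exponent $\rate_G$ (the decorations only rescale $r$ by a fixed factor $\lambda_N < 1$ in the exponent? — rather, the decorated elements of $\X$--length $\leqslant r$ correspond to words $w$ of length $\leqslant \mu r$ for a fixed $\mu \in (0,1)$, so their count grows like $e^{\rate_G \mu r}$), while the total count in $G/\Gamma$ of length $\leqslant r$ grows like $e^{\rate_{G/\Gamma} r}$; turning the argument around and letting $N \to \infty$ pins down that the ``lost'' elements (those whose geodesics hug $\Gamma$-cosets) cannot account for the full growth, yielding $\rate_{G/\Gamma} < \rate_G$.

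The main obstacle I anticipate is \textbf{Step 3}: proving that the interleaved orbit paths are uniform quasi-geodesics and that the resulting group elements are genuinely distinct (no accidental coincidences among different words $w$) and geodesically near-optimal. This is where one must pay carefully for the difference between \emph{contracting} and \emph{strongly contracting} — a single contracting element's translates behave well, but here we are concatenating many translated copies of an axis of $h$ separated by the ``noise'' $s_i$ and $\gamma$, and one needs a bounded-projection or Behrstock-type inequality to guarantee that the concatenation does not backtrack. I expect the paper to have isolated this as a separate lemma (an ``admissible path''/local-to-global criterion for paths built from strongly contracting pieces), and I would invoke that, together with a Poincar\'e-series comparison (cf.\ \fullref{sec:poincare}) to convert the counting inequality of Step 4 into the strict inequality $\rate_G > \rate_{G/\Gamma}$. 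A secondary technical point is handling the distortion hypothesis precisely: quasi-convexity of the orbit ensures geodesics between orbit points stay near the orbit, which is what lets Step 3's quasi-geodesic in $\X$ be compared with word length in $G$, so one must keep track of the quasi-convexity constant $\consta$ throughout.
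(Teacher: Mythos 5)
There is a genuine gap, and it is precisely at the point you flagged as ``the heart'' (Step 3): you do not have the \emph{minimal section} idea, and without it the no-backtracking claim fails.

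Your plan is to interleave arbitrary words $w = s_1\cdots s_k$ with blocks $h^N\gamma h^N$ and argue that the strongly contracting property of $h$ alone forces the resulting path to be a uniform quasi-geodesic with no backtracking along translates of the axis of $h$. This is not true: the letters $s_i$ are arbitrary generators and may themselves travel a long way along the axis of $h$ (e.g.\ $s_i$ could be a power of $h$, or more subtly, a product whose orbit path fellow-travels the axis for a long time). Strong contraction of $h$ tells you that \emph{geodesics far from} $\H$ have bounded projection to $\H$; it says nothing about the projection of an arbitrary group element. What the paper does instead is to replace the arbitrary $s_i$ by elements of a \emph{minimal section} $\bar G$ of $G/\Gamma$ (shortest representatives in each coset of $\Gamma$). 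The key Lemma, \fullref{lemma:boundedprojection}, shows that for every $\bar g\in\bar G$ and every translate $f\H$, the projection $d^\pi_{f\H}(\bp,\bar g.\bp)$ is \emph{uniformly bounded}: if it were large, one could replace $\bar g$ by $fh^n f^{-1}\bar g$ (still in the same $\Gamma$-coset, since $h\in\Gamma$) and make it shorter, contradicting minimality. That is the geometric input that prevents the concatenation pieces from crossing the inserted $h^n$-blocks. You cannot substitute this with the contraction property of $h$ alone, and you also cannot substitute it with quasi-convexity of the orbit.

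There are two further structural differences from the paper's argument that matter. First, the paper proves (\fullref{lemma:lotsofcontractingelements}) that the normal subgroup $\Gamma$ itself contains a strongly contracting element $h$ — a nontrivial commutator construction $g h^n g^{-1} h^{-n}$ — so the inserted blocks are simply $h^n\in\Gamma$. Your workaround of using $h^N\gamma h^N$ with $h\in G$ and $\gamma\in\Gamma$ has the consequence that your interleaved element $s_1 h^N\gamma h^N s_2\cdots$ does \emph{not} map to $\bar w$ in $G/\Gamma$; it maps to the twisted word $\bar s_1\bar h^{2N}\bar s_2\cdots$, so the decorated elements neither lie in the coset of $w$ nor directly produce a clean fiber-count, and the bookkeeping in your Step~4 (already admitted to be confused about direction) does not close. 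Second, you gesture at a Poincar\'e-series comparison, but the actual mechanism is the divergence criterion (\fullref{lemma:strictgrowthinequality}): injectivity of the free-product-set map $\phi_n\colon A^*\!*\mathbb{Z}_2\to G$ gives $\rate_{\phi_n(A^*\!*\mathbb{Z}_2)}>\rate_A$ \emph{provided} $\Theta_A$ diverges at its critical exponent. Quasi-convexity is used exactly here (one stays always in case (\textdagger) of the proof of \fullref{maincge}), not merely to compare counting in $\X$ with a word metric as in your Step~1. Without isolating divergence as the pivot, the strict inequality does not follow from raw counting, because the $h^n$-blocks rescale lengths and a priori wash out the exponent, which is essentially what your own Step~4 worries about.

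In short: the free-product intuition is right, and your instinct that the strongly contracting element controls backtracking is pointing in the right direction, but the specific mechanism has to be (i) a strongly contracting element \emph{inside} $\Gamma$, (ii) a minimal section whose elements have bounded projections onto axis translates, and (iii) the divergence criterion for the Poincar\'e series of that minimal section. As written, your Steps 2--4 do not supply (ii) or (iii), and Step 3 would fail for arbitrary words $w$.
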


Alternatively, we can assume that the growth rate of the number of
orbit points that can be reached by geodesics lying entirely, except near the endpoints, outside a
neighborhood of the orbit is strictly smaller than the growth rate of
the group:

\begin{reftheorem}[(\fullref{maincge})]
Let $G$ be a finitely generated, non-elementary group.
  Let $\X$ be a $G$--space.
If $G$ contains a strongly contracting element and there exists a
$\consta\geqslant 0$ such that the $\consta$--complementary growth exponent of $G$ is strictly less than the
  growth exponent of $G$, then $G\act \X$ is a growth tight action.
\end{reftheorem}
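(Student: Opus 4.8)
The plan is to fix an arbitrary infinite normal subgroup $\Gamma\trianglelefteqslant G$ and prove $\rate_{G/\Gamma}<\rate_G$. The first reduction replaces the quotient by a concrete set of representatives: for each coset choose $z\in G$ minimizing $d_\X(\bp,z.\bp)$, obtaining $\mathcal{Z}\subseteq G$. Using $G$--invariance of $d_\X$ and normality of $\Gamma$ one checks that the induced pseudo-metric on $G/\Gamma$ realizes exactly these minimal distances, so $\rate_{\mathcal{Z}}=\rate_{G/\Gamma}$ and it suffices to show $\rate_{\mathcal{Z}}<\rate_G$. It is cleanest to run the comparison at the level of the Poincar\'e series of \fullref{sec:poincare}: the goal becomes that $\sum_{z\in\mathcal{Z}}e^{-s\,d_\X(\bp,z.\bp)}$ has critical exponent strictly smaller than that of $\sum_{g\in G}e^{-s\,d_\X(\bp,g.\bp)}$.

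The second step produces a strongly contracting element \emph{inside} $\Gamma$. Since the $G$--action is properly discontinuous and $\Gamma$ is infinite, $\Gamma$ has unbounded orbit; together with the strongly contracting $h\in G$ and non-elementarity, a ping-pong argument yields a strongly contracting $\delta\in\Gamma$ (if some $\gamma\in\Gamma$ does not stabilize the endpoint pair of $h$, a high power commutator $[\gamma,h^n]\in\Gamma$ is strongly contracting; otherwise $\Gamma$ lies in the virtually cyclic stabilizer of that pair and hence contains a power of $h$). Replacing $\delta$ by a suitable power, fix its quasi-axis $A=\qaxis$ of $\delta$ with good strong-contraction constants, and note that each $G$--translate $a.A$ is the quasi-axis of the strongly contracting conjugate $a\delta a^{-1}$.

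The technical heart is an excision lemma: there is a constant $L$ so that for every $z\in\mathcal{Z}$, no geodesic $[\bp,z.\bp]$ stays within a fixed distance of any translate $a.A$ along a subsegment of length $\geq L$. Indeed, strong contraction of $A$ forces such a geodesic to follow $a.A$ across the whole overlap, so it passes near two points $a\delta^{k_1}.\bp,\,a\delta^{k_2}.\bp$ of $a.A$ with $|k_1-k_2|$ comparable to $L$. Then $z':=(a\delta^{\,k_1-k_2}a^{-1})z$ represents the same coset $z\Gamma$ --- \emph{because} $a\delta^{\,k_1-k_2}a^{-1}\in\Gamma$ by normality --- and $z'.\bp$ is reached by following the initial arc of $[\bp,z.\bp]$ and then an $(a\delta^{\,k_1-k_2}a^{-1})$--translate of its terminal arc; this has $d_\X$--length at most $d_\X(\bp,z.\bp)-L+O(1)$, strictly less than $d_\X(\bp,z.\bp)$ once $L$ is large, contradicting minimality of $z$. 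Hence geodesics of elements of $\mathcal{Z}$ have uniformly bounded overlap with the whole family $\{a.A\}_{a\in G}$.

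It remains to turn bounded overlap into the growth inequality, and here the hypothesis on the $\consta$--complementary growth exponent is indispensable. Partition each geodesic $[\bp,z.\bp]$ into its portions in $\thin$ and its portions in $\thick$; the bounded-overlap lemma says the thick portions never run along a translate $a.A$ for long. The idea is then to dominate $\sum_{z\in\mathcal{Z}}e^{-s\,d_\X(\bp,z.\bp)}$ by a series assembled from ``$\thin$'' building blocks --- whose exponent is at most the $\consta'$--complementary growth exponent of $G$ for a controlled $\consta'$, hence $<\rate_G$ by hypothesis --- interleaved with short ``$\thick$'' corrections, while the full series $\sum_{g\in G}e^{-s\,d_\X(\bp,g.\bp)}$ dominates the same blocks \emph{plus} arbitrarily long powers of $\delta$ inserted as spacers between consecutive blocks, which by strong contraction concatenate without cancellation; the extra spacer freedom forces the critical exponent of the $G$--series to exceed that of the $\mathcal{Z}$--series. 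I expect this last step to be the main obstacle: identifying the right notion of building block so that the $\mathcal{Z}$--series really is dominated by complementary-growth ``$\thin$'' blocks plus bounded ``$\thick$'' corrections, keeping $\consta'$ controlled in terms of $\consta$, and ensuring that no exponent is lost in passing between the group, its orbit, and the (a priori unboundedly many) quasi-axis translates a geodesic may graze. (In the quasi-convex case of \fullref{mainqc}, every geodesic between orbit points stays in $\thick$, so the only orbit points reached by geodesics lying in $\thin$ away from their endpoints are within bounded distance of $\bp$; thus the complementary growth exponent is $\leq 0<\rate_G$ and the hypothesis of \fullref{maincge} holds automatically, the combinatorics reducing to a pattern-avoidance count in the orbit.)
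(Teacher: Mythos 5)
Your first three moves track the paper closely. Passing to a minimal section $\mathcal{Z}$ (the paper's $\bar G$), producing a strongly contracting element of $\Gamma$ via commutator powers, and your ``excision lemma'' are all correct and are, respectively, the paper's definition of $\bar G$, \fullref{lemma:lotsofcontractingelements}, and \fullref{lemma:boundedprojection} --- your surgery argument via normality is exactly the paper's proof of the latter. The quasi-convex case at the end is also argued correctly.

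The gap is in the final step, and it is a real one: the paper needs two separate ingredients that you have merged, and the merged version does not close. First, the bounded-projection lemma is used to prove that the free-product-set map $\phi_n\from A^*\!*\mathbb{Z}_2\to G$, $(a_1,\dots,a_k)\mapsto a_1h^n\cdots a_kh^n$, is injective for $n$ large (\fullref{proposition:embedded}); your ``spacers concatenate without cancellation'' intuition is pointing at this, but its verification uses the quasi-tree machinery to track nested complementary components separated by spacer balls, and bounded overlap alone does not deliver it. Second, and more importantly, the criterion that converts such an embedding into a strict growth inequality (\fullref{lemma:strictgrowthinequality}) carries a hypothesis: $\exp(|h^n|\cdot\rate_A)<\Theta_A(\rate_A)$. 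Since $n$ is forced to be large and is not otherwise under control, the only way to guarantee this is to have $\Theta_A$ \emph{diverge} at its critical exponent, and this is the one place where the complementary growth hypothesis actually enters. The paper splits into cases: if $\rate_{\bar G}\leqslant\rate_G^c$ the conclusion is immediate since $\rate_G^c<\rate_G$; otherwise $\rate_{\bar G}>\rate_G^c$, and divergence of $\bar G$ follows from the convolution-type inequality
\[ V_{r,Q} \lmul \sum_{m_0+m_1\leqslant r} V_{m_0,Q}\; V_{m_1,Q}\; \#Comp^{G}_{Q,\,r-(m_0+m_1)} \]
obtained by cutting each geodesic $[\bp,\bar g.\bp]$ at its excursion into $\thin$, combined with \cite[Lemma~4.3]{DalPeiPic11}. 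Your ``thin blocks interleaved with thick corrections'' sketch is gesturing at this decomposition, but you aim it at directly comparing the $\mathcal{Z}$--series with the $G$--series, which is not what is needed and, as you concede, does not visibly close. The correct logical shape is: bounded overlap $\Rightarrow$ free-product injectivity; complementary growth hypothesis $\Rightarrow$ divergence of $\bar G$; injectivity plus divergence $\Rightarrow$ strict inequality via the Sambusetti-type criterion.
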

See \fullref{def:cge} for a precise definition of  the $\consta$--complementary growth exponent.

The proof of \fullref{mainqc} is a special case of the proof of \fullref{maincge}.

Using \fullref{mainqc}, we prove:
\begin{reftheorem}[(\fullref{correlativelyhyperbolic})]
 If $\X$ is a quasi-convex, relatively hyperbolic $G$--space and $G$
  does not coarsely fix a peripheral subspace then $G\act \X$ is a
  growth tight action.
\end{reftheorem}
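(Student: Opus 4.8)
The plan is to deduce \fullref{correlativelyhyperbolic} from \fullref{mainqc} by verifying its two hypotheses: that $\X$ admits a strongly contracting element for the $G$--action, and that $\X$ is a quasi-convex $G$--space (the latter being assumed directly). So the real content is producing a strongly contracting element in a relatively hyperbolic $G$--space, under the assumption that $G$ does not coarsely fix a peripheral subspace. First I would recall the setup: a relatively hyperbolic $G$--space $\X$ comes with a $G$--invariant family $\H$ of uniformly quasi-convex ``peripheral'' subspaces such that collapsing (coning off) the members of $\H$ yields a hyperbolic space $\dot\X$ on which $G$ acts with the usual properties, and geodesics in $\X$ between orbit points are tracked by ``relative geodesics'' that alternate between traveling near peripheral subspaces and making definite progress in $\dot\X$. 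The key geometric input is the standard fact that in such a space, an element $g\in G$ is strongly contracting (with respect to its orbit in $\X$) precisely when a quasi-axis for $g$ has \emph{bounded penetration} into every peripheral subspace --- equivalently, when $g$ acts as a loxodromic (hyperbolic) isometry of the coned-off space $\dot\X$ that is not conjugate into a peripheral subgroup. I would isolate this as a lemma (or cite its analogue proven earlier in the paper via the equivalent characterizations of strongly contracting elements promised in the introduction), so that the remaining task is purely to find such a ``$\dot\X$--loxodromic'' element.

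Next I would produce the loxodromic element. Since $G$ is non-elementary, it acts on the hyperbolic space $\dot\X$ as a non-elementary action, so either it contains a loxodromic isometry of $\dot\X$ or it fixes a point of $\bdry\dot\X$ or swaps a pair of points. I would argue that the assumption ``$G$ does not coarsely fix a peripheral subspace'' rules out the degenerate possibility that every infinite-order element of $G$ is parabolic fixing a single boundary point of $\dot\X$, because in the relatively hyperbolic setting a parabolic fixed point of $\dot\X$ that is fixed by all of $G$ forces $G$ to coarsely stabilize the corresponding peripheral subspace (the parabolic points of the coned-off space are exactly the limit points of the peripheral subspaces). Likewise a global fixed point or fixed pair in $\bdry\dot\X$ coming from a conical limit point would make $G$ virtually cyclic in $\dot\X$, hence virtually coarsely fixing a point, contradicting non-elementarity together with properness. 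Ruling these out, the classification of groups acting on hyperbolic spaces yields a loxodromic $g\in G$ whose fixed points in $\bdry\dot\X$ are conical, hence not parabolic, so $g$ is not conjugate into a peripheral subgroup; by the lemma of the previous paragraph, $g$ is strongly contracting for $G\act\X$.

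With a strongly contracting element in hand and quasi-convexity assumed, \fullref{mainqc} applies verbatim and gives that $G\act\X$ is a growth tight action, completing the proof. The main obstacle I anticipate is the careful bookkeeping in the lemma identifying strongly contracting elements with non-peripheral $\dot\X$--loxodromics: one must control how a quasi-geodesic axis in $\X$ interacts with the family $\H$, show that bounded penetration into peripherals combined with hyperbolicity of $\dot\X$ upgrades to the strong contraction property (uniformly bounded diameter of nearest-point projections of disjoint balls), and conversely that unbounded penetration into some peripheral destroys contraction. The second delicate point is the precise translation of ``$G$ does not coarsely fix a peripheral subspace'' into the statement that $G$ has no global parabolic fixed point in $\bdry\dot\X$; this requires knowing that coarse stabilizers of peripheral subspaces are exactly the peripheral subgroups (up to finite index / conjugacy) and that these are precisely the stabilizers of parabolic points, which is part of the standard dictionary for relatively hyperbolic spaces but should be stated explicitly. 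Everything else is an application of results already established in the paper.
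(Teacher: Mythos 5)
Your proposal is correct and follows the same route as the paper: reduce to producing a strongly contracting element and then invoke \fullref{mainqc}. The paper's own proof is far terser — it simply cites \cite[Lemma~5.4]{Sis12} for the fact that an infinite-order element of $G$ not coarsely fixing a peripheral subspace is strongly constricting, and then applies \fullref{mainqc}; you instead sketch a re-derivation of that lemma (strongly contracting $\Leftrightarrow$ bounded peripheral penetration $\Leftrightarrow$ loxodromic in the coned-off space and non-peripheral), and you also spell out the step, left implicit in the paper, that non-elementarity of $G$ together with the hypothesis that $G$ does not coarsely fix a peripheral subspace actually yields such an element.
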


This generalizes all previously known results for growth tightness of
cocompact actions:
every already known example of a group that admits a growth tight action and
has some infinite, infinite index normal subgroups is relatively
hyperbolic, and, conversely, relatively hyperbolic groups admit growth
tight
actions~\cite{ArzLys02,Sam02amalgam,Yan13,Sam03,Sab13,DalPeiPic11}.

We also use \fullref{mainqc} to prove growth tightness for actions on
non relatively hyperbolic spaces. 
For instance, we prove that a group action on a proper CAT(0) space
with a rank 1 isometry is growth tight:
\begin{reftheorem}[(\fullref{cat0})]
  If $G$ is a finitely generated, non-elementary group and $\X$ is a quasi-convex, CAT(0)
  $G$--space such that $G$ contains
an element that acts as a rank 1 isometry on $\X$, then $G\act \X$ is a growth tight action.
\end{reftheorem}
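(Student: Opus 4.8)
The plan is to obtain this theorem as an immediate consequence of \fullref{mainqc}, once the rank $1$ hypothesis is translated into the language of strongly contracting elements. First I would verify that the hypotheses of \fullref{mainqc} are in place: $G$ is finitely generated and non-elementary by assumption, and $\X$ is a quasi-convex $G$--space by assumption (recall that a $G$--space is by definition a proper, geodesic metric space, so the additional CAT(0) hypothesis — which makes the notion of rank $1$ isometry meaningful — is consistent with this). The only remaining input needed for \fullref{mainqc} is a strongly contracting element of $G$.

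To produce such an element I would appeal to \fullref{Thm:rank1}, the characterization asserting that, when $\X$ is a CAT(0) $G$--space, an element of $G$ is strongly contracting with respect to $G\act\X$ if and only if it acts on $\X$ as a rank $1$ isometry. By hypothesis $G$ contains an element acting as a rank $1$ isometry of $\X$; by \fullref{Thm:rank1} this element is therefore strongly contracting. Hence every hypothesis of \fullref{mainqc} is satisfied, and \fullref{mainqc} gives that $G\act\X$ is a growth tight action, as required.

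Thus the deduction itself is short; the substance is carried by the two results being invoked. One is the general growth-tightness criterion \fullref{mainqc} (the quasi-convex specialization of \fullref{maincge}), whose proof is the technical core of the paper. The other is \fullref{Thm:rank1}, and this is where I expect the genuine obstacle to lie: one must show that an axis not bounding a half-flat yields an orbit quasi-geodesic along which closest-point projections are uniformly bounded in the sense of \fullref{def:contractinglement}, and conversely that a strongly contracting axis cannot bound a half-flat. Granting these two inputs, the present statement is simply their conjunction.
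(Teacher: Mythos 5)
Your proof is correct and matches the paper's proof exactly: the paper likewise cites \fullref{Thm:rank1} to translate the rank $1$ hypothesis into the existence of a strongly contracting element and then invokes \fullref{mainqc}, stating the theorem with the one-line preamble ``\fullref{Thm:rank1} and \fullref{mainqc} show.'' Your remark identifying \fullref{Thm:rank1} as the locus of the real content is also apt, since that is precisely the imported result (Bestvina--Fujiwara) that does the geometric work here.
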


Two interesting classes of non relatively hyperbolic groups to which
\fullref{cat0} applies are non-elementary Right Angled Artin Groups, which are
non relatively hyperbolic when the defining graph is connected, and
flip-graph-manifolds. 
These are the first examples of non relatively hyperbolic groups that
admit non-trivial growth tight actions.
\begin{reftheorem}[(\fullref{raag})]
  Let $\Theta$ be a finite graph that is not a join and has more than
one vertex.
The action of the Right Angled Artin Group $G$ defined by $\Theta$ on
the universal cover $\X$ of the
Salvetti complex associated to $\Theta$ is a growth tight action.
\end{reftheorem}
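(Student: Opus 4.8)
The plan is to deduce the statement from \fullref{cat0}: it suffices to check that $\X$ is a quasi-convex CAT(0) $G$--space, that $G$ is finitely generated and non-elementary, and that $G$ contains an element acting on $\X$ as a rank 1 isometry.

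The first three points are standard. The Salvetti complex $S_\Theta$ is a compact, non-positively curved cube complex with $\pi_1(S_\Theta)\cong G$, so its universal cover $\X$ is a finite-dimensional CAT(0) cube complex on which $G$ acts freely, properly discontinuously, cocompactly, and by cubical (hence CAT(0)) isometries; thus $\X$ is a CAT(0) $G$--space. Cocompactness provides a bounded set $B\subseteq\X$ with $G.B=\X$, so every $G$--orbit is coarsely dense in $\X$; in particular every geodesic of $\X$ lies in a uniformly bounded neighbourhood of the orbit, so $\X$ is a quasi-convex $G$--space. Finally, $G$ is generated by the finite vertex set of $\Theta$, and since $\Theta$ has more than one vertex $G$ is torsion-free, infinite, and not infinite cyclic, hence non-elementary.

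It remains to produce the rank 1 isometry, and this is where the hypothesis that $\Theta$ is not a join enters. I would argue via rank rigidity for CAT(0) cube complexes. First, $\X$ is essential: every hyperplane is a $G$--translate of the one dual to an edge labelled by a generator $v$, and since $v$ has infinite order the two half-spaces this hyperplane determines are both unbounded. Second, $\X$ is irreducible: a decomposition of $\X$ as a nontrivial product of cube complexes induces a partition of the generating set of $G$ into two nonempty subsets each of which commutes elementwise with the other, i.e.\ a join decomposition of $\Theta$, which we have excluded; moreover $\X$ is not a single line, as $\Theta$ has more than one vertex. By the rank rigidity theorem of Caprace and Sageev, a group acting cocompactly and essentially on an irreducible, finite-dimensional CAT(0) cube complex that is not a line contains an element acting as a rank 1 isometry; as the action here is free, this element is an element of $G$, so $G$ contains the desired rank 1 element. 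Alternatively, one may invoke the explicit construction of contracting elements in right-angled Artin groups whose defining graph is not a join, due to Behrstock and Charney.

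Having verified all its hypotheses, \fullref{cat0} applies and shows that $G\act\X$ is a growth tight action. The only substantive step is the production of the rank 1 isometry: the work is to translate the combinatorial condition that $\Theta$ is not a join into the geometric statements that $\X$ is essential and irreducible, and then to quote the rank-rigidity machinery; once a rank 1 isometry is in hand, the rest is a matter of unwinding definitions.
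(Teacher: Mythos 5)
Your proof is correct, but it takes a genuinely different route from the paper. The paper's proof of \fullref{raag} splits into two cases according to whether $\Theta$ is connected. When $\Theta$ is disconnected, $G(\Theta)$ decomposes as a free product, $\X$ is hyperbolic relative to the subcomplexes covering the sub-Salvetti complexes of the components, and growth tightness follows from \fullref{correlativelyhyperbolic}; when $\Theta$ is connected (and not a join), the paper invokes Behrstock--Charney directly to produce a rank~1 isometry and applies \fullref{cat0}. You instead apply \fullref{cat0} uniformly, producing the rank~1 isometry in all cases from Caprace--Sageev rank rigidity, after verifying that $\X$ is essential and irreducible. This is a legitimate alternative and has the advantage of avoiding the case split and of not needing the relatively hyperbolic machinery of \fullref{correlativelyhyperbolic} at all. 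What it costs is that you must supply (or cite) the translation from ``$\Theta$ is not a join'' to ``$\X$ is irreducible,'' and you need the cocompact version of the rank rigidity theorem (which produces the rank~1 element after possibly passing to the essential core and a finite-index subgroup, both harmless here since the action is already essential and $G$ is torsion-free); your irreducibility argument is stated somewhat informally, though the underlying fact is standard. The paper's approach, by contrast, defers this geometric work to the specific Behrstock--Charney citation (which is stated for connected $\Theta$, explaining the case split). Both are valid; yours is arguably more self-contained within the CAT(0) cube complex world, while the paper's illustrates a second application of its relatively hyperbolic result.
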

\begin{reftheorem}[(\fullref{flip})]
    Let $M$ be a flip-graph-manifold. 
Let $G$ and $\X$ be the fundamental group and universal cover,
respectively, of $M$. Then the action of $G$ on $\X$ by deck
transformations is a growth tight action.
\end{reftheorem}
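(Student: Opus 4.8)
The plan is to deduce \fullref{flip} from \fullref{cat0}, so the work is to verify that hypothesis: that $G$ is finitely generated and non-elementary, and that $\X$ is a quasi-convex CAT(0) $G$--space in which $G$ contains an element acting as a rank 1 isometry. Since $M$ is a compact manifold, $G=\pi_1(M)$ is finitely generated and acts freely, properly discontinuously, and cocompactly by deck transformations on $\X=\widetilde{M}$. Graph manifolds carry metrics of non-positive curvature obtained by giving each Seifert component the product metric of a hyperbolic surface with totally geodesic boundary and a circle and gluing along the boundary tori by isometries; for flip gluings one gets a genuine CAT(0) metric on $\X$ (this is classical; see work of Leeb and of Buyalo--Svetlov, and compare Kapovich--Leeb). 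Thus $\X$ is a proper, geodesic CAT(0) $G$--space. Since the action is cocompact, every $G$--orbit is coarsely dense in $\X$, hence quasi-convex, so $\X$ is a quasi-convex $G$--space. Finally $G$ contains the fundamental group of a Seifert piece, which contains a copy of $\mathbb{Z}^2$, so $G$ is non-elementary.

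It remains to produce a rank 1 isometry, which is the heart of the matter. I would use the structure of $\X$ as a tree of spaces over the Bass--Serre tree $T$ of the JSJ decomposition of $M$: each vertex space $\X_v$ is (bi-Lipschitz equivalent to) a product $\widetilde{S_v}\times\mathbb{R}$, where $S_v$ is a compact hyperbolic surface with geodesic boundary and the $\mathbb{R}$--factor is the Seifert fiber direction, and each edge space is a Euclidean plane $L\times\mathbb{R}$ with $L$ a line covering a boundary curve of some $S_v$. The flip condition is precisely that, across each wall, the fiber direction on one side is identified with a base direction (a boundary line) on the other, and vice versa; this transversality is what will be exploited.

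Next I would construct a hyperbolic $g\in G$ whose axis $\widetilde\gamma$ projects to a quasigeodesic in $T$ crossing infinitely many edges in a periodic pattern, for instance by taking $g=g_1\cdots g_k$ to be an alternating product of fiber translations and base-hyperbolic elements supported in a cyclically arranged chain of pieces $v_1,\dots,v_k$, with $k$ large and the factors in general position so that $g$ is hyperbolic and translates along a bi-infinite geodesic crossing walls transversally. One then shows $\widetilde\gamma$ bounds no half-flat: a half-flat $F$ asymptotic to $\widetilde\gamma(\pm\infty)$ meets each vertex space $\X_{v_i}$ it enters in a flat subset of $\widetilde{S_{v_i}}\times\mathbb{R}$, which, because $\widetilde{S_{v_i}}$ is the universal cover of a hyperbolic surface and so contains no flats other than neighborhoods of boundary lines, must lie within bounded distance of a single plane $L_i\times\mathbb{R}$ with $L_i$ a boundary line; propagating this through the flip gluings forces $\widetilde\gamma$ to remain within bounded distance of one such plane, contradicting transversal crossing of infinitely many walls. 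Hence $g$ acts as a rank 1 isometry of $\X$, so by \fullref{Thm:rank1} it is strongly contracting. (Alternatively, one may invoke a known construction of periodic rank 1 geodesics in flip graph manifolds.)

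With all hypotheses of \fullref{cat0} verified, \fullref{cat0} immediately gives that $G\act\X$ is a growth tight action. The main obstacle is the rank 1 verification: one must control the CAT(0) geometry of the tree of spaces and use the flip gluings carefully to exclude half-flats along the explicit periodic axis; by comparison, the reduction steps and the construction of $g$ are routine.
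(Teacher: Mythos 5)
Your overall reduction is exactly the one the paper uses: verify the hypotheses of \fullref{cat0} and invoke it. Where you diverge is in the treatment of the rank 1 isometry. The paper simply cites Kapovich and Leeb \cite{KapLee98}, who already proved both that $\widetilde{M}$ admits a CAT(0) metric and that a hyperbolic element of $\pi_1(M)$ is rank 1 if and only if it is not represented by a loop lying in a single Seifert piece; this characterization makes the existence of a rank 1 element immediate (take any element whose axis crosses a wall of the JSJ decomposition). You instead propose to construct a rank 1 element by hand and to prove directly that its axis bounds no half-flat by propagating constraints through the flip gluings.

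Your sketch is sound in outline and captures the geometric mechanism correctly: a flat in a vertex space $\widetilde{S_v}\times\mathbb{R}$ must lie near a product $L\times\mathbb{R}$ with $L$ a boundary line, and the flip condition prevents such flats from extending away from a single wall. This is essentially the argument underlying the Kapovich--Leeb characterization. But note that what you are sketching is a proof of their result, not a use of it, so the ``heart of the matter'' you identify is precisely what the paper outsources. The trade-off: the paper's route is one sentence but leans on a nontrivial external theorem; your route is self-contained in principle, but the propagation step (making precise that the half-flat cannot jump to a different wall across a flip, and that a periodic geodesic crossing infinitely many walls transversally must eventually exit any bounded neighborhood of any single wall) would take real work to make rigorous, and is presented here only at the level of a plan. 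If you intend to carry it out, you will also want to be careful in the construction of $g$: ``factors in general position'' needs to be replaced by an explicit condition guaranteeing that $g$ is hyperbolic on the Bass--Serre tree with an axis that genuinely crosses walls, which is most easily arranged by a ping-pong argument in the tree.

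The remaining verifications (finite generation, cocompactness, quasi-convexity from coarse density, non-elementarity via an embedded $\mathbb{Z}^2$) are correct and correctly made explicit; the paper suppresses them as routine.
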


We even exhibit an infinite family of non relatively hyperbolic,
non-CAT(0) groups that admit cocompact, growth tight actions:
\begin{reftheorem}[(\fullref{snowflakegrowthtight})]
  The Brady-Bridson snowflake groups $BB(1,r)$ for $r\geqslant 3$ admit cocompact,
  growth tight actions.
\end{reftheorem}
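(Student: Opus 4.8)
The plan is to exhibit, for each $r\geqslant 3$, a cocompact $BB(1,r)$--space $\X$ together with a strongly contracting element, and then to verify that $\X$ is quasi-convex so that \fullref{mainqc} applies. The natural candidate for $\X$ is the universal cover of the standard presentation complex (or, better, a suitably chosen nonpositively curved complex or Cayley graph) of $BB(1,r)$ on which $BB(1,r)$ acts cocompactly by deck transformations; any two such cocompact $G$--spaces give quasi-isometric orbit maps, so the growth tightness statement does not depend on the precise model, only on the existence of \emph{one} cocompact model satisfying the hypotheses of \fullref{mainqc}. Since $BB(1,r)$ is finitely generated and visibly non-elementary (it surjects onto $\mathbb Z$ and contains free subgroups), the only real content is (i) producing a strongly contracting element and (ii) establishing quasi-convexity.

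For step (i), I would use the equivalent characterizations of strongly contracting elements promised earlier in the paper (the section producing ``new examples of group actions with strongly contracting elements''). The snowflake groups $BB(1,r)$ are built as multiple HNN extensions / graphs of groups with $\mathbb Z^2$ vertex and edge groups, and the Brady--Bridson construction endows them with a preferred ``snowflake'' geometry in which certain elements — for instance a generator of one of the free HNN stable-letter directions, or an element whose axis runs transverse to all the flat $\mathbb Z^2$ pieces — behave hyperbolically relative to the flats. The key is to show such an element $g$ has a quasi-axis whose closest-point projection is coarsely well defined and coarsely Lipschitz, equivalently that the quasi-axis is Morse / contracting in $\X$. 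I expect this to follow from the distortion estimates in Brady--Bridson: the snowflake groups have polynomial Dehn function $n^{2\log_2(2r)}$ or similar, and the combinatorial structure shows that any geodesic avoiding a neighborhood of the axis must detour through the flat pieces, incurring a length penalty that forces the contraction inequality. Concretely, I would pick $g$ to be (a power of) the Baumslag--Solitar-like stable letter and check the strongly contracting condition directly against the normal form for elements of $BB(1,r)$.

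For step (ii), quasi-convexity of the orbit: if one chooses $\X$ to be a Cayley graph of $BB(1,r)$ with respect to a finite generating set, then the $G$--orbit of the identity vertex is the whole vertex set, which is trivially quasi-convex, and $G$ acts cocompactly. This is the cleanest route and sidesteps any need for a CAT(0) or combinatorially nonpositively curved model (which is good, since the theorem is advertised as giving \emph{non-CAT(0)} examples). With a Cayley graph as $\X$, $\rate_G$ is the usual exponential growth rate of $BB(1,r)$, which is positive because the group is non-elementary, and \fullref{mainqc} then immediately yields that the action $G\act\X$ is growth tight; cocompactness is automatic. So the proof reduces entirely to step (i).

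The main obstacle will be step (i): verifying that the chosen element is strongly contracting in the snowflake metric. Unlike the CAT(0) examples, where one invokes \fullref{Thm:rank1} and a half-flat criterion, here there is no ambient nonpositive curvature to lean on, so one must argue combinatorially inside $BB(1,r)$, controlling how geodesics interact with the exponentially distorted flat $\mathbb Z^2$ subgroups. I anticipate that the cleanest argument packages this as: the cyclic subgroup $\langle g\rangle$ is Morse (equivalently, using the paper's characterization, strongly contracting) because its translates are ``$r$--separated'' from the distorted flats, with the separation following from the explicit snowflake normal form and the fact that $r\geqslant 3$ forces genuine (not merely linear) distortion. Once $g$ is shown strongly contracting, everything else is bookkeeping.
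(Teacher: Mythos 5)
Your framework (find a cocompact model $\X$, produce a strongly contracting element, apply \fullref{mainqc}; quasi-convexity is automatic for a Cayley graph) is exactly the paper's, and your step~(ii) matches. The gap is in step~(i), and it is not a detail: your proposed choice of strongly contracting element is wrong, and the paper goes out of its way to show it is wrong.

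You suggest taking $g$ to be ``(a power of) the Baumslag--Solitar-like stable letter'', i.e.\ $s$ or $t$, on the grounds that such an element has axis transverse to the $\mathbb{Z}^2$ flats and that geodesics avoiding a neighborhood of the axis must detour through the distorted flats. The trouble is that this intuition applies equally to $s$ and to $s^{-1}t$, yet the paper shows in its \S\ref{sec:beta} that $s^{-1}$ (hence $s$) is \emph{not} strongly contracting: taking the geodesics $\sigma_{j,k}$ from $a^{L^j}$ to $a^{L^k}$, one gets paths staying arbitrarily far from $\beta=s^{-\mathbb{N}}$ but with unboundedly large projection onto $\beta$, so the Bounded Geodesic Image Property of \fullref{def:boundedgeodesicimage} fails. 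Both $\langle s\rangle$ and $\langle s^{-1}t\rangle$ are hyperbolically embedded and both act hyperbolically on the Bass--Serre tree with essentially trivial pairwise stabilizers along their axes, so neither the graph-of-groups structure nor the ``transverse to flats'' heuristic distinguishes them; this is precisely the point the paper is making when it says one cannot conclude strong contraction from hyperbolic embeddedness in the absence of a CAT(0) model. The element that works is $s^{-1}t$, and verifying it works is not bookkeeping: the paper builds a bespoke Cayley graph on $\{a,\,a^rb,\,a^rb^{-1},\,s,\,t\}$ with the $a^rb$ and $a^rb^{-1}$ edges rescaled to length $L=2r$ so that geodesics in planes become tractable, classifies geodesics between plane points by a snowflake recursion, and then checks the Bounded Geodesic Image Property for $\pi_\alpha$ by a wall-crossing argument (\fullref{walls} and the surrounding lemmas). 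Your appeal to the superquadratic Dehn function $n^{2\log_2 L}$ is used in the paper only negatively, to rule out a CAT(0) model and hence \fullref{Thm:rank1}; it does not by itself yield the contraction inequality, and indeed cannot, since it would ``prove'' the false statement that $s$ is strongly contracting.

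So the proposal as written would fail at its central step. To repair it you would need to (a)~replace the stable letter by $s^{-1}t$, and (b)~replace the hand-waving about distortion with a concrete combinatorial verification of strong contraction in an explicit model, which is where essentially all the work in the paper's proof lives.
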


\bigskip
We prove growth tightness for interesting non-quasi-convex
actions using \fullref{maincge}.
We generalize a theorem of Dal'bo, Peign\'e, Picaud, and
Sambusetti \cite{DalPeiPic11} for Kleinian groups satisfying an
additional Parabolic Gap Condition, see \fullref{def:pgc}, to
cusp-uniform actions on arbitrary hyperbolic spaces satisfying the
Parabolic Gap Condition:
\begin{reftheorem}[(\fullref{theoremcuspuniform})]
 Let $G$ be a finitely generated, non-elementary group.
  Let $G\act \X$ be a cusp uniform action on a hyperbolic space. 
Suppose that $G$ satisfies the Parabolic
Gap Condition.
Then $G\act \X$ is a growth tight action.
\end{reftheorem}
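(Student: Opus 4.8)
The plan is to derive the theorem from \fullref{maincge}: it suffices to exhibit a strongly contracting element of $G$ and a constant $\consta$ for which the $\consta$--complementary growth exponent of $G$ (\fullref{def:cge}) is strictly smaller than $\rate_G$. For the first point, the Parabolic Gap Condition prevents $G$ from being contained in any maximal parabolic subgroup $P$, since that would force $\rate_G\leqslant\rate_P<\rate_G$; combined with non-elementarity, this makes the action $G\act\X$ non-elementary, so $G$ does not coarsely fix a peripheral subspace and contains a loxodromic isometry $g$. A quasi-axis of $g$ is a quasigeodesic, and in a hyperbolic space every quasigeodesic is strongly contracting, so $g$ is a strongly contracting element for $G\act\X$ (see \fullref{def:contractinglement}).

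For the second point, fix a $G$--invariant system $\H$ of pairwise disjoint horoballs realizing the cusp uniform action, let $Y:=\X\setminus\bigcup\H$ be the thick part, and take $\bp\in Y$. By cocompactness of $G\act Y$ choose $R$ with $Y\subseteq\nbhd{R}{G.\bp}$, and fix $\consta>R$. The geometric crux is a dichotomy for a geodesic $\gamma$ from $\bp$ to an orbit point $g.\bp$ whose interior avoids $\nbhd{\consta}{G.\bp}$: either $d(\bp,g.\bp)\leqslant 2\consta$, or the portion of $\gamma$ lying at distance greater than $\consta$ from both endpoints is disjoint from $Y$ and, being connected and contained in the disjoint union of open horoballs, lies inside a single horoball $B$; its endpoint nearest $\bp$ then lies in $\closure{B}$, so $d(\bp,B)\leqslant\consta$, and symmetrically $g.\bp\in\nbhd{\consta}{B}$. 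Since $\H$ is locally finite, only finitely many horoballs $B_1,\dots,B_N$ satisfy $d(\bp,B_i)\leqslant\consta$; so, apart from the finitely many orbit points within $2\consta$ of $\bp$, every element $g$ counted by the $\consta$--complementary growth exponent has $g.\bp\in\nbhd{\consta}{B_i}$ for some $i$.

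It remains to bound the growth of $\{g\in G\mid g.\bp\in\nbhd{\consta}{B_i}\}$ for each $i$. Writing $B_i=h_i.B'_{j(i)}$ with $B'_1,\dots,B'_m$ a set of $G$--orbit representatives of horoballs and $P_1,\dots,P_m$ their maximal parabolic stabilizers, it is enough to bound the growth of the orbit points in $\nbhd{\consta}{B'_j}$ for each representative $B'_j$. Local finiteness shows that $\{g'\in G\mid d(g'.\bp,B'_j)\leqslant\consta\}$ lies in a union of boundedly many right cosets of $P_j$: applying $\inv{g'}$, the point $\bp$ lies within $\consta$ of $\inv{g'}.B'_j$, which must be one of the finitely many horoballs near $\bp$, and this confines $g'$ to boundedly many cosets $P_j h$. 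Consequently the number of such $g'$ with $d(\bp,g'.\bp)\leqslant T$ is at most a constant times $\#\{p\in P_j\mid d(\bp,p.\bp)\leqslant T+\text{const}\}$, which grows with exponent at most $\rate_{P_j}$. Summing over $i=1,\dots,N$ and invoking the Parabolic Gap Condition (\fullref{def:pgc}), the $\consta$--complementary growth exponent of $G$ is at most $\max_{1\leqslant j\leqslant m}\rate_{P_j}<\rate_G$, and \fullref{maincge} gives that $G\act\X$ is a growth tight action.

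The step I expect to be the main obstacle is the geometric dichotomy together with the accompanying coset bookkeeping: confining the deep part of an orbit-avoiding geodesic to a single basepoint-adjacent horoball, and matching the orbit points near a horoball with boundedly many translates of its parabolic stabilizer. Both require quantitative control of how geodesics in a $\delta$--hyperbolic space enter and leave a locally finite, $G$--invariant family of horoballs; the argument runs parallel to the treatment of Kleinian groups satisfying the Parabolic Gap Condition by Dal'bo, Peign\'e, Picaud, and Sambusetti, with constant negative curvature replaced by Gromov hyperbolicity of $\X$.
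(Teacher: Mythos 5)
Your proof is correct and follows the same route as the paper: choose $Q$ comparable to the diameter of the truncated quotient so that a complementary geodesic's interior lies in a single horoball, bound the resulting count by the parabolic growth exponents via coset bookkeeping, and invoke \fullref{maincge} together with the Parabolic Gap Condition. The paper's version is terser (it asserts without elaboration that the $Q$--complementary growth exponent equals the maximum parabolic growth exponent, and leaves the existence of a strongly contracting element implicit), whereas you supply those details explicitly; the underlying argument is the same.
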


Once again, our theorems extend beyond actions on relatively
hyperbolic spaces, as we use \fullref{maincge} to prove:
\begin{reftheorem}[(\fullref{mcgisgrowthtight})]
  The action of the mapping class group of a hyperbolic surface on its
  Teichm\"uller space with
  the Teichm\"uller metric is a growth tight action. 
\end{reftheorem}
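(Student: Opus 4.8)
The strategy is to apply \fullref{maincge} to the action $G\act\X$, where $G=\mcg$ and $\X$ is the Teichm\"uller space of $\s$ equipped with the Teichm\"uller metric. This is a $G$--space: $\X$ is proper and geodesic and $G$ acts on it properly discontinuously by isometries, and $G$ is finitely generated and non-elementary. One uses \fullref{maincge} rather than \fullref{mainqc} here because the $G$--orbit in $\X$ is not quasi-convex --- two thick points can be joined by Teichm\"uller geodesics that run arbitrarily deep into the thin part, far from every orbit point. So the task is to exhibit a strongly contracting element of $G$ and a constant $\consta\geqslant 0$ for which the $\consta$--complementary growth exponent of $G$ (in the sense of \fullref{def:cge}) is strictly less than $\rate_G$.

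For the strongly contracting element, take any pseudo-Anosov $\phi\in G$; such elements exist precisely because $\s$ is a hyperbolic surface, so $\X$ is nontrivial. The Teichm\"uller axis of $\phi$ is a bi-infinite geodesic contained in the $\epsilon$--thick part of $\X$ for some $\epsilon>0$, and by Minsky's contraction estimates for pseudo-Anosov axes in the Teichm\"uller metric --- which fit the notion of strong contraction characterized above --- $\phi$ is strongly contracting with respect to $G\act\X$.

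For the complementary growth exponent we exploit the coarse geometry of the orbit. Since $\bp$ lies in the $\epsilon_0$--thick part of $\X$ for some $\epsilon_0>0$ and the thick/thin decomposition is $G$--invariant, the whole orbit $G.\bp$ lies in the $\epsilon_0$--thick part; by Mumford compactness, $G$ acts cocompactly on the $\epsilon_1$--thick part of $\X$ for each $\epsilon_1>0$, so $G.\bp$ is coarsely dense there. Hence we may fix $\consta\geqslant 0$ so large that $\X\setminus\clnbhd{\consta}{G.\bp}$ is contained in the $\epsilon_1$--thin part of $\X$. A Teichm\"uller geodesic that lies, outside bounded neighborhoods of its endpoints, in this complement therefore has a long middle segment lying entirely in the thin part. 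By the thick--thin decomposition of Teichm\"uller geodesics (Minsky's product regions theorem and Rafi's combinatorial model), such a segment is organized by the short multicurves and proper subsurfaces ``active'' along it: where a multicurve $\sigma$ is short, the geodesic is coarsely a geodesic in a product of the Teichm\"uller space of the strictly simpler surface cut along $\sigma$ with horoball-type factors recording the lengths and twists of the components of $\sigma$. The twist directions are logarithmically distorted in the Teichm\"uller metric, so they contribute no exponential growth beyond a fixed sublinear rate, and the remaining factor is accounted for by the stabilizer of $\sigma$, which up to finite index is a product of a free abelian twist group with a mapping class group of strictly smaller complexity. Combining this reduction with the lattice-point asymptotics of Athreya--Bufetov--Eskin--Mirzakhani --- which give $\rate_G = 6g-6+2n = \dim_{\mathbb{R}}\X$ and the corresponding equalities for the simpler surfaces --- one concludes, by induction on the complexity of $\s$, that the $\consta$--complementary growth exponent of $G$ is at most $\rate_G$ minus a positive constant. \fullref{maincge} then yields that $G\act\X$ is a growth tight action.

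The main obstacle is exactly this last estimate: bounding the exponential count of orbit points reachable only by Teichm\"uller geodesics that penetrate deep into the thin part. This requires quantitative control, simultaneously, of the distortion of twist directions in the Teichm\"uller metric and of the combinatorics of how a thin geodesic segment is partitioned among its active subsurfaces. The base cases --- the once-punctured torus and the four-times-punctured sphere, where $\X$ is a rescaled hyperbolic plane and the thin part is a union of horoballs --- follow directly from the lattice-point count, and the general case is then handled by the induction on complexity sketched above.
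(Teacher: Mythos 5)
Your high-level strategy is exactly right: apply \fullref{maincge}, use Minsky's theorem to produce a strongly contracting pseudo-Anosov element, and then bound the complementary growth exponent. The first two points match the paper precisely. The gap is in the third.

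You observe, correctly, that the ``main obstacle'' is to show that the count of orbit points reachable only by Teichm\"uller geodesics that dive deep into the thin part grows at a strictly smaller exponential rate than $\rate_G = 6g-6+2p$. But your proposal for that estimate --- a complexity induction driven by Minsky's product regions, Rafi's model, and the lattice-point asymptotics of Athreya--Bufetov--Eskin--Mirzakhani --- is a sketch of a research program, not a proof. The phenomenon you are trying to establish (that the thin part contributes an exponentially smaller count) is precisely the content of a theorem of Eskin, Mirzakhani, and Rafi, namely \cite[Theorem~1.7]{EskMirRaf12}, which the paper cites as a black box. That theorem states that the number $N_1(\mathcal{Q}_{1,\epsilon},a_i,a_j,r)$ of mapping classes $g$ with a Teichm\"uller geodesic from near $a_i$ to near $g.a_j$ that stays in the $\epsilon$--thin part satisfies $N_1 \leqslant G(a_i)G(a_j)\exp\bigl(r(\zeta-\tfrac12)\bigr)$. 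The paper's contribution here is a short bookkeeping step: showing that every element of $Comp^{\mcg}_{Q,\,r}$ contributes to one of finitely many such $N_1$ sets, so the $Q$--complementary growth exponent is at most $\zeta - \tfrac12 < \zeta$. Without citing Eskin--Mirzakhani--Rafi (or else actually carrying out the induction you gesture at, which would be a serious paper in its own right), your proof does not close. The ABEM result alone gives you $\rate_G = \zeta$ but says nothing about a gap for the thin-penetrating geodesics.

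One smaller remark: your use of Mumford compactness to identify $\X\setminus\clnbhd{Q}{G.\bp}$ with the thin part is in the paper, but the paper is careful about the quantifier order --- first fix $r_0$ and $\delta=\tfrac12$, let $\epsilon$ be the threshold furnished by Eskin--Mirzakhani--Rafi, and only then choose $Q$ so the $\epsilon$--thick part sits in $\clnbhd{Q}{\mcg.\bp}$. Your version picks $\epsilon_1$ before knowing what thinness threshold the counting theorem requires, which would have to be repaired.
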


Mapping class groups, barring exceptional low complexity cases, are neither relatively hyperbolic nor CAT(0).

\bigskip
In \fullref{part:gt} of this paper we prove our main results, \fullref{maincge} and \fullref{mainqc}.
We show in \fullref{lemma:lotsofcontractingelements} that if there exists a strongly contracting element for $G\act
\X$ then every infinite normal subgroup $\Gamma$ contains a strongly
contracting element $h$.
We prove growth tightness by bounding the growth exponent of a subset
that is orthogonal, in a coarse sense, to every translate of an axis
for $h$. 

A dual problem, which is of independent interest, is to find the
growth exponent of the conjugacy class of $h$. 
In \fullref{sec:cc} we show that the growth exponent of the
conjugacy class of a strongly contracting element is exactly half the
growth exponent of the group, provided the strongly contracting
element moves the base point far enough.

In \fullref{part:sc} we produce new examples of group actions with
strongly contracting elements.
These include groups acting on relatively hyperbolic metric spaces (\fullref{sec:relhyp}), certain CAT(0) groups (\fullref{sec:cat0}),
mapping class groups (\fullref{sec:mcg}),
and  snowflake groups (\fullref{sec:snowflake}).
Our main theorems imply that all these groups admit growth tight actions.
These are first examples of growth tight actions and groups
which do not come from and are not relatively hyperbolic groups.

\subsection{Invariance}
Growth tightness is a delicate condition.
A construction of Dal'bo, Otal, and Peign\'e \cite{DalOtaPei00}, see \fullref{obs:notqi}, shows
that there exist groups $G$ and non-cocompact, hyperbolic,
equivariantly quasi-isometric $G$--spaces
$\X$ and $\X'$ such that $G\act\X$ is growth tight and $G\act \X'$ is
not.

In subsequent work \cite{CasTao14}, we extend the techniques of this paper to produce
the first examples of groups that admit a growth tight action on one
of their Cayley graphs and a non-growth tight action on another.
This answers in the affirmative the following question of Grigorchuk and de la Harpe \cite{GriDeL97}:
\begin{question}
  Does there exist a word metric for which $F_2\times F_2$ is growth
  tight?
\end{question}
Recall that $F_2\times F_2$ is not growth tight with respect to a generating set that
is a union of free generating sets of the two factors.

More generally, a product of infinite
groups acting on the $l^1$ product of their Cayley graphs is not
growth tight.
Such $l^1$ products and the Dal'bo, Otal, Peign\'e examples are the only known general
constructions of non-growth tight examples.
It would be interesting to have a condition to exclude growth
tightness.
One can not hope to bound the growth exponents of quotients away from
that of the group, as Shukhov \cite{Shu99} and Coulon \cite{Cou13} have given
examples of hyperbolic groups and sequences of quotients 
whose growth exponents limit to that of the group.
At present, growth tightness can only be excluded for a particular
action by exhibiting a quotient of the group by an infinite normal subgroup whose growth exponent is equal to that
of the group.

\subsection{The Hopf Property}\label{sec:hopf}
A group $G$ is \emph{Hopfian} if there is no proper quotient of $G$
isomorphic to $G$.

Let $\mathfrak{D}$ be a set of pseudo-metrics on $G$ that is \emph{quotient-closed}, in the sense that if $\Gamma$ is a normal subgroup of $G$ such that there exists an isomorphism $\phi\from G\to G/\Gamma$, then for every $d\in\mathfrak{D}$, the pseudo-metric on $G$ obtained by pulling back via $\phi$ the pseudo-metric on $G/\Gamma$ induced by $d$ is also in $\mathfrak{D}$.
For example, the set of word metrics on $G$ coming from finite
generating sets is quotient-closed.

Suppose further that $\mathfrak{D}$ contains a minimal growth pseudo-metric $d_0$, i.e., $\rate_{G,d_0}=\inf_{d\in\mathfrak{D}}\rate_{G,d}$, and that $G$ is growth tight with respect to $d_0$.

\begin{proposition}\label{prop:hopf}
Let $G$ be a finitely generated group with a bound on the cardinalities of
  its finite normal subgroups.
Suppose that there exists a quotient-closed set $\mathfrak{D}$ of pseudo-metrics
on $G$ that contains a growth tight, minimal growth
element $d_0$ as above.
Then $G$ is Hopfian.
\end{proposition}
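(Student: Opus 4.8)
The plan is to argue by contradiction. Suppose $\phi\from G\to G/\Gamma$ is an isomorphism with $\Gamma\trianglelefteqslant G$ a nontrivial normal subgroup; I want to derive a contradiction with growth tightness of $d_0$. The first step is to observe that it suffices to handle the case where $\Gamma$ is \emph{infinite}. Indeed, if $\Gamma$ is finite and nontrivial, then $G$ has a normal subgroup isomorphic to $G/\Gamma\cong G$, hence a proper self-embedding, and iterating the quotient map $\phi$ produces an increasing chain of finite normal subgroups $\Gamma \subsetneq \phi^{-1}(\Gamma/1)\cdot\Gamma \subsetneq \cdots$ (the preimages $\Gamma_n$ of the images of $\Gamma$ under $\phi^n$), each properly containing the previous one because $\phi$ is not surjective; this contradicts the hypothesis that the cardinalities of finite normal subgroups of $G$ are bounded. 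So we may assume $\Gamma$ is infinite.

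Next, since $\mathfrak{D}$ is quotient-closed, the pseudo-metric $d_1 := \phi^*\bar{d_0}$ on $G$ — obtained by pulling back via $\phi$ the pseudo-metric $\bar{d_0}$ on $G/\Gamma$ induced by $d_0$ — lies in $\mathfrak{D}$. Because $\phi$ is an isomorphism, it is an isometry from $(G,d_1)$ to $(G/\Gamma,\bar{d_0})$, so the growth exponents agree: $\rate_{G,d_1} = \rate_{G/\Gamma,\bar{d_0}}$. Here the growth exponent of $(G/\Gamma,\bar d_0)$ is exactly $\rate_{G/\Gamma}$ in the sense of the growth tightness definition, so $\rate_{G,d_1} = \rate_{G/\Gamma,\bar d_0}$. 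Now apply growth tightness of $d_0$ to the infinite normal subgroup $\Gamma$: this gives $\rate_{G,d_0} > \rate_{G/\Gamma,\bar d_0} = \rate_{G,d_1}$. On the other hand, minimality of $d_0$ in $\mathfrak{D}$ forces $\rate_{G,d_0} \leqslant \rate_{G,d_1}$, since $d_1\in\mathfrak{D}$. These two inequalities are contradictory, so no such $\phi$ exists and $G$ is Hopfian.

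The only genuinely delicate point is the reduction from finite $\Gamma$ to infinite $\Gamma$, and I should be slightly careful about it: one must check that the nested normal subgroups $\Gamma_n \trianglelefteqslant G$ really are normal in $G$ (they are, being preimages under the surjection $\phi^n$ of normal — indeed, of the full image — subgroups) and that they are strictly increasing (which uses non-surjectivity of $\phi$ at each stage, together with injectivity to see that $|\Gamma_{n+1}| = |\Gamma_n| \cdot |\Gamma| > |\Gamma_n|$ when $\Gamma\neq 1$). Once that bookkeeping is in place, the boundedness hypothesis on finite normal subgroups is immediately violated. Everything else is a formal manipulation of growth exponents under the isometry $\phi$ and the defining inequalities of growth tightness and minimality, so I expect no further obstacle.
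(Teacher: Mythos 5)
Your proof is correct and uses the same two ingredients as the paper's argument (minimality versus growth tightness to rule out infinite $\Gamma$; the increasing chain of finite normal subgroups to rule out finite nontrivial $\Gamma$), merely presenting the two cases in the opposite order from the paper, which first uses the inequalities to conclude $\Gamma$ must be finite and then dispatches the finite case. One small wording correction: a nontrivial finite $\Gamma$ with $G\cong G/\Gamma$ gives a \emph{surjective, non-injective} endomorphism $q=\phi^{-1}\circ\pi\from G\to G$ (where $\pi$ is the quotient map), not a ``proper self-embedding,'' and the chain $\Gamma_n=\ker q^n$ grows strictly because $q$ is surjective with nontrivial finite kernel (so $|\Gamma_{n+1}|=|\Gamma_n|\cdot|\Gamma|$), not because of any failure of surjectivity of $\phi$ --- but this is a slip in phrasing only and the underlying idea is sound.
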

The hypothesis on bounded cardinalities of finite normal subgroups holds for all groups of interest in this paper, see \fullref{thm:maximalnormal}.
\begin{proof}
Suppose that $\Gamma$ is a normal subgroup of $G$ such that $G\cong G/\Gamma$.
Let $d$ be the pseudo-metric on $G$ obtained from pulling back the pseudo-metric on $G/\Gamma$ induced by $d_0$. 
Since $\mathfrak{D}$ is quotient-closed, $d\in\mathfrak{D}$.
By minimality, $\rate_{G,d_0}\leqslant \rate_{G,d}$, but by growth tightness, $\rate_{G,d}\leqslant \rate_{G,d_0}$, with equality only if $\Gamma$ is finite.
Thus, the only normal subgroups $\Gamma$ for which we could have $G\cong G/\Gamma$ are finite.
However, if $G\cong G/\Gamma$ for some finite $\Gamma$ then $G$ has
arbitrarily large finite normal subgroups, contrary to hypothesis.
\end{proof}

Grigorchuk and de la Harpe \cite{GriDeL97} suggested this as a
possible approach to the question of whether a non-elementary Gromov
hyperbolic group is Hopfian, in the particular case that $\mathfrak{D}$
is the set of word metrics on $G$.
Arzhantseva and Lysenok \cite{ArzLys02} proved that every word metric on a non-elementary hyperbolic group is growth tight.
They conjectured that the growth exponent of such a group achieves its
infinum on some finite generating set and proved a step towards this conjecture~\cite{ArzLys06}.  
Sambusetti \cite{Sam02amalgam} gave an examples of a (non-hyperbolic) group for which the set of word metrics does not realize its infimal growth exponent.
In general it is difficult to determine whether a given group has a generating set that 
realizes the infimal growth exponent among word metrics.
Part of our motivation for studying growth tight actions is to open new possibilities for the set $\mathfrak{D}$ of pseudo-metrics considered above.

  Torsion free hyperbolic groups are Hopfian by a theorem of Sela
  \cite{Sel99}. 
Reinfeldt and Weidmann \cite{ReiWei10} have announced a generalization
of Sela's techniques to hyperbolic groups with torsion, and concluded
that all hyperbolic groups are Hopfian.

\subsection{The Rank Rigidity Conjecture}\label{sec:rankrigidity}
The Rank Rigidity Conjecture \cite{CapSag11,BalBuy08} asserts that if $\X$ is a locally
compact, irreducible, geodesically complete CAT(0) space, and $G$ is an infinite
discrete group acting properly and cocompactly on $\X$, then one of
the following holds:
\begin{enumerate}
\item $\X$ is a higher rank symmetric space.\label{rritemsym}
\item $\X$ is a Euclidean building of dimension at least 2.\label{rritembldg}
\item $G$ contains a rank 1 isometry.\label{rritemrk1}
\end{enumerate}
 
In case (\ref{rritemsym}), the Margulis Normal Subgroup Theorem
implies that $G$ is trivially growth tight, since it has no infinite,
infinite index normal subgroups.
Conjecturally, the Margulis Normal Subgroup Theorem also holds in case (\ref{rritembldg}).
Our \fullref{cat0} says that if $\X$ is proper then $G\act \X$ is a growth tight action in case (\ref{rritemrk1}).
Thus, a non-growth tight action of a non-elementary group on a proper, 
irreducible CAT(0) space as above would provide a counterexample
either to the Rank Rigidity Conjecture or to the conjecture that the
Margulis Normal Subgroup Theorem applies to Euclidean buildings.

The Rank Rigidity Conjecture is known to be true for many interesting
classes of spaces, such as Hadamard manifolds \cite{Bal95},
2--dimensional, piecewise-Euclidean cell complexs \cite{BalBri95},
Davis complexes of Coxeter groups \cite{CapFuj10}, universal covers of
Salvetti complexes
of Right Angled Artin Groups
\cite{BehCha12}, and finite dimensional CAT(0) cube complexes
\cite{CapSag11}, so \fullref{cat0} provides many new examples of
growth tight actions.

It is unclear when growth tightness holds if $\X$ is reducible. 
A direct product of infinite groups acting via a product action on a
product space with the $l^1$ metric fails to be growth tight.
However, there are also examples \cite{BurMoz97} of infinite simple groups acting
cocompactly on products of trees.
In \cite{CasTao14} we find partial results in the case that the group
action is a product action.

\subsection{Outline of the Proof of the Main Theorems}
Sambusetti \cite{Sam02amalgam} proved that a non-elementary free
product of non-trivial groups has a greater growth exponent than that
of either factor.
Thus, a strategy to prove growth tightness is to find a subset of 
$G$ that looks like a free product, with one factor that grows
like the quotient group we are interested in. Specifically:
\begin{enumerate}
\item Find a subset $A\subset G\subset \X$ such that
  $\rate_{A}=\rate_{G/\Gamma}$.
We will obtain $A$ as a coarsely dense subset of a minimal
section of the quotient map
$G\to G/\Gamma$, see \fullref{def:A}.
\item Construct an embedding of a free product set
  $A*\mathbb{Z}_2$ into $\X$.
The existence of a strongly contracting element $h\in\Gamma$ is used in
the construction of this embedding, see \fullref{proposition:embedded}.\label{item:embedding}
\item Show that
  $\rate_{G/\Gamma}=\rate_{A,d_\X}<\rate_{A*\mathbb{Z}/2\mathbb{Z},d_\X}\leqslant\rate_G$.
In this step it is crucial that $A$ is divergent, see
\fullref{def:divergent} and \fullref{lemma:strictgrowthinequality}.
We use quasi-convexity/complementary growth exponent to establish divergence.
\end{enumerate}

This outline, due to Sambusetti, is nowadays standard.
Typically step~(\ref{item:embedding}) is accomplished by a Ping-Pong
argument, making use of fine control on the geometry of the space
$\X$.
Our methods are coarser than such a  standard approach, and therefore can be applied to a wider variety of spaces.
We use, in particular, a technique of Bestvina, Bromberg, and Fujiwara \cite{BesBroFuj10}
to construct an
action of $G$ on a quasi-tree.
Verifying that the map from the free product set into $\X$ is an
embedding amounts to showing that elements in $A$ do not cross certain
coarse edges of the quasi-tree.

\part{Growth Tight Actions}\label{part:gt}
\section{Preliminaries}
Fix a $G$--space $\X$.
From now on, $d$ is used to denote the metric on $\X$ as well as the
induced pseudo-metric on $G$ and $G/\Gamma$.
Since there will be no possibility of confusion, we suppress $d$ from
the growth exponent notation.

We denote by $\ball{r}{x}$ the open ball of radius $r$ about the point $x$ and by 
$\nbhd{r}{\A}:=\cup_{x\in\A}\ball{r}{x}$ the open $r$--neighborhood about the
set $\A$.
The closed $r$--ball and closed $r$--neighborhood are denoted
$\clball{r}{x}$ and $\clnbhd{r}{\A}$, respectively.

\subsection{Coarse Language}
All of the following definitions may be written without specifying $\consta$ to
indicate that some such $\consta\geqslant 0$ exists:
Two subsets $\A$ and $\A'$ of $\X$ are $\consta$--\emph{coarsely equivalent}
if $\A\subset\clnbhd{\consta}{\A'}$ and $\A'\subset\clnbhd{\consta}{\A}$.
A subset $\A$ of $\X$ is $\consta$--\emph{coarsely dense} if it is
$\consta$--coarsely equivalent to $\X$.
A subset $\A$ of $\X$ is $\consta$--\emph{coarsely connected} if for every
$a$ and $a'$ in $\A$ there exists a chain $a=a_0,\, a_1,\dots,\, a_n=a'$
of points in $\A$ with $d(a_i,a_{i+1})\leqslant \consta$.
 
A \emph{pseudo-map} $\phi\from\X\to\Y$ assigns to each point in $\X$ a
subset $\phi(x)$ of $\Y$.
A pseudo-map is $\consta$--\emph{coarsely well defined} if for every
$x\in\X$ the set $\phi(x)$ of $\Y$ has diameter at most $\consta$.
Pseudo-maps $\phi$ and $\phi'$ with the same domain and codomain are
$\consta$--\emph{coarsely equivalent} or $\consta$--\emph{coarsely agree} if $\phi(x)$ is $\consta$--coarsely
equivalent to $\phi'(x)$ for every $x$ in the domain.
A $\consta$--coarsely well defined pseudo-map is called a $\consta$--\emph{coarse map}. 
From a $\consta$--coarse map we can obtain a
$\consta$--coarsely equivalent map by selecting one point from every image
set. 
Conversely:
\begin{lemma}\label{lemma:equivariantcoarsemap}
  If $\phi\from \X\to\Y$ is coarsely $G$--equivariant then there is an
  equivariant coarse map coarsely equivalent to $\phi$.
\end{lemma}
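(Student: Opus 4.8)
The plan is to prescribe $\psi$ on a set of $G$--orbit representatives in $\X$ and then extend it $G$--equivariantly. The single point requiring care is that the stabilizer of a point of $\X$ acts on $\Y$, and a finite group of isometries of $\Y$ need not fix a point; this is precisely why the statement asks only for a coarse \emph{map}, whose image sets have bounded diameter, rather than an honest map.

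First I would extract from the hypotheses a single constant $\consta\geqslant 0$ with $\diam\phi(x)\leqslant\consta$ for all $x\in\X$ and with $\phi(g.x)$ $\consta$--coarsely equivalent to $g.\phi(x)$ for all $g\in G$ and $x\in\X$; this is possible because $\phi$ is a coarse map and is coarsely $G$--equivariant. Next I would choose a set $R\subset\X$ of representatives of the $G$--orbits in $\X$. For each $x_0\in R$ the stabilizer $\mathrm{Stab}(x_0)$ is finite because $G\act\X$ is properly discontinuous, so I can set
\[\psi(x_0):=\bigcup_{s\in\mathrm{Stab}(x_0)}s.\phi(x_0),\]
a set that is $\mathrm{Stab}(x_0)$--invariant by construction. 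Since $s.x_0=x_0$ forces $s.\phi(x_0)$ to be $\consta$--coarsely equivalent to $\phi(s.x_0)=\phi(x_0)$, this set lies in $\clnbhd{\consta}{\phi(x_0)}$ and so has diameter at most $3\consta$. For a general $x\in\X$ I would write $x=g.x_0$ with $x_0\in R$ and set $\psi(x):=g.\psi(x_0)$; the $\mathrm{Stab}(x_0)$--invariance of $\psi(x_0)$ makes this independent of the choice of $g$, so $\psi$ is $G$--equivariant by construction, and $\diam\psi(x)\leqslant 3\consta$ because $G$ acts by isometries. Hence $\psi$ is a coarse map.

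Finally I would verify that $\psi$ coarsely agrees with $\phi$. For $x=g.x_0$ with $x_0\in R$, the containments $\phi(x_0)\subset\psi(x_0)\subset\clnbhd{\consta}{\phi(x_0)}$ give, after applying the isometry $g$, that $\psi(x)$ and $g.\phi(x_0)$ are $\consta$--coarsely equivalent; combining this with the $\consta$--coarse equivalence of $g.\phi(x_0)$ and $\phi(g.x_0)=\phi(x)$ shows that $\psi$ and $\phi$ are uniformly $2\consta$--coarsely equivalent. I expect the only real obstacle to be the torsion point isolated in the first paragraph: one must replace $\phi(x_0)$ by its full $\mathrm{Stab}(x_0)$--orbit, and the remaining steps amount to routine bookkeeping with the coarse constants.
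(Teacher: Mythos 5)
Your argument is correct, but the paper takes a noticeably slicker route. You build the equivariant map $\psi$ in two steps: choose orbit representatives $R$, symmetrize $\phi$ over the (finite) stabilizer at each $x_0\in R$ so that $\psi(x_0)$ is $\mathrm{Stab}(x_0)$--invariant, then push forward equivariantly and check well-definedness. The paper instead writes down a single global formula, $\phi'(x):=\bigcup_{g\in G}\inv{g}.\phi(g.x)$, and observes that it is manifestly $G$--equivariant (reindex the union) and $\consta$--coarsely equivalent to $\phi$ (each term $\inv{g}.\phi(g.x)$ lies within $\consta$ of $\phi(x)$, and $g=e$ gives $\phi(x)\subset\phi'(x)$). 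This avoids both the choice of representatives and the separate well-definedness check, since the construction never singles out a point in each orbit. Your symmetrization over $\mathrm{Stab}(x_0)$ is exactly the obstruction the paper's formula handles implicitly: when $x=g.x_0$ and $g$ ranges over the coset $g_0\,\mathrm{Stab}(x_0)$, the union in $\phi'$ picks up the whole $\mathrm{Stab}(x_0)$--orbit of $\phi(x_0)$ automatically. Both routes give an equivariant coarse map within bounded distance of $\phi$; the paper's just gets there without the bookkeeping, and yours makes the finite-stabilizer issue explicit, which some readers may find clarifying.
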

\begin{proof}
Suppose there is a $\consta$ such that
$d(g.\phi(x),\phi(g.x))\leqslant\consta$ for all $x\in\X$ and $g\in G$.
Define $\phi'(x):=\bigcup_{g\in G}\inv{g}\!.\phi(g.x)$. Then $\phi'$ is
$G$--equivariant and $C$--coarsely equivalent to $\phi$.
\end{proof}

\begin{definition}
  If $\phi\from\X\to\Y$ is a pseudo-map and $\A$ and $\A'$ are subsets
  of $\X$, let $d^\phi(\A,\A')$ denote the diameter of $\phi(\A)\cup\phi(\A')$.
\end{definition}

\begin{definition}\label{def:quasiconvex}
  A subset $\A\subset\X$ is $\consta$--\emph{quasi-convex} if for every $a_0,\,a_1\in \A$ there exists a geodesic
  $\gamma$ between $a_0$ and $a_1$ such that $\gamma\subset
  \clnbhd{\consta}{\A}$.
It is $\consta$--\emph{strongly quasi-convex} if every geodesic with
endpoints in $\A$ stays in $\clnbhd{\consta}{\A}$.
\end{definition}

\begin{definition}\label{def:qcgspace}
  A $G$--space $\X$ is $\consta$--\emph{quasi-convex} if it contains a $\consta$--quasi-convex $G$--orbit.
\end{definition}
For convenience, if $\X$ is a quasi-convex $G$--space we assume
we have chosen a basepoint $\bp\in\X$ such that $G.\bp$ is quasi-convex.

A group is \emph{elementary} if it has a finite index cyclic subgroup.

\begin{definition}
  Let $g\in G$. The \emph{elementary closure} of $g$, denoted by $E(g)$, is the
  largest virtually cyclic subgroup containing $g$, if such a subgroup exists.
\end{definition}

A map $\phi\from\X\to\Y$ is an
$(\constm,\consta)$--\emph{quasi-isometric embedding}, for some
$\constm\geqslant 1$ and $\consta\geqslant 0$, if, for all $x_0,\,x_1\in\X$:
\[\frac{1}{\constm}d(x_0,x_1)-\consta\leqslant
d(\phi(x_0),(x_1))\leqslant \constm d(x_0,x_1)+\consta\]
A map $\phi$ is \emph{$\consta$--coarsely $\constm$--Lipschitz} if the second
inequality holds, and is a \emph{quasi-isometry} if it is a
quasi-isometric embedding whose image is $\consta$--coarsely dense.

An \emph{$(\constm,\consta)$--quasi-geodesic} is an $(\constm,\consta)$--quasi-isometric embedding
of a coarsely connected subset of $\mathbb{R}$.
If $\gamma\from I\to\X$ is a quasi-geodesic we let $\gamma_t$ denote
the point $\gamma(t)$, and let $\gamma$ denote the image of $\gamma$
in $\X$.

\begin{definition}
  A quasi-geodesic $\qgeo$ is \emph{Morse} if for every $\constm\geqslant 1$
  there exists a $\constb\geqslant 0$ such that every
  $(\constm,\constm)$--quasi-geodesic with endpoints on
  $\qgeo$ is contained in the $\constb$--neighborhood of $\qgeo$. 
\end{definition}

We will use notation to simplify some calculations. 
Let $\consta$ be a `universal constant'.
For us this will usually mean a constant that depends on $G\act \X$
and a choice of $\bp\in\X$, but not on the point in $\X$ at which quantities $a$
and $b$ are
calculated. 
\begin{itemize}
\item For $a\leqslant \consta b$ we write $a\lmul b$.
\item For $\frac{1}{\consta}b\leqslant a\leqslant \consta b$ we write $a\emul b$.
\item For $a\leqslant b+\consta$ we write $a\ladd b$.
\item For $b-\consta\leqslant a\leqslant b+\consta$ we write $a\eadd b$.
\item For $a\leqslant \consta b+\consta$ we write $a\laddmul b$.
\item For $\frac{1}{\consta}b-\consta\leqslant a\leqslant \consta b+\consta$ we write $a\eaddmul b$.
\end{itemize}

\subsection{Poincar\'e Series and Growth}\label{sec:poincare}
Let $(\X,\bp,d)$ be a pseudo-metric space with choice of basepoint.
Let $|x|:=d(\bp,x)$ be the induced semi-norm.
  Define the \emph{Poincar\'e series} of  $\A\subset \X$ to be
\[\Theta_{\A}(s):=\sum_{a\in \A}\exp(-s|a|)\]

Another related series is:
\[\Theta'_{\A}(s):=\sum_{n=0}^\infty \#(\clball{n}{\bp}\cap\A)\cdot\exp(-sn)\]

The series $\Theta_\A$ and $\Theta_\A'$ have the same convergence
behavior, since
$\Theta_{\A}(s)=\Theta'_{\A}(s)\cdot (1-\exp(-s))$.
It follows that the growth exponent of $\A$ is a \emph{critical
  exponent} for $\Theta_\A'$ and $\Theta_\A$: the series converge for $s$
greater than the critical exponent and diverge for $s$ less than the
critical exponent.

\begin{definition}\label{def:divergent}
  $\A\subset \X$ is \emph{divergent} if $\Theta_\A$ diverges at its
  critical exponent.
\end{definition}

Since point stabilizers are finite, if $A<G$ and we set $\A:=A.\bp$
then $\Theta_A\emul \Theta_\A$ and $\Theta'_A\emul\Theta_\A'$.
This implies $\rate_A=\rate_\A$, so we can compute the growth exponent
of $A$ with respect to the pseudo-metric on $A$ induced by $G\act\X$ by
computing the growth exponent of the $A$--orbit as a subset of $\X$.

\subsection{The Quasi-tree Construction}
We recall the method of Bestvina, Bromberg, and Fujiwara
\cite{BesBroFuj10} for producing group actions on quasi-trees.
A \emph{quasi-tree} is a geodesic metric space that is quasi-isometric
to a simplicial tree. 
Manning \cite{Man05} gave a characterization of quasi-trees as spaces
satisfying a `bottleneck' property.
We use an equivalent formulation:
 
\begin{definition}[(Bottleneck Property)]
  A geodesic metric space satisfies the \emph{bottleneck property} if
  there exists a number $\bottleneck$ such that for all $x$ and $y$ in $\X$, and
  for any point $m$ on a geodesic segment from
  $x$ to $y$, every path from $x$ to $y$ passes through 
  $\clball{\bottleneck}{m}$.
\end{definition}

\begin{theorem}[{\cite[Theorem 4.6]{Man05}}]
A geodesic metric space is a quasi-tree if and only if it satisfies
the bottleneck property.
\end{theorem}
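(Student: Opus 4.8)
The statement is Manning's bottleneck criterion, so the plan is to prove the two implications separately, with the reverse one carrying essentially all of the content. For the direction \emph{quasi-tree implies bottleneck}: fix an $(\constm,\consta)$--quasi-isometry $f\from\X\to T$ onto a simplicial tree, with a coarse inverse. A tree satisfies the bottleneck property with constant $0$, since the image of any path between two points contains the unique arc joining them. To transport this, take $x,y\in\X$, a geodesic $[x,y]$, and a point $m$ on it. The set $f([x,y])$ is an $(\constm,\consta)$--quasi-geodesic of $T$, hence lies within Hausdorff distance $R=R(\constm,\consta)$ of the arc $[f(x),f(y)]$; pick $p\in[f(x),f(y)]$ with $d(f(m),p)\leqslant R$. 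If $p$ lies within $\constm+\consta$ of an endpoint then $m$ is a bounded distance from $x$ or $y$ and there is nothing to prove, so assume $p$ is interior to the arc, and hence separates $f(x)$ from $f(y)$ in $T$. Given any path $\sigma$ from $x$ to $y$, subdivide it into points $x=z_0,\dots,z_k=y$ with $d(z_i,z_{i+1})\leqslant 1$; then $f(z_0),\dots,f(z_k)$ is a chain from $f(x)$ to $f(y)$ with jumps at most $\constm+\consta$, and taking the last $z_i$ whose image lies on the $f(x)$--side of $p$, the geodesic $[f(z_i),f(z_{i+1})]$ must pass through $p$, so $d(f(z_i),p)\leqslant\constm+\consta$. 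Pulling $z_i$ back through the quasi-isometry inequality shows $\sigma$ meets the ball of radius $\bottleneck=\bottleneck(\constm,\consta)$ about $m$, as required.

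For \emph{bottleneck implies quasi-tree}, suppose $\X$ has the bottleneck property with constant $\bottleneck$. First record the soft consequences: applying the property to the concatenation of two sides of a geodesic triangle, regarded as a path spanning the third side, shows triangles are $\bottleneck$--slim, so $\X$ is $\bottleneck$--hyperbolic; and applying it to a geodesic bigon shows bigons are $2\bottleneck$--thin, so after fixing a basepoint $\bp$ the ``geodesic from $\bp$ to $x$'' is coarsely canonical. The plan is then to build a tree from the geodesics emanating from $\bp$: choose $\gamma_x=[\bp,x]$ for every $x$, and form $T$ from the disjoint union of the intervals $\gamma_x$ by gluing $\gamma_x$ to $\gamma_y$ along their maximal common (uniformly close) initial segment, with the length metric, so that $d_T(\bar x,\bar y)=(|x|-\rho_{xy})+(|y|-\rho_{xy})$ where $\rho_{xy}$ is the parameter at which $\gamma_x$ and $\gamma_y$ stop fellow-travelling. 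One then verifies that $T$ is an $\mathbb{R}$--tree and that $x\mapsto\bar x$ is a quasi-isometry. The bound $d_\X(x,y)\leqslant d_T(\bar x,\bar y)+2\bottleneck$ is immediate: run $\gamma_x$ backwards to the splitting point, take a hop of length at most $2\bottleneck$, and run $\gamma_y$ forwards. The reverse bound, that the gluing does not inflate distances, is the crux: one shows that if $\gamma_x$ and $\gamma_y$ separate at a small parameter $\rho_{xy}$, then considering the path from $x$ to $y$ running back along $\gamma_x$ to $\bp$ and out along $\gamma_y$, the bottleneck property applied to $[x,y]$ forces $[x,y]$ to pass within $O(\bottleneck)$ of the splitting point, so that $d_\X(x,y)\geqslant(|x|-\rho_{xy})+(|y|-\rho_{xy})-O(\bottleneck)$; the same mechanism, applied to triples of geodesics from $\bp$, keeps the fellow-travelling relation coarsely transitive with a constant that does not accumulate along chains of comparisons, so $T$ is genuinely tree-like. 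Finally, any such $T$ is quasi-isometric to a simplicial tree (pass to a maximal $1$--separated net and join points at distance at most $2$, noting that every extra edge closes a cycle of bounded diameter), so $\X$ is a quasi-tree.

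I expect the main obstacle to be precisely the lower bound on $d_\X(x,y)$ in the second implication, together with the bookkeeping needed to pin down the successive fellow-travelling thresholds ($2\bottleneck$, $4\bottleneck$, and so on) so that the gluing yields an honest tree rather than merely a hyperbolic space. This is exactly where the strength of the hypothesis is indispensable: a \emph{single} constant valid for \emph{all} pairs of points and \emph{all} paths, not just the quasi-geodesic stability enjoyed by every hyperbolic space --- indeed $\mathbb{H}^2$ is $\delta$--hyperbolic, yet is not a quasi-tree and does not satisfy the bottleneck property.
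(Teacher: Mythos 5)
This theorem is cited in the paper as \cite[Theorem~4.6]{Man05} and used as a black box; the paper contains no proof of its own, so there is no internal argument for your sketch to be compared against. Judged on its own terms, your outline is a sound plan for reproving Manning's criterion: the easy direction is handled correctly (transport the separating property of a tree vertex through the quasi-isometry via a fine subdivision of an arbitrary path), and you have identified the genuinely load-bearing step in the converse, namely the lower bound $d_\X(x,y)\gadd (|x|-\rho_{xy})+(|y|-\rho_{xy})$ needed so that the glued-up $\mathbb{R}$--tree does not strictly shrink distances.

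Two points deserve a little more care. First, the phrase ``the bottleneck property applied to $[x,y]$ forces $[x,y]$ to pass within $O(\bottleneck)$ of the splitting point'' compresses a real argument: what bottleneck literally gives, applied with the concatenation $\ol{\gamma_x}+\gamma_y$ as the test path, is that the geodesic $[x,y]$ is contained in $\clnbhd{\bottleneck}{\gamma_x\cup\gamma_y}$. One then needs to locate a transition point $m_0\in[x,y]$ lying within $\bottleneck$ of both $\gamma_x$ and $\gamma_y$ simultaneously (it exists by connectedness of $[x,y]$), observe that the corresponding parameters on $\gamma_x$ and $\gamma_y$ are at most $\rho_{xy}$ up to an $O(\bottleneck)$ error, and only then read off the additive lower bound from $d(x,y)=d(x,m_0)+d(m_0,y)$. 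Second, the closing remark that ``any such $T$ is quasi-isometric to a simplicial tree'' via a separated net is where properness of $\X$ (built into the paper's standing $G$--space assumptions) must enter, to guarantee bounded geometry of the net; an arbitrary $\mathbb{R}$--tree need not be quasi-isometric to a simplicial tree. Neither point is a gap in the strategy, but both are exactly the places where a write-up would need to do honest work, and you have flagged the first of them yourself.
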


Let $\mathbb{Y}$ be a collection of geodesic metric spaces, and
suppose for each $X,\, Y\in\mathbb{Y}$ we have a subset
$\pi_Y(X)\subset Y$, which is referred to as the \emph{projection of
  $X$ to $Y$}.
Let $d_Y^\pi(X,Z):=\diam \pi_Y(X)\cup\pi_Y(Z)$.

\begin{definition}[(Projection Axioms)]\label{def:projectionaxioms}
A set $\mathbb{Y}$ with projections as above satisfies the
\emph{projection axioms} if there exist $\xi\geqslant 0$ such that for all distinct $X,Y,Z\in\mathbb{Y}$:
  \begin{enumerate}
\item[(P0)]  $\diam\pi_Y(X)\leqslant \xi$
\item[(P1)] At most one of $d_X^\pi(Y,Z)$, $d_Y^\pi(X,Z)$, or
  $d_Z^\pi(X,Y)$ is strictly greater than $\xi$.
\item[(P2)] $|\{V\in\mathbb{Y}\mid
  d^\pi_V(X,Y)>\xi\}|<\infty$
  \end{enumerate}
\end{definition}

For a motivating example, let $G$ be the fundamental group of a
closed hyperbolic surface, and let $\H$ be the axis in $\mathbb{H}^2$
of $h\in G$.
Let $\mathbb{Y}$ be the distinct $G$--translates of $\H$, and for each
$Y\in\mathbb{Y}$ let $\pi_Y$ be closest point projection to $Y$. 
In this example, projection distances arise as closest point
projection in an ambient space containing $\mathbb{Y}$. 
Bestvina, Bromberg, and Fujiwara consider abstractly the collection $\mathbb{Y}$
and projections satisfying the projection axioms, and build
an ambient space containing a copy of $\mathbb{Y}$ such that closest
point projection agrees with the given projections, up to bounded
error:

\begin{theorem}[{\cite[Theorem~A and Theorem~B]{BesBroFuj10}}]\label{thm:BBF}
  Consider a set $\mathbb{Y}$ of geodesic metric spaces and projections satisfying the projection axioms.
There exists a geodesic metric space $\Y$ containing disjoint,
isometrically embedded, totally geodesic  copies of each
$Y\in\mathbb{Y}$, such that for $X,\,Y\in\mathbb{Y}$, closest
point projection of $X$ to $Y$ in $\qt$ is uniformly coarsely
equivalent to $\pi_Y(X)$.

The construction is equivariant with respect to any group action that
preserves the projections.
Also, if each $Y\in\mathbb{Y}$ is a quasi-tree, with uniform
bottleneck constants, then $\qt$ is a quasi-tree.
\end{theorem}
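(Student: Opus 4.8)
The plan is to follow the combinatorial-to-geometric strategy behind this construction: first organize the projection data $(\mathbb{Y},\pi)$ into a linear ``betweenness'' structure, then build $\Y$ as a graph of spaces whose shape mirrors that structure, and finally read off the geometry of $\Y$ from an explicit distance formula.

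First I would clean up the axioms: replace each $\pi_Y(X)$ by a uniformly bounded enlargement so that the modified projection distances are symmetric and satisfy a sharpened form of (P1) --- namely that if $d_Y^\pi(X,Z)>\xi$ then both $d_X^\pi(Y,Z)\leqslant\xi$ and $d_Z^\pi(X,Y)\leqslant\xi$, for all distinct $X,Y,Z$. This is pure bookkeeping and changes every quantity below by at most a function of $\xi$. Then fix a threshold $K$, large relative to $\xi$, and for $X\neq Z$ set $\mathbb{Y}_K(X,Z):=\{\,Y\mid d_Y^\pi(X,Z)>K\,\}$; this is finite by (P2). Using the sharpened (P1) I would show that for distinct $Y,Y'\in\mathbb{Y}_K(X,Z)$ exactly one of $d_Y^\pi(X,Y')>\xi$ or $d_{Y'}^\pi(X,Y)>\xi$ holds, and that declaring $Y<Y'$ in the first case puts a linear order on $\mathbb{Y}_K(X,Z)$ in which $X$ and $Z$ act as least and greatest endpoints. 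I would also verify the interval-consistency statements one expects of ``points on a segment'': if $Y$ lies between $X$ and $Z$ and $W$ lies between $Y$ and $Z$ then $W$ lies between $X$ and $Z$; if $W\in\mathbb{Y}_K(X,Y)\setminus\mathbb{Y}_K(X,Z)$ then $Y\leqslant W$; and so on.

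Next I would build $\Y$ as the graph of spaces whose vertex spaces are the $Y\in\mathbb{Y}$, each carrying its own metric, with an edge of length $1$ joining $y\in Y$ to $z\in Z$ whenever $y\in\pi_Y(Z)$, $z\in\pi_Z(Y)$, and no $W$ satisfies $d_W^\pi(Y,Z)>K$; equip $\Y$ with the induced path metric. It is connected and geodesic, and the copies of the $Y$'s are disjoint by construction. For the distance formula, given $x\in X$ and $z\in Z$ write $\mathbb{Y}_K(X,Z)=\{Y_1<\dots<Y_n\}$, put $Y_0=X$ and $Y_{n+1}=Z$, and consider the \emph{standard path}: run inside each $Y_i$ from a point of $\pi_{Y_i}(Y_{i-1})$ to a point of $\pi_{Y_i}(Y_{i+1})$, crossing a length-$1$ edge between consecutive $Y_i$'s. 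Computing its length gives the upper bound
\[ d_\Y(x,z)\ \eaddmul\ d_X(x,\pi_X(Z))+\sum_{i=1}^{n}d_{Y_i}^\pi(X,Z)+d_Z(\pi_Z(X),z), \]
where, crucially, the error terms (of size $O(\xi n)$, plus the $n$ edges) are absorbed because each summand exceeds $K\gg\xi$. The matching lower bound is the main estimate: by induction on $n$, using the order and consistency above, any path in $\Y$ from $x$ to $z$ must enter each $Y_i$ within bounded distance of both $\pi_{Y_i}(X)$ and $\pi_{Y_i}(Z)$, in the correct order, so its length is at least the right-hand side up to an additive-multiplicative error controlled by $\xi$ and $K$. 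A direct no-backtracking argument then shows $d_\Y$ restricted to any single $Y$ equals $d_Y$ and that a $Y$--geodesic is a $\Y$--geodesic, so each $Y$ is isometrically and totally geodesically embedded. For $x\in X$, the formula shows $d_\Y(x,\cdot)$ is minimized over $Y$, up to bounded error, precisely on $\pi_Y(X)$, which with (P0) gives that closest-point projection of $X$ to $Y$ in $\Y$ is uniformly coarsely equivalent to $\pi_Y(X)$. Equivariance is immediate, since $\Y$ was built from the data $(\mathbb{Y},\pi)$ alone. Finally, if each $Y$ has the bottleneck property with a common constant $\bottleneck_0$, I would deduce it for $\Y$: a point $m$ on a geodesic $[x,z]$ lies within $1$ of some $Y_i$; any path from $x$ to $z$ enters $Y_i$ near $\pi_{Y_i}(X)$ and near $\pi_{Y_i}(Z)$, so by the bottleneck property of $Y_i$ its portion inside $Y_i$ meets the $\bottleneck_0$--neighborhood of every point of some $Y_i$--geodesic between those two projection sets, and such a geodesic passes boundedly close to $m$. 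Hence $\Y$ satisfies the bottleneck property with a constant depending only on $\bottleneck_0$, $\xi$, and $K$, and by Manning's characterization $\Y$ is a quasi-tree.

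The main obstacle is the combinatorics of the second paragraph together with the lower bound in the distance formula: squeezing from the projection axioms a genuine linear order with the expected interval-consistency, and then using it to show that geodesics in $\Y$ cannot avoid the intermediate spaces $Y_i$. Everything downstream of the distance formula --- isometric and totally geodesic embedding, coarse agreement of closest-point projection with $\pi_Y$, equivariance, and the bottleneck/quasi-tree conclusion --- is comparatively routine.
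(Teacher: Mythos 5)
The paper does not prove this statement: it is cited verbatim from Bestvina--Bromberg--Fujiwara (Theorems A and B of \cite{BesBroFuj10}), and the text following the theorem gives only a one-paragraph description of the construction. Your sketch is a faithful outline of the BBF argument --- the symmetrization and sharpening of (P1), the linear ordering of $\mathbb{Y}_K(X,Z)$ with its interval-consistency properties, the graph-of-spaces construction, the distance formula, and the deduction of isometric/totally geodesic embedding, coarse agreement of projections, equivariance, and the bottleneck property --- so in that sense it matches the ``proof'' the paper is pointing to. You also correctly flag the genuine difficulties (the order-theoretic combinatorics and the lower bound in the distance formula), which are where most of BBF's technical work lives.

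One concrete error: you attach edges of length $1$, but the edges must have length equal to a large constant $\constb$, chosen after the threshold $K$ (this is exactly what the paper's description says, and what BBF need). With length-$1$ edges, the claim that each $Y$ is isometrically and totally geodesically embedded can fail: for adjacent $X$ and $Y$, a path may leave $X$ at a point of $\pi_X(Y)$, travel briefly in $Y$, and re-enter $X$ at another point of $\pi_X(Y)$; the two re-entry points differ by at most the $\xi$ of (P0), so the detour costs only $2$ in edges while potentially shaving up to $\xi$ of distance inside $X$. Once $\xi\geqslant 2$ --- and your bounded enlargement to symmetrize the projections can only increase $\xi$ --- this produces shortcuts, so $X$ is at best quasi-isometrically embedded. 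Taking the edge length to be large relative to $\xi$ and $K$ repairs this, and the same issue resurfaces in the ``no-backtracking'' step of your lower bound, which also needs the edges to be expensive enough that reentering a vertex space is never profitable.
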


The basic idea is that $Z$ is `between' $X$ and $Y$ in $\mathbb{Y}$ if
$d^\pi_Z(X,Y)$ is large, and $X$ and $Y$ are `close' if there is no
$Z$ between them.
Essentially, $\qt$ is constructed by choosing parameters $\consta$ and $\constb$
and connecting every point of
$\pi_Y(X)$ to every point of $\pi_X(Y)$ by an edge of length $\constb$ if
there does not exist $Z\in\mathbb{Y}$ with $d^\pi_Z(X,Y)>\consta$.
For technical reasons one actually must perturb the projection
distances a bounded amount first.
Then, if $\consta$ is chosen sufficiently large and $\constb$ is chosen sufficiently
large with respect to $\consta$, the resulting space is the $\Y$ of \fullref{thm:BBF}.

\subsection{Hyperbolically Embedded Subgroups}
Dahmani, Guirardel, and Osin \cite{DahGuiOsi11} define the concept of a
\emph{hyperbolically embedded subgroup}.
This is a generalization of a peripheral subgroup of a relatively hyperbolic group.
We will not state the definition, as it is technical and we will not
work with this property directly, but it follows from \cite[Theorem~4.42]{DahGuiOsi11} that $E(h)$ is
hyperbolically embedded in $G$ for any strongly contracting element
$h$.
The proof of this theorem proceeds by considering the action of $E(h)$
on a quasi-tree constructed via the method of Bestvina, Bromberg, and Fujiwara.

We state some results on
hyperbolically embedded subgroups that are
related to the work in this paper.
These are not used in the proofs of the main theorems.

\begin{theorem}[{\cite[Theorem~2.23]{DahGuiOsi11}}]\label{thm:maximalnormal}
  If $G$ has a hyperbolically embedded subgroup then $G$ has a maximal finite normal subgroup.
\end{theorem}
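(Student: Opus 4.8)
The plan is to exploit that a non-degenerate hyperbolically embedded subgroup of $G$ provides an action of $G$ on a hyperbolic space --- the relative Cayley graph $\widehat{\Gamma} = \Gamma(G, X \sqcup \mathcal{H})$ for a suitable relative generating set --- in which every loxodromic element satisfies the $\mathrm{WPD}$ condition. First I would dispose of the degenerate cases: if $G$ is finite then $G$ itself is its maximal finite normal subgroup, and if $G$ is virtually cyclic one checks directly that a maximal finite normal subgroup exists. In the remaining case $G$ is acylindrically hyperbolic --- this is where the hypothesis is genuinely used, via \cite{DahGuiOsi11} --- so $\widehat{\Gamma}$ is unbounded and $G$ contains a loxodromic $\mathrm{WPD}$ element $g$. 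Fix a quasi-axis of $g$ together with its endpoint pair $\{g^{+}, g^{-}\} \subset \partial \widehat{\Gamma}$, and let $E(g)$ be the elementary closure of $g$; for a loxodromic $\mathrm{WPD}$ element this coincides with the setwise stabilizer of $\{g^{+}, g^{-}\}$ and is virtually cyclic \cite{DahGuiOsi11}.

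The heart of the argument is the claim that every finite normal subgroup $N \trianglelefteq G$ satisfies $N \subseteq E(g)$. Since $N$ is finite, $\operatorname{Aut}(N)$ is finite, so the conjugation homomorphism $G \to \operatorname{Aut}(N)$ has kernel $C_G(N)$ of finite index in $G$; as $g$ has infinite order, some power $g^{k}$ with $k \geq 1$ lies in $C_G(N)$, i.e.\ $g^{k}$ centralizes $N$, hence $N \subseteq C_G(g^{k})$. Now any element commuting with $g^{k}$ fixes both $g^{+}$ and $g^{-}$, because the forward and backward limit points of $g^{k}$ may be computed from any basepoint and conjugating the basepoint by such an element does not move the limit; therefore $C_G(g^{k}) \subseteq \operatorname{Stab}\{g^{+}, g^{-}\} = E(g^{k}) = E(g)$. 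Combining, $N \subseteq E(g)$.

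From here it is routine. The group $E(g)$ is virtually cyclic, so there is a uniform bound $B$ on the order of its finite subgroups, and by the previous step $|N| \leq B$ for every finite normal subgroup $N \trianglelefteq G$. Moreover the collection of finite normal subgroups of $G$ is nonempty (it contains the trivial subgroup) and directed under inclusion, since for finite normal $N_1, N_2$ the product $N_1 N_2$ is again finite and normal and contains both. A nonempty directed family of subgroups with a uniform bound on orders has a maximum element $K$, and this $K$ is then a finite normal subgroup containing every finite normal subgroup of $G$, i.e.\ the unique largest --- in particular maximal --- finite normal subgroup.

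The main obstacle is the assertion used above that the endpoint stabilizer $\operatorname{Stab}\{g^{+}, g^{-}\}$, equivalently the elementary closure $E(g)$, is virtually cyclic; this is precisely where the hyperbolic embedding is indispensable, since without a properness hypothesis like $\mathrm{WPD}$ the pointwise stabilizer of two boundary points can be arbitrarily large. I would either invoke the structure theory of loxodromic $\mathrm{WPD}$ elements and their elementary closures developed in \cite{DahGuiOsi11} (building on the quasi-tree techniques of \cite{BesBroFuj10}), or reprove exactly what is needed: an element stabilizing $\{g^{+}, g^{-}\}$ coarsely preserves the quasi-axis of $g$, and the $\mathrm{WPD}$ condition bounds the number of group elements moving two far-apart points of that quasi-axis a bounded amount, which confines $\operatorname{Stab}\{g^{+}, g^{-}\}$ to a virtually cyclic subgroup.
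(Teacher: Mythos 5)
The paper states this result as a black-box citation to Dahmani--Guirardel--Osin~\cite[Theorem~2.23]{DahGuiOsi11} and supplies no proof of its own, so there is no internal argument to compare yours against. Your reconstruction is correct and essentially reproduces the argument in the cited source: obtain a loxodromic WPD element $g$, use finiteness of $\operatorname{Aut}(N)$ together with the centralizer-commutes-with-a-power trick to confine every finite normal subgroup $N\trianglelefteq G$ inside the virtually cyclic elementary closure $E(g)=\operatorname{Stab}\{g^+,g^-\}$, extract from this a uniform bound on $|N|$, and then take the maximum of the directed family of finite normal subgroups. One small caveat worth flagging: as stated both here and in the paper, the hypothesis should really be a \emph{non-degenerate} (i.e.\ proper infinite) hyperbolically embedded subgroup -- every group trivially has degenerate ones, and without non-degeneracy the conclusion fails (e.g.\ for an infinite direct sum of finite groups) -- and your proof correctly builds in that assumption when it splits into the finite/virtually cyclic/acylindrically hyperbolic trichotomy.
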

Recall that this theorem guarantees one of the hypotheses of \fullref{prop:hopf}.

\begin{theorem}\label{manyquotients}
  If $G$ contains an infinite order element $h$ such that $E(h)$ is
  hyperbolically embedded then $G$ has an
  infinite, infinite index normal subgroup.
\end{theorem}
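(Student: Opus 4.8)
The plan is to invoke the theory of hyperbolically embedded subgroups developed by Dahmani, Guirardel, and Osin, together with small cancellation (Dehn filling) techniques in that setting. Recall from \cite[Theorem~4.42]{DahGuiOsi11} that having a strongly contracting element $h$ of infinite order implies $E(h)$ is hyperbolically embedded in $G$; here we are given directly that $E(h) \hookrightarrow_h G$. Since $h$ has infinite order, $E(h)$ is an infinite virtually cyclic group, and in particular it is infinite but one expects it to have infinite index in $G$ when $G$ is non-elementary — although we should not assume that, and indeed we do not need it, since the conclusion we want is not about $E(h)$ itself but about producing \emph{some} infinite, infinite index normal subgroup.

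First I would pass to a suitable subgroup: inside the virtually cyclic group $E(h)$ there is an infinite cyclic subgroup $\langle h^k \rangle$ of finite index, and one checks (using \cite[Theorem~2.23]{DahGuiOsi11}, or directly) that finite-index subgroups of hyperbolically embedded subgroups are again hyperbolically embedded, so $\langle h^k \rangle \hookrightarrow_h G$ for appropriate $k$. Next, the key input is the group-theoretic Dehn filling theorem of Dahmani, Guirardel, and Osin: for a hyperbolically embedded subgroup $H \hookrightarrow_h G$, there is a family of ``sufficiently deep'' normal subgroups $N \triangleleft H$ such that the normal closure $\langle\!\langle N \rangle\!\rangle$ in $G$ intersects $H$ precisely in $N$, and $G / \langle\!\langle N \rangle\!\rangle$ contains an embedded copy of $H/N$; moreover $\langle\!\langle N\rangle\!\rangle$ is infinite (it contains the infinite group $N$ whenever $N \neq 1$) and the quotient $G/\langle\!\langle N\rangle\!\rangle$ is non-trivial (even infinite, e.g. it surjects onto a non-elementary hyperbolic group in the generic case, but at minimum $H/N$ embeds and can be taken non-trivial). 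Taking $H = \langle h^k\rangle \cong \mathbb{Z}$ and $N = \langle h^{km}\rangle$ for $m$ large, the filling theorem gives that $\Gamma := \langle\!\langle h^{km}\rangle\!\rangle$ is normal in $G$, is infinite (it contains $h^{km}$ of infinite order), and has infinite index because $G/\Gamma$ contains the infinite cyclic (or finite cyclic $\mathbb{Z}/m$ — either way non-trivial, but with $m$ varying we certainly get infinite quotients) group $H/N$ and in fact is infinite for $m$ large. That produces the desired infinite, infinite index normal subgroup.

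The main obstacle is verifying that $\Gamma = \langle\!\langle h^{km}\rangle\!\rangle$ is of \emph{infinite index}, not merely that $G/\Gamma$ is non-trivial; one must ensure the quotient is infinite. The cleanest route is to use the full strength of the Dehn filling theorem, which guarantees (for deep enough $N$) that $G/\Gamma$ is non-elementary — in particular infinite — provided $G$ itself is non-elementary and acausal degeneracies are excluded; alternatively, one can iterate fillings, or quote the result of \cite{DahGuiOsi11} that a non-degenerate hyperbolically embedded subgroup forces $G$ to be SQ-universal, which immediately yields uncountably many infinite-index normal subgroups with infinite quotients and in particular at least one infinite one. Either way the conclusion follows; the SQ-universality statement is the most economical citation, so I would phrase the proof around that: $G$ is SQ-universal, hence has infinitely many pairwise non-isomorphic infinite quotients, so at least one proper quotient is infinite, and the corresponding kernel $\Gamma$ is infinite (since, by construction via filling, $\Gamma$ contains a power of $h$) and of infinite index (since $G/\Gamma$ is infinite).
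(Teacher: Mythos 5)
Your approach is in the right circle of ideas and is essentially correct, but it routes through different results of Dahmani--Guirardel--Osin than the paper does. The paper's proof is a one-liner: it cites \cite[Theorem~5.15]{DahGuiOsi11}, which says that for suitable $n$ the normal closure $\Gamma=\langle\!\langle h^n\rangle\!\rangle$ is the free product of the conjugates of $\langle h^n\rangle$. Infiniteness of $\Gamma$ is then immediate, and infinite index follows because the free product has infinitely many factors (the distinct conjugates of $\langle h^n\rangle$ are indexed by cosets of a subgroup of $E(h)$, which has infinite index since $G$ is non-elementary), so $\Gamma$ is an infinite-rank free group and cannot be finite index in the finitely generated $G$. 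You instead invoke the Dehn filling theorem and, as a fallback, SQ-universality. The filling route, as you rightly flag, only directly gives $|G/\Gamma|\geq m$ rather than infiniteness of the quotient, so the SQ-universality patch is what makes your argument close; it works, and combined with the existence of a maximal finite normal subgroup (\cite[Theorem~2.23]{DahGuiOsi11}, already quoted in the paper) it cleanly yields an infinite kernel of infinite index without you having to peer inside the construction. Two smaller quibbles: Theorem~2.23 is the maximal-finite-normal-subgroup result, not a statement about finite-index subgroups of hyperbolically embedded subgroups, so that citation is misplaced; and the reduction to $\langle h^k\rangle$ is unnecessary --- one can apply the DGO theorem directly to $E(h)$ with $N=\langle h^n\rangle$ chosen normal in $E(h)$, which is what the paper does. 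Overall: same source, valid argument, but a less economical path than the free-product decomposition the paper uses.
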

\begin{proof}
By \cite[Theorem 5.15]{DahGuiOsi11}, for a sufficiently large $n$, the
normal closure of $\langle h^n\rangle$ in $G$ is the free product of the
conjugates of $\langle h^n\rangle$.
\end{proof}

This theorem says that our main results are true for interesting
reasons, not simply for
lack of normal subgroups.

 Minasyan and Osin \cite{MinOsi13} produce hyperbolically embedded
 subgroups in certain graphs
 of groups.
We use these to produce growth tight examples in \fullref{cat0gog}.
\begin{theorem}[{\cite[Theorem~4.17]{MinOsi13}}]\label{corollary:gog}
  Let $G$ be a finitely generated, non-elementary group that splits non-trivially as a graph of
  groups and is not an ascending HNN-extension.
If there exist two edges of the corresponding
  Bass-Serre tree whose stabilizers have finite intersection then $G$ contains an infinite order element $h$ such that $E(h)$ is hyperbolically embedded in $G$.
\end{theorem}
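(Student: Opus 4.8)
The plan is to use the action of $G$ on the Bass--Serre tree $T$ of the given splitting, to produce inside this action a loxodromic element satisfying Bestvina--Fujiwara's weak proper discontinuity (WPD) condition, and then to invoke the same Dahmani--Guirardel--Osin machinery already cited in this section: a group acting on a hyperbolic space with a loxodromic WPD element $h$ has $E(h)$ hyperbolically embedded, where $E(h)$ is the stabilizer of the pair of endpoints of the axis of $h$, which WPD forces to be virtually cyclic.

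First I would normalize the action. Passing to the unique minimal $G$--invariant subtree, assume $G\curvearrowright T$ is minimal and without inversions. The splitting being non-trivial means the action is not elliptic. Since $G$ is finitely generated and is not an ascending HNN extension, $G$ does not fix an end of $T$ (a finitely generated group acting minimally on a tree while fixing an end is elliptic or an ascending HNN extension). Finally $G$ preserves no line: minimality would then force $T$ to be that line, giving $G$ a finite-index subgroup that surjects onto $\mathbb Z$ with kernel the pointwise stabilizer of the line, which is contained in $\operatorname{Stab}(e_1)\cap\operatorname{Stab}(e_2)$ and hence finite, so $G$ would be virtually cyclic, contradicting non-elementarity. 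Thus $G\curvearrowright T$ is of general type: $G$ contains loxodromic elements and independent pairs of them.

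The crux is the second step, upgrading a loxodromic to a WPD element. The hypothesis supplies edges $e_1,e_2$ with $\operatorname{Stab}(e_1)\cap\operatorname{Stab}(e_2)$ finite. Using ping-pong with independent loxodromics, I would find $g\in G$ so that $e_1$ and $g.e_2$ lie on (or within bounded distance of) a common axis; the segment $\sigma\subset T$ spanning $e_1\cup g.e_2$ then has finite pointwise stabilizer, since that stabilizer sits inside $\operatorname{Stab}(e_1)\cap g\operatorname{Stab}(e_2)g^{-1}$. Next I would construct a loxodromic $h$ whose axis $A_h$ is uniformly tiled: there is $L>0$ such that every length-$L$ subsegment of $A_h$ contains a $G$--translate of $\sigma$. (Take $h$ a high power of, or a suitable product involving, a loxodromic of translation length exceeding $\operatorname{diam}\sigma$ whose axis runs through $\sigma$; general type and minimality make this possible.) Then the pointwise stabilizer of any length-$L$ subsegment of $A_h$ is finite, and the WPD condition for $h$ follows: an element that moves a long enough chunk of the $h$--orbit a bounded amount must, up to bounded ambiguity, lie in one of these finite pointwise stabilizers.

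Finally, $T$ is $0$--hyperbolic, $h$ is loxodromic hence of infinite order, and $h$ is WPD for $G\curvearrowright T$, so by the results of Dahmani, Guirardel, and Osin the elementary closure $E(h)$ is a virtually cyclic subgroup that is hyperbolically embedded in $G$, which is the desired conclusion. The main obstacle is precisely the tiling construction in the previous paragraph: converting the static datum ``two edges with finite-intersection stabilizers'' into a loxodromic whose whole axis is uniformly covered by translates of a single finite-pointwise-stabilizer segment requires a careful ping-pong argument on $T$ and attention to the degenerate configurations (large individual edge stabilizers, invariant ends, invariant lines) eliminated in the normalization step.
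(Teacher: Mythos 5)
The paper states this theorem as a direct citation of \cite[Theorem~4.17]{MinOsi13} and offers no proof of its own, so strictly there is no in-paper argument to compare against. Your sketch is therefore a reconstruction of the Minasyan--Osin argument, and the overall framework — normalize the action on the Bass--Serre tree to a minimal action of general type (ruling out elliptic, parabolic/ascending-HNN, and lineal cases using the stated hypotheses), produce a loxodromic element satisfying WPD, and then invoke Dahmani--Guirardel--Osin to conclude that the loxodromic WPD element $h$ has $E(h)$ hyperbolically embedded — is the right one and consistent with what the cited reference does.

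However, the key step contains a concrete error. You propose to take $\sigma$ to be the segment spanned by $e_1$ and $g.e_2$ after arranging them on a common axis, and to deduce that the pointwise stabilizer of $\sigma$ is finite because it lies in $\operatorname{Stab}(e_1)\cap g\operatorname{Stab}(e_2)g^{-1}$. But the hypothesis only gives that $\operatorname{Stab}(e_1)\cap \operatorname{Stab}(e_2)$ is finite; in general $\operatorname{Stab}(e_1)\cap g\operatorname{Stab}(e_2)g^{-1}$ is a different group and could well be infinite, so your conclusion does not follow. The repair is to work with $\sigma = [e_1,e_2]$, the geodesic segment in $T$ joining $e_1$ to $e_2$: its pointwise stabilizer is contained in $\operatorname{Stab}(e_1)\cap \operatorname{Stab}(e_2)$ and is therefore finite by hypothesis, with no conjugation involved. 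The lemma you then need (and correctly identify as the main technical point) is that in a minimal action of general type on a tree, for every finite geodesic segment $\sigma$ there is a loxodromic element whose axis contains a $G$--translate of $\sigma$. This is a standard consequence of minimality plus general type, but it should be stated and either proved or cited rather than waved at; once you have such a loxodromic $h$ with translation length exceeding $\operatorname{diam}\sigma$, equivariance under $\langle h\rangle$ gives the uniform tiling of the axis by translates of $\sigma$, whence every sufficiently long subsegment of the axis has finite pointwise stabilizer and WPD follows.
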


\section{Contraction and Constriction}
In this section we introduce properties called
`contracting' and `constricting' that generalize properties of closest
point projection to a geodesic in hyperbolic space, and verify that
the `strong' versions of these properties are sufficient to satisfy
the projection axioms of \fullref{def:projectionaxioms}.
These facts are well known to the experts\footnote{For example,
  \cite{Sis11} shows the projection axioms are satisfied for
  constricting elements, without assuming that $\X$ is proper.}, but as
there is currently no published general treatment of this material, we provide a
detailed account.

\subsection{Contracting and Constricting}
In this section we define contracting and constricting maps and show
that the strong versions of these properties are equivalent.

\begin{definition}
A $\consta$--coarse map  $\pi\from\X\to\A$ is \emph{$\consta$--\ccpp} if for
  all $x$ there exists an $a\in \A$ with $d(x,\A)=d(x,a)$ such that
  $\diam \{a\}\cup \pi(x)\leqslant \consta$.
\end{definition}
Recall $d^\pi(x_0,x_1):=\diam \pi(x_0)\cup\pi(x_1)$.
\begin{definition}\label{def:contracting}
  $\pi\from \X\to \A$ is $(\constm,\consta)$--\emph{contracting} for
  $\consta\geqslant 0$ and $\constm\geqslant 1$ if
  \begin{enumerate}
\item $\pi$ and $\mathrm{Id}_\A$ are $\consta$--coarsely equivalent on
  $\A$, and
\item $d(x_0,x_1)<\frac{1}{\constm}d(x_0,\A)-\consta$ implies
  $d^\pi(x_0,x_1)\leqslant \consta$ for
  all $x_0,x_1\in\X$.\label{item:contractingdefballcontracting}
  \end{enumerate}
We say $\pi$ is
\emph{strongly contracting} if it is $(1,
\consta)$--contracting and $d(x,\pi(x))- d(x,\A)\leqslant \consta$ for
all $x\in\X$.
\end{definition}

Another formulation of strong contraction says that geodesics far from
$\A$ have
bounded projections to $\A$:

\begin{definition}\label{def:boundedgeodesicimage}
 A coarse map $\pi\from \X\to\A$ has the \emph{Bounded Geodesic Image Property} if
  there is a constant $\consta$ such that for every geodesic
  $\geo$, if $\geo\cap \nbhd{\consta}{\A}=\emptyset$ then
  $\diam(\pi(\geo))\leqslant \consta$.
\end{definition}

\begin{lemma}\label{lemma:boundedgeodesicimage}
If  $d(x,\pi(x))- d(x,\A)$ is uniformly bounded then $\pi$ has the Bounded Geodesic
Image Property if and only if it is strongly contracting.
\end{lemma}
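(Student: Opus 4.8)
The plan is to prove \fullref{lemma:boundedgeodesicimage} by establishing both implications under the standing hypothesis that $d(x,\pi(x)) - d(x,\A)$ is uniformly bounded, say by some constant $\consta_0$.

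\textbf{Strongly contracting implies Bounded Geodesic Image.} Suppose $\pi$ is $(1,\consta)$--contracting (and recall strong contraction already includes the bounded difference hypothesis, so this direction is really: $(1,\consta)$--contracting plus bounded difference implies BGI). Let $\geo$ be a geodesic with $\geo \cap \nbhd{\consta'}{\A} = \emptyset$ for a constant $\consta'$ to be chosen. For any point $x$ on $\geo$, I would look at a subsegment of $\geo$ around $x$. The key estimate: if $x_0, x_1$ are consecutive sample points along $\geo$ with $d(x_0,x_1)$ slightly less than $\frac{1}{\constm}d(x_0,\A) - \consta = d(x_0,\A) - \consta$ (using $\constm = 1$), then item~(\ref{item:contractingdefballcontracting}) of \fullref{def:contracting} gives $d^\pi(x_0,x_1) \leqslant \consta$. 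Since $\geo$ stays at distance $\geqslant \consta'$ from $\A$, one can subdivide $\geo$ into boundedly-overlapping pieces on each of which the projection moves by at most $\consta$; but a geodesic can have unbounded length, so instead I would argue as follows: take the endpoints $p, q$ of $\geo$ (or, if $\geo$ is infinite, arbitrary points on it) and the point $m \in \geo$ realizing (approximately) $d(m, \A) = \max_{x \in \geo} d(x,\A)$ — wait, better to sample. The cleanest route: for any two points $x_0, x_1 \in \geo$, consider the subsegment between them and iterate the contracting property along a chain of points $x_0 = y_0, y_1, \dots, y_k = x_1$ where consecutive points are at distance just under $d(y_i, \A) - \consta \geqslant \consta' - \consta$; choosing $\consta' $ large makes this gap positive and bounded below, so finitely many steps suffice only if the segment is short. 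For long segments this fails, so the correct argument uses that $\geo$ being far from $\A$ forces $\pi(\geo)$ to be contained in a bounded set because any point of $\geo$ projects close to the projection of a nearest point, combined with the standard ``divergence of geodesics'' consequence of strong contraction: I would cite or reprove that the projection of the whole geodesic lies within bounded distance of the projection of a single point realizing the minimum distance to $\A$. Concretely: let $x \in \geo$ with $d(x, \A)$ minimal along $\geo$ (or within $1$ of the infimum); for any other $y \in \geo$, the subsegment $[x,y]$ of $\geo$ has all points at distance $\geqslant d(x,\A) \geqslant \consta'$ from $\A$, and I claim $d^\pi(x,y) \laddmul 1$. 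This follows by the contracting property applied at $x$: if $d(x,y) < d(x,\A) - \consta$ we are done immediately; otherwise walk from $x$ a distance just under $d(x,\A)-\consta$ to a point $x'$, so $d^\pi(x,x') \leqslant \consta$ and $d(x', \A) \geqslant d(x,\A) - d(x,x') $, but also $d(x',\A)$ might have dropped — here one uses that along the geodesic from $x$, distance to $\A$ changes at most linearly, and $x'$ is still far; then iterate from $x'$. The number of iterations is controlled because each step covers a definite fraction of the remaining segment only if distances stay comparable; the honest fix is the well-known lemma that for strongly contracting $\pi$, $d^\pi(x,y) \leqslant \consta(d(x,y) - \max(d(x,\A),d(y,\A))) + \consta$ type bound — so if $\geo$ is a geodesic far from $\A$, $d(x,y) - d(x,\A)$ can be large, meaning I actually need the geodesic-specific statement. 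I will therefore prove: \emph{on a geodesic avoiding $\nbhd{\consta'}{\A}$, the function $t \mapsto \pi(\geo_t)$ moves a total of at most $\consta$}, by contradiction: if $\pi(\geo)$ had diameter $> N$ for large $N$, pick $x_0, x_1 \in \geo$ with $d^\pi(x_0,x_1) > N$; by (P1)-type reasoning, or directly, the geodesic $[x_0,x_1] \subset \geo$ must come within $\frac{1}{\constm}d(x_i,\A) + \consta \leqslant d(x_i,\A)+\consta$ of $\A$ at some point, contradicting $\geo \cap \nbhd{\consta'}{\A} = \emptyset$ once $\consta' > \consta$. This contradiction argument is the right one: it uses only the contrapositive of item~(\ref{item:contractingdefballcontracting}).

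\textbf{Bounded Geodesic Image implies strongly contracting.} Suppose $\pi$ has BGI with constant $\consta$, and $d(x,\pi(x)) - d(x,\A) \leqslant \consta_0$. I need to verify $\pi$ is $(1,\consta')$--contracting for a suitable $\consta'$, i.e. property (1) (which is essentially immediate, since a \ccpp-type or coarse closest-point map agrees with the identity on $\A$ — here I would note $\pi$ restricted to $\A$ is $\consta$--coarsely $\mathrm{Id}_\A$, which should be part of the setup of a coarse map to $\A$, or follows from the bounded-difference hypothesis applied to $x \in \A$) and property (\ref{item:contractingdefballcontracting}): if $d(x_0,x_1) < d(x_0,\A) - \consta'$ then $d^\pi(x_0,x_1) \leqslant \consta'$. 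Take a geodesic $\geo$ from $x_0$ to $x_1$. Every point $y$ on $\geo$ satisfies $d(y, \A) \geqslant d(x_0,\A) - d(x_0,y) \geqslant d(x_0,\A) - d(x_0,x_1) > \consta'$. Choosing $\consta' \geqslant \consta$, we get $\geo \cap \nbhd{\consta}{\A} = \emptyset$, so BGI gives $\diam \pi(\geo) \leqslant \consta$; in particular $d^\pi(x_0,x_1) \leqslant \consta \leqslant \consta'$. That is exactly property (\ref{item:contractingdefballcontracting}) with constant $\consta'$. Combined with the bounded-difference hypothesis (which is assumed in the lemma statement and is the remaining clause in the definition of \emph{strongly contracting}), $\pi$ is strongly contracting.

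\textbf{Main obstacle.} The routine direction is BGI $\Rightarrow$ strongly contracting — it is essentially a one-line unwinding of definitions plus the triangle inequality on a geodesic. The subtle direction is strongly contracting $\Rightarrow$ BGI, where the temptation is to "integrate" the local contracting estimate along a geodesic; the clean fix is the contradiction/contrapositive argument: a geodesic whose projection has large diameter must dip close to $\A$ (within $d(\cdot,\A)+\consta$), so choosing the BGI constant larger than $\consta$ and comparing with the assumed lower bound on $\operatorname{dist}(\geo, \A)$ forces the diameter bound. Care is needed to track how the several constants ($\consta$ from contraction, $\consta_0$ from the bounded difference, and the output constant for BGI) depend on each other, but no new ideas are required; this is where I expect most of the bookkeeping to go.
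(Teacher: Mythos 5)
Your easy direction (Bounded Geodesic Image implies strongly contracting) is correct and matches the paper's: a geodesic from $x_0$ to $x_1$ with $d(x_0,x_1)<d(x_0,\A)-\consta$ stays outside $\nbhd{\consta}{\A}$ by the triangle inequality, so BGI caps the projection diameter; and the bounded-difference hypothesis applied to $x\in\A$ handles condition (1) of \fullref{def:contracting}.

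The hard direction (strongly contracting implies BGI) has a genuine gap. Your proposed contradiction argument claims that if $d^\pi(x_0,x_1)$ is large for $x_0,x_1\in\geo$, then ``the geodesic $[x_0,x_1]$ must come within $d(x_i,\A)+\consta$ of $\A$ at some point, contradicting $\geo\cap\nbhd{\consta'}{\A}=\emptyset$.'' But the endpoints $x_i$ themselves are at distance exactly $d(x_i,\A)$ from $\A$, so coming within $d(x_i,\A)+\consta$ is vacuously true and no contradiction with $d(\cdot,\A)>\consta'$ follows, since $d(x_i,\A)$ itself can be arbitrarily larger than $\consta'$. What the contrapositive of item~(\ref{item:contractingdefballcontracting}) actually yields is only a lower bound $d(x_0,x_1)\geqslant d(x_0,\A)-\consta$; it does not force the geodesic to dip near $\A$. (That forcing statement is essentially the strong constriction property, which is what the chain of equivalences is trying to \emph{prove}, so you cannot assume it here.)

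The subdivision approach you considered and abandoned is in fact the correct one; what you're missing is how to bound the number of steps. Partition $[0,T]$ into $[0,t_0],[t_0,t_1],\dots,[t_k,s],[s,T]$ where $t_0=d(\geo_0,\A)-\consta$, $s=T-d(\geo_T,\A)+\consta$, and $t_{i+1}=t_i+d(\geo_{t_i},\A)-\consta$ until you reach $s$; each step has length $\geqslant 2\consta$ (since the geodesic avoids $\nbhd{3\consta}{\A}$), and each subinterval has $d^\pi\leqslant\consta$ by contraction. The number $k$ of middle steps is a priori $<(s-t_0)/2\consta$, which is unbounded. The trick to bound it is to compute $d(\geo_0,\geo_T)$ two ways: as $T=(d(\geo_0,\A)-\consta)+(s-t_0)+(d(\geo_T,\A)-\consta)$ along the geodesic, and as at most $d(\geo_0,\pi(\geo_0))+d(\pi(\geo_T),\geo_T)+\consta(3+k)$ via the chain of projections. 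Subtracting and using the bounded-difference hypothesis $d(x,\pi(x))-d(x,\A)\leqslant\consta$ (which you invoke for the easy direction but never use here) gives a self-reinforcing inequality of the form $s-t_0\leqslant\text{const}+\tfrac{s-t_0}{2}$, hence $s-t_0\leqslant 14\consta$, hence $k<7$, hence $d^\pi(\geo_0,\geo_T)<10\consta$. This is a direct estimate, not a contradiction argument, and the bounded-difference hypothesis is essential to it.
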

\begin{proof}
First, assume that $\pi$ has the Bounded Geodesic Image Property, for
some constant $\consta$.
Let $x$ be any point in $\X\setminus \nbhd{\consta}{\A}$.
For any $y$ such that $d(x,y)<d(x,\A) - \consta$, every geodesic
from $x$ to $y$ remains outside $\nbhd{\consta}{\A}$, so its projection has diameter
at most $\consta$.

For the converse,
  suppose $\pi\from \X\to\A$ is a $\consta$--coarse map that is $(1,\consta)$--contracting and
 $d(x,\pi(x))- d(x,\A)\leqslant \consta$ for all $x\in\X$.
If $\consta=0$ then balls outside of $\nbhd{\consta}{\A}$ project to a
single point, and we are done, so assume $\consta> 0$.
Let $\geo\from [0,T]\to\X$ be a geodesic that stays outside $\nbhd{3\consta}{\A}$.  
Let $t_0:=d(\geo_0,\A)-\consta$, and let $s:=T-d(\geo_T,\A)+\consta$. 
If $s\leqslant t_0$ then $d^\pi(\geo_0,\geo_T)\leqslant 2\consta$.
Otherwise, define $t_{i+1}:=t_i+d(\geo_{t_i},\A)-\consta$, provided
$t_{i+1}<s$.
Each $t_{i+1}-t_i\geqslant 2\consta$, so we have
 a partition of $[0,T]$ into subintervals
$[0,t_0],[t_0,t_1],\dots,[t_{k-1},t_k],[t_k,s],[s,T]$ with
$k<\frac{s-t_0}{2\consta}$, and if $[a,b]$ is one of these intervals then
$d^\pi(\geo_a,\geo_b)\leqslant \consta$, by strong contraction.

Now,
\begin{align*}
  d(\geo_0,\geo_T)&\leqslant
  d(\geo_0,\pi(\geo_0))+d(\pi(\geo_0),\pi(\geo_{t_0}))+d(\pi(\geo_{t_0}),\pi(\geo_s))\\
&\qquad\qquad+d(\pi(\geo_s),\pi(\geo_T))+d(\pi(\geo_T),\geo_T)\\
&\leqslant
d(\geo_0,\pi(\geo_0))+d(\pi(\geo_T),\geo_T)+\consta(3+\frac{s-t_0}{2\consta}),\\
\end{align*}
and
\begin{align*}
  d(\geo_0,\geo_T)&=d(\geo_0,\geo_{t_0})+d(\geo_{t_0},\geo_s)+d(\geo_s,\geo_T)\\
&=d(\geo_0,\A)-\consta + s-t_0 +d(\geo_T,\A)-\consta,
\end{align*}
so \[s-t_0\leqslant 2(5\consta+d(\geo_0,\pi(\geo_0))-d(\geo_0,\A)+
d(\geo_T,\pi(\geo_T))-d(\geo_T,\A))\leqslant 14\consta. \]
This means $k< 7$, so
$d^\pi(\geo_0,\geo_T)\leqslant \consta(3+k)< 10\consta$.
\end{proof}

  If $\pi$ is only $(\constm,\consta)$--contracting then a similar argument shows that $d^\pi(\geo_0,\geo_T)$ is
  bounded in terms of $\consta$ and $\log_{\frac{\constm+1}{\constm-1}}(d(\geo_0,\A)d(\geo_T,\A))$.

We now introduce the notion of a constricting map. 
Using constricting maps will simplify some of our proofs, but it turns
out that the strong versions of constricting and contracting are equivalent. 

\begin{definition}\label{def:pathsystem}
  A \emph{path system} is a transitive collection of
  quasi-geodesics with uniform constants that is closed under taking
  subpaths.

 A path system is \emph{minimizing} if, for some $\consta\geqslant 0$, it contains a path system
  consisting of $(1,\consta)$--quasi-geodesics.
\end{definition}

\begin{definition}\label{def:constricting}
  Let $\ps$ be a path system.
A coarse map $\pi\from \X\to \A$ is
\emph{$(\constm,\consta)$--$\ps$--constricting}\footnote{Sisto \cite{Sis11} calls
  this property `$\ps$--contracting'. We change the name to avoid
  conflict with the better established `contracting' terminology of
  \fullref{def:contracting}.} for $\constm\geqslant 1$ and
$\consta\geqslant 0$ if:
\begin{enumerate}
\item $\ps$ contains a path system consisting of $(\constm,\consta)$--quasi-geodesics,
\item $\pi$ and $\mathrm{Id}_\A$ are $\consta$--coarsely equivalent on
  $\A$, and
\item for every $\path\in\ps$ with endpoints $x_0$ and $x_1$, if
  $d^\pi(x_0,x_1)>\consta$ then $d(\pi(x_i),\path)\leqslant \consta$
  for both $i\in\{0,1\}$.
\end{enumerate}
A coarse map is \emph{constricting} if it is $(\constm,\consta)$--$\ps$--constricting for
some path system $\ps$ and \emph{strongly constricting} if it is $(1,\consta)$--constricting for the
path system consisting of all geodesics.
\end{definition}

\begin{lemma}\label{lemma:constrictingimpliescontracting}
  If $\pi\from\X\to\A$ is constricting then it is contracting.
\end{lemma}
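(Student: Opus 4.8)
I want to show that a constricting map $\pi\from\X\to\A$ is contracting, i.e., that I can produce constants $\constm\geqslant 1$ and $\consta'\geqslant 0$ witnessing \fullref{def:contracting}. Suppose $\pi$ is $(\constm_0,\consta_0)$--$\ps$--constricting for some path system $\ps$. Condition (2) of \fullref{def:contracting} (coarse equivalence of $\pi$ and $\mathrm{Id}_\A$ on $\A$) is literally condition (2) of \fullref{def:constricting}, so nothing to do there. For condition \eqref{item:contractingdefballcontracting}, I would argue by contrapositive: assume $d^\pi(x_0,x_1)>\consta'$ for a suitably chosen $\consta'$, and derive that $d(x_0,x_1)\geqslant \frac{1}{\constm}d(x_0,\A)-\consta'$.

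The key idea is to feed the segment from $x_0$ to $x_1$ through a path in $\ps$. First, using transitivity of the path system $\ps$, fix $\path\in\ps$ from $x_0$ to $x_1$; it is an $(\constm_0,\consta_0)$--quasi-geodesic. If $d^\pi(x_0,x_1)$ is larger than the constricting constant $\consta_0$, then by condition (3) of \fullref{def:constricting}, $\path$ passes within $\consta_0$ of $\pi(x_0)$, say through a point $p$. Now I estimate: $d(x_0,p)$ is at least $d(x_0,\A)$ up to bounded error, since $p$ is $\consta_0$--close to $\pi(x_0)\in\A$ (using that $\pi$ is a $\consta_0$--coarse map and condition (2), $\pi(x_0)$ lies uniformly close to $\A$; more simply, $\pi(x_0)$ is within $\consta_0$ of $\A$ because it is within $\consta_0$ of $a$ for some $a\in\pi(\A)$... — actually the cleanest route is: $d(x_0,\A)\leqslant d(x_0,p)+d(p,\pi(x_0))+\operatorname{dist}(\pi(x_0),\A)\leqslant d(x_0,p)+\consta_0+\consta_0$). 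On the other hand, $p$ lies on the quasi-geodesic $\path$ from $x_0$ to $x_1$, so $d(x_0,p)\leqslant \constm_0\, d(x_0,x_1)+\consta_0$ (the quasi-geodesic has length controlled by $\constm_0 d(x_0,x_1)+\consta_0$, hence every point on it is within that distance of $x_0$). Chaining these gives $d(x_0,\A)\leqslant \constm_0\, d(x_0,x_1)+\text{(const)}$, i.e. $d(x_0,x_1)\geqslant \frac{1}{\constm_0}d(x_0,\A)-\text{(const)}$, which is the contrapositive of condition \eqref{item:contractingdefballcontracting} with $\constm=\constm_0$ and $\consta'$ a suitable multiple of $\consta_0$.

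There is one gap to patch: the displayed chain only uses one endpoint, $x_0$, so the hypothesis I actually need to invoke condition (3) is $d^\pi(x_0,x_1)>\consta_0$, and I should set $\consta'\geqslant\consta_0$ so that $d^\pi(x_0,x_1)>\consta'$ indeed triggers it. I also need to double-check that a point on an $(\constm_0,\consta_0)$--quasi-geodesic joining $x_0$ to $x_1$ is within $\constm_0 d(x_0,x_1)+\consta_0$ of $x_0$: parametrize $\path\from[0,L]\to\X$ with $\path_0=x_0$, $\path_L=x_1$; then $L=d(\path_0,\path_L)$ up to the quasi-geodesic constants along the parameter, and $d(x_0,\path_t)\leqslant \constm_0 |t-0|+\consta_0\leqslant \constm_0 L+\consta_0\leqslant \constm_0(\constm_0 d(x_0,x_1)+\consta_0)+\consta_0$ — so the constant $\constm$ I end up with is $\constm_0^2$ and the additive error a bounded multiple of $\consta_0$; this is harmless since \fullref{def:contracting} allows any $\constm\geqslant 1$.

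**Main obstacle.** The argument is essentially bookkeeping; the only real subtlety is making sure the path system genuinely supplies a quasi-geodesic between the \emph{given} pair $x_0,x_1$ (transitivity of $\ps$) and that I only invoke the constricting property under the hypothesis that the projection distance exceeds the relevant constant — exactly the regime where condition \eqref{item:contractingdefballcontracting} is vacuous anyway in its non-contrapositive form. So the main thing to be careful about is matching up the contrapositive correctly and not losing track of which constant controls which inequality; there is no deep geometric difficulty.
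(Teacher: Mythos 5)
Your proof is correct and is essentially the paper's argument: both take a path $\path\in\ps$ from $x_0$ to $x_1$, observe that the constricting hypothesis forces $\path$ to pass near $\pi(x_0)\subset\A$ when $d^\pi(x_0,x_1)$ exceeds $\consta_0$, and then use the quasi-geodesic estimate along $\path$ to deduce $d(x_0,x_1)\geqslant \frac{1}{\constm_0^2}d(x_0,\A)-\text{(const)}$, which contraposes to the required contracting inequality with multiplicative constant $\constm_0^2$. The paper phrases the middle step as ``the path cannot enter $\clnbhd{\consta}{\A}$'' rather than working directly with the point $p$ near $\pi(x_0)$, but this is cosmetic, and your self-correction on the distance estimate (recovering $\constm_0^2$ rather than $\constm_0$) matches the paper's final constants.
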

\begin{proof}
Suppose $\pi$ is $(\constm,\consta)$--$\ps$-constricting
$\consta$--coarse map for a path system $\ps$
consisting of $(\constm,\consta)$--quasi-geodesics.
Suppose $\path\from [0,T]\to\X$ is a path in $\ps$ with $\path_0=x$
and $\path_T=y$, and suppose $z=\path_s\in\clnbhd{\consta}{\A}$.
Using the fact that $\path$ is an $(\constm,\consta)$--quasi-geodesic
on the intervals $[0,T]$, $[0,s]$, and $[s,T]$, one sees that $d(x,y)\geqslant\frac{1}{\constm^2}(d(x,\A)+d(y,\A)-4\consta)$.
Therefore, if $d(x,y)<\frac{1}{\constm^2}d(x,\A)-\frac{4\consta}{\constm^2}$
then $\path$ can not enter $\clnbhd{\consta}{\A}$. This would
contradict the constricting property, unless $d^\pi(x,y)\leqslant \consta$.
Therefore $\pi$ is $(\constm^2,\max\{\consta,\frac{4\consta}{\constm^2}\})$--contracting.
\end{proof}

\begin{lemma}\label{lemma:cpp}
 Let $\pi\from\X\to\A$ be an $\consta$--coarse map that is $(1,\consta)$--$\ps$--constricting.
For all $x\in\X$ and all $r\geqslant 0$ we have
$\{a\in\A\mid d(x,a)\leqslant d(x,\A)+r\}\subset\{a\in\A\mid
d(a,\pi(x))\leqslant r+5\consta\}$.

  In particular, setting $r=0$ shows that closest point
  projection to $\A$ is coarsely well defined and coarsely equivalent
  to $\pi$.
\end{lemma}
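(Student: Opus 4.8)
The plan is to prove the containment $\{a\in\A\mid d(x,a)\leqslant d(x,\A)+r\}\subset\{a\in\A\mid d(a,\pi(x))\leqslant r+5\consta\}$ directly, by taking an arbitrary $a$ in the left-hand set and estimating $d(a,\pi(x))$ via a geodesic from $x$ to $a$. First I would fix $x\in\X$ and $r\geqslant 0$, pick $a\in\A$ with $d(x,a)\leqslant d(x,\A)+r$, and let $\geo$ be a geodesic from $x$ to $a$; since the path system $\ps$ is transitive and closed under subpaths, and by the first clause of \fullref{def:constricting} with $\constm=1$ contains a path system of $(1,\consta)$--quasi-geodesics, I can instead take $\path\in\ps$ from $x$ to $a$ which is a $(1,\consta)$--quasi-geodesic. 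The endpoint $a$ lies in $\A$, hence trivially within $\clnbhd{\consta}{\A}$, so the geodesic/path $\path$ \emph{does} meet $\clnbhd{\consta}{\A}$ (at its endpoint $a$, where $\pi(a)$ is $\consta$--close to $a$ by clause (2)).

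The key step is a case split on $d^\pi(x,a)$. If $d^\pi(x,a)\leqslant\consta$, then $d(a,\pi(x))\leqslant d(a,\pi(a))+d^\pi(a,x)\leqslant \consta+\consta\leqslant r+5\consta$ and we are done. If $d^\pi(x,a)>\consta$, then the constricting property (clause (3)) applies to $\path$: it gives $d(\pi(x),\path)\leqslant\consta$ and $d(\pi(a),\path)\leqslant\consta$. So there is a point $\path_t$ on the path with $d(\pi(x),\path_t)\leqslant\consta$. Now I estimate where $\path_t$ sits: using that $\path$ is a $(1,\consta)$--quasi-geodesic, $d(x,\path_t)+d(\path_t,a)\ladd d(x,a)\leqslant d(x,\A)+r$, while $d(x,\path_t)\geqslant d(x,\A)-\consta$ (every point is at least $d(x,\A)$ from $x$ up to the quasi-geodesic error, but more simply $d(x,\path_t)\geqslant d(x,\A)$ is false; rather $d(x,\pi(x))\leqslant d(x,\A)+$const is \emph{not} assumed here, so I should instead bound $d(x,\path_t)$ from below by noting $\path_t$ is within $\consta$ of $\pi(x)\subset\A$, hence $d(x,\path_t)\geqslant d(x,\A)-\consta$). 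Combining, $d(\path_t,a)\ladd r+2\consta$ or so, and therefore $d(\pi(x),a)\leqslant d(\pi(x),\path_t)+d(\path_t,a)\ladd r+5\consta$ after bookkeeping the constants. The "in particular" clause is then immediate: taking $r=0$ shows any closest-point projection $a$ of $x$ to $\A$ satisfies $d(a,\pi(x))\leqslant 5\consta$, and since $\pi$ is a $\consta$--coarse map, $\pi(x)$ has diameter $\leqslant\consta$, so closest point projection is coarsely well defined with diameter $\lmul \consta$ and coarsely agrees with $\pi$.

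The main obstacle is purely bookkeeping: getting the constant exactly $5\consta$ rather than some larger multiple requires being careful about which error terms are additive-$\consta$ versus a sum of several $\consta$'s, and about the fact that $\pi(x)$ and $\pi(a)$ are themselves sets of diameter $\leqslant\consta$, not points. I would organize the triangle-inequality chain as $d(a,\pi(x))\leqslant d(a,\pi(a))+d(\pi(a),\path_t)+d(\path_t,\path_{t'})+d(\path_{t'},\pi(x))$ where $\path_{t'}$ is the point near $\pi(a)$, but it is cleaner to route through a single point of $\path$ near $\pi(x)$ and use the length estimate along $\path$; I expect this yields the claimed $5\consta$. There is no conceptual difficulty beyond choosing the estimates in the right order; the one subtle point is remembering that clause (3) requires $d^\pi(x_0,x_1)>\consta$ to conclude anything, which is exactly why the case split on $d^\pi(x,a)$ is forced on us and is harmless.
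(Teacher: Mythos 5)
Your proposal follows essentially the same route as the paper's proof: take $a\in\A$ with $d(x,a)\leqslant d(x,\A)+r$, run a $(1,\consta)$--quasi-geodesic $\path\in\ps$ from $x$ to $a$, dispose trivially of the case where the projections are already close, and otherwise invoke clause (3) of the constriction definition to find $z=\path_t$ within $\consta$ of $\pi(x)$, then bound $d(z,a)$ by combining the quasi-geodesic additivity $d(x,z)+d(z,a)\leqslant d(x,a)+3\consta$ with the lower bound $d(x,z)\geqslant d(x,\A)-\consta$ coming from $z$ being $\consta$--close to $\pi(x)\subset\A$. This is exactly the paper's computation (where the intermediate bound is $d(z,a)\leqslant r+4\consta$, not $r+2\consta$, though your final $r+5\consta$ is right), and your case split on $d^\pi(x,a)>\consta$ rather than $d(a,\pi(x))>2\consta$ is an equivalent, if anything slightly cleaner, formulation since it directly matches the hypothesis of the constriction clause.
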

\begin{proof}
For $x\in\X$ and $r\geqslant 0$, let $a\in\A$ be a point such that $d(x,a)\leqslant d(x,\A)+r$.
Let $\path$ be a $(1,\consta)$--quasi-geodesic from $x$ to $a$ in $\ps$.
If $d(a,\pi(x))>2\consta$ then $d^\pi(a,x)>\consta$, so there is a point
$z\in\path\cap\clnbhd{\consta}{\pi(x)}$.
Now $d(x,z)+\consta\geqslant d(x,\pi(x))\geqslant d(x,\A)\geqslant d(x,a)-r$.
Since $\path$ is a $(1,\consta)$--quasi-geodesic, $d(x,a)\geqslant
d(x,z)+d(z,a)-3\consta$, so
$d(z,a)\leqslant r+4\consta$, and $d(a,\pi(x))\leqslant r+5\consta$.
\end{proof}

\begin{proposition}\label{prop:allthesame}
  Let $\pi\from\X\to\A$. The following are
  equivalent:
  \begin{enumerate}
\item $\pi$ is strongly constricting.\label{item:constrictallgeodesics}
  \item $\pi$ is constricting for some minimizing path system.\label{item:constrictsomegeodesics}
\item $\pi$ is strongly contracting.\label{item:strcontraction}
\item $\pi$ has the Bounded Geodesic Image Property and $d(x,\pi(x))-
  d(x,\A)$ is uniformly bounded.\label{item:thebgi}
  \end{enumerate}
\end{proposition}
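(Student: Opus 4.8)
The plan is to prove \fullref{prop:allthesame} by establishing a cycle of implications, say $\eqref{item:strcontraction}\Leftrightarrow\eqref{item:thebgi}$, then $\eqref{item:strcontraction}\Rightarrow\eqref{item:constrictallgeodesics}\Rightarrow\eqref{item:constrictsomegeodesics}\Rightarrow\eqref{item:strcontraction}$, so that all four are equivalent. The equivalence $\eqref{item:strcontraction}\Leftrightarrow\eqref{item:thebgi}$ is exactly \fullref{lemma:boundedgeodesicimage}: the definition of strong contraction already builds in that $d(x,\pi(x))-d(x,\A)$ is uniformly bounded, and that lemma supplies the equivalence with the Bounded Geodesic Image Property. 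So that part is immediate from what is already proved. The implication $\eqref{item:constrictallgeodesics}\Rightarrow\eqref{item:constrictsomegeodesics}$ is trivial, since the path system of all geodesics is minimizing (it consists of $(1,0)$--quasi-geodesics), and $\eqref{item:constrictsomegeodesics}\Rightarrow\eqref{item:strcontraction}$ will follow by combining \fullref{lemma:constrictingimpliescontracting}, which gives that $\pi$ is contracting, with \fullref{lemma:cpp}, which, applied to the minimizing sub-path-system of $(1,\consta)$--quasi-geodesics, shows that $\pi$ coarsely agrees with closest point projection and hence that $d(x,\pi(x))-d(x,\A)$ is uniformly bounded; upgrading the contraction constant $\constm$ from \fullref{lemma:constrictingimpliescontracting} to $\constm=1$ requires a little more care, but the bounded-geodesic-image reformulation makes this routine once we know projections of far geodesics are bounded.

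The substantive step is $\eqref{item:strcontraction}\Rightarrow\eqref{item:constrictallgeodesics}$: assuming $\pi$ is strongly contracting, we must show it is strongly constricting for the path system of all geodesics, i.e., that for every geodesic $\geo$ with endpoints $x_0,x_1$, if $d^\pi(x_0,x_1)>\consta'$ (for a suitable new constant $\consta'$) then $\geo$ passes within $\consta'$ of both $\pi(x_0)$ and $\pi(x_1)$. The idea is the standard one: if $\geo$ avoided a large neighborhood of $\A$ near $\pi(x_0)$, then by the Bounded Geodesic Image Property (available via $\eqref{item:thebgi}$) its projection would be bounded, contradicting $d^\pi(x_0,x_1)$ being large. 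More precisely, I would let $y$ be the last point of $\geo$ (traversing from $x_0$) lying in $\clnbhd{\consta}{\A}$ and argue that $\pi(y)$ is coarsely $\pi(x_0)$ while $y$ itself is coarsely $\pi(x_0)$ (using that $\pi$ is coarsely closest-point projection), so $\geo$ comes close to $\pi(x_0)$; symmetrically for $x_1$. Care is needed because a priori $\geo$ might dip in and out of the neighborhood of $\A$ several times, so one should track the first and last such excursions and use contraction on the complementary sub-geodesics.

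I expect the main obstacle to be bookkeeping the constants cleanly across the four conditions — in particular, making sure the constant produced in $\eqref{item:constrictsomegeodesics}\Rightarrow\eqref{item:strcontraction}$ does not secretly depend on the quasi-geodesic constant $\constm$ of the minimizing system in a way that breaks the ``strong'' ($\constm=1$) conclusion. The resolution is that \fullref{lemma:cpp} is stated precisely for $(1,\consta)$--$\ps$--constricting maps, so restricting to the $(1,\consta)$--quasi-geodesic sub-path-system guaranteed by the \emph{minimizing} hypothesis is exactly what pins $\constm$ down to $1$; after that, the bounded-geodesic-image characterization of $\eqref{item:thebgi}$ absorbs the remaining slack, since it involves only a single additive constant and no multiplicative one. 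Using the $\lmul$, $\ladd$, $\eadd$ notation from the preliminaries should keep the chains of inequalities manageable and avoid an explosion of named constants.
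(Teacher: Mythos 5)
Your proposal follows essentially the same route as the paper: the cycle $(3)\Leftrightarrow(4)$ via \fullref{lemma:boundedgeodesicimage}, $(1)\Rightarrow(2)$ trivially, $(2)\Rightarrow(3)$ via \fullref{lemma:constrictingimpliescontracting} and \fullref{lemma:cpp} (and your resolution of the $\constm=1$ worry by passing to the $(1,\consta)$--quasi-geodesic sub-path-system guaranteed by the minimizing hypothesis is exactly what the paper does), and $(3)\Rightarrow(1)$ by locating where $\geo$ enters and exits a neighborhood of $\A$ and applying the Bounded Geodesic Image Property to the complementary subsegments. One small slip in your sketch of $(3)\Rightarrow(1)$: to show $\geo$ passes near $\pi(x_0)$ you should take $y$ to be the \emph{first}, not the last, point of $\geo$ in $\clnbhd{3\consta}{\A}$, so that the initial subsegment of $\geo$ from $x_0$ to $y$ lies outside the neighborhood and has bounded projection; the paper handles both ends by taking the first and last such times $t_0$ and $t_1$.
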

\begin{proof}
(\ref{item:constrictallgeodesics}) implies (\ref{item:constrictsomegeodesics})
is immediate.

  Suppose $\pi$ is $(1,\consta)$--$\ps$--constricting for a minimizing path
  system $\ps$ consisting of $(1,\consta)$--quasi-geodesics.
\fullref{lemma:constrictingimpliescontracting} shows $\pi$ is $(1,\consta')$--contracting.
By \fullref{lemma:cpp}, $\pi$ is coarsely a closest point projection, so $d(x,\pi(x))-
d(x,\A)$ is uniformly bounded.
Thus, (\ref{item:constrictsomegeodesics}) implies (\ref{item:strcontraction}).

Now suppose $\pi$ is $(1,\consta)$--contracting and $d(x,\pi(x))-
d(x,\A)\leqslant\consta$ for all $x\in\X$.
Take any geodesic $\geo\from [0,T]\to\X$.
If $d^\pi(\geo_0,\geo_T)>10\consta$ then
$\geo\cap\nbhd{3\consta}{\A}\neq\emptyset$, as in \fullref{lemma:boundedgeodesicimage}.
Let $t=t_0,\,t_1$ be the first and last times, respectively,
such that $d(\geo_t,\A)\leqslant 3\consta$.
By \fullref{lemma:boundedgeodesicimage}, $d^\pi(\geo_0,\geo_{t_0})\leqslant 10\consta$.
Thus, $d(\pi(\geo_0),\geo_{t_0})\leqslant
d^\pi(\geo_0,\geo_{t_0})+d(\pi(\geo_{t_0}),\geo_{t_0})\leqslant 14\consta$.
The same argument shows $d(\pi(\geo_T),\geo_{t_1})\leqslant 14\consta$, so
$\pi$ is $(1,14\consta)$--constricting for the path system of all geodesics.
Thus, (\ref{item:strcontraction}) implies
(\ref{item:constrictallgeodesics}).

(\ref{item:strcontraction}) is equivalent to (\ref{item:thebgi}) by \fullref{lemma:boundedgeodesicimage}.
\end{proof}

\subsection{Additional Properties of Contracting and Constricting Maps}
We establish some properties of contracting and constricting maps that
will be useful in the sequel.

\begin{lemma}\label{lemma:distancelb}
  If $\pi$ is a $(1,\consta)$--strongly constricting $C$--coarse map
  and $d^\pi(x,y)>\consta$ then $d(x,y)\geqslant d(x,\pi(x))+d^\pi(x,y)+d(\pi(y),y)-6C$.
\end{lemma}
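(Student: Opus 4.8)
The plan is to use the constricting property to force a geodesic from $x$ to $y$ to pass near both $\pi(x)$ and $\pi(y)$, then read off the lower bound from the triangle inequality along the geodesic. First I would take a geodesic $\geo$ from $x$ to $y$. Since $d^\pi(x,y)>\consta$, the $(1,\consta)$--strongly constricting property (applied to the path system of all geodesics, which $\geo$ belongs to) gives points $p\in\geo\cap\clnbhd{\consta}{\pi(x)}$ and $q\in\geo\cap\clnbhd{\consta}{\pi(y)}$; that is, $d(p,\pi(x))\leqslant\consta$ and $d(q,\pi(y))\leqslant\consta$. Here I would also want $d(p,\pi(x))$ and $d(q,\pi(y))$ controlled by $C$ rather than $\consta$, but since $\consta\leqslant C$ (the map is $C$--coarse and $\consta$--constricting, and one may assume $C\geqslant\consta$, or simply carry both constants), this is fine; I will write everything in terms of $C$ to match the statement, absorbing $\consta$ into $C$.

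Next I would argue that $p$ and $q$ appear on $\geo$ in the expected order, i.e.\ $p$ is between $x$ and $q$, which is between $q$ and $y$ — or at least that $d(x,y)\geqslant d(x,p)+d(p,q)+d(q,y) - (\text{bounded error})$. Since $\geo$ is a geodesic, for \emph{any} two points $p,q$ on it we have $d(x,y)=d(x,p)+d(p,q)+d(q,y)$ if they occur in that order; if the order is reversed the term $d(p,q)$ comes in with the wrong sign, so I need to rule that out. The point is that $d(x,p)\eadd d(x,\pi(x))\eadd d(x,\A)$ (using $d(x,\pi(x))-d(x,\A)$ bounded, which holds by \fullref{prop:allthesame} since strongly constricting implies strongly contracting) and similarly $d(y,q)\eadd d(y,\A)$, while $d(p,q)\eadd d^\pi(x,y)$ up to bounded error because $p$ is within $C$ of $\pi(x)$ and $q$ within $C$ of $\pi(y)$ and $d^\pi(x,y)=\diam(\pi(x)\cup\pi(y))$. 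Then $d(x,p)+d(p,q)+d(q,y)$ is, up to bounded additive error, $d(x,\pi(x))+d^\pi(x,y)+d(\pi(y),y)$, and since this is the correct-order sum it is at most $d(x,y)$; the wrong order would make $d(x,y)$ \emph{smaller} than $d(x,p)+d(q,y)-d(p,q)$, but $d(x,y)\geqslant d(x,p)$ and $d(x,y)\geqslant d(q,y)$ already, and when $d^\pi(x,y)$ is large the two neighborhoods $\clnbhd{C}{\pi(x)}$ and $\clnbhd{C}{\pi(y)}$ are disjoint, pinning down the order. Finally I would collect the additive errors: each substitution $d(x,p)\to d(x,\pi(x))$, $d(q,y)\to d(\pi(y),y)$, $d(p,q)\to d^\pi(x,y)$ costs at most $2C$ (worst case), for a total of $6C$, giving $d(x,y)\geqslant d(x,\pi(x))+d^\pi(x,y)+d(\pi(y),y)-6C$.

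The main obstacle I anticipate is bookkeeping the order of $p$ and $q$ on $\geo$ cleanly — making precise that when $d^\pi(x,y)>\consta$ the constricting property really does produce $p,q$ with $p$ between $x$ and $q$ — and tracking the constants so that the total loss is exactly $6C$ and not, say, $8C$; this may require being slightly careful about which errors are bounded by $C$ versus $\consta$ and choosing the estimate that lands on $6C$. Everything else is a routine triangle-inequality chase along a geodesic using the substitutions $d(x,p)\eadd d(x,\pi(x))$, $d(p,q)\eadd d^\pi(x,y)$, $d(q,y)\eadd d(\pi(y),y)$.
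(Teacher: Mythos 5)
Your proposal takes essentially the same approach as the paper: fix a geodesic $\geo$ from $x$ to $y$, use $(1,\consta)$--strong constriction (together with $d^\pi(x,y)>\consta$) to produce points $\geo_s$ and $\geo_t$ within $\consta$ of $\pi(x)$ and $\pi(y)$ respectively, and then read off the lower bound via the triangle inequality along $\geo$, using that $\pi(x)$ and $\pi(y)$ have diameter at most $\consta$. One small remark: the ordering worry you raise (whether $\geo_s$ precedes $\geo_t$ along $\geo$) is a legitimate subtlety that the paper silently elides, but your proposed resolution via disjointness of the $C$--neighborhoods of $\pi(x)$ and $\pi(y)$ doesn't by itself determine the order — disjointness only shows $\geo_s\neq\geo_t$; also note that in this paper $\consta$ and $C$ are the \emph{same} constant (the macro $\consta$ expands to $C$), so there is no separate constant to absorb, and the $6C$ comes from losses of $C$, $4C$, $C$ on the three segments $[x,\geo_s]$, $[\geo_s,\geo_t]$, $[\geo_t,y]$ rather than $2C$ three times.
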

\begin{proof}
Let $\geo$ be a geodesic from $x$ to $y$.
By strong constriction, there exist $s$ and $t$ such that
$d(\geo_s,\pi(x))\leqslant\consta$ and $d(\geo_t,\pi(y))\leqslant\consta$.
The lemma follows from the triangle inequality and the fact that
$\pi(x)$ and $\pi(y)$ have diameter at most $\consta$.
\end{proof}

\begin{lemma}\label{lemma:coarse1Lipschitz}
  If $\pi\from\X\to\A$ is strongly constricting then it is coarsely
  1--Lipschitz.
\end{lemma}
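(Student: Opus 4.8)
The plan is to reduce this almost immediately to the distance lower bound of \fullref{lemma:distancelb}. First I would fix a $(1,\consta)$--strongly constricting $C$--coarse map $\pi\from\X\to\A$ and consider two arbitrary points $x,\,y\in\X$. The quantity we must control is $d^\pi(x,y)=\diam\pi(x)\cup\pi(y)$, and we want to show it is bounded above by $d(x,y)$ plus a universal constant.

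The argument splits into two cases. In the easy case, $d^\pi(x,y)\leqslant\consta$, and there is nothing to prove since $\consta$ is a universal constant. In the main case, $d^\pi(x,y)>\consta$, which is precisely the hypothesis needed to invoke \fullref{lemma:distancelb}: it gives $d(x,y)\geqslant d(x,\pi(x))+d^\pi(x,y)+d(\pi(y),y)-6C$. Since $d(x,\pi(x))\geqslant 0$ and $d(\pi(y),y)\geqslant 0$, this immediately yields $d^\pi(x,y)\leqslant d(x,y)+6C$. Combining the two cases, $d^\pi(x,y)\leqslant d(x,y)+\max\{\consta,6C\}$ for all $x,\,y$, which is exactly the assertion that $\pi$ is coarsely $1$--Lipschitz.

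There is essentially no obstacle here: the content was already packaged into \fullref{lemma:distancelb}, whose proof uses strong constriction to find geodesic points near $\pi(x)$ and $\pi(y)$ and then applies the triangle inequality. The only thing to be careful about is bookkeeping of which constant ($\consta$ versus $C$) governs coarseness of $\pi$ as a map versus coarseness of the constricting estimate, but since both are universal constants depending only on $G\act\X$ and the basepoint, the final bound is a universal constant as required.
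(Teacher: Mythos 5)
Your proof is correct and follows essentially the same approach as the paper: strong constriction forces a geodesic from $x$ to $y$ to pass near both $\pi(x)$ and $\pi(y)$, and the triangle inequality does the rest. The only difference is that the paper re-derives this estimate inline (with slightly different bookkeeping constants, a threshold of $4\consta$ and error $8\consta$) instead of citing \fullref{lemma:distancelb} directly as you do, which is arguably cleaner since that lemma has already been established.
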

\begin{proof}
  Let $\pi$ be an $\consta$--coarse
  map that is $(1,\consta)$--constricting on the path system
  of geodesics.
Let $x_0$ and $x_1$ be arbitrary points, and let $\geo$ be a
geodesic from $x_0$ to $x_1$.
If $d^\pi(x_0,x_1)>4\consta$ then
$\geo\cap\ball{\consta}{x_i}\neq\emptyset$ for each $i$, which implies
$d(x_0,x_1)\geqslant
d(x_0,\pi(x_0))+d^\pi(x_0,x_1)+d(\pi(x_1),x_1)-8\consta$.
Thus, for all $x_0$ and $x_1$, we have $d^\pi(x_0,x_1)\leqslant d(x_0,x_1)+8\consta$.
\end{proof}

\begin{lemma}\label{lemma:extendedMorse}
  Let $\pi\from\X\to\A$ be an $(\constm,\consta)$--contracting
  $\consta$--coarse map such that $d(x,\pi(x))- d(x,\A)\leqslant\consta$ for
  all $x\in\X$.
Fix $\constb\geqslant 1$.
For all sufficiently large $\constc$ there exists a $T_{max}$ such that if
$\qgeo\from [0,T]\to\X$ is a $(\constb,\constb)$--quasi-geodesic with
$d(\qgeo_0,A)=\constc=d(\qgeo_T,\A)$ and
$\qgeo\cap\nbhd{\constc}{\A}=\emptyset$ then $T\leqslant T_{max}$.
\end{lemma}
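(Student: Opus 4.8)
The plan is to bound $d^\pi(\qgeo_0,\qgeo_T)$ above by a quantity that grows only slowly in $T$ (with a small slope), and to play this against the lower bound forced by the quasi-geodesic inequalities; once $\constc$ is large the two estimates together pin down $T$. For the lower bound, observe that $d(\qgeo_0,\pi(\qgeo_0))\leqslant d(\qgeo_0,\A)+\consta=\constc+\consta$ and likewise at $\qgeo_T$, so the triangle inequality through points of $\pi(\qgeo_0)$ and $\pi(\qgeo_T)$, together with the fact that $\qgeo$ is a $(\constb,\constb)$--quasi-geodesic, gives
\[d^\pi(\qgeo_0,\qgeo_T)\ \geqslant\ d(\qgeo_0,\qgeo_T)-2\constc-2\consta\ \geqslant\ \tfrac{1}{\constb}T-\constb-2\constc-2\consta\]
(the additive constants here, and below, being schematic to order $\consta$ and $\constb$). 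It therefore suffices to bound $d^\pi(\qgeo_0,\qgeo_T)$ above by something of the form $\epsilon(\constc)\,T+(\text{const})$ with $\epsilon(\constc)<\tfrac{1}{\constb}$ for all large $\constc$.

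Such an upper bound comes from chaining along $\qgeo$, and this is exactly where the hypothesis $\qgeo\cap\nbhd{\constc}{\A}=\emptyset$ is used: it forces $d(\qgeo_t,\A)\geqslant\constc$ for every $t$, so by the second clause of \fullref{def:contracting}, whenever $d(\qgeo_s,\qgeo_t)<\tfrac{1}{\constm}\constc-\consta$ we already have $d^\pi(\qgeo_s,\qgeo_t)\leqslant\consta$. Since consecutive integer points of $\qgeo$ are at distance at most $2\constb$, I would pick $0=t_0<t_1<\dots<t_k=T$ so that every step has $d(\qgeo_{t_i},\qgeo_{t_{i+1}})<\tfrac{1}{\constm}\constc-\consta$ while every step but possibly the last also has $d(\qgeo_{t_i},\qgeo_{t_{i+1}})\geqslant\tfrac{1}{\constm}\constc-\consta-2\constb$; the latter forces $t_{i+1}-t_i\geqslant\tfrac{1}{\constb}(\tfrac{1}{\constm}\constc-\consta-3\constb)$, hence $k\leqslant 1+\constb T/(\tfrac{1}{\constm}\constc-\consta-3\constb)$ as soon as that denominator is positive. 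Using the elementary inequality $d^\pi(x,z)\leqslant\sum_{i=0}^{n-1} d^\pi(y_i,y_{i+1})$ valid along any chain $x=y_0,\dots,y_n=z$ (each image set having diameter at most $\consta$), the step estimates chain up to
\[d^\pi(\qgeo_0,\qgeo_T)\ \leqslant\ k\consta\ \leqslant\ \consta+\frac{\constb\consta}{\tfrac{1}{\constm}\constc-\consta-3\constb}\,T.\]

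Combining the two displays gives
\[T\left(\frac{1}{\constb}-\frac{\constb\consta}{\tfrac{1}{\constm}\constc-\consta-3\constb}\right)\ \leqslant\ \constb+2\constc+3\consta.\]
This inequality is the whole point, and it is why the statement asks for $\constc$ large: once $\constc$ exceeds a threshold of order $\constm(\constb^{2}\consta+\consta+\constb)$, the bracketed coefficient of $T$ is a strictly positive constant depending only on $\constb$, $\consta$, $\constm$, and $\constc$, and one simply sets $T_{max}$ to be $\constb+2\constc+3\consta$ divided by that coefficient. I expect the only fussy part of the write-up to be the choice of breakpoints $t_i$ — keeping each step short enough to stay strictly inside the contracting radius $\tfrac{1}{\constm}d(\qgeo_{t_i},\A)-\consta$ yet long enough to keep the count $k$ under control — and carrying the additive $\consta$- and $\constb$-errors from the quasi-geodesic constants through the arithmetic so that the final coefficient of $T$ is genuinely positive for all sufficiently large $\constc$; conceptually nothing beyond the contracting inequality and the quasi-geodesic inequality is involved.
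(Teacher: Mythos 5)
Your proof is correct and follows essentially the same strategy as the paper's: subdivide $[0,T]$ into $O(T\constb/(\tfrac{\constc}{\constm}-\consta-\constb))$ intervals whose endpoints have $\consta$-close projections, chain to bound $d^\pi(\qgeo_0,\qgeo_T)$ linearly in $T$ with small slope, and play this against the lower bound coming from the quasi-geodesic inequality plus the triangle inequality through $\pi(\qgeo_0)$ and $\pi(\qgeo_T)$. The only cosmetic difference is that you phrase the final step as a two-sided bound on $d^\pi(\qgeo_0,\qgeo_T)$, whereas the paper writes $T\leqslant\constb\,d(\qgeo_0,\qgeo_T)+\constb^2$ and then bounds $d(\qgeo_0,\qgeo_T)$; these are the same inequality rearranged.
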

\begin{proof}
Let $\constc>\constm(\constb^2\consta+\consta+\constb)$.
Let $t_0:=0$ and let $t_{i+1}$ be the last time that
$d(\qgeo_{t_i},\qgeo_{t_{t+1}})=\frac{1}{\constm}d(\qgeo_{t_i},\A)-\consta$, provided
$t_{i+1}<T$.
This subdivides $[0,T]$ into at most
$1+\frac{T\constb}{\frac{\constc}{\constm}-\consta-\constb}$ intervals
$[t_0,t_1],\dots,[t_k,T]$, each of which has endpoints whose
$\pi$--images are distance at most $\consta$ apart.

Since $\qgeo$ is a quasi-geodesic,
$T\leqslant \constb d(\qgeo_0,\qgeo_T)+\constb^2$.
On the other hand:
\[d(\qgeo_0,\qgeo_T)\leqslant
2\constc+2\consta+d^\pi(\qgeo_0,\qgeo_T)\leqslant  2\constc+2\consta+\consta\left(
1+\frac{T\constb}{\frac{\constc}{\constm}-\consta-\constb}\right)\]
Combined with the
condition on $\constc$\!, this yields an upper bound on $T$.
\end{proof}

\begin{corollary}\label{corollary:Morse}
If $\pi\from\X\to\A$ is contracting and $d(x,\pi(x))-d(x,\A)$ is
uniformly bounded, then for all $\constm\geqslant
1$ and $\constc\geqslant 0$ there exists a $\constb$ such that every
$(\constm,\constm)$--quasi-geodesic with endpoints at distance at
most $\constc$ from $\A$ is contained in $\clnbhd{\constb}{\A}$.

In particular, if $\A$ is a quasi-geodesic then it is Morse.
\end{corollary}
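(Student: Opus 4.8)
The plan is to deduce the Corollary from \fullref{lemma:extendedMorse} by chopping a wayward quasi-geodesic into maximal excursions that stay far from $\A$ and bounding the length of each. Since $\pi$ is contracting with $d(x,\pi(x))-d(x,\A)$ uniformly bounded, I may fix a single constant $\consta_0$ for which $\pi$ is an $(\constm_0,\consta_0)$--contracting $\consta_0$--coarse map with $d(x,\pi(x))-d(x,\A)\leqslant\consta_0$ for all $x$; this is exactly the setting required by \fullref{lemma:extendedMorse}. Fix $\constm\geqslant 1$ and $\constc\geqslant 0$. A preliminary reduction: using the standard fact that in a geodesic space every $(\constm,\constm)$--quasi-geodesic has the same endpoints as a \emph{continuous} $(\constm_1,\constm_1)$--quasi-geodesic lying within Hausdorff distance $\constm_2$ of it, with $\constm_1,\constm_2$ depending only on $\constm$, it suffices to prove the neighborhood bound for continuous quasi-geodesics of parameter $\constm_1$ and then add $\constm_2$.

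Next I would apply \fullref{lemma:extendedMorse} with its quasi-geodesic constant equal to $\constm_1$: this produces a radius beyond which all radii are admissible, so I choose $\rho$ exceeding both that radius and $\constc$, and let $T_{max}$ be the corresponding length bound. Now let $\qgeo\from[0,T]\to\X$ be a continuous $(\constm_1,\constm_1)$--quasi-geodesic with $d(\qgeo_0,\A)\leqslant\constc$ and $d(\qgeo_T,\A)\leqslant\constc$, and set $U:=\{t\mid d(\qgeo_t,\A)>\rho\}$. If $U=\emptyset$ we are done. Otherwise, since $t\mapsto d(\qgeo_t,\A)$ is continuous and strictly below $\rho$ at $0$ and at $T$, each connected component of $U$ is an open interval $(a,b)$ with $0<a<b<T$, with $d(\qgeo_a,\A)=\rho=d(\qgeo_b,\A)$ by the intermediate value theorem, and with $\qgeo|_{[a,b]}\cap\nbhd{\rho}{\A}=\emptyset$ (the endpoints sitting exactly on $\bdry\nbhd{\rho}{\A}$). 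Thus $\qgeo|_{[a,b]}$ meets the hypotheses of \fullref{lemma:extendedMorse} with the role of $\constc$ there played by $\rho$, so $b-a\leqslant T_{max}$, and then
\[d(\qgeo_t,\A)\leqslant d(\qgeo_a,\A)+d(\qgeo_a,\qgeo_t)\leqslant\rho+\constm_1(b-a)+\constm_1\leqslant\rho+\constm_1 T_{max}+\constm_1\]
for all $t\in[a,b]$. Since every point of $\qgeo$ outside $\clnbhd{\rho}{\A}$ lies on such an excursion, $\qgeo\subset\clnbhd{\rho+\constm_1 T_{max}+\constm_1}{\A}$; undoing the tameness reduction gives $\constb:=\rho+\constm_1 T_{max}+\constm_1+\constm_2$, which depends only on $G\act\X$, $\constm$, and $\constc$, as required. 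The ``in particular'' clause is then the $\constc=0$ case: if $\A$ is itself a quasi-geodesic, the statement just proved says that for each $\constm\geqslant 1$ there is a $\constb$ with every $(\constm,\constm)$--quasi-geodesic with endpoints on $\A$ contained in $\clnbhd{\constb}{\A}$, which is precisely the definition of $\A$ being Morse.

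The genuinely substantive content sits entirely inside \fullref{lemma:extendedMorse}; here the only delicate point is reconciling that lemma's requirement that the sub-path have endpoints \emph{exactly} at distance $\rho$ from $\A$ and avoid the \emph{open} $\rho$--neighborhood. I expect this matching of hypotheses to be the main (and only real) obstacle, and the clean fix is the one above: pass to a continuous representative and cut $\qgeo$ along the level set $\{d(\param,\A)=\rho\}$ via the intermediate value theorem, so that each excursion has its endpoints on $\bdry\nbhd{\rho}{\A}$ automatically. Everything else is bookkeeping of how the quasi-geodesic constants and the neighborhood radius degrade under the tameness reduction and the sub-path decomposition.
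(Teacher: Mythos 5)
Your proposal is correct and fills in exactly the argument the paper leaves implicit: the Corollary is stated immediately after \fullref{lemma:extendedMorse} with no explicit proof, and the intended route is indeed to decompose a quasi-geodesic into maximal excursions away from $\A$ and bound each excursion's length by the lemma. You correctly identify the one genuine technical wrinkle — the lemma wants endpoints exactly at distance $\rho$ and avoidance of the open $\rho$--neighborhood, whereas the Corollary only gives endpoints within distance $\constc$ — and your fix (tame to a continuous quasi-geodesic, then cut along the level set $\{d(\param,\A)=\rho\}$ by the intermediate value theorem, having chosen $\rho$ larger than both $\constc$ and the lemma's threshold) is exactly the clean way to reconcile them. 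The final estimate $d(\qgeo_t,\A)\leqslant\rho+\constm_1 T_{max}+\constm_1$ on each excursion and the identification of the $\constc=0$ case with the definition of Morse are both right.
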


\begin{lemma}\label{lemma:contractingquasigeodesicsarestable}
  Let $\qgeo\from\mathbb{R}\to\X$ be a
 quasi-geodesic, and let $\pi\from\X\to\qgeo$ be a
  strongly contracting projection. 
For all $\constc\geqslant 0$ there exists a $
\constb$ such that if
$\path\from [0,T]\to\X$ is
a geodesic and $t_0$ and $t_1$ are such that
$d(\path_0,\qgeo_{t_0})\leqslant \constc$ and
$d(\path_T,\qgeo_{t_1})\leqslant \constc$ then
$\qgeo_{[t_0,t_1]}\subset \clnbhd{\constb}{\path}$.
\end{lemma}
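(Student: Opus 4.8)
Proof proposal for Lemma 2.19 (the final statement, labeled `lemma:contractingquasigeodesicsarestable`).

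The plan is to argue that $\qgeo_{[t_0,t_1]}$ cannot wander far from the geodesic $\path$ by combining the strong contraction of $\pi$ (onto $\qgeo$) with the Bounded Geodesic Image Property. First I would recall, using \fullref{prop:allthesame}, that a strongly contracting projection has the Bounded Geodesic Image Property and is coarsely a closest point projection, with all constants controlled by some universal $\consta$; also, by \fullref{corollary:Morse}, $\qgeo$ itself is Morse, since it is a quasi-geodesic admitting a contracting projection. Fix the quasi-geodesic constants of $\qgeo$ as $(\constm,\consta)$. The strategy is: (1) control how $\pi\circ\path$ moves along $\qgeo$, showing $\pi(\path)$ is coarsely all of $\qgeo_{[t_0,t_1]}$; (2) for each parameter $t\in[t_0,t_1]$, produce a point of $\path$ close to $\qgeo_t$.

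For step (1): since $d(\path_0,\qgeo_{t_0})\leqslant\constc$ and $\pi$ coarsely agrees with the identity on $\qgeo$, we get $d(\pi(\path_0),\qgeo_{t_0})\laddmul \constc$, and similarly $d(\pi(\path_T),\qgeo_{t_1})\laddmul\constc$. Now I would subdivide $\path=\path|_{[0,T]}$ at a fine scale. Let $0=s_0<s_1<\dots<s_N=T$ with consecutive points at distance roughly $\consta$ apart along the geodesic; then $d(\path_{s_j},\path_{s_{j+1}})$ is small, so by \fullref{lemma:coarse1Lipschitz} (strong contraction implies coarsely $1$--Lipschitz), $d^\pi(\path_{s_j},\path_{s_{j+1}})$ is uniformly bounded. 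Hence the points $\pi(\path_{s_j})$ form a chain along $\qgeo$ with uniformly bounded gaps, running from within bounded distance of $\qgeo_{t_0}$ to within bounded distance of $\qgeo_{t_1}$. Because $\pi$ is coarsely a closest point projection and $\qgeo$ is a quasi-geodesic, this chain must coarsely fill $\qgeo_{[t_0,t_1]}$: every $\qgeo_t$ with $t\in[t_0,t_1]$ lies within some uniform distance $\constb_1=\constb_1(\constc,\consta,\constm)$ of some $\pi(\path_{s_j})$.

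For step (2): fix $t\in[t_0,t_1]$ and choose $j$ with $d(\qgeo_t,\pi(\path_{s_j}))\leqslant\constb_1$. Let $x:=\path_{s_j}$. Then $d(x,\pi(x))=d(x,\qgeo)\leqslant d(x,\qgeo_t)$, and from $d(\qgeo_t,\pi(x))\leqslant\constb_1$ together with $d(x,\pi(x))\eadd d(x,\qgeo)$ and the triangle inequality, one gets $d(x,\qgeo_t)\laddmul \constb_1$ as well — that is, $x=\path_{s_j}$ itself is within a uniform distance of $\qgeo_t$. Taking $\constb$ to be the resulting uniform constant finishes the proof: $\qgeo_{[t_0,t_1]}\subset\clnbhd{\constb}{\path}$.

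The main obstacle I anticipate is step (1), specifically verifying that the $\pi$--images of the fine subdivision points of $\path$ actually sweep out \emph{all} of $\qgeo_{[t_0,t_1]}$ and don't, say, backtrack or skip a long sub-segment. This requires using that $\pi$ is coarsely a closest point projection together with the quasi-geodesic (indeed Morse) nature of $\qgeo$: a bounded-gap chain of near-closest-point-projections whose ends are near $\qgeo_{t_0}$ and $\qgeo_{t_1}$ must, by a connectedness/order argument along the quasi-geodesic, come uniformly close to every intermediate point $\qgeo_t$. Making that order argument precise — essentially that the function $s\mapsto$ (parameter along $\qgeo$ of $\pi(\path_s)$) is coarsely monotone and coarsely surjective onto $[t_0,t_1]$ — is where the real work lies; everything else is triangle inequalities and bookkeeping of constants.
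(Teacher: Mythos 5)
Your step (2) as written does not go through. From $d(x,\pi(x))\eadd d(x,\qgeo)$ and $d(\pi(x),\qgeo_t)\leqslant\constb_1$ the triangle inequality gives only $d(x,\qgeo_t)\eaddmul d(x,\qgeo)+\constb_1$, and since $d(x,\qgeo)\leqslant d(x,\qgeo_t)$ this is circular: nothing in your chain of estimates bounds $d(\path_{s_j},\qgeo)$, so you cannot conclude that the subdivision points $\path_{s_j}$ themselves lie near $\qgeo_t$. The strategy could be rescued by actually using \fullref{corollary:Morse}, which you cite in your setup but never invoke: since $\qgeo$ is Morse and $\path$ is a geodesic with endpoints $\constc$--close to $\qgeo$, all of $\path$ lies in $\clnbhd{B}{\qgeo}$ for a uniform $B$, which supplies the missing a priori bound on $d(\path_{s_j},\qgeo)$ and closes your triangle inequality.

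The paper, however, proves the lemma by an argument that sidesteps the Morse property entirely, and this is worth noting. Because $\pi$ is strongly constricting, the geodesic $\path$ passes within a uniform distance of every point of $\pi(\path)$: for $x=\path_s$, either $d^\pi(\path_0,\path_s)$ exceeds the constriction constant, in which case the sub-geodesic $\path_{[0,s]}\subset\path$ passes near $\pi(\path_s)$; or it is small, in which case $\pi(\path_s)$ is uniformly close to $\pi(\path_0)$, which is in turn close to $\path_0$ because $d(\path_0,\qgeo_{t_0})\leqslant\constc$ and $\pi$ is coarsely a closest-point projection. So it is the chain points $\qgeo_{s_i}\in\pi(\path_i)$ --- not the $\path_{s_i}$ --- that are shown to lie near $\path$, which is exactly the containment the lemma asserts. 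Finally, the coverage worry you flag in step (1) is not "the real work": the $s_i$ form a finite real sequence starting near $t_0$, ending near $t_1$, with bounded consecutive gaps, so a discrete intermediate-value argument (no monotonicity needed) places every $t\in[t_0,t_1]$ within bounded parameter distance of some $s_i$, and the quasi-geodesic property of $\qgeo$ then bounds $\diam\qgeo_{[s_i,s_{i+1}]}$.
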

\begin{proof}
By \fullref{prop:allthesame}, $\pi$ is strongly constricting, so
$\path$ passes close to every point in $\pi(\path)$. 
Let $i$ and $j$ be numbers in the domain of $\path$, with $0<j-i\leqslant 1$.
Let $s_i$ and $s_j$ be such that $\qgeo_{s_i}\in\pi(\path_i)$ and $\qgeo_{s_j}\in\pi(\path_j)$.
Then $s_i$ and $s_j$ are 
 boundedly far apart, since
 $\pi$ is coarsely 1-Lipschitz, by
\fullref{lemma:coarse1Lipschitz}, and $\qgeo$ is a quasi-geodesic.
Therefore, the diameter of $\qgeo_{[s_i,s_j]}$ is bounded,
and we have already noted that $\qgeo(s_i)$ and $\qgeo(s_j)$ are
close to $\path$, since they are in the image of $\pi$.
\end{proof}

\begin{lemma}\label{lemma:strongconstrictiscoarseequivinvariant}
Let $\A$ and $\A'$ be coarsely equivalent subsets of
$\X$.
 Let $\sigma\from \A\to\A'$ and
 $\bar{\sigma}\from\A'\to\A$ be $\consta$--coarse maps such that
 $d(a,\sigma(a))\leqslant \consta$ for all $a\in\A$ and $d(a',\bar{\sigma}(a'))\leqslant
 \consta$ for all $a'\in\A'$. 
Then $\pi_\A\from\X\to\A$ is strongly contracting if and only if
$\pi_{\A'}:=\sigma\circ\pi_\A\from\X\to\A'$ is strongly contracting.
\end{lemma}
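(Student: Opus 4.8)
The plan is to use the symmetry of the statement: it suffices to prove one direction, say that strong contraction of $\pi_\A$ implies strong contraction of $\pi_{\A'}:=\sigma\circ\pi_\A$, since the reverse implication then follows by swapping the roles of $\A$ and $\A'$ and using $\bar\sigma$ in place of $\sigma$ (noting that $\bar\sigma\circ\sigma$ coarsely agrees with $\mathrm{Id}_\A$ because all the maps involved move points a bounded amount and $\A,\A'$ are coarsely equivalent). So assume $\pi_\A$ is strongly contracting, hence by \fullref{prop:allthesame} it is coarsely a closest point projection to $\A$; in particular $d(x,\pi_\A(x))-d(x,\A)$ is uniformly bounded, and since $\A$ and $\A'$ are coarsely equivalent, $d(x,\A)-d(x,\A')$ is uniformly bounded, so $d(x,\pi_{\A'}(x))-d(x,\A')$ is uniformly bounded as well (using $d(\pi_\A(x),\sigma(\pi_\A(x)))\leqslant\consta$). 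Thus by \fullref{lemma:boundedgeodesicimage} it is enough to verify that $\pi_{\A'}$ has the Bounded Geodesic Image Property.

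For the Bounded Geodesic Image verification, first I would observe that $\pi_{\A'}$ is a coarse map (its image sets have diameter bounded by $\diam\pi_\A(x)+2\consta$) and that $\pi_{\A'}$ and $\mathrm{Id}_{\A'}$ coarsely agree on $\A'$: for $a'\in\A'$, $\pi_{\A'}(a')=\sigma(\pi_\A(a'))$, and $\pi_\A(a')$ coarsely agrees with $\bar\sigma(a')$ which is within $\consta$ of $a'$, so $\sigma(\pi_\A(a'))$ is within a bounded distance of $\sigma(\bar\sigma(a'))$, which is within a bounded distance of $a'$. Next, pick the constant $\consta_0$ witnessing the Bounded Geodesic Image Property for $\pi_\A$, let $\consta_1$ be a uniform bound on $d(\A,\A')$ (Hausdorff-type constant), and consider any geodesic $\geo$ with $\geo\cap\nbhd{\consta_0+\consta_1}{\A'}=\emptyset$. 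Then $\geo\cap\nbhd{\consta_0}{\A}=\emptyset$, so $\diam\pi_\A(\geo)\leqslant\consta_0$, and therefore $\diam\pi_{\A'}(\geo)=\diam\sigma(\pi_\A(\geo))\leqslant\consta_0+2\consta$. Choosing the Bounded Geodesic Image constant for $\pi_{\A'}$ to be $\max\{\consta_0+\consta_1,\consta_0+2\consta\}$ completes this direction.

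The main obstacle — really the only point requiring care — is bookkeeping the constants and making sure the hypotheses of \fullref{prop:allthesame} and \fullref{lemma:boundedgeodesicimage} are genuinely met, in particular that $d(x,\pi_{\A'}(x))-d(x,\A')$ is uniformly bounded (which needs coarse equivalence of $\A$ and $\A'$, not merely that $\sigma,\bar\sigma$ exist as coarse maps). The reverse implication is not quite a verbatim swap because one must check $\bar\sigma$ satisfies the analogous displacement bound and that coarse equivalence is symmetric, both of which are immediate from the hypotheses; I would state this explicitly but not belabor it.
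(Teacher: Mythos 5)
Your proof is correct, and it takes a different route than the paper's. The paper verifies the definition of strong contraction directly: it checks that $\pi_{\A'}$ is $(1,3C)$--contracting (by unwinding the ball-shrinking condition, using $d(x,\A')-2C\leqslant d(x,\A)-C$), and then checks that $\pi_{\A'}$ is coarsely a closest point projection by bounding $d(\pi_\A(x),\bar\sigma(a'))$ via \fullref{lemma:cpp}. You instead route through \fullref{lemma:boundedgeodesicimage}: you establish that $d(x,\pi_{\A'}(x))-d(x,\A')$ is bounded (using the closest-point characterization and the coarse equivalence of $\A$ and $\A'$), then note that the Bounded Geodesic Image Property is transparently robust under translating the target set a bounded amount — a geodesic avoiding $\nbhd{\consta_0+\consta_1}{\A'}$ avoids $\nbhd{\consta_0}{\A}$, so its $\pi_\A$--image, and hence its $\sigma\circ\pi_\A$--image, is bounded. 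This buys a conceptually cleaner argument: it makes visible exactly which property is being exploited (BGI cares only about what happens far from $\A$, which is insensitive to bounded perturbation of $\A$), whereas the paper's route requires delicate constant-chasing in the contraction inequalities. The cost is that you need to independently verify the $\mathrm{Id}_{\A'}$--agreement clause of the definition, which you do correctly.

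One small point of informality: your symmetry reduction concludes only that $\bar\sigma\circ\sigma\circ\pi_\A$ is strongly contracting, and you then appeal to $\bar\sigma\circ\sigma$ being coarsely $\mathrm{Id}_\A$ to transfer this to $\pi_\A$. That final transfer is a (much easier) special case of the lemma itself — two coarse maps to the \emph{same} target set $\A$ that pointwise agree up to a uniform constant have the same contraction behavior — and it would be worth one sentence to observe this rather than leaving the reader to suspect circularity. The paper sidesteps this by not addressing the reverse direction at all, leaving symmetry entirely implicit.
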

\begin{proof}
  Suppose $\pi_\A$ is $(1,\consta)$--contracting and $d(x,\pi(x))-
  d(x,\A)\leqslant\consta$ for all $x\in\X$.
If $d(x,y)\leqslant d(x,\A')-2\consta\leqslant d(x,\A)-\consta$ then
$d^\pi_{\A'}(x,y)\leqslant d^\pi_\A(x,y)+2\consta\leqslant 3\consta$, so
$\pi_{\A'}$ is $(1,3\consta)$--contracting.

Take a point $x$ and let $a'\in\A'$ such that
$d(x,\A')=d(x,a')$.
Then $d(x,\bar{\sigma}(a'))-\consta\leqslant d(x,a')\leqslant d(x,\pi_{\A'}(x))\leqslant
d(x,\pi_\A(x))+2\consta$, so $d(x,\bar{\sigma}(a'))\leqslant d(x,\A)+3\consta$.
By \fullref{prop:allthesame}, $\pi_\A$ is strongly constricting, so by
\fullref{lemma:cpp}, there is a constant $\constc$ such
that $d(\pi_\A(x),\bar{\sigma}(a'))\leqslant 3\consta+\constc$.
Thus, $\pi_{\A'}$ is
$(5\consta+\constc)$--\ccpp, hence, strongly contracting.
\end{proof}

\begin{lemma}\label{lemma:boundedimage}
  Let $\pi\from\X\to\A$ be strongly constricting.
There exists a number $\constb$ such that if $d(\A,g\A)>\constb$ then
$\diam\pi(g\A)$ is bounded, independent of $g$.
\end{lemma}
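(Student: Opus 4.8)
Let $\pi\from\X\to\A$ be strongly constricting. There exists a number $\constb$ such that if $d(\A,g\A)>\constb$ then $\diam\pi(g\A)$ is bounded, independent of $g$.

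The plan is to compare $\pi$ with the corresponding projection to the translate $g\A$. Set $\pi_{g\A}:=g\circ\pi\circ\inv{g}\from\X\to g\A$, i.e.\ the pseudo-map $x\mapsto g.\pi(\inv{g}.x)$. Since $g$ acts by isometries and sends geodesics to geodesics, a direct check shows that $\pi_{g\A}$ is $(1,\consta)$--strongly constricting with exactly the \emph{same} constant $\consta$ as $\pi$. Consequently, by \fullref{prop:allthesame} and \fullref{lemma:boundedgeodesicimage}, both $\pi$ and $\pi_{g\A}$ are coarsely closest point projections and both satisfy the Bounded Geodesic Image Property with a constant depending only on the original constricting constant. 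After enlarging $\consta$ we may assume it simultaneously serves, for both $\pi$ and $\pi_{g\A}$, as the constricting constant, as a bound on the defect $d(x,\pi(x))-d(x,\A)$, and as the Bounded Geodesic Image constant. Put $\constb:=8\consta$.

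The key point is the claim that if $d(\A,g\A)>\constb$, then no point of any geodesic with endpoints in $g\A$ lies within $\consta$ of $\A$. Granting this, the Bounded Geodesic Image Property for $\pi$ gives $\diam\pi(\geo)\leqslant\consta$ for every geodesic $\geo$ with endpoints in $g\A$. Since every pair $p,q\in g\A$ is joined by such a geodesic $\geo$, and $\pi(p)\cup\pi(q)\subseteq\pi(\geo)$, we obtain $d^\pi(p,q)\leqslant\consta$ for all $p,q\in g\A$, hence $\diam\pi(g\A)\leqslant\consta$, a bound independent of $g$.

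To prove the claim, suppose some point $m$ on a geodesic $\geo$ with endpoints in $g\A$ satisfied $d(m,\A)\leqslant\consta$. Then $d(m,g\A)\geqslant d(\A,g\A)-\consta>\constb-\consta>\consta$, so $m$ lies outside the open neighborhood $\nbhd{\consta}{g\A}$. Let $\sigma=[x,y]$ be the maximal subsegment of $\geo$ that contains $m$ and is disjoint from $\nbhd{\consta}{g\A}$; since the endpoints of $\geo$ lie in $g\A$, the segment $\sigma$ is proper and its endpoints satisfy $d(x,g\A)=d(y,g\A)=\consta$. Applying the Bounded Geodesic Image Property for $\pi_{g\A}$ to $\sigma$ bounds $\diam\pi_{g\A}(\sigma)$ by $\consta$; combined with $d(x,\pi_{g\A}(x))\leqslant d(x,g\A)+\consta\leqslant 2\consta$ and the same for $y$, the triangle inequality yields $d(x,y)\leqslant 5\consta$. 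But $m$ lies on the geodesic segment from $x$ to $y$, so $d(x,m)\leqslant 5\consta$, while at the same time $d(x,m)\geqslant d(m,g\A)-d(x,g\A)>(\constb-\consta)-\consta=6\consta$. This contradiction proves the claim.

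The geometric substance is entirely in the two applications of the Bounded Geodesic Image Property: one for $\pi_{g\A}$, which forces any excursion of $\geo$ away from $g\A$ to be short, and one for $\pi$ at the end. The step I expect to require the most care is the preliminary verification that $\pi_{g\A}$ inherits the strong constriction constant of $\pi$ exactly, since it is this uniformity that makes the final bound on $\diam\pi(g\A)$ independent of $g$; the remaining bookkeeping of additive constants is routine, but must be done carefully enough that the numerical inequality $\constb>7\consta$ genuinely produces the contradiction.
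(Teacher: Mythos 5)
Your proof is correct, and it takes a noticeably different route from the paper's. The paper quotes \fullref{corollary:Morse} to say directly that a geodesic with endpoints in $g\A$ stays in a bounded ($\constb-\consta$) neighborhood of $g\A$, then fixes the single point $x\in g\A$ closest to $\A$ and argues: if $d^\pi(x,y)>\consta$ for some $y\in g\A$, then strong constriction forces the geodesic $[x,y]$ to pass $\consta$-close to $\pi(x)\subset\A$, which, combined with the Morse bound, puts the neighborhoods $\clnbhd{\consta}{\A}$ and $\clnbhd{\constb-\consta}{g\A}$ into contact and bounds $d(\A,g\A)$. You instead prove a strictly stronger intermediate claim — that when $d(\A,g\A)>\constb$, \emph{every} geodesic with endpoints in $g\A$ stays entirely outside $\nbhd{\consta}{\A}$ — by applying the Bounded Geodesic Image Property to the conjugated projection $\pi_{g\A}=g\circ\pi\circ\inv{g}$ on a maximal excursion, and then feed that into the BGIP for $\pi$ itself. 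In effect your claim re-derives a consequence of the Morse property (short excursions from $g\A$) that the paper gets wholesale from \fullref{corollary:Morse}; the trade-off is that your version is more self-contained, rests only on the BGIP characterization in \fullref{lemma:boundedgeodesicimage} and \fullref{prop:allthesame}, and isolates a reusable geometric fact (translated axes at large separation have geodesics that are genuinely disjoint from a fixed neighborhood of $\A$), at the cost of a somewhat longer argument. Your observation that $\pi_{g\A}$ inherits the strong constriction constants of $\pi$ exactly is correct and is indeed the point that makes the final bound $g$-independent, though the paper sidesteps it by never needing to project to $g\A$.
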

\begin{proof}
  Let $\pi$ be $(1,\consta)$--strongly constricting.
By \fullref{prop:allthesame}, $\pi$ is strongly contracting,
so by \fullref{corollary:Morse} there is a constant $\constb$ such that a
geodesic with endpoints in $\A$ stays in the $(\constb-\consta)$--neighborhood of
$\A$. 
Therefore, a geodesic with endpoints in $g\A$ stays in $\clnbhd{\constb-\consta}{g\A}$.
Choose $x\in g\A$ such that $d(x,\A)=d(g\A,\A)$.
For all $y\in g\A$, if $d^\pi(x,y)>\consta$ then a geodesic from $x$ to $y$ passes within distance $\consta$ of $\pi(x)$
and $\pi(y)$. 
This means 
$\clnbhd{\consta}{\A}\cap\clnbhd{\constb-\consta}{g\A}\neq\emptyset$, so $d(\A,g\A)\leqslant
\constb$.
Thus, if $d(\A,g\A)>\constb$, then $d^\pi(x,y)\leqslant \consta$, so $\diam
\pi(g\A)\leqslant 2\consta$.
\end{proof}

\subsection{Strongly Contracting
  Elements}\label{sec:stronglycontractingelements}
We have defined contraction and constriction for maps.
We now give definitions for group elements:

\begin{definition}\label{def:contractinglement}
  An element $h\in G$ is called \emph{contracting}, with
    respect to $G\act\X$,
  if $i \mapsto h^i\!.\bp$ is a quasi-geodesic and if there exists a
  subset $\A\subset\X$ on which $\langle h\rangle$ acts cocompactly
  and a map $\pi\from\X\to\A$ that is contracting.

An element $h\in G$ is called \emph{constricting}, with
    respect to $G\act\X$,
  if $i \mapsto h^i\!.\bp$ is a quasi-geodesic and if there exists a
  subset $\A\subset\X$ on which $\langle h\rangle$ acts cocompactly, a
  $G$--invariant path system $\ps$, 
  and a map $\pi\from\X\to\A$ that is $\ps$--constricting.

An element is \emph{strongly contracting} or \emph{strongly constricting} if the
projection $\pi$ is, respectively, strongly contracting or strongly constricting.
\end{definition}

For $\pi$ and $\A$ as in the definition, \fullref{prop:allthesame} says $\pi$ is strongly contracting if and
only if it is strongly constricting. 
Thus,
\fullref{lemma:cpp} says closest point projection to $\A$ is coarsely
well defined and coarsely equivalent to $\pi$.
\fullref{lemma:strongconstrictiscoarseequivinvariant} says that the choice
of the set $\A$ only affects the constants of strong contraction.
It follows that an element $h$ is strongly contracting if and only if
$i\mapsto h^i\!.\bp$ is a quasi-geodesic and closest point projection
to $\langle h\rangle.\bp$ is strongly contracting. 
In the remainder of this section we produce more finely
tailored choices for $\A$ and $\pi$. 
In particular, we would like $\pi$ to be compatible with the group
action, see \fullref{rem:equivariant}.

\begin{proposition}[{(cf.\ \cite[Lemma~6.5]{DahGuiOsi11})}]\label{corollary:Ehexists}
Let $G$ be a finitely generated group, and let $\X$ be a $G$--space.
Let $h\in G$ be an infinite order element.
If there exists a strongly constricting $\pi\from\X\to\langle
h\rangle.\bp$ then:
\[E(h)=H:=\{g\in
  G\mid g\langle h\rangle.\bp\text{ is coarsely equivalent to }\langle
  h\rangle.\bp\}\]
\end{proposition}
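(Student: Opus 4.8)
The plan is to show the two inclusions $H \subset E(h)$ and $E(h) \subset H$ separately, after first checking that $H$ is actually a subgroup.

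\emph{Step 1: $H$ is a subgroup containing $h$.} Clearly $h \in H$ since $h\langle h\rangle.\bp = \langle h\rangle.\bp$. To see $H$ is closed under products and inverses, observe that ``$g\langle h\rangle.\bp$ coarsely equivalent to $\langle h\rangle.\bp$'' is equivalent, via \fullref{lemma:strongconstrictiscoarseequivinvariant} and \fullref{prop:allthesame}, to the statement that the $G$--translate $\pi_g := g\circ\pi\circ \inv g$ is a strongly contracting projection onto $g\langle h\rangle.\bp$ with the \emph{same} image up to bounded Hausdorff distance as $\langle h\rangle.\bp$. If $g_1, g_2 \in H$, then $g_1 g_2\langle h\rangle.\bp$ is within bounded distance of $g_1\langle h\rangle.\bp$ (apply the isometry $g_1$ to $g_2\langle h\rangle.\bp \emul \langle h\rangle.\bp$), which is within bounded distance of $\langle h\rangle.\bp$; the bounds are uniform because they come from the fixed constant $\consta$ of strong constriction for $\pi$. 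The same argument handles $\inv g$. Hence $H \leqslant G$.

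\emph{Step 2: $H$ is virtually cyclic, so $H \subset E(h)$.} Since $\langle h \rangle$ acts cocompactly on $\langle h\rangle.\bp$ and $i \mapsto h^i.\bp$ is a quasi-geodesic, $\langle h\rangle.\bp$ is a quasi-line on which $H$ acts coarsely (each $g \in H$ coarsely preserves $\langle h\rangle.\bp$, since $g\langle h\rangle.\bp \emul \langle h\rangle.\bp$). This gives a coarse, hence genuine after adjustment, action of $H$ on a quasi-line, equivalently a homomorphism $H \to \mathrm{Isom}$ of a quasi-line with kernel that coarsely fixes $\langle h\rangle.\bp$ pointwise; properness of $\X$ forces that kernel to be finite and the image to be virtually cyclic. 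More concretely: fix $x \in \langle h\rangle.\bp$; the orbit map $H \to \X$, $g \mapsto g.x$ has image within bounded distance of the quasi-line $\langle h\rangle.\bp$, and is a quasi-isometric embedding because $\langle h\rangle \leqslant H$ has image a net in that quasi-line and $H/\langle h\rangle$ must then be finite by a pigeonhole/properness argument (infinitely many cosets would give orbit points escaping to infinity along the quasi-line faster than linearly, or accumulating, contradicting properness together with the quasi-geodesic bound). Since $H$ is thus virtually cyclic and contains $h$, the maximality of $E(h)$ gives $H \subset E(h)$.

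\emph{Step 3: $E(h) \subset H$.} Conversely, any $g \in E(h)$ lies in a virtually cyclic group containing $h$, so some power $g^n$ satisfies $g^n = h^m$ for suitable $m \neq 0$, or more precisely $g$ normalizes a finite-index $\langle h^k\rangle \leqslant \langle h\rangle$ up to finite error; in any case $g\langle h\rangle.\bp$ and $\langle h\rangle.\bp$ are at finite Hausdorff distance because they are two quasi-lines sharing a common sub-quasi-line (the orbit of a common finite-index cyclic subgroup) up to the bounded ambiguity coming from the finite pieces and from the quasi-geodesic constants. Hence $g \in H$.

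\emph{Main obstacle.} The delicate point is Step 2 — turning the \emph{coarse} invariance of the quasi-line $\langle h\rangle.\bp$ under $H$ into the conclusion that $H$ is genuinely virtually cyclic, i.e.\ ruling out that $H$ could be, say, an infinite torsion extension or contain a large finite-index-free part acting with unbounded but sublinear displacement. This is exactly where properness of $\X$ (point stabilizers finite, balls compact) and the strong contraction constants must be combined carefully, and it is the analogue of \cite[Lemma~6.5]{DahGuiOsi11}; I would model the argument on that proof, using the uniform strong-contraction constant to bound $\diam \pi(g\langle h\rangle.\bp)$ via \fullref{lemma:boundedimage} for $g \notin H$ and a counting/properness argument for $g \in H$.
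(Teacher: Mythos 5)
Your overall strategy matches the paper's: show $H$ is a subgroup, then show $[H:\langle h\rangle]<\infty$ by a properness argument, and conclude $E(h)=H$. But Step~2 as written has a real gap. You say the orbit map $g\mapsto g.x$ ``has image within bounded distance of the quasi-line $\langle h\rangle.\bp$'' and then run a pigeonhole/properness argument on that. From the definition of $H$ alone, however, each $g\in H$ only gives a coarse-equivalence constant $\consta_g$ between $g\langle h\rangle.\bp$ and $\langle h\rangle.\bp$ that \emph{a priori depends on} $g$; nothing yet says these constants are bounded over $H$. Without a single uniform bound, the orbit points $g.\bp$ could drift arbitrarily far from the quasi-line, and the pigeonhole/properness argument does not run. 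Your parenthetical in Step~1 that ``the bounds are uniform because they come from the fixed constant $\consta$ of strong constriction'' is not correct either: the constriction constant controls $\pi$, not the coarse-equivalence constant $\consta_g$.

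The missing ingredient is exactly \fullref{lemma:boundedimage}, which you do name in your ``Main obstacle'' paragraph but apply in the wrong direction: you say to use it ``for $g\notin H$,'' whereas the paper invokes it, via its contrapositive, for $g\in H$. The lemma gives a constant $\constb$ such that $d(\langle h\rangle.\bp,\, g\langle h\rangle.\bp)>\constb$ forces $\diam\pi(g\langle h\rangle.\bp)$ to be bounded; but for $g\in H$ the set $g\langle h\rangle.\bp$ is coarsely equivalent to $\langle h\rangle.\bp$, so $\pi(g\langle h\rangle.\bp)$ is unbounded (since $\pi$ is coarsely the identity on the unbounded set $\langle h\rangle.\bp$). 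Hence $d(\langle h\rangle.\bp,\, g\langle h\rangle.\bp)\leqslant\constb$ for every $g\in H$ -- this is the uniform bound you need. With it in hand, proper discontinuity immediately shows $H$ meets only finitely many $\langle h\rangle$-cosets, so $[H:\langle h\rangle]<\infty$, and (since $H$ contains every virtually cyclic subgroup containing $h$, these all containing $\langle h\rangle$ with finite index) $E(h)$ exists and equals $H$. Once this is in place, the detour through ``a homomorphism $H\to\mathrm{Isom}$ of a quasi-line'' is unnecessary machinery, and the quasi-geodesic property of $i\mapsto h^i.\bp$ is not actually needed.
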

\begin{proof}
$H$ is a group containing every
  finite index supergroup of $\langle
  h\rangle$.
Let $\constc$ be the constant of \fullref{lemma:boundedimage}, and let
$S:=\{g\in\G\mid d(g\langle h\rangle.\bp,\langle h\rangle.\bp)\leqslant
\constc\}$. Then \fullref{lemma:boundedimage} implies $H\subset S$.
Since $G\act\X$ is properly discontinuous, $S$ is contained in
finitely many $h$--orbits, so $\langle h\rangle<H$ has  finite index.
Therefore, $E(h)$ exists and is equal to $H$.
\end{proof}

\begin{definition}\label{def:quasiaxis}
If $h$ is a strongly contracting element, define
 the \emph{(quasi)-axis} of $h$, with respect to the basepoint $\bp$,
 to be $\H:=E(h).\bp$.
 \end{definition}

\begin{lemma}\label{lemma:projection}
   If $h$ is a strongly contracting element then there exists an
  $E(h)$--equivariant, strongly contracting
  coarse map $\pi_\H\from\X\to\H$.
\end{lemma}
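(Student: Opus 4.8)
The plan is to build $\pi_\H$ from an existing strongly contracting projection by averaging over the coset $E(h)/\langle h\rangle$ and then symmetrizing over $E(h)$, much as in \fullref{lemma:equivariantcoarsemap}. First I would recall that since $h$ is strongly contracting, by the discussion following \fullref{def:contractinglement} we may take $\A := \langle h\rangle.\bp$ and let $\pi_\A\from\X\to\A$ be closest point projection, which by \fullref{prop:allthesame} is strongly contracting (equivalently, strongly constricting), and by \fullref{corollary:Ehexists} we know $E(h)$ exists and equals $H$, the set of $g$ with $g\A$ coarsely equivalent to $\A$. In particular $\H = E(h).\bp$ is coarsely equivalent to $\A$, and $E(h)$ acts cocompactly on $\H$ with $\langle h\rangle$ of finite index in $E(h)$.

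Next I would transfer $\pi_\A$ to a projection onto $\H$. Since $\A$ and $\H$ are coarsely equivalent, pick coarse maps $\sigma\from\A\to\H$ and $\bar\sigma\from\H\to\A$ moving points a bounded amount (e.g. $\sigma$ the inclusion-up-to-basepoint, $\bar\sigma$ a nearest-point choice); then \fullref{lemma:strongconstrictiscoarseequivinvariant} shows $\pi_0 := \sigma\circ\pi_\A\from\X\to\H$ is strongly contracting. The remaining issue is equivariance: $\pi_0$ is only coarsely $\langle h\rangle$--equivariant at best, not $E(h)$--equivariant. To fix this I would check that $\pi_0$ is \emph{coarsely} $E(h)$--equivariant, i.e. $d(g.\pi_0(x),\pi_0(g.x))$ is uniformly bounded over $x\in\X$ and $g\in E(h)$: this holds because for $g\in E(h)$ the set $g\H = \H$ (as $g.\bp\in\H$ and $\H$ is an $E(h)$--orbit), so both $g.\pi_0(x)$ and $\pi_0(g.x)$ are closest-point projections of the same point $g.x$ to coarsely the set $\H$, and \fullref{lemma:cpp} bounds the distance between any two near-closest points. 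Then \fullref{lemma:equivariantcoarsemap} (applied with the group $E(h)$ in place of $G$) produces an honest $E(h)$--equivariant coarse map $\pi_\H$ that is coarsely equivalent to $\pi_0$.

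Finally I would observe that strong contraction is preserved under passing to a coarsely equivalent map: if $\pi_0$ is $(1,\consta)$--contracting with $d(x,\pi_0(x))-d(x,\H)\leqslant\consta$ and $\pi_\H$ agrees with $\pi_0$ up to an additive constant $\consta'$, then $\pi_\H$ is $(1,\consta+2\consta')$--contracting and $d(x,\pi_\H(x))-d(x,\H)\leqslant\consta+\consta'$, so $\pi_\H$ is still strongly contracting. Putting this together gives an $E(h)$--equivariant, strongly contracting coarse map $\X\to\H$, as desired.

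The main obstacle I anticipate is the equivariance step: verifying that $\pi_0$ is coarsely $E(h)$--equivariant uniformly over all of $E(h)$ (not just over the cyclic part) requires knowing that $\H$ is genuinely $E(h)$--invariant as a coarse set and that closest-point projection is coarsely well defined, both of which are supplied by \fullref{corollary:Ehexists} and \fullref{lemma:cpp}; once that is in hand, invoking \fullref{lemma:equivariantcoarsemap} is routine, and the stability of strong contraction under bounded perturbation is a short direct estimate.
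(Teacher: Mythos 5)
Your proposal is correct and takes essentially the same approach as the paper: both arguments take a closest-point-type projection to $\H$, observe that it is strongly contracting (via \fullref{prop:allthesame}, \fullref{lemma:cpp}, and \fullref{lemma:strongconstrictiscoarseequivinvariant}) and coarsely $E(h)$--equivariant, and then symmetrize via \fullref{lemma:equivariantcoarsemap}. The only cosmetic difference is that you pass explicitly through $\A=\langle h\rangle.\bp$ before transferring to $\H$, and you give a short direct estimate for the final step (strong contraction is stable under bounded perturbation of the map) where the paper re-cites \fullref{lemma:strongconstrictiscoarseequivinvariant}; your explicit estimate is arguably cleaner since that lemma is phrased in terms of changing the target set rather than perturbing the map.
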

\begin{proof}
  By \fullref{prop:allthesame}, \fullref{lemma:cpp}, and \fullref{lemma:strongconstrictiscoarseequivinvariant}
  any choice of
  closest point projection map to $\H$ is strongly contracting and
  coarsely $E(h)$--equivariant, so, by
  \fullref{lemma:equivariantcoarsemap}, we can replace it by a
  coarsely equivalent, $E(h)$--equivariant coarse map, which will
  still be strongly contracting, by \fullref{lemma:strongconstrictiscoarseequivinvariant}.
\end{proof}

\begin{definition}\label{def:equivariantprojections}
From the projection $\pi_\H$ of \fullref{lemma:projection} define strongly contracting projections 
onto each translate of $\H$ by $\pi_{g\H}\from \X\to g\H: x\mapsto 
g.\pi_\H(g^{-1}\!.x)$. 
\end{definition}

If $g'\H=g\H$ then $g^{-1}g'\in E(h)$ so \fullref{lemma:projection} implies
$\pi_{g'\H}(x)=\pi_{g\H}(x)$ for all $x\in\X$.
\begin{remark}\label{rem:equivariant}
The projections of \fullref{def:equivariantprojections} satisfy $g.\pi_\H(x)=\pi_{g\H}(g.x)$ for all $x\in\X$ and 
$g\in G$.   
\end{remark}

\subsection{Strongly Contracting Elements and the Projection Axioms}\label{sec:strcontandproj}
Let $h\in G$ be a strongly contracting element with respect to $G\act
\X$.
Let $\H$ be a quasi-axis of $h$ defined in \fullref{def:quasiaxis}.
We wish to apply \fullref{thm:BBF} to the collection of
$G$--translates of $\H$ with the projections of \fullref{def:equivariantprojections}.
To see that the hypotheses of the theorem are satisfied, we first
embed $\H$ into a geodesic metric space and then verify the projection
axioms of \fullref{def:projectionaxioms}.

Choose  representatives
$1=g_0,\dots,g_{n-1}$ for
$\langle h\rangle\backslash E(h)$, so that for each $i$ we have $d(g_i.\bp,\bp)=\min_{g\in
  \langle h\rangle g_i}d(g.\bp,\bp)$.
Let $g_n:=h$.
Let $\hat{\H}$ be the Cayley graph of $E(h)$ with respect
to the generating set $\{g_1,\dots,g_n\}$.
The graph $\hat{\H}$ becomes a geodesic metric space by assigning each
edge length one, and it is a quasi-tree since $E(h)$ is virtually cyclic.

Choose representatives $1=f_0,f_1,\dots$ for $G/E(h)$.
Let $\mathbb{Y}$ be a disjoint union of copies of $\hat{\H}$, one for
each $f_iE(h)\in G/E(h)$, denoted $f_i\hat{\H}$.
The orbit map $f_i\hat{\H}\to f_i\H := f_ie\mapsto
f_ie.\bp$ is a quasi-isometric embedding, so its inverse
$\phi_{f_i\H}\from f_i\H\to f_i\hat{\H}$ is a coarse
map that is a quasi-isometry.
 Define $\pi_{f_i\hat{\H}}(f_j\hat{\H}):=\phi_{f_i}(\pi_{f_i\H}(f_j\H))$.
Since $\phi_{f_i}$ is a quasi-isometry it suffices
to check the projection axioms on translates of $\H$ in $\X$.

\begin{lemma}[Axiom (P0)]\label{lemma:axiom0}
There is a uniform bound on the diameter of $\pi_\H(g\H)$ for $g\notin E(h)$.
\end{lemma}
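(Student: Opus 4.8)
The plan is to show that $\pi_\H(g\H)$ has uniformly bounded diameter for $g \notin E(h)$ by reducing it to \fullref{lemma:boundedimage}. The key point is that \fullref{lemma:boundedimage} already gives a bound on $\diam \pi_\H(g\H)$ whenever $d(\H, g\H)$ is larger than some threshold $\constb$, so all that remains is to handle the finitely many $G$--orbits of translates $g\H$ with $d(\H, g\H) \leqslant \constb$, and then to observe that $g \notin E(h)$ forces $g\H \neq \H$, so that such translates still have bounded projection.

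First I would invoke \fullref{lemma:projection} to fix the $E(h)$--equivariant strongly contracting coarse map $\pi_\H \from \X \to \H$, and note by \fullref{prop:allthesame} that it is also strongly constricting. Apply \fullref{lemma:boundedimage} to get a constant $\constb$ such that $d(\H, g\H) > \constb$ implies $\diam \pi_\H(g\H)$ is bounded by a constant independent of $g$. It remains to treat translates $g\H$ with $d(\H, g\H) \leqslant \constb$ and $g \notin E(h)$. By \fullref{corollary:Ehexists}, $E(h) = \{g \in G \mid g\langle h\rangle.\bp \text{ is coarsely equivalent to } \langle h\rangle.\bp\}$, and since $\H = E(h).\bp$ is coarsely equivalent to $\langle h \rangle.\bp$, the condition $g \notin E(h)$ means $g\H$ is \emph{not} coarsely equivalent to $\H$, hence in particular $g\H \neq \H$ as subsets.

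The main work is the bounded case: suppose $d(\H, g\H) \leqslant \constb$ but $g\H$ is not coarsely equivalent to $\H$. Because $G \act \X$ is properly discontinuous and $\H$ is (coarsely) a quasi-geodesic on which $E(h)$ acts cocompactly, there are only finitely many $E(h)$--orbits of translates $g\H$ with $d(\H, g\H) \leqslant \constb$ (this is the same counting argument as in the proof of \fullref{corollary:Ehexists}: the set $\{g \mid d(g\H, \H) \leqslant \constb\}$ meets only finitely many cosets of $E(h)$). For each such orbit, pick a representative $g$; since $g \notin E(h)$, the translate $g\H$ is not coarsely equivalent to $\H$, so $\pi_\H(g\H)$, which is coarsely the closest point projection of the quasi-geodesic $g\H$ onto the quasi-geodesic $\H$, cannot be all of $\H$ — if it were coarsely dense in $\H$, then $g\H$ would fellow-travel $\H$ by \fullref{lemma:contractingquasigeodesicsarestable} and the two would be coarsely equivalent. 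So $\pi_\H(g\H)$ misses a point of $\H$; combined with the fact that $\pi_\H(g\H)$ is coarsely connected (it is the coarse image of the connected set $g\H$ under a coarsely $1$--Lipschitz map, by \fullref{lemma:coarse1Lipschitz}) and that $\H$ coarsely looks like a line, a subset of a line that is coarsely connected and misses a point must have bounded diameter. Finally, using the equivariance $f.\pi_\H(x) = \pi_{f\H}(f.x)$ from \fullref{rem:equivariant}, translating $g\H$ within its $E(h)$--orbit does not change $\diam \pi_\H(g\H)$ (as $\pi_\H$ is $E(h)$--equivariant), so the bound obtained on the finitely many representatives is uniform over the whole orbit. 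Taking the maximum of this finite collection of bounds with the bound from \fullref{lemma:boundedimage} yields the uniform bound for all $g \notin E(h)$.

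I expect the main obstacle to be the bounded case, specifically making rigorous the claim that a coarsely connected subset of the quasi-line $\H$ that fails to be coarsely dense has bounded diameter. One must be careful that $\H$ is only coarsely a line (it is the orbit of a virtually cyclic group), so "misses a point" must be upgraded to "misses an unbounded sub-quasi-ray", and then one argues that coarse connectivity of $\pi_\H(g\H)$ prevents it from straddling such a gap. An alternative, perhaps cleaner route avoiding this topological argument entirely: show directly that if $\diam \pi_\H(g\H)$ were large then a geodesic between two far-apart points of $g\H$ would pass close to a long segment of $\H$ (by strong constriction applied to $\pi_\H$), forcing $g\H$ into a bounded neighborhood of $\H$ via \fullref{corollary:Morse} and \fullref{lemma:contractingquasigeodesicsarestable}, hence $g\H$ coarsely equivalent to $\H$, hence $g \in E(h)$ by \fullref{corollary:Ehexists} — a contradiction. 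This contrapositive form is likely how I would actually write it.
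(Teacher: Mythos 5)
Your two-phase structure (far translates handled by \fullref{lemma:boundedimage}, near translates handled by a finiteness-of-orbits argument) is a legitimate strategy, and the auxiliary claims you make along the way (finitely many $E(h)$--double cosets with $d(\H,g\H)\leqslant\constb$, by proper discontinuity; $E(h)$--equivariance of $\pi_\H$ making $\diam\pi_\H(g\H)$ a double-coset invariant) are correct. However, both versions of the step where you handle a single representative $g$ have a gap at the same place, and it is precisely the spot where the paper's proof invests its real effort.

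In your first version you argue that $\pi_\H(g\H)$ is coarsely connected, not coarsely dense in $\H$, and conclude it must be bounded because ``a coarsely connected subset of a line that misses a point has bounded diameter.'' This is false: a ray $[0,\infty)\subset\mathbb{R}$ is coarsely connected, misses the entire negative half-line, and is unbounded. The quasi-line $\H$ admits the same phenomenon. In your alternative version you argue that a large projection diameter forces a long segment of $\H$ into a bounded neighborhood of $g\H$ (correct, via strong constriction and Morse), and then leap to ``hence $g\H$ coarsely equivalent to $\H$, hence $g\in E(h)$.'' But fellow-traveling of a single long segment — even an unbounded ray — does not give coarse equivalence of the full quasi-lines, which is what \fullref{corollary:Ehexists} would need. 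Neither version closes the gap.

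The missing ingredient, which is the heart of the paper's own proof, is the combination of proper discontinuity with an algebraic commensurability argument: from the long segment of $\H$ lying in a fixed-size neighborhood of $g\H$, one extracts, for each integer $i$ in a long interval, an integer $\alpha_i$ with $d(h^i.\bp,\,gh^{\alpha_i}\inv{g}.\bp)$ uniformly bounded, and hence a point $h^{-i}gh^{\alpha_i}\inv{g}.\bp$ in a fixed ball about $\bp$. If two of these coincide one gets $h^{j-i}=gh^{\alpha_j-\alpha_i}\inv{g}$, which makes $\langle h\rangle$ and $g\langle h\rangle\inv{g}$ commensurable and therefore forces $g\in E(h)$, contradicting the hypothesis. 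So all these points are distinct, and proper discontinuity bounds their number, hence the length of the segment, hence $\diam\pi_\H(g\H)$ — uniformly in $g$, with no case split and no appeal to \fullref{lemma:boundedimage}. You should adopt this argument; once you have it, you will find that your reduction to finitely many double cosets becomes unnecessary, since the bound it produces is already independent of $g$.
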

\begin{proof}Let $\pi_\H\from\X\to\H$ be $(1,\consta')$--strongly constricting.
Let $\qgeo\from \mathbb{R}\onto\H$ be an $(\constm,\consta'')$--quasi-geodesic
parameterization that agrees with $i\mapsto h^i\!.\bp$ on the integers. 
Replace $\consta'$ and $\consta''$ by $\consta:=\max\{\consta',\consta''\}$.

Let $\constc:=\diam\langle h\rangle\backslash\H$.
Let $\constb$ be large enough so that if $\path$ is a geodesic with
$d(\path_{s_0},\qgeo_{t_0})\leqslant \consta$ and
$d(\path_{s_1},\qgeo_{t_1})\leqslant \consta$ then
$\path_{[s_0,s_1]}\subset\clnbhd{\constb}{\qgeo_{[t_0,t_1]}}$ and
$\qgeo_{[s_0,s_1]}\subset\clnbhd{\constb}{\path_{[t_0,t_1]}}$, as in
\fullref{corollary:Morse} and \fullref{lemma:contractingquasigeodesicsarestable}.

Suppose $g\notin E(h)$.
For a pair of points $x_0,\,x_1\in g\H$, take $t_0$ and $t_1$ such that $\qgeo_{t_i}\in \pi_\H(x_i)$
for each $i$. 
Let $\path$ be a geodesic connecting $x_0$ to $x_1$.
If $d^\pi_\H(x_0,x_1)>\consta$ then for each $i$ there exists $s_i$ such
that $d(\path_{s_i},\qgeo_{t_i})\leqslant \consta$.

Now $\qgeo_{[t_0,t_1]}$ is $\constb$--close to $\path_{[s_0,s_1]}$, which in
turn is $\constb$--close to a subinterval of $g\H$.
Therefore, for each integer $i\in[t_0,t_1]$ there is an integer
$\alpha_i$ such that $d(h^i\!.\bp,gh^{\alpha_i}\inv{g}\!.\bp)\leqslant
2\constb+\constc$.

If for some $i\neq j$ we have
$h^{-i}gh^{\alpha_i}\inv{g}\!.\bp=h^{-j}gh^{\alpha_j}\inv{g}\!.\bp$,
then $h^{j-i}=gh^{\alpha_j-\alpha_i}\inv{g}$, which implies $\langle
h\rangle$ and $\langle gh\inv{g}\rangle$ are commensurable.
However, this
would imply $g\in E(h)$, contrary to hypothesis.
Therefore, for each integer $i$ in $[t_0,t_1]$ we get a distinct point
$h^{-i}gh^{\alpha_i}\inv{g}\!.\bp\in\clnbhd{2\constb+\constc}{\bp}$.
Since the action of $G$ is properly discontinuous, the number of orbit
 points in $\clball{2\constb+\constc}{\bp}$ is finite, so $\diam
 \pi_\H(g\H)$ is bounded, independent of $g$.
\end{proof}

\begin{lemma}[Axiom (P1)]\label{lemma:axiom1}
  For all sufficiently large $\xi$ and for any
  $X,\,Y,\,Z\in\mathbb{Y}$, at most one of $d^\pi_X(Y,Z)$,
  $d^\pi_Y(X,Z)$, and $d^\pi_Z(X,Y)$ is greater than $\xi$.
\end{lemma}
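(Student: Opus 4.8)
The plan is to establish the contrapositive in a quantitative form: if two of the three quantities $d^\pi_X(Y,Z)$, $d^\pi_Y(X,Z)$, $d^\pi_Z(X,Y)$ exceed $\xi$, then $\xi$ is bounded above by a constant depending only on $G\act\X$, $h$, and $\bp$. Since $d^\pi_A(B,C)=d^\pi_A(C,B)$ and the three spaces $X,Y,Z$ (distinct $G$--translates of $\H$) may be relabelled, it suffices to treat the case $d^\pi_Y(X,Z)>\xi$ and $d^\pi_Z(X,Y)>\xi$: in each of the three configurations ``two are large'', one translate plays the role of an outer space from which we sample a point, and the other two play the roles of $Y$ and $Z$. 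Throughout I will use that each projection $\pi_Y$ is strongly contracting (\fullref{def:equivariantprojections}), hence strongly constricting for the path system of all geodesics (\fullref{prop:allthesame}), hence coarsely a closest point projection (\fullref{lemma:cpp}); and that by \fullref{lemma:axiom0} there is a constant $\xi_0$ bounding the diameter of $\pi_Y(X)$, $\pi_Z(X)$, $\pi_Z(Y)$, etc.

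First I would pass to a uniform, pointwise form of the hypothesis. Because $\pi_Y(X)$ and $\pi_Y(Z)$ each have diameter at most $\xi_0$, the assumption $d^\pi_Y(X,Z)=\diam\bigl(\pi_Y(X)\cup\pi_Y(Z)\bigr)>\xi$ gives $d(\pi_Y(x),\pi_Y(z))>\xi-2\xi_0$ for \emph{every} $x\in X$ and $z\in Z$, and symmetrically $d(\pi_Z(x),\pi_Z(y))>\xi-2\xi_0$ for every $x\in X$ and $y\in Y$. Now fix any $x\in X$ and, using \fullref{lemma:cpp}, choose $y\in\pi_Y(x)\subset Y$ and $z\in\pi_Z(x)\subset Z$, so that $d(x,y)\eadd d(x,Y)$ and $d(x,z)\eadd d(x,Z)$ with additive error a universal constant.

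The geometric core is to run strong constriction along the two geodesics $[x,y]$ and $[x,z]$. Assuming $\xi$ exceeds the constriction constant plus $2\xi_0$, the geodesic $[x,y]$ has $d^\pi_Z(x,y)>$ the constriction constant, so it contains a point within a universal distance of $\pi_Z(x)$ and a point within a universal distance of $\pi_Z(y)$; both of these points lie within a universal distance of $Z$, and by the pointwise bound above they are at distance at least $\xi$ minus a universal constant from one another. Splitting $[x,y]$ at these two points, in whichever order they occur, and bounding the three resulting subsegments from below, yields $d(x,y)\geqslant d(x,Z)+d(y,Z)+\xi-C_1$ for a constant $C_1$ independent of $\xi$; since $d(x,y)\eadd d(x,Y)$ and $d(y,Z)\geqslant 0$ this becomes $d(x,Y)\geqslant d(x,Z)+\xi-C_2$. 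The symmetric argument applied to $[x,z]$, with the roles of $Y$ and $Z$ interchanged, gives $d(x,Z)\geqslant d(x,Y)+\xi-C_2$. Adding the two inequalities yields $0\geqslant 2\xi-2C_2$, so $\xi\leqslant C_2$. Hence for every $\xi>C_2$ at most one of $d^\pi_X(Y,Z)$, $d^\pi_Y(X,Z)$, $d^\pi_Z(X,Y)$ can exceed $\xi$, which is the assertion.

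I expect the only real care to lie in the bookkeeping of the last step: one must check that every auxiliary constant ($\xi_0$, the constriction and closest-point constants, $C_1$, $C_2$) depends only on $G\act\X$, $h$, and $\bp$, and not on $\xi$, so that $\xi\leqslant C_2$ is a genuine obstruction. The one mild subtlety is that the two points found on $[x,y]$ may occur in either order along the geodesic; this is harmless, since the length estimate only uses that both are uniformly close to $Z$ and that they are far apart, not which is the projection of which endpoint. No fine geometry of $\X$ beyond the strong contraction of the projections, together with its consequences already recorded in \fullref{prop:allthesame}, \fullref{lemma:cpp}, and \fullref{lemma:axiom0}, is needed.
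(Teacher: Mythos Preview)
Your argument is correct, and it takes a genuinely different route from the paper's proof. The paper argues directly: assuming $d^\pi_X(Y,Z)>\xi$, it takes any $z\in Z$, lets $y\in Y$ be a closest point to $z$, and observes via \fullref{lemma:cpp} that the entire geodesic $[z,y]$ projects near $y$ under $\pi_Y$. Strong constriction with respect to $X$ then produces a point $z'\in[z,y]$ close to some $x\in X$, and the coarse $1$--Lipschitz property of $\pi_Y$ (\fullref{lemma:coarse1Lipschitz}) gives $d^\pi_Y(x,z)\ladd d^\pi_Y(x,z')+d^\pi_Y(z',z)$ bounded, hence $d^\pi_Y(X,Z)\leqslant\xi$. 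So the paper proves the sharper implication ``one large forces the other two small'' in one pass, using one application of constriction and the closest-point trick to control $\pi_Y$ along $[z,y]$.

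Your approach instead pits two applications of constriction against each other: from $x\in X$ you drop geodesics to $\pi_Y(x)$ and $\pi_Z(x)$, and each geodesic is forced to traverse a long segment near the other axis, yielding the incompatible pair $d(x,Y)\gadd d(x,Z)+\xi$ and $d(x,Z)\gadd d(x,Y)+\xi$. This is slightly longer but more symmetric, avoids the closest-point observation that $\pi_Y([z,y])$ is concentrated near $y$, and does not need \fullref{lemma:coarse1Lipschitz}. Both approaches rest on the same ingredients (\fullref{prop:allthesame}, \fullref{lemma:cpp}, \fullref{lemma:axiom0}); the paper's is a touch more economical, while yours makes the obstruction to two large projections geometrically transparent.
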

\begin{proof}
  Suppose $\pi_Y$ is $(1,\consta)$--strongly constricting. 
Let $\xi'$ be the constant from
\fullref{lemma:axiom0}.
Let $\xi\geqslant 2\xi'+14\consta$.
Suppose that $d^\pi_X(Y,Z)>\xi$.
We show $d^\pi_X(Y,Z)\leqslant\xi$; the inequality $d^\pi_Z(X,Y)\leqslant\xi$ follows by a
similar argument.

Take any point $z\in Z$, and let $y\in Y$ be a point such that
$d(z,y)=d(z,Y)$.
Let $\geo\from [0,T]\to\X$ be a geodesic from $z$ to $y$.
For every point of $\geo$, $y$ is the closest point of $Y$.
By \fullref{lemma:cpp}, $\pi_Y(\geo)\subset\clball{5\consta}{y}$.
Now, $d^\pi_X(Y,Z)>\xi$ implies $d^\pi_X(\geo_0,\geo_T)>\consta$, so
there is a $z'\in\geo$ and $x\in X$ with $d(x,z')\leqslant\constc$.
By \fullref{lemma:coarse1Lipschitz}, $\pi_Y$ is
$8\consta$--coarsely 1--Lipschitz, which means $d^\pi_Y(x,z')\leqslant 9\consta$.
Thus, $d^\pi_Y(X,Z)\leqslant 2\xi'+d^\pi_Y(x,z)\leqslant
2\xi'+5\consta+d^\pi_Y(x,z')\leqslant 2\xi'+14\consta\leqslant \xi$.
\end{proof}

\begin{lemma}[Axiom (P2)]\label{lemma:axiom2}
  For all sufficiently large $\xi$ and for all $X,\, Y\in\mathbb{Y}$,
  the set $\{V\in\mathbb{Y}\mid d^\pi_V(X,Y)>\xi\}$ is finite. 
\end{lemma}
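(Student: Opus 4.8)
The statement of \fullref{lemma:axiom2} is about the abstract collection $\mathbb{Y}$, but, as observed just before it, the quasi-isometries $\phi_{f_i}$ transport the question to the $G$--translates of $\H$ inside $\X$ (at the cost of enlarging $\xi$). So the plan is to fix two translates $X,Y$ of $\H$ in $\X$ and a geodesic $\geo$ joining a point of $X$ to a point of $Y$, and to show that every translate $V$ with $d^\pi_V(X,Y)>\xi$ must pass uniformly close to $\geo$; since $\geo$ has compact image and the orbit $G.\bp$ is locally finite, only finitely many such $V$ can exist.

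First I would fix the constants. By \fullref{prop:allthesame} the projection $\pi_\H$ is $(1,\consta)$--strongly constricting for some $\consta$, and since each $\pi_{g\H}$ of \fullref{def:equivariantprojections} is an isometric conjugate of $\pi_\H$, every translate projection is $(1,\consta)$--strongly constricting with the same constant. Let $\xi'$ be the bound from \fullref{lemma:axiom0}, so that $\diam\pi_V(W)\leqslant\xi'$ whenever $V\neq W$ are translates of $\H$, and choose $\xi>2\xi'+\consta$. Now fix $x\in X$, $y\in Y$ and a geodesic $\geo\from[0,T]\to\X$ from $x$ to $y$; its image $\geo([0,T])$ is compact since $\X$ is proper.

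The heart of the argument is to convert a large value of $d^\pi_V(X,Y)$ into a large value of $d^\pi_V(x,y)$ for the \emph{fixed} pair $x,y$. Suppose $V\notin\{X,Y\}$ and $d^\pi_V(X,Y)>\xi$. Since $\pi_V(x)\subseteq\pi_V(X)$ with $\diam\pi_V(X)\leqslant\xi'$, and likewise $\pi_V(y)\subseteq\pi_V(Y)$ with $\diam\pi_V(Y)\leqslant\xi'$, and since $\xi>\xi'\geqslant\max\{\diam\pi_V(X),\diam\pi_V(Y)\}$, one gets $d^\pi_V(x,y)=\diam(\pi_V(x)\cup\pi_V(y))\geqslant d^\pi_V(X,Y)-2\xi'>\consta$. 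As $\geo$ is a geodesic and $\pi_V$ is strongly constricting for the path system of all geodesics, \fullref{def:constricting}(3) gives $d(\pi_V(x),\geo)\leqslant\consta$; since $\pi_V$ maps into $V$, there is a point $v\in V$ with $d(v,\geo([0,T]))\leqslant\consta$. Finally, $V=gE(h).\bp$ for some $g\in G$, so $v$ lies in $G.\bp\cap\clnbhd{\consta}{\geo([0,T])}$, a finite set by proper discontinuity, and each such orbit point lies in only finitely many translates $gE(h).\bp$ (at most $|\mathrm{Stab}(\bp)|$ of them). Hence only finitely many $V\notin\{X,Y\}$ satisfy $d^\pi_V(X,Y)>\xi$, and the full set has at most two more elements; transporting back through the $\phi_{f_i}$ finishes the proof of (P2).

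I expect the only delicate point to be the inequality $d^\pi_V(x,y)\geqslant d^\pi_V(X,Y)-2\xi'$, which is exactly where \fullref{lemma:axiom0} is indispensable: it is the diameter bound on $\pi_V(X)$ and $\pi_V(Y)$ (valid precisely because $V\neq X,Y$) that lets one replace the projections of the entire translates by the projections of the single chosen points $x$ and $y$. Once that reduction is in place, everything else is a direct application of strong constriction together with properness of the action, with no fine geometric control on $\X$ required.
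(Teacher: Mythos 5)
Your proof is correct and follows essentially the same route as the paper's: fix a geodesic between a point of $X$ and a point of $Y$, use \fullref{lemma:axiom0} to transfer the hypothesis $d^\pi_V(X,Y)>\xi$ into $d^\pi_V$ of the two endpoints exceeding the constriction constant, conclude by strong constriction that $\geo$ passes $\consta$--close to $V$, and finish by proper discontinuity. The paper makes the marginally more specific choice of endpoints in $\pi_X(Y)$ and $\pi_Y(X)$, but this plays no essential role; your reduction via the diameter bound of (P0) is exactly the intended step.
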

\begin{proof}
  Let $\xi'$ be the constant of \fullref{lemma:axiom0}.
Suppose $\pi_\H$ is $(1,\consta)$--strongly constricting.
Let $\xi>\consta+2\xi'$. Take arbitrary $X,\,Y\in\mathbb{Y}$, and let
$\geo$ be a geodesic from some point in $\pi_X(Y)$ to
some point in $\pi_Y(X)$.
If $d^\pi_V(X,Y)>\xi$ then $d^\pi_V(\geo_0,\geo_T)>\consta$, so $\geo$
comes within distance $\consta$ of $V$. 
By proper discontinuity of the
action, there are only finitely many elements of $\mathbb{Y}$ that
come within distance $\consta$ of the finite geodesic $\geo$.
\end{proof}

\begin{definition}\label{def:axesquasitree}
  Let $\qt$ be the quasi-tree produced by \fullref{thm:BBF} from
  $\mathbb{Y}$. Let $\qtbp\in\qt$ be the vertex corresponding to
  $\bp\in\X$.
Let $\hat{\pi}_{g\hat{\H}}\from\qt\to g\hat{\H}$ be closest point projection to the
isometrically embedded copy of $g\hat{\H}$ in $\qt$, which the theorem
says coarsely
agrees with $\pi_{g\hat{H}}$.
\end{definition}

\begin{definition}
Define uniform quasi-isometric embeddings $\phi_{g\H}\from g\H\to\qt$
for each translate $g\H$ of $\H$ 
by sending $g\H$ to $f_i\hat{\H}$ via $\phi_{f_i}$, where $g\in
f_iE(h)$, and postcomposing by the isometric embedding of
$f_i\hat{\H}$ into $\qt$ provided by \fullref{thm:BBF}.
\end{definition}

\begin{proposition}
  If there is a strongly contracting element for $G\act\X$ then $G$
  has non-zero growth exponent.
\end{proposition}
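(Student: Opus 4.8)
The plan is to exhibit a rank-two free subgroup of $G$ whose generators displace the basepoint $\bp$ a uniformly bounded amount, and then to count positive words in those generators. Since $G$ is non-elementary it properly contains the elementary closure $E(h)$, which exists by \fullref{corollary:Ehexists}; choose $g\in G\setminus E(h)$ and set $h':=ghg^{-1}$. Then $h'$ is again strongly contracting, with quasi-axis $g\H\neq\H$, and by \fullref{lemma:axiom0} the two axes have uniformly bounded mutual projections, $\diam\pi_\H(g\H)<\infty$ and $\diam\pi_{g\H}(\H)<\infty$.

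Next I would show that for all sufficiently large $n$ the subgroup $\langle h^n,\, gh^ng^{-1}\rangle$ is free of rank two by a ping-pong argument carried out in the quasi-tree $\qt$ of \fullref{def:axesquasitree}. By \fullref{thm:BBF} the group $G$ acts on $\qt$, the elements $h^n$ and $gh^ng^{-1}$ act loxodromically along the disjoint, isometrically embedded copies of the axes of $h$ and $h'$, and the previous paragraph ensures that the endpoint pairs of these two axes in $\partial\qt$ are disjoint. A loxodromic isometry of a (hyperbolic) quasi-tree has North--South dynamics on its boundary, so the classical table-tennis lemma, applied to small neighborhoods of the four endpoints, shows that once $n$ is large enough every nonempty reduced word in $h^n$, $gh^ng^{-1}$ and their inverses moves $\qtbp$ off itself, hence is nontrivial in $G$. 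In particular $a:=h^n$ and $b:=gh^ng^{-1}$ generate a free sub-semigroup, so for each $k$ the $2^k$ positive words of length $k$ in $a,b$ represent $2^k$ distinct elements of $G$.

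Finally, set $C:=\max\{d(\bp,a.\bp),\,d(\bp,b.\bp)\}$, which is positive because $i\mapsto h^i.\bp$ is an unbounded quasi-geodesic. For any word $w=s_1\cdots s_k$ with each $s_j\in\{a,b\}$, invariance of $d$ and the triangle inequality give $|w|=d(\bp,s_1\cdots s_k.\bp)\leqslant\sum_{j=1}^k d(\bp,s_j.\bp)\leqslant kC$, so $\{g\in G\mid |g|\leqslant kC\}$ contains the $2^k$ distinct elements from the previous paragraph; hence $\rate_G\geqslant(\log 2)/C>0$. The only real work is the ping-pong step: its geometric input --- that a translate $g\H$ with $g\notin E(h)$ has bounded projection to $\H$ --- is exactly \fullref{lemma:axiom0}, and the quasi-tree $\qt$ manufactured from the projection axioms converts this into the tree-like dynamics on which a standard Tits-style argument runs. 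Steps one and three are then routine bookkeeping.
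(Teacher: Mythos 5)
Your argument is correct, and it reproduces the mechanism that the paper itself leaves to a citation: the paper's entire proof is the single line that \cite[Proposition~3.23]{BesBroFuj10} says $G$ contains a free subgroup, hence has exponential growth. Your ping-pong in the quasi-tree $\qt$, keyed to the bounded projection of \fullref{lemma:axiom0} (together with the implicit non-elementarity of $G$), is precisely the geometric input underlying that BBF proposition, and the closing count of positive words correctly converts a free sub-semigroup of uniformly bounded displacement into a strictly positive growth exponent.
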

\begin{proof}
  \cite[Proposition~3.23]{BesBroFuj10} says $G$ contains a free
  subgroup, so it has exponential growth.
\end{proof}

\section{Abundance of Strongly Contracting Elements}
In this section we show that strongly contracting elements are abundant:
\begin{proposition}\label{lemma:lotsofcontractingelements}
  If $G$ contains a strongly contracting element for $G\act \X$ then so does every
  infinite normal subgroup.
\end{proposition}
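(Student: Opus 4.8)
The plan is to take a strongly contracting element $h \in G$ with quasi-axis $\H = E(h).\bp$ and strongly contracting projection $\pi_\H \from \X \to \H$, and use it to manufacture a strongly contracting element inside an arbitrary infinite normal subgroup $\Gamma \trianglelefteqslant G$. First I would fix a large parameter and pick an element $\gamma \in \Gamma$ that moves the basepoint a large distance (possible since $\Gamma$ is infinite and the action is properly discontinuous, so $\Gamma.\bp$ is an infinite, hence unbounded, subset of $\X$). The candidate strongly contracting element will be a conjugate-and-translate combination: something of the form $\gamma h^N \gamma^{-1}$ or, more robustly, $g h^N g^{-1} \cdot \gamma$ type products, arranged so that the result lies in $\Gamma$ by normality (e.g., $h^{-N}\gamma h^N \cdot \gamma^{-1}$ or directly $\gamma (h^N \gamma h^{-N})^{-1}$, both of which are products of $\Gamma$-conjugates of elements of $\Gamma$). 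The key point is that conjugates of a strongly contracting element are again strongly contracting for the same action: if $h$ is strongly contracting with axis $\H$, then $g h g^{-1}$ is strongly contracting with axis $g\H$, since $\pi_{g\H}(x) = g.\pi_\H(g^{-1}.x)$ is a strongly contracting projection by \fullref{def:equivariantprojections} and \fullref{lemma:strongconstrictiscoarseequivinvariant}.

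The heart of the argument is a concatenation/local-to-global lemma for strongly contracting axes. I would choose translates $\H_1 = \H$, $\H_2 = w\H$ (for a suitable $w \in G$) whose projections to each other are bounded — which holds whenever $d(\H, w\H)$ is large enough, by \fullref{lemma:boundedimage} — and then show that a bi-infinite path obtained by alternately following long segments of $\H_1$ and $\H_2$, connected by short jumps, is a quasi-geodesic whose nearest-point projection to any single translate of an axis is bounded except near one segment. This is essentially verifying the bottleneck/projection-axiom picture along one orbit: the path crosses the "coarse edges" of the quasi-tree $\qt$ of \fullref{def:axesquasitree} in a controlled way, so its image is an unbounded quasi-geodesic in $\qt$, and pulling back gives that the element realizing this periodic path is strongly contracting in $\X$. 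Concretely, I expect to produce $h' \in \Gamma$ of the form (a conjugate of a power of $h$) times ($\gamma$ conjugating into a disjoint translate), show $i \mapsto (h')^i.\bp$ is a quasi-geodesic using that consecutive axis-translates have bounded mutual projection, and show closest-point projection to $\langle h'\rangle.\bp$ is strongly contracting by checking the Bounded Geodesic Image Property via \fullref{lemma:boundedgeodesicimage} and \fullref{prop:allthesame}.

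The main obstacle I anticipate is arranging that the relevant axis-translates are genuinely far apart and "linked" correctly, i.e. that the product element I write down has an axis which really is a broken line alternating between two (or more) translates of $\H$ that pairwise project to bounded sets, rather than an element whose axis happens to run parallel to a single translate of $\H$ (in which case strong contraction would follow trivially but the element might not lie in $\Gamma$, or vice versa). Managing this requires choosing $\gamma \in \Gamma$ with $\gamma\H$ far from $\H$; if every $\gamma \in \Gamma$ fixed $\H$ coarsely then $\Gamma \subset E(h)$ (by \fullref{corollary:Ehexists}), forcing $\Gamma$ to be virtually cyclic and hence, being infinite and normal in the non-elementary $G$, contradicting non-elementarity of $G$ — so such a $\gamma$ exists, and one then takes a high power $h^N$ with $N$ large relative to the constants from \fullref{lemma:boundedimage}, \fullref{lemma:axiom0}, \fullref{lemma:axiom1}, and \fullref{lemma:axiom2}. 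Once the two axes are far apart, the verification that the periodic concatenation is a Morse quasi-geodesic with strongly contracting projection is the standard ping-pong-in-the-quasi-tree estimate, using \fullref{lemma:distancelb}, \fullref{corollary:Morse}, and \fullref{lemma:contractingquasigeodesicsarestable} to control how geodesics in $\X$ track the broken axis.
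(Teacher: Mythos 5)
Your overall strategy matches the paper's proof: take $h$ strongly contracting, pick $\gamma\in\Gamma$ with $\gamma\H$ far from $\H$, form the commutator $f=\gamma h^n\gamma^{-1}h^{-n}\in\Gamma$, and show $f$ is strongly contracting by analyzing its broken-line axis alternating between translates of $\H$, using the quasi-tree of \fullref{def:axesquasitree}, disjointness of balls, and verification of strong constriction via \fullref{prop:allthesame}. That is exactly the paper's construction (the paper writes $f:=gh^ng^{-1}h^{-n}$ with $g\in\Gamma\setminus E(h)$) and the same projection-axiom/quasi-tree machinery.

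However, one step is misresolved. You dispose of the degenerate case --- ``every $\gamma\in\Gamma$ coarsely fixes $\H$'' --- by saying $\Gamma\subset E(h)$ would be virtually cyclic and this ``contradicts non-elementarity of $G$.'' That implication does not hold: a non-elementary group can perfectly well have an infinite, virtually cyclic normal subgroup, and the proposition is not stated with a non-elementarity hypothesis anyway. In this case there is nothing to derive a contradiction from; rather, the conclusion is already true for a much simpler reason, which is how the paper handles it: if $\Gamma\cap E(h)$ is infinite then $\Gamma$ contains an infinite-order element of $E(h)$, and every such element has quasi-axis coarsely equal to $\H$, hence is itself strongly contracting by \fullref{lemma:strongconstrictiscoarseequivinvariant} (together with \fullref{lemma:quasigeodesic} to see it is a quasi-geodesic), so we are already done. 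Only when $\Gamma\cap E(h)$ is finite does one need the commutator construction, and then a suitable $g\in\Gamma\setminus E(h)$ exists simply because $\Gamma$ is infinite. (If you do want to run a contradiction in that branch, the correct argument is that $\Gamma\trianglelefteqslant G$ with $\Gamma\leqslant E(h)$ forces $\Gamma\leqslant E(h)\cap E(ghg^{-1})$ for every $g$, which is finite once $g\H\ne\H$ by proper discontinuity and \fullref{lemma:boundedimage}; this is not what you wrote.)
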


In effect, the proposition reduces the problem of growth tightness for arbitrary
quotients of $G$ to quotients by the normal closure of a strongly
contracting element. 

Given a strongly contracting element $h\in\G$ and an infinite normal
subgroup $\Gamma$ of $G$ we find an element
$g\in\Gamma$ such that $f:=gh^ng^{-1}h^{-n}\in\Gamma$ is strongly
contracting for all sufficiently large $n$.
To prove $f$ is strongly contracting we follow a standard strategy by showing that an axis for $f$
has `long' ($\eaddmul n$) segments in contracting sets, separated by
`short' ($=d(\bp,g.\bp)$) hops between such segments.
For each $x\in\X$ there is, coarsely, a unique one of these segments such
that the projection of $x$ transitions from landing at the end of the
segment to landing at the beginning of the segment.
We use this transition point to define the projection to the
$f$--axis, and verify that this projection is strongly contracting. 

We first prove some preliminary lemmas. 

\begin{lemma}\label{lemma:quasigeodesic}
  Let $h\in G$ be an infinite order element and $\pi\from\X\to\langle
  h\rangle.\bp$ a contracting coarse map such that
  $d(x,\pi(x))-d(x,\A)$ is uniformly bounded.
Then $i\mapsto h^i\!.\bp$ is a quasi-geodesic.
\end{lemma}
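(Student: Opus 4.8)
The plan is to show that the orbit map $i\mapsto h^i.\bp$ is a quasi-geodesic by controlling distances along it using the contracting projection $\pi\from\X\to\langle h\rangle.\bp$. Write $\A:=\langle h\rangle.\bp$ and let $\qgeo_i:=h^i.\bp$. The upper bound $d(\qgeo_i,\qgeo_j)\leqslant \constm|i-j|$ for a suitable $\constm$ is immediate: $d(\qgeo_i,\qgeo_{i+1})=d(\bp,h.\bp)$ is a constant, and we iterate the triangle inequality. The content is the lower bound: there must exist $\constm\geqslant 1$ and $\consta\geqslant 0$ with $d(\qgeo_i,\qgeo_j)\geqslant \frac{1}{\constm}|i-j|-\consta$.

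First I would record that $\langle h\rangle$ acts cocompactly on $\A$ with $\A=\langle h\rangle.\bp$, so $\diam(\langle h\rangle\backslash\A)$ is finite; combined with $\pi$ being coarsely a closest point projection (which holds since $d(x,\pi(x))-d(x,\A)$ is uniformly bounded, after invoking \fullref{lemma:cpp} via the fact that a contracting map with this property is strongly contracting — here one must be mildly careful, since the hypothesis only says \emph{contracting}, not \emph{strongly}; so instead I would work directly with property (\ref{item:contractingdefballcontracting}) of \fullref{def:contracting}). The key mechanism: fix $i<j$ and consider the points $\qgeo_i$ and $\qgeo_j$. If $d(\qgeo_i,\qgeo_j)$ were small, then since both points lie \emph{on} $\A$, applying the $h$-equivariance of $\pi$ and the coarse equivalence of $\pi$ with $\mathrm{Id}_\A$ on $\A$ forces $\pi(\qgeo_i)$ and $\pi(\qgeo_j)$ to be coarsely $\qgeo_i$ and $\qgeo_j$ respectively, hence $d^\pi(\qgeo_i,\qgeo_j)$ is coarsely $d(\qgeo_i,\qgeo_j)$, which is small — this is consistent and gives no contradiction directly. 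So the argument must instead go the other way: I would take an auxiliary point far from $\A$, or, more cleanly, exploit that $h$ has infinite order together with proper discontinuity.

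The clean approach I would take: suppose for contradiction that $i\mapsto h^i.\bp$ is \emph{not} a quasi-geodesic. Then for every $n$ there exist $i_n<j_n$ with $j_n-i_n\to\infty$ but $d(\qgeo_{i_n},\qgeo_{j_n})\leqslant \frac{1}{n}(j_n-i_n)$ — or, after translating by $h^{-i_n}$ and using that $d(h^k.\bp,\bp)\leqslant \constm k$, we may assume $i_n=0$ and $d(\bp,h^{m_n}.\bp)$ grows sublinearly in $m_n:=j_n-i_n\to\infty$. Now I would feed a geodesic $\geo$ from $\bp$ to $h^{m_n}.\bp$ into the contracting property: if this geodesic stays outside a large neighborhood of $\A$ its $\pi$-image is bounded (the Bounded Geodesic Image consequence of \fullref{lemma:boundedgeodesicimage} / \fullref{corollary:Morse}), contradicting $d^\pi(\bp,h^{m_n}.\bp)\eadd d(\bp,h^{m_n}.\bp)$ once $m_n$ is large; alternatively, if the geodesic does come close to $\A$, say at a point $p$ with $d(p,\A)\leqslant \constb$, then $p$ is within $\constb+\diam(\langle h\rangle\backslash\A)$ of some $h^k.\bp$, and the two subpaths $\bp\to p$ and $p\to h^{m_n}.\bp$ each have length at least $\tfrac1\constm d(\bp,h^k.\bp)-\consta$ and $\tfrac1\constm d(h^k.\bp,h^{m_n}.\bp)-\consta$ — a descent/subdivision argument (exactly in the style of \fullref{lemma:extendedMorse}) then bounds the number of subdivision points and yields a genuine linear lower bound. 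The main obstacle is this last bookkeeping: extracting the linear lower bound from the contracting inequality requires iterating it along a geodesic and controlling how many times the geodesic can dip near $\A$, which is precisely the kind of logarithmic-versus-linear estimate appearing in the remark after \fullref{lemma:boundedgeodesicimage} and in \fullref{lemma:extendedMorse}; I expect the cleanest route is to reduce to those already-proved statements rather than redo the partitioning by hand, using that $\A$ is itself (a priori) only a coarsely-dense subset of the orbit on which $\langle h\rangle$ acts cocompactly, so that proper discontinuity of the action gives the needed finiteness of how often a bounded geodesic can pass near $\bp$-translates.
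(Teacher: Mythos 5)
Your closing sentence identifies the right tools --- \fullref{corollary:Morse} to confine a geodesic with endpoints in $\A:=\langle h\rangle.\bp$ to a uniform neighborhood of $\A$, and proper discontinuity to control how far $\alpha_i$ can jump --- but the argument you sketch on the way there has genuine gaps, and you never actually assemble the two pieces.

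The main problem is the case split. You propose to feed a geodesic from $\bp$ to $h^{m_n}\!.\bp$ into a dichotomy: either it stays far from $\A$, or it comes close. But both endpoints lie \emph{on} $\A$, so the first case is vacuous, and in fact \fullref{corollary:Morse} (whose hypotheses are exactly those of this lemma) confines the entire geodesic to a bounded neighborhood of $\A$ from the start. Even granting the dichotomy, the contradiction you claim in the first case --- ``contradicting $d^\pi(\bp,h^{m_n}\!.\bp)\eadd d(\bp,h^{m_n}\!.\bp)$ once $m_n$ is large'' --- is not a contradiction: $\pi$ is coarsely the identity on $\A$, so this coarse equality holds automatically for every $m_n$, and under your hypothesis both sides are simply small. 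In the second case the claimed lower bounds on the two subgeodesic lengths are not as written (a subsegment of a geodesic has length exactly the distance between its endpoints), and the descent argument in the style of \fullref{lemma:extendedMorse} is promised but not carried out. So the proposal is not a proof; it is an exploration in roughly the right direction that never closes.

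The paper's proof is direct and does not argue by contradiction. The triangle inequality gives $d(h^\alpha\!.\bp,h^\beta\!.\bp)\lmul(\beta-\alpha)$. For the other inequality, take a geodesic $\geo\from[0,T]\to\X$ from $h^\alpha\!.\bp$ to $h^\beta\!.\bp$. By \fullref{corollary:Morse} there is a $\constc$ so that each $\geo_i$ (for integer $i\in[0,T]$) lies within $\constc$ of some $h^{\alpha_i}\!.\bp$ with $\alpha\leqslant\alpha_i\leqslant\beta$. Since $d(\geo_i,\geo_{i+1})\leqslant 1$, we get $d(h^{\alpha_i}\!.\bp,h^{\alpha_{i+1}}\!.\bp)\leqslant 2\constc+1$, and proper discontinuity of $G\act\X$ then yields a uniform bound $\alpha_{i+1}-\alpha_i\leqslant\gamma$. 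Setting $\alpha_0:=\alpha$ and $\alpha_{\lceil T\rceil}:=\beta$ and telescoping gives $\beta-\alpha\leqslant\gamma\lceil T\rceil\leqslant\gamma\bigl(d(h^\alpha\!.\bp,h^\beta\!.\bp)+1\bigr)$, exactly the lower quasi-geodesic bound. No contrapositive, no case split, and no appeal to the Bounded Geodesic Image Property is needed.
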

\begin{proof}
  Take any $\alpha<\beta$ in $\mathbb{Z}$.
By the triangle inequality, $d(h^\alpha\!.\bp,h^\beta\!.\bp)\lmul
(\beta-\alpha)$. We now prove the opposite inequality.
Let $\geo\from [0,T]\to\X$ be a geodesic from $h^\alpha\!.\bp$ to $h^\beta\!.\bp$.
By \fullref{corollary:Morse}, there exists a $\constc$ such that for every $i\in [0,T]\cap\mathbb{Z}$
there exists an $\alpha\leqslant\alpha_i\leqslant\beta$ such that
$d(\geo_i,h^{\alpha_i}\!.\bp)\leqslant\constc$.
Since the action of $G$ on $\X$ is properly discontinuous, there
exists a maximum $\gamma$ such that $d(\bp,h^\gamma\!.\bp)\leqslant
2\constc+1$, so $\alpha_{i+1}-\alpha_i\leqslant \gamma$ for all $i$.
Setting $\alpha_0:=\alpha$ and $\alpha_{\lceil
  T\rceil}:=\beta$, we have $\beta-\alpha=\sum_{i=0}^{\lceil
  T\rceil-1}\alpha_{i+1}-\alpha_{i}\leqslant \gamma \lceil T\rceil\leqslant
\gamma (d(h^\alpha\!.\bp,h^\beta\!.\bp)+1)$.
\end{proof}

 Fix a strongly contracting element $h$, and let $\qt$ be the
  quasi-tree of \fullref{def:axesquasitree}, with bottleneck constant $\bottleneck$.

\begin{lemma}\label{lemma:bigprojectionsamecomponent}
  There exists $K\geqslant 0$ such that $d^\pi_\H(\bp,g_1.\bp)-d^\pi_\H(g_1.\bp,g_0.\bp)\geqslant
  K$ implies $g_0.\qtbp$ and $g_1.\qtbp$ are
  contained in the same component of $\qt\setminus\clball{\bottleneck}{\qtbp}$.
\end{lemma}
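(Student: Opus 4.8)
The plan is to use the bottleneck property of $\qt$ together with the relationship between projection distances in $\X$ and closest-point projections in $\qt$. Recall from \fullref{def:axesquasitree} that closest-point projection $\hat\pi_{\hat\H}$ in $\qt$ coarsely agrees with the projection $\pi_\H$ transported through the quasi-isometry $\phi_\H$, up to some uniform constant $\consta$. Now suppose, for contradiction, that $g_0.\qtbp$ and $g_1.\qtbp$ lie in different components of $\qt\setminus\clball{\bottleneck}{\qtbp}$. Then any geodesic in $\qt$ from $g_0.\qtbp$ to $g_1.\qtbp$ passes through $\clball{\bottleneck}{\qtbp}$, and more to the point, the geodesic from $\qtbp$ to $g_1.\qtbp$ and the geodesic from $\qtbp$ to $g_0.\qtbp$ share an initial segment up to bounded distance: $\qtbp$ is (coarsely) on the geodesic between $g_0.\qtbp$ and $g_1.\qtbp$. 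This is the geometric content of "different components" that I would extract first.

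Next I would translate this into a statement about projections to $\hat\H$. Since $\qtbp \in \hat\H$ (it is the base vertex of the copy of $\hat\H$ corresponding to the trivial coset), the fact that $\qtbp$ coarsely lies on a $\qt$-geodesic from $g_0.\qtbp$ to $g_1.\qtbp$ forces $\hat\pi_{\hat\H}(g_0.\qtbp)$ and $\hat\pi_{\hat\H}(g_1.\qtbp)$ to be on opposite sides of $\qtbp$ in $\hat\H$, or at least both within bounded distance of $\qtbp$: intuitively, the geodesic descends from $g_1.\qtbp$, projects near $\qtbp$, then ascends to $g_0.\qtbp$. Hence $d^{\hat\pi}_{\hat\H}(g_0.\qtbp, g_1.\qtbp) \eadd d^{\hat\pi}_{\hat\H}(g_0.\qtbp,\qtbp) + d^{\hat\pi}_{\hat\H}(\qtbp, g_1.\qtbp)$, possibly with the middle term making both summands comparable to the total. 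Pulling this back to $\X$ via the quasi-isometry $\phi_\H$ and using the coarse agreement of the projections, I get $d^\pi_\H(g_0.\bp, g_1.\bp) \gadd d^\pi_\H(g_1.\bp, \bp)$ up to a uniform additive constant $K$. Equivalently $d^\pi_\H(\bp, g_1.\bp) - d^\pi_\H(g_1.\bp, g_0.\bp) \ladd 0$; taking $K$ to be the accumulated constant gives the contrapositive of the claim. Note the roles: here $g_0.\bp$ plays the role of "$X$" and $\bp$ the role of "$\H$", and the asymmetry in the hypothesis ($\bp$ vs.\ $g_1.\bp$ on one side, $g_1.\bp$ vs.\ $g_0.\bp$ on the other) matches exactly a "$\qtbp$ is between" statement in the quasi-tree.

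The main obstacle I expect is bookkeeping the three layers of coarseness: (i) the quasi-isometry $\phi_\H$ between $\H \subset \X$ and $\hat\H$; (ii) the coarse agreement between $\hat\pi_{\hat\H}$ and $\phi_\H \circ \pi_\H$ from \fullref{thm:BBF}; and (iii) the passage from "lies in $\clball{\bottleneck}{\qtbp}$" to a bound on projection distance in a quasi-tree, where the bottleneck constant $\bottleneck$ and the quasi-tree constants interact. The cleanest way to handle (iii) is to observe that in a geodesic metric space, if $m$ lies on a geodesic from $p$ to $q$ then $d^{\hat\pi}_Y(p,q) \eadd d^{\hat\pi}_Y(p,m) + d^{\hat\pi}_Y(m,q)$ whenever $m \in Y$ and $\hat\pi_Y$ is closest-point projection — essentially because closest-point projection to $Y$ is coarsely monotone along geodesics in a quasi-tree (a consequence of the bottleneck property). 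Once that monotonicity lemma is in hand, the rest is assembling constants, and I would simply define $K$ at the end to absorb all of them.
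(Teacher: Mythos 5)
Your proposal is substantively correct, and it does take a genuinely different logical route: you argue by contrapositive (assume different components, derive a contradiction with the projection inequality), whereas the paper's proof assumes the inequality and explicitly constructs a path from $g_0.\qtbp$ to $g_1.\qtbp$ avoiding $\clball{\bottleneck}{\qtbp}$, by concatenating a geodesic from $g_0.\qtbp$ to an approximate projection $h^{m_0}\!.\qtbp$, a chain of translates $h^m\!.\geo$ of a fundamental geodesic segment of $\hat\H$, and a geodesic to $g_1.\qtbp$, each of which is shown separately to avoid the ball. Interestingly, the two proofs converge at one point: the paper's step establishing that $m_0$ and $m_1$ have the \emph{same sign} is itself an arithmetic contradiction of exactly the form you derive in your final step (opposite signs would force $d^\pi_\H(g_0.\bp,g_1.\bp) \eadd d^\pi_\H(\bp,g_0.\bp)+d^\pi_\H(\bp,g_1.\bp)$, violating the hypothesis). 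Your approach replaces the rest of the paper's construction with a single appeal to coarse monotonicity of projection; the paper's approach has the advantage that the explicit path-building machinery is reused in \fullref{lemma:disjointballs} and in \fullref{proposition:embedded}, so it is not wasted effort.

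Two things to be careful about. First, the sentence ``in a geodesic metric space, if $m$ lies on a geodesic from $p$ to $q$ then $d^{\hat\pi}_Y(p,q)\eadd d^{\hat\pi}_Y(p,m)+d^{\hat\pi}_Y(m,q)$'' is false in that generality; what you need (and seem to intend, given the parenthetical) is that $\qt$ is a quasi-tree, hence hyperbolic, and $\hat\H$ is totally geodesic in it, so $\hat\pi_{\hat\H}$ is strongly constricting in $\qt$ — a fact not explicitly recorded in the paper and which your proof would need to supply. Second, your case analysis should read ``on opposite sides of $\qtbp$, or at least \emph{one} of the two projections within bounded distance of $\qtbp$'' rather than ``both''; the mixed case (one near, one far) is possible and is what produces the additive constant absorbed into $K$. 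Neither slip is fatal — the inequality $d^\pi_\H(g_0.\bp,g_1.\bp)\gadd d^\pi_\H(\bp,g_1.\bp)$ holds in every case — but the intermediate statement as written is incorrect.
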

\begin{proof}
Let $\constc:=\diam \langle
h\rangle\backslash\hat{\H}$ in $\qt$.
For each $i\in\{0,1\}$, choose an $m_i$ such that
$d(h^{m_i}\!.\qtbp,\hat{\pi}_{\hat{\H}}(g_i.\qtbp))\leqslant \constc$.
Choose a geodesic $\geo$ from $\qtbp$ to $h.\qtbp$.
Take $M>0$ such that
$h^m.\geo\cap\clball{\bottleneck}{\qtbp}=\emptyset$ when $|m|\geq
M$.

For each $i$, $|m_i|\eaddmul d(h^{m_i}\!.\qtbp,\qtbp)\gaddmul K$, so for
sufficiently large $K$ we have
$d(h^{m_i}\!.\qtbp,\qtbp)>2\bottleneck+\constc$ and $|m_i|>M$.
Furthermore, $m_0$ and $m_1$ must have the same sign if $K$ is large
enough: by \fullref{lemma:contractingquasigeodesicsarestable}, the
interval of $\H$ between $h^{m_0}\!.\bp$ and $h^{m_1}\!.\bp$ stays
close to a geodesic between $h^{m_0}\!.\bp$ and $h^{m_1}\!.\bp$, so if
$m_0$ and $m_1$ have different signs:
 \[d^\pi_{{\H}}(g_0.\bp,g_1.\bp)\eadd
 d(h^{m_0}\!.\bp,h^{m_1}\!.\bp)\eadd d(\bp,h^{m_0}\!.\bp)+d(\bp,h^{m_1}\!.\bp)\eadd
d^\pi_{{\H}}(\bp,g_0.\bp)+d^\pi_{{\H}}(\bp,g_1.\bp)\]
However, $d^\pi_{{\H}}(g_0.\bp,g_1.\bp)\leqslant
d^\pi_{{\H}}(\bp,g_1.\bp)-K$, so this would imply \[K\ladd
d^\pi_{{\H}}(\bp,g_0.\bp)\ladd -K,\] which is false for sufficiently large $K$.

No geodesic between $g_i.\qtbp$ and $h^{m_i}\!.\qtbp$ enters
$\clball{\bottleneck}{\qtbp}$, since this would imply:
\[d(h^{m_1}\!.\qtbp,\qtbp)\leqslant 2\bottleneck+\constc\]

For $\min\{m_0,m_1\}\leqslant m\leqslant \max\{m_0,m_1\}-1$ the
geodesic $h^m\!.\geo$ stays outside $\clball{\bottleneck}{\qtbp}$
since $m_0$ and $m_1$ have the same sign and magnitude at least $M$, which implies $|m|\geqslant M$.

By concatenating such geodesics, we construct a path from $g_0.\qtbp$ to $g_1.\qtbp$ in
$\qt\setminus\clball{\bottleneck}{\qtbp}$.
\end{proof}

\begin{corollary}\label{corollary:commoncomponents}
  There exists an $N>0$ such that for all
  $n\geqslant N$ the points $h^n\!.\qtbp$ and $h^N\!.\qtbp$ are in
  the same component of $\qt\setminus\clball{\bottleneck}{\qtbp}$.
\end{corollary}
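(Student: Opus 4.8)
The plan is to deduce the corollary from \fullref{lemma:bigprojectionsamecomponent} by feeding in \emph{consecutive} powers of $h$, using that $\bp$ and all $h^k.\bp$ lie on $\H$ and that ``lying in a common component of $\qt\setminus\clball{\bottleneck}{\qtbp}$'' is an equivalence relation on the points of $\qt\setminus\clball{\bottleneck}{\qtbp}$.

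First I would record that, since $\pi_\H$ is coarsely the identity on $\H$ and $\bp,\,h^k.\bp\in\H$ for every integer $k$, we have $d^\pi_\H(h^k.\bp,h^l.\bp)\eadd d(h^k.\bp,h^l.\bp)$ for all integers $k,l$. In particular $d^\pi_\H(h^n.\bp,h^{n+1}.\bp)\eadd d(\bp,h.\bp)$ is bounded independently of $n$, while $d^\pi_\H(\bp,h^{n+1}.\bp)\eadd d(\bp,h^{n+1}.\bp)$; the latter tends to infinity because $i\mapsto h^i.\bp$ is a quasi-geodesic (by hypothesis on $h$, or by \fullref{lemma:quasigeodesic}), so $d(\bp,h^{n+1}.\bp)\geqslant (n+1)/\constm-\consta$. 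Hence there is an $N$ such that for all $n\geqslant N$ we have $d^\pi_\H(\bp,h^{n+1}.\bp)-d^\pi_\H(h^{n+1}.\bp,h^n.\bp)\geqslant K$, where $K$ is the constant of \fullref{lemma:bigprojectionsamecomponent}. Enlarging $N$ if necessary, I may also assume $h^n.\qtbp\notin\clball{\bottleneck}{\qtbp}$ for $n\geqslant N$, since $d_\qt(\qtbp,h^n.\qtbp)\to\infty$ ($\hat\H$ is isometrically embedded in $\qt$ and $h^n$ escapes in $\hat\H$).

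Now for each $n\geqslant N$, applying \fullref{lemma:bigprojectionsamecomponent} with $g_1:=h^{n+1}$ and $g_0:=h^n$ shows that $h^n.\qtbp$ and $h^{n+1}.\qtbp$ lie in the same component of $\qt\setminus\clball{\bottleneck}{\qtbp}$. Chaining these relations from $N$ up to $n$ and using transitivity of ``belonging to the same component'' gives that $h^n.\qtbp$ and $h^N.\qtbp$ lie in the same component of $\qt\setminus\clball{\bottleneck}{\qtbp}$ for every $n\geqslant N$, as claimed.

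The only point requiring care—and the one place a naive approach breaks—is that one cannot simply take $g_0=h^N$ and $g_1=h^n$: because $i\mapsto h^i.\bp$ is merely a quasi-geodesic, the difference $d(\bp,h^n.\bp)-d(h^N.\bp,h^n.\bp)$ need not grow with $n$, so the hypothesis of \fullref{lemma:bigprojectionsamecomponent} may fail. Passing through consecutive powers circumvents this at the modest cost of invoking transitivity, and I do not anticipate any further obstacle beyond this bookkeeping.
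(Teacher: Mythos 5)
Your argument matches the paper's proof: the paper likewise chooses $N$ so that $d^\pi_\H(\bp,h^{n+1}.\bp)-d^\pi_\H(h^n.\bp,h^{n+1}.\bp)\geqslant K$ for $n\geqslant N$, applies \fullref{lemma:bigprojectionsamecomponent} with $g_0=h^n$, $g_1=h^{n+1}$, and implicitly chains by transitivity of ``same component.'' Your closing caveat about not taking $g_0=h^N$, $g_1=h^n$ directly is a sensible precaution, though in fact the Morse property of $\H$ (via \fullref{corollary:Morse}) would rescue that variant as well; the consecutive-powers route you and the paper both use simply avoids needing it.
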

\begin{proof}
  Take $N$ large enough so that $d^\pi_\H(\bp,h^n\!.\bp)\geqslant
  K+d(\bp,h.\bp)+2C$ for all $n\geqslant N$.
Then
$d^\pi_\H(\bp,h^{n+1}\!.\bp)-d^\pi_\H(h^n\!.\bp,h^{n+1}\!.\bp)\geqslant
K$. Apply \fullref{lemma:bigprojectionsamecomponent}.
\end{proof}

\begin{definition}\label{def:complementarycomponent}
Call the component of $\qt\setminus 
\ball{\bottleneck}{g.\qtbp}$ containing $gh^n\!.\qtbp$ for all
sufficiently large $n$ \emph{the $gh^\infty$
  component} and the component containing $gh^{-n}\!.\qtbp$ for all
sufficiently large $n$ \emph{the $gh^{-\infty}$ component}. 
\end{definition}

\begin{lemma}\label{lemma:disjointballs}
For some $K\geqslant 0$ suppose $g_0$ and $g_1$ are elements of $G$ such that
$g_0\H\neq g_1\H$ and  $d^\pi_{g_0\H}(g_0.\bp,g_1.\bp)\leqslant
K$ and $d^\pi_{g_1\H}(g_0.\bp,g_1.\bp)\leqslant K$.
Then there exists an $N>0$ such that for all $n\geqslant N$, 
  $\epsilon_0,\,\epsilon_1\in\{\pm 1\}$, and $f_0,f_1\in\{g_0,g_1\}$
  \begin{itemize}
  \item the balls 
  $\clball{\bottleneck}{f_0h^{\epsilon_0n/2}\!.\qtbp}$ and 
  $\clball{\bottleneck}{f_1h^{\epsilon_1n/2}\!.\qtbp}$ in $\qt$ are disjoint unless 
  $f_0=f_1$ and $\epsilon_0=\epsilon_1$,
  \item $f_0.\qtbp$ and $f_1.\qtbp$ are in the
    $f_0h^{-\epsilon_0\infty}$ component of
    $\qt\setminus\clball{\bottleneck}{f_0h^{\epsilon_0n/2}\!.\qtbp}$, and
\item $f_0h^{\epsilon_0n}\!.\qtbp$ and $f_0h^{\epsilon_0n}f_1.\qtbp$ are in the
  $f_0h^{\epsilon_0\infty}$ component of
    $\qt\setminus\clball{\bottleneck}{f_0h^{\epsilon_0n/2}\!.\qtbp}$.
  \end{itemize}
 \end{lemma}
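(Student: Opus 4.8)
The plan is to push everything into the quasi-tree $\qt$ and, using that its construction is $G$--equivariant, reduce each assertion to \fullref{corollary:commoncomponents} together with an elementary distance estimate. Observe that $g_0,g_1,h,K$, and hence the numbers $R:=\max_{i,j\in\{0,1\}}d_\qt(g_i.\qtbp,g_j.\qtbp)$ and $R':=\max_i d_\qt(\qtbp,g_i.\qtbp)$, are fixed and only $n$ varies. Two standing inputs: (i) since $E(h)$ is virtually cyclic, the orbit map $i\mapsto h^i.\qtbp$ lies in the isometrically embedded copy of $\hat\H$ and is a quasi-geodesic in $\qt$, so there are $\constm\geqslant 1$, $\consta\geqslant 0$ with $d_\qt(fh^i.\qtbp,fh^j.\qtbp)\geqslant\frac{|i-j|}{\constm}-\consta$ for every $f\in G$, by equivariance; (ii) unwinding \fullref{prop:allthesame}, \fullref{lemma:cpp}, the definition of the projections used to build $\qt$, and the coarse agreement (\fullref{thm:BBF}) of closest-point projection in $\qt$ with those projections, the hypotheses $d^\pi_{g_i\H}(g_0.\bp,g_1.\bp)\leqslant K$ translate into a constant $K'$ with $d_\qt\!\big(\hat\pi_{g_0\hat\H}(g_1\hat\H),g_0.\qtbp\big)\leqslant K'$ and $d_\qt\!\big(\hat\pi_{g_1\hat\H}(g_0\hat\H),g_1.\qtbp\big)\leqslant K'$, where $\hat\pi_{g_i\hat\H}(g_{1-i}\hat\H)$ has uniformly bounded diameter by \fullref{lemma:axiom0} and $\hat\pi$ is coarsely $1$--Lipschitz by \fullref{lemma:coarse1Lipschitz}.

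For the first bullet it suffices, in the two cases left open, to show $d_\qt(f_0h^{\epsilon_0n/2}.\qtbp,f_1h^{\epsilon_1n/2}.\qtbp)>2\bottleneck$ once $n$ is large. If $f_0=f_1$ but $\epsilon_0\neq\epsilon_1$, then by equivariance and (i) this distance is $\geqslant\frac{n}{\constm}-\consta$. If $f_0\neq f_1$, the hypotheses are symmetric so take $f_0=g_0$, $f_1=g_1$; then $f_1h^{\epsilon_1n/2}.\qtbp\in g_1\hat\H$ has $\hat\pi_{g_0\hat\H}$--image within a bounded distance of $g_0.\qtbp$ by (ii), while $f_0h^{\epsilon_0n/2}.\qtbp$ lies on $g_0\hat\H$, hence is (coarsely) its own $\hat\pi_{g_0\hat\H}$--image, at distance $\geqslant\frac{n}{2\constm}-\consta$ from $g_0.\qtbp$ by (i); coarse $1$--Lipschitzness of $\hat\pi_{g_0\hat\H}$ then gives $d_\qt(f_0h^{\epsilon_0n/2}.\qtbp,f_1h^{\epsilon_1n/2}.\qtbp)\geqslant\frac{n}{2\constm}-\consta-K'-O(1)$. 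Both lower bounds exceed $2\bottleneck$ for $n$ large.

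For the second and third bullets I translate each statement by $(f_0h^{\epsilon_0n/2})^{-1}$. This centers the removed ball at $\qtbp$, and by the definition of the $f_0h^{\pm\epsilon_0\infty}$ components it turns the second bullet into ``$h^{-\epsilon_0n/2}.\qtbp$ and a point at distance $\leqslant R$ from it lie in the $h^{-\epsilon_0\infty}$ component of $\qt\setminus\clball{\bottleneck}{\qtbp}$'' and the third into ``$h^{\epsilon_0n/2}.\qtbp$ and a point at distance $\leqslant R'$ from it lie in the $h^{\epsilon_0\infty}$ component of $\qt\setminus\clball{\bottleneck}{\qtbp}$''. In each case the distinguished power of $h$ lands in the required component by \fullref{corollary:commoncomponents} (applied to $h$ or to $h^{-1}$) as soon as $n/2\geqslant N_0$, and the second point lies in the same component because, by (i), $d_\qt(h^{\pm\epsilon_0n/2}.\qtbp,\qtbp)\geqslant\frac{n}{2\constm}-\consta$, so for $n$ large every geodesic between the two points stays outside $\clball{\bottleneck}{\qtbp}$. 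Taking $N$ to be the maximum of the finitely many thresholds on $n$ produced along the way completes the proof.

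The step I expect to be the main obstacle is input (ii): chasing the hypothesis $d^\pi_{g_i\H}(g_0.\bp,g_1.\bp)\leqslant K$ --- stated for the contracting projections of $G\act\X$ --- through the successive layers of coarse identifications (closest-point projection in $\X$ versus $\pi$, the quasi-isometries $\phi_{g_i\H}$, closest-point projection in $\qt$ versus the abstract projections of \fullref{def:projectionaxioms}) to a clean inequality in $\qt$ with all error terms independent of $n$. Everything after that is bookkeeping with $\bottleneck$ and with the quasi-geodesic constants of $i\mapsto h^i.\qtbp$.
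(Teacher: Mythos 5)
Your proof is correct, and it reaches the three bullets by a genuinely different route than the paper does. For the first bullet when $f_0\neq f_1$, the paper works with arbitrary points of the ball $\clball{\bottleneck}{f_1h^{\epsilon_1 n/2}.\qtbp}$ and invokes projection axiom (P1) (\fullref{lemma:axiom1}) to transfer a large projection to $f_1\hat{\H}$ into a small projection to $f_0\hat{\H}$, concluding that the two balls have disjoint $\hat{\pi}_{f_0\hat{\H}}$--images; you instead estimate only the distance between the two centers, projecting both to the single axis $g_0\hat{\H}$ and using coarse $1$--Lipschitzness of closest-point projection in $\qt$. Your version is more economical, but note it relies on $\hat{\pi}_{g_0\hat{\H}}$ being coarsely $1$--Lipschitz \emph{in $\qt$}, which is not literally \fullref{lemma:coarse1Lipschitz} (stated for projections in $\X$); one should remark that $\qt$ is a quasi-tree and the embedded $g_0\hat\H$ is totally geodesic, so closest-point projection in $\qt$ is strongly constricting and hence coarsely $1$--Lipschitz — a standard fact, but worth saying. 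For the second and third bullets the paper verifies the numerical hypothesis of \fullref{lemma:bigprojectionsamecomponent} (taking $N$ large enough that $d(\bp,h^{n/2}.\bp)\geqslant K'+K+2C$, plus twice the threshold of \fullref{corollary:commoncomponents}), whereas you translate by $(f_0 h^{\epsilon_0 n/2})^{-1}$, apply \fullref{corollary:commoncomponents} to place $h^{\mp\epsilon_0 n/2}.\qtbp$ in the right component, and then observe that any geodesic from it to a point within the fixed radius $R$ (or $R'$) stays outside $\clball{\bottleneck}{\qtbp}$ once $n$ is large. This direct geodesic argument is simpler and avoids an appeal to \fullref{lemma:bigprojectionsamecomponent} altogether; the paper's route has the mild advantage of reusing a lemma it already needs elsewhere. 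Your identification of the unwinding of the hypothesis (step (ii)) as the technical bottleneck is accurate, and your chain of reductions through \fullref{lemma:axiom0}, the quasi-isometries $\phi_{g_i}$, and the coarse agreement in \fullref{thm:BBF} is the right one — this is also essentially what the paper does at the corresponding point, where it notes that the bounds $d^\pi_{f_i\H}(f_i.\bp,f_{1-i}.\bp)\leqslant K$ put $\hat\pi_{f_i\hat\H}(f_{1-i}\hat\H)$ in a bounded neighborhood of $f_i.\qtbp$.
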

 \begin{proof}
$\clball{\bottleneck}{f_0h^{n/2}\!.\qtbp}$ and $\clball{\bottleneck}{f_0h^{-n/2}\!.\qtbp}$ are
    disjoint for all sufficiently large $n$ since $i\mapsto
    h^i\!.\qtbp$ is a quasi-geodesic.
In the other cases, $f_0\H$ and $f_1\H$ are
distinct axes, so $f_0\hat{\H}$ and $f_1\hat{\H}$ are disjoint.
For each $i\in\{0,1\}$, the bounds $d^\pi_{f_i\H}(f_i.\bp,f_{1-i}.\bp)\leqslant
K$ imply that 
the closest point projection $\hat{\pi}_{f_{i}\hat{\H}}(f_{1-i}\hat{\H})$ of $f_{1-i}\hat{\H}$ to
$f_{i}\hat{\H}$ is contained in a bounded neighborhood
of $f_{i}.\qtbp$.
For any point $y.\qtbp\in
\clball{\bottleneck}{f_{1}h^{\epsilon_{1}n/2}\!.\qtbp}\setminus
f_{1}\hat{\H}$, we have that $\hat{\pi}_{f_{1}\hat{\H}}(y\hat{\H})$
is $2\bottleneck$--close to $f_{1}h^{\epsilon_{1}n/2}\!.\qtbp$.
Therefore \[d^\pi_{f_{1}\hat{\H}}(f_0\hat{\H},y\hat{\H})\eadd d(f_{1}.\qtbp,f_{1}h^{\epsilon_{1}n/2}\!.\qtbp)\eaddmul n,\] so for $n$ sufficiently large we can make $d^\pi_{f_{1}\hat{\H}}(f_0\hat{\H},y\hat{\H})$ larger then the constant $\xi$ of
projection axiom (P1), which implies
$d^\pi_{f_0\hat{\H}}(f_{1}\hat{\H},y\hat{\H})\leqslant\xi$.
On the other hand,
$\clball{\bottleneck}{f_{0}h^{\epsilon_{0}n/2}\!.\qtbp}$ projects close to
$f_{0}h^{\epsilon_{0}n/2}\!.\qtbp$ in $f_0\hat{\H}$, so for large enough $n$
the balls have disjoint projections, which means the balls are disjoint.

For the second statement, suppose $N$ is large enough so that for all
$n\geqslant N$ we have $d(\bp,h^{n/2}\!.\bp)\geqslant K'+K+2C$, where $K'$ is the constant of
\fullref{lemma:bigprojectionsamecomponent}.
Then \[d^\pi_{f_0\H}(f_0h^{\epsilon_0 n/2}\!.\bp,f_0.\bp)-d^\pi_{f_0\H}(f_0.\bp,f_{1}.\bp)\geqslant K',\]
so \fullref{lemma:bigprojectionsamecomponent} implies $f_0.\qtbp$ and
$f_1.\qtbp$ are in the same component of
$\qt\setminus\clball{\bottleneck}{f_0h^{\epsilon_0 n/2}\!.\qtbp}$.
If, in addition, $N$ is at least
twice the constant of \fullref{corollary:commoncomponents}, then this
is the $f_0h^{-\epsilon_0\infty}$ component.

The proof of the third statement is similar.
 \end{proof}

\begin{proof}[Proof of \fullref{lemma:lotsofcontractingelements}]
Strongly constricting is the same as strongly contracting, by
\fullref{prop:allthesame}, so suppose $h$ is a
$(1,\consta)$--strongly constricting element.
By \fullref{lemma:cpp}, there exists a $\constc$ such that $\pi_\H$ is
$\constc$--coarsely equivalent to closest point projection.
Recall that $\constc>\consta$.
By \fullref{lemma:coarse1Lipschitz}, there exists a $\constc'$ such that
$\pi_\H$ is $\constc'$--coarsely 1-Lipschitz.

Let $\Gamma$ be an infinite normal subgroup of $G$.
Every infinite order element of $E(h)$ is strongly contracting, so if
$\Gamma$ contains such an element we are done.
Otherwise, $\Gamma\cap E(h)$ is finite.
Since $\Gamma$ is infinite, there exists an element $g\in\Gamma$ such that $g\notin E(h)$. 
We claim that for sufficiently large $n$ the element
$f:=gh^ng^{-1}h^{-n}\in\Gamma$ is strongly constricting.

For brevity, let $f^{i+1/2}$ denote $f^igh^n$.
Let $\hat{\A}_i:=f^{i/2}\hat{\H}$ and $\A_i:=f^{i/2}\H$.
Define $\mathcal{B}_0:=\clball{\bottleneck}{h^{n/2}\!.\qtbp}$,
$\mathcal{B}_1:=\clball{\bottleneck}{f^{1/2}h^{-n/2}\!.\qtbp}$,
  and $\mathcal{B}_{2k+i}:=f^k\mathcal{B}_i$ for $k\in\mathbb{Z}$.
Let $\hat{\zuruck}_i:=f^{i/2}h^{(-1)^in}\!.\qtbp\in\qt$ and
$\zuruck_i:=f^{i/2}h^{(-1)^in}\!.\bp\in\X$.
Let $\hat{\vor}_i:=f^{i/2}\!.\qtbp\in\qt$ and
$\vor_i:=f^{i/2}\!.\bp\in\X$.
See \fullref{fig:qtballs}.

\begin{figure}[h]
\labellist
\small
\pinlabel $\mathcal{B}_{i-1}$ [b] at 0 20
\pinlabel $\mathcal{B}_i$ [t] at 120 40
\pinlabel $\mathcal{B}_{i+1}$ [b] at 200 20
\pinlabel $\mathcal{B}_{i+2}$ [t] at 320 40
\pinlabel $\hat{\A}_i$ [b] at 40 50 
\pinlabel $\hat{\A}_{i+1}$ [t] at 280 10
\pinlabel $\hat{\A}_{i+2}$ [b] at 240 50
\pinlabel $\hat{\A}_{i-1}$ [t] at 80 10
\tiny
\pinlabel $\hat{\zuruck}_i$ [tl] at 62 50
\pinlabel $\hat{\vor}_i$ [tl] at 162 50
\pinlabel $\hat{\zuruck}_{i+2}$ [tl] at 262 50
\pinlabel $\hat{\vor}_{i-1}$ [br] at 59 10
\pinlabel $\hat{\zuruck}_{i+1}$ [br] at 159 10
\pinlabel $\hat{\vor}_{i+1}$ [br] at 259 10 
\endlabellist
  \centering
  \includegraphics{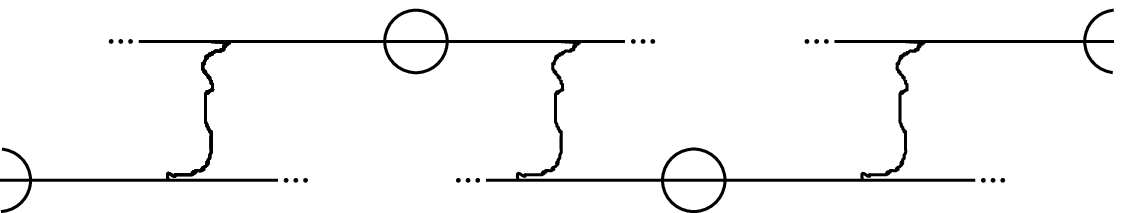}
  \caption{Disjoint balls in $\qt$.}
  \label{fig:qtballs}
\end{figure}

By repeated applications of \fullref{lemma:disjointballs},
for large enough $n$ the balls $\mathcal{B}_i$ are pairwise disjoint.
There are two orbits of these balls under the $f$--action, so $f$ is
an infinite order element.
Furthermore, the balls are linearly ordered by separation, consistent with
the subscripts, since for all $i$ we have that $\mathcal{B}_j$ is contained in
the $f^{i/2}h^{(-1)^{i+1}\infty}$ component of
$\qt\setminus\mathcal{B}_i$ for all $j>i$, and in the
$f^{i/2}h^{(-1)^{i}\infty}$ component for all $j<i$.

For any $i$ and any $j<i-1$ the ball $\mathcal{B}_{i-1}$ separates
$\hat{\A}_j$ from $\hat{\A}_i$ in $\qt$, so
$\hat{\pi}_{\hat{\A}_i}(\hat{\A}_j)$ is contained in a bounded
neighborhood of $\hat{\pi}_{\hat{\A}_i}(\hat{\A}_{i-1})$, which in
turn we know is contained in a bounded neighborhood of
$\hat{\zuruck}_i$.
Conversely, $\hat{\pi}_{\hat{\A}_i}(\hat{\A}_j)$ is contained in
a bounded neighborhood of $\hat{\vor}_i$ for $j>i$.
Since $\hat{\pi}_{\hat{\A}_i}$ agrees with $\pi_{\A_i}$ up to
bounded error, the same statements are true for the axes in $\X$. 
That is, there exists a $\constb$, independent of $n$, such that for all $i$
we have
\begin{itemize} 
\item $d^\pi_{\A_i}(\zuruck_i, \A_j)\leqslant \constb$ if $j<i$, and
\item $d^\pi_{\A_i}(\vor_i, \A_j)\leqslant \constb$ if $j>i$.
\end{itemize}
Define $\constb':=2\constb+\consta+2\constc+\constc'$.

Suppose that for some $x\in\X$ there exists an $i$ such that
$d^\pi_{\A_i}(x,\vor_i)>\constb'$.
Then for any $j>i$ we have $d(\pi_{\A_i}(x),\pi_{\A_i}(\A_j))>\constc>\consta$.
Let $y$ be a point of $\A_j$ closest to $x$.
On any given geodesic from $x$ to $y$ there is a point
$z\in\clball{\consta+\constb}{\vor_i}$, since $d_{\A_i}^\pi(x,y)>\consta$.
Now $\pi_{\A_j}$ is $\constc$--coarsely equivalent to closest point
projection, and $y$ is closest to both $x$ and $z$, so
$d^\pi_{\A_j}(x,z)\leqslant 2\constc$.
However, $z$ is $(\consta+\constb)$--close to $\vor_i$, and
$d^\pi_{\A_j}(\vor_i,\zuruck_j)\leqslant \constb$, so
$d^\pi_{\A_j}(x,\zuruck_j)\leqslant 2\constc+\consta+\constb+\constc'+\constb=\constb'$.

We have shown that $d^\pi_{\A_i}(x,\vor_i)>\constb'$ implies
$d^\pi_{\A_j}(x,\zuruck_j)\leqslant \constb'$ for all $j>i$.
A similar argument shows that $d^\pi_{\A_i}(x,\zuruck_i)>\constb'$ implies
$d^\pi_{\A_j}(x,\vor_j)\leqslant \constb'$ for all $j<i$.

Assume that $n$ is large enough so that 
$d^\pi_{\A_0}(\zuruck_0,\vor_0)=d^\pi_\H(h^n\!.\bp,\bp)>2\constb'+2\consta+2\constc+d(\bp,g.\bp)$.
Define $\mathcal{F}:=\cup_{i\in\mathbb{Z}}\{\vor_i\}$.
We wish to define $\pi_\mathcal{F}\from
\X\to\mathcal{F}$ by sending a point $x$ to the point $\vor_\alpha$
where $\alpha$ is the greatest integer such that
$d^\pi_{\A_\alpha}(x,\vor_{\alpha})\leqslant \constb'$, but we must
verify that such an $\alpha$ exists.
Fix an $x\in\X$, and suppose that $\iota\in\mathbb{Z}$ is such that
$d(x,\A_\iota)=\min_{j\in\mathbb{Z}} d(x,\A_j)$. 
Such an $\iota$ exists since the action is properly discontinuous.
Suppose that $d^\pi_{\A_{\iota}}(x,\vor_{\iota})\leqslant \constb'$. 
By the assumption on $n$, $d^\pi_{\A_{\iota}}(x,\zuruck_{\iota})> \constb'$, so
$d^\pi_{\A_j}(x,\vor_{\iota})\leqslant \constb'$ for all $j<\iota$.
A brief computation shows that
$d^\pi_{\A_{\iota+1}}(x,\zuruck_{\iota+1})\leqslant d(x,\A_{\iota+1})+d(\bp,g.\bp)+\constb'+2\consta+\constc$.
By \fullref{lemma:cpp}, $d(\zuruck_{\iota+1},\pi_{\A_{\iota+1}}(x))\leqslant
d(\bp,g.\bp)+\constb'+2\consta+2\constc$, which, again by our assumption on
$n$, implies $d^\pi_{\A_{\iota+1}}(x,\vor_{\iota+1})> \constb'$.
We conclude that $\alpha\leqslant\iota$.
The previous paragraph then tells us that $d^\pi_{\A_j}(x,\zuruck_j)\leqslant \constb'$ for all $j>\alpha+1$.

\begin{figure}[h]
\labellist
\tiny
\pinlabel $\ball{\constb'}{\zuruck_{i+1}}$ [tl] at 72 50
\pinlabel $\ball{\constb'}{\vor_{i+1}}$ [tl] at 172 50
\pinlabel $\ball{\constb'}{\zuruck_{i+3}}$ [tl] at 272 50
\pinlabel $\ball{\constb'}{\vor_{i}}$ [br] at 39 10
\pinlabel $\ball{\constb'}{\zuruck_{i+2}}$ [br] at 139 10
\pinlabel $\ball{\constb'}{\vor_{i+2}}$ [br] at 239 10 
\pinlabel \textcircled{x} [t] at 45 9 
\pinlabel \textcircled{y} [t] at 52 9
\pinlabel \textcircled{x} [t] at 150 9 
\pinlabel \textcircled{y} [t] at 247 9
\pinlabel \textcircled{x} [b] at 150 47 
\pinlabel \textcircled{y} [b] at 165 47
\pinlabel \textcircled{x} [b] at 261 47 
\pinlabel \textcircled{y} [b] at 268 47
\endlabellist
  \centering
  \includegraphics{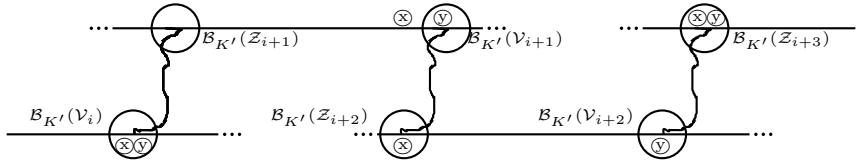}
  \caption{Projections \textcircled{x} of $x$ and \textcircled{y} of $y$ to each axis.}
  \label{fig:iplus2}
\end{figure}

Now suppose $x$ and $y$ are points with $\pi_\mathcal{F}(x)=\vor_i$ and $\pi_\mathcal{F}(y)=\vor_j$ for
$j>i+1$. 
Then for each $i+2\leqslant k\leqslant j$ we have $d^\pi_{\A_k}(x,y)\geqslant
d^\pi_{\A_k}(\zuruck_k,\vor_k)-2\constb'>\consta$. 
\fullref{fig:iplus2} depicts a situation with $j=i+2$ that shows $j>i+1$
is necessary, since the projections to $\A_{i+1}$ may be close.
By the strong constriction property for each $\A_k$, every
geodesic from $x$ to $y$ passes $(\consta+\constb')$--close to $\zuruck_k$ and $\vor_k$.
So every geodesic passes within $\consta+\constb'$ of  $\pi_\mathcal{F}(y)=\vor_j$ and
within $\consta+\constb'$ of $\zuruck_{i+2}$, which boundedly close to
$\pi_\mathcal{F}(x)=\vor_i$.

Therefore, $\pi_\mathcal{F}$ is $(1, \max\{d(\vor_0,\vor_2),
\consta+\constb'+d(\vor_0,\zuruck_2)\})$--strongly constricting.
\fullref{lemma:quasigeodesic} says $i\mapsto f^i\!.\bp$ is a
quasi-geodesic, so $f\in\Gamma$ is a strongly contracting element.
\end{proof}

\section{A Minimal Section}
Let $\X$ be a $G$--space with basepoint $\bp$. 
Suppose that there exists a strongly contracting element for $G\act\X$.
Let $\Gamma$ be an
infinite normal subgroup of $G$.
By \fullref{lemma:lotsofcontractingelements}, there exists a strongly
contracting element $h\in \Gamma$.
Let $\H=E(h).\bp$ be an axis for $h$, and define equivariant
projections to translates of $\H$ as in 
\fullref{def:equivariantprojections}.
Suppose $\pi_\H$ is a $(1,\consta)$--strongly constricting $\consta$--coarse map.

\begin{definition}
For each element $g\Gamma\in G/\Gamma$ choose an element $\bar{g}\in
g\Gamma$ such that $d(\bp,\bar{g}.\bp)= d(\bp,g\Gamma.\bp)=d(\Gamma.\bp,g\Gamma.\bp)$.
Let $\bar{G}:=\{\bar{g}\mid g\Gamma\in G/\Gamma\}$.
We call $\bar{G}$ a \emph{minimal section}, and let $\bar{\G}$ denote $\bar{G}.\bp$.
\end{definition}

Observe that
$\Theta'_{G/\Gamma}(s)=\Theta'_{\bar{G}}(s)$, so $\rate_{G/\Gamma}=\rate_{\bar{G}}$.

The next lemma says, coarsely, that the minimal section is orthogonal
to translates of $\H$.

\begin{lemma}\label{lemma:boundedprojection}
 For every $\bar{g}\in\bar{G}$
 and for every $f\in G$ we have $d_{f\H}^\pi(\bp,\bar{g}.\bp)\leqslant
 8\consta+\constc$, where $\constc:=\diam \langle h\rangle\backslash \H$.
\end{lemma}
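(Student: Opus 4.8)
The plan is to exploit the fact that $\bar g$ is a \emph{minimal} coset representative: $d(\bp,\bar g.\bp)=d(\Gamma.\bp,\bar g\Gamma.\bp)\leqslant d(\gamma.\bp,\bar g\gamma'.\bp)$ for all $\gamma,\gamma'\in\Gamma$. Since $h$ is a strongly contracting element of $\Gamma$ (this is where we use that $h\in\Gamma$, established in the setup of this section), every translate $h^k$ lies in $\Gamma$, so the word $\bar g$ and the words $h^{k}\bar g h^{m}$ have basepoint displacements we can compare. First I would set up the following: let $f\in G$ be arbitrary and suppose, for contradiction, that $d^\pi_{f\H}(\bp,\bar g.\bp)$ is large. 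By equivariance (Remark~\ref{rem:equivariant}), it suffices to treat $f=1$, i.e.\ to bound $d^\pi_{\H}(\bp,\bar g.\bp)$; the general case follows by replacing $h$ with the strongly contracting element $f^{-1}hf$ whose axis is $f^{-1}.\H$, or simply by applying the equivariance of the projections directly.

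So the core is: show $d^\pi_\H(\bp,\bar g.\bp)\laddmul \consta$. Suppose $d^\pi_\H(\bp,\bar g.\bp)>\consta$. Pick a geodesic $\geo$ from $\bp$ to $\bar g.\bp$. By strong constriction of $\pi_\H$ (Definition~\ref{def:constricting}, using that strongly contracting $=$ strongly constricting by Proposition~\ref{prop:allthesame}), $\geo$ passes within $\consta$ of both $\pi_\H(\bp)$ and $\pi_\H(\bar g.\bp)$, which are $\consta$-close to $\bp$ and to some $h^k.\bp$ respectively (up to the additive constant $\constc=\diam\langle h\rangle\backslash\H$, since $\pi_\H$ coarsely agrees with closest-point projection to $\H=E(h).\bp$ and $\langle h\rangle$ has index $\leqslant |{\langle h\rangle\backslash E(h)}|$, with the $\constc$ absorbing the difference between $\H$ and $\langle h\rangle.\bp$). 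Then by Lemma~\ref{lemma:distancelb},
\[
d(\bp,\bar g.\bp)\;\geqslant\; d(\bp,\pi_\H(\bp))+d^\pi_\H(\bp,\bar g.\bp)+d(\pi_\H(\bar g.\bp),\bar g.\bp)-6\consta\;\geqslant\; d^\pi_\H(\bp,\bar g.\bp)-6\consta.
\]
On the other hand, consider the element $h^{-k}\bar g\in\bar g\Gamma$ (here $h^{-k}\in\Gamma$ since $h\in\Gamma\trianglelefteqslant G$; note $\bar g\Gamma=\bar g\cdot h^{-k}\cdot(h^{k})$... more carefully, $h^{-k}\bar g\Gamma$: since $\Gamma$ is normal, $h^{-k}\bar g\Gamma=\bar g(\bar g^{-1}h^{-k}\bar g)\Gamma=\bar g\Gamma$). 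By minimality of $\bar g$ in its coset, $d(\bp,\bar g.\bp)\leqslant d(\bp,h^{-k}\bar g.\bp)=d(h^k.\bp,\bar g.\bp)$. But $h^k.\bp$ is within $\consta+\constc$ of $\pi_\H(\bar g.\bp)$, so $d(h^k.\bp,\bar g.\bp)\leqslant d(\pi_\H(\bar g.\bp),\bar g.\bp)+\consta+\constc$. Combining with the reverse inequality $d(\bp,\pi_\H(\bar g.\bp))\geqslant d(\bp,\bar g.\bp)-d(\pi_\H(\bar g.\bp),\bar g.\bp)$ and the triangle-inequality lower bound $d(\bp,\pi_\H(\bar g.\bp))\gadd d^\pi_\H(\bp,\bar g.\bp)-\consta-\constc$ (since $\pi_\H(\bp)$ is $(\consta+\constc)$-close to $\bp$), we get
\[
d(\bp,\bar g.\bp)\;\geqslant\; d(\bp,\bar g.\bp)+d^\pi_\H(\bp,\bar g.\bp)-\text{(bounded)},
\]
forcing $d^\pi_\H(\bp,\bar g.\bp)\laddmul\consta+\constc$, and bookkeeping the constants yields the bound $8\consta+\constc$.

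The main obstacle I anticipate is the constant bookkeeping — in particular keeping straight the three distinct ways a point can be "close to the axis" ($\pi_\H$ versus closest-point projection versus the $\langle h\rangle$-orbit $\H$ versus $E(h)$-orbit, all differing by $\consta$ or $\constc$), and making sure that the element $h^{-k}\bar g$ genuinely lies in the coset $\bar g\Gamma$ and is therefore a legitimate competitor in the minimality inequality defining $\bar g$. The normality of $\Gamma$ is the crucial structural input that makes $h^{-k}\bar g\Gamma=\bar g\Gamma$; without it the whole argument collapses. The geometric content is just: the projection of $\bar g.\bp$ to $\H$ is bounded because pushing $\bar g.\bp$ back along $\H$ by $h^{-k}$ would decrease its distance to $\bp$, contradicting minimality of $\bar g$ in its $\Gamma$-coset.
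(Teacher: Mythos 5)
Your core argument for the case $f=1$ is correct and is essentially the paper's proof specialized to that case: both compare $\bar g$ with an element obtained by left-multiplying by a power of the contracting element (lying in $\Gamma$ by normality), and both derive a contradiction with minimality via \fullref{lemma:distancelb}. The gap is in the claimed reduction from general $f$ to $f=1$, and it is a real one.

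First, the conjugation reduction as stated is wrong: the quasi-axis of $f^{-1}hf$ with respect to the fixed basepoint $\bp$ is $E(f^{-1}hf).\bp = f^{-1}E(h)f.\bp$, which is generally \emph{not} equal to $f^{-1}.\H = f^{-1}E(h).\bp$ (they coincide only when $f.\bp \in E(h).\bp$). Second, even setting this aside, your $f=1$ argument uses the fact that $\bp$ lies on $\H$ in an essential way: you find $h^k.\bp\in\H$ near $\pi_\H(\bar g.\bp)$ and then bound $d(\bp,h^{-k}\bar g.\bp)=d(h^k.\bp,\bar g.\bp)$ by jumping from $h^k.\bp$ straight to $\pi_\H(\bar g.\bp)$ to $\bar g.\bp$. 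For a general translate $f\H$, the basepoint $\bp$ is not on $f\H$, and there is no element $\gamma\in\Gamma$ making $\gamma.\bp$ land near $\pi_{f\H}(\bar g.\bp)\subset f\H$; the translate $fh^kf^{-1}.\bp$ keeps the same distance to $f\H$ as $\bp$ does. So the shortcut from $h^k.\bp$ directly to the axis is unavailable. The equivariance of projections ($\pi_{f\H}(\bp)=f.\pi_\H(f^{-1}.\bp)$) converts the general case into bounding $d^\pi_\H(f^{-1}.\bp, f^{-1}\bar g.\bp)$, but $f^{-1}.\bp$ is likewise not on $\H$, so the same obstruction reappears.

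The correct generalization, which is what the paper does, is to compare $\bar g$ with $fh^nf^{-1}\bar g\in\bar g\Gamma$ and to estimate $d(\bp, fh^nf^{-1}\bar g.\bp)$ entirely through the projections to $f\H$. Using the equivariance $\pi_{f\H}(fh^nf^{-1}.x)=fh^nf^{-1}.\pi_{f\H}(x)$ (valid because $fh^nf^{-1}$ preserves $f\H$ as a set) one chooses $n\neq 0$ so that $\pi_{f\H}(fh^nf^{-1}\bar g.\bp)$ is $(\constc+2\consta)$-close to $\pi_{f\H}(\bp)$; then the triangle inequality gives $d(\bp,fh^nf^{-1}\bar g.\bp)\leqslant d(\bp,\pi_{f\H}(\bp)) + (\constc+2\consta) + d(\pi_{f\H}(\bar g.\bp),\bar g.\bp)$, and \fullref{lemma:distancelb} gives $d(\bp,\bar g.\bp)\geqslant d(\bp,\pi_{f\H}(\bp)) + d^\pi_{f\H}(\bp,\bar g.\bp) + d(\pi_{f\H}(\bar g.\bp),\bar g.\bp) - 6\consta$, yielding a contradiction when $d^\pi_{f\H}(\bp,\bar g.\bp)>8\consta+\constc$. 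Note how the term $d(\bp,\pi_{f\H}(\bp))$, which may be large for $f\neq 1$, appears in both bounds and cancels; your $f=1$ argument implicitly relied on this term being negligible.
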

\begin{proof}
Suppose not.
Then there exists
an $n\neq 0$ such that:
\begin{align*}
   D&\geqslant
   d(\pi_{f\H}(\bp),fh^nf^{-1}.\pi_{f\H}(\bar{g}.\bp))\\
&\geqslant d^\pi_{f\H}(\bp,fh^nf^{-1}\bar{g}.\bp)-2\consta
\end{align*}
Thus, $d^\pi_{f\H}(\bp,\bar{g}.\bp)-d^\pi_{f\H}(\bp,fh^n\inv{f}\bar{g}.\bp)>6\consta$.
However:
\begin{align*}
  d(\bp,fh^n&f^{-1}\bar{g}.\bp)\\
&\leqslant
d(\bp,\pi_{f\H}(\bp))+d^\pi_{f\H}(\bp,fh^n\inv{f}\bar{g}.\bp)+d(\pi_{f\H}(fh^n\inv{f}\bar{g}.\bp),fh^n\inv{f}\bar{g}.\bp)\\
&<
d(\bp,\pi_{f\H}(\bp))+d^\pi_{f\H}(\bp,\bar{g}.\bp)+d(\pi_{f\H}(fh^n\inv{f}\bar{g}.\bp),fh^n\inv{f}\bar{g}.\bp)-6\consta\\
&= d(\bp,\pi_{f\H}(\bp))+d^\pi_{f\H}(\bp,\bar{g}.\bp)+d(\pi_{f\H}(\bar{g}.\bp),\bar{g}.\bp)-6\consta\\
&\leqslant d(\bp,\bar{g}.\bp) \quad\text{(by \fullref{lemma:distancelb})}
\end{align*}
This contradicts minimality of $\bar{G}$, since $fh^nf^{-1}\bar{g}=\bar{g}\inv{\bar{g}}fh^n\inv{f}\bar{g}\in\bar{g}\Gamma$.
\end{proof}

\begin{corollary}\label{corollary:maxsepdist}
If
  $d(\bar{g}.\bp,\bar{g}'\!.\bp)\geqslant 18\consta+2\constc$ for
  $\bar{g},\,\bar{g}'\in\bar{G}$ then there is no $f\in G$
  such that $\bar{g}.\bp\in f\H$ and $\bar{g}'\!.\bp\in f\H$.
\end{corollary}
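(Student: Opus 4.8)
The plan is to obtain this as a short consequence of \fullref{lemma:boundedprojection}. I argue by contradiction: assume $d(\bar{g}.\bp,\bar{g}'.\bp)\geqslant 18\consta+2\constc$ and suppose, for contradiction, that there is an $f\in G$ with $\bar{g}.\bp\in f\H$ and $\bar{g}'.\bp\in f\H$. The projection $\pi_{f\H}$ is a $\consta$--coarse map whose restriction to $f\H$ is $\consta$--coarsely the identity (this is built into \fullref{def:contracting} and \fullref{def:constricting}, and is transported to the translate $f\H$ via \fullref{def:equivariantprojections}), so every point of $\pi_{f\H}(\bar{g}.\bp)$ lies within $\consta$ of $\bar{g}.\bp$, and likewise every point of $\pi_{f\H}(\bar{g}'.\bp)$ lies within $\consta$ of $\bar{g}'.\bp$.

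Next I would chain a triangle inequality through the projections, routing it through an auxiliary point of $\pi_{f\H}(\bp)$. Fix $p_{\bar{g}}\in\pi_{f\H}(\bar{g}.\bp)$, $p_{\bar{g}'}\in\pi_{f\H}(\bar{g}'.\bp)$, and $p\in\pi_{f\H}(\bp)$, so that
\[
d(\bar{g}.\bp,\bar{g}'.\bp)\leqslant d(\bar{g}.\bp,p_{\bar{g}})+d(p_{\bar{g}},p)+d(p,p_{\bar{g}'})+d(p_{\bar{g}'},\bar{g}'.\bp).
\]
The outer two terms are at most $\consta$ by the first paragraph. Since $p_{\bar{g}},p\in\pi_{f\H}(\bar{g}.\bp)\cup\pi_{f\H}(\bp)$ and $p,p_{\bar{g}'}\in\pi_{f\H}(\bp)\cup\pi_{f\H}(\bar{g}'.\bp)$, the two middle terms are bounded by $d_{f\H}^\pi(\bp,\bar{g}.\bp)$ and $d_{f\H}^\pi(\bp,\bar{g}'.\bp)$ respectively, each of which is at most $8\consta+\constc$ by \fullref{lemma:boundedprojection}. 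Summing gives $d(\bar{g}.\bp,\bar{g}'.\bp)\leqslant 18\consta+2\constc$, contradicting the hypothesis, so no such $f$ exists.

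There is no real obstacle here; the corollary merely repackages \fullref{lemma:boundedprojection} into the form needed later, namely that distant points of the minimal section cannot lie on a common $G$--translate of the axis $\H$. The only mild care required is to route the estimate through a point of $\pi_{f\H}(\bp)$, since $d_{f\H}^\pi$ is the diameter of a union of projection sets rather than a genuine point-to-point distance, and to remember that $\pi_{f\H}$ agrees with the identity on $f\H$ only up to the coarseness constant $\consta$.
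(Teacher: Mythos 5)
Your proof is correct and takes essentially the same route as the paper's: both arguments route a triangle inequality through $\pi_{f\H}(\bp)$, bound the two resulting middle legs by \fullref{lemma:boundedprojection}, and use that $\pi_{f\H}$ is $\consta$--coarsely the identity on $f\H$ to control the endpoints. The paper phrases this at the level of the projection distance $d^\pi_{f\H}$ while you unfold it into a chain of point-to-point distances, but the two are the same computation.
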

\begin{proof}
  If there were such an $f$, we would have
  $d^\pi_{f\H}(\bar{g}.\bp,\bar{g}'\!.\bp)\geqslant 2(8\consta+\constc)$,
  which means either $\bar{g}$ or $\bar{g}'$ would contradict \fullref{lemma:boundedprojection}.
\end{proof}

In light of \fullref{corollary:maxsepdist}, it will be convenient to
pass to a coarsely dense subset of $\bar{\G}$ whose elements yield
distinct translates of $\H$:

\begin{definition}\label{def:A}
Let $\constb\geqslant 18\consta+2\constc$, and let $A$ be a maximal subset of $\bar{G}$
such that $1\in A$ and $d(\bar{g}.\bp,\bar{g}'\!.\bp)\geqslant \constb$ for
all distinct $\bar{g},\,\bar{g}'\in A$.
Let $\A:=A.\bp$.
\end{definition}
By maximality, for every $\bar{g}\in\bar{G}$ there is some $a\in A$ such that
$d(a.\bp,\bar{g}.\bp)\leqslant \constb$.
There are boundedly many points of $\bar{\G}$ in a ball of
radius $\constb$, so $\Theta_{\bar{\G}}(s)$ is bounded below by
$\Theta_{\A}(s)$ and above by a
constant multiple of $\Theta_{\A}(s)$.
In particular, $\Theta_{\A}(s)$ has the same convergence behavior as
$\Theta_{\bar{\G}}(s)$, so $\rate_A=\rate_{\bar{G}}=\rate_{G/\Gamma}$.

\fullref{corollary:maxsepdist} implies $a\H\neq a'\H$ for distinct $a,\, a'\in A$.

\section{Embedding a Free Product Set}

Let $A\subset\bar{G}$ as in \fullref{def:A}, and let
$A^*:=A\setminus\{1\}$.
Consider the free product set 
 $A^*\!*\mathbb{Z}_2:=\bigcup_{k=1}^\infty\{(a_1,\dots,a_k)\mid
a_i\in A^*\}$.
For any $n>0$ we can map the free product set into $G$ by $\phi_n\from(a_1,\dots, a_k)\mapsto
a_1h^na_2h^n\!\cdots a_kh^n$.
Our goal is to show $\rate_{\phi_n(A^*\!*\mathbb{Z}_2)}>\rate_A$.
We establish the inequality in the next section.
In this section we show $\phi_n$ is an injection for all sufficiently
large $n$. 
In fact, we prove something stronger:

\begin{proposition}\label{proposition:embedded}
The map $A^*\!*\mathbb{Z}_2\to G: (a_1,\dots, a_k)\mapsto a_1h^n\!\cdots a_nh^n\!.\bp$
is an injection for all sufficiently large $n$.
\end{proposition}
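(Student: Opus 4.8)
The plan is to argue inside the quasi-tree $\qt$ of \fullref{def:axesquasitree} and to recover a word $(a_1,\dots,a_k)$ from the geometry of the pair $(\qtbp,\phi_n(a_1,\dots,a_k).\qtbp)$, where I abbreviate $\phi_n(a_1,\dots,a_k):=a_1h^n\cdots a_kh^n$. Fix a word, set $p_0:=1$, $p_{2j-1}:=a_1h^n\cdots a_{j-1}h^na_j$ and $p_{2j}:=a_1h^n\cdots a_jh^n$ for $1\le j\le k$, and let $\hat\A_j\subset\qt$ be the isometric copy of $\A_j:=p_{2j-1}\H$. Let $\mathcal{B}_j:=\clball{\bottleneck}{p_{2j-1}h^{n/2}\!.\qtbp}$ be the midpoint ball of the $j$--th $h^n$--traversal; note that $p_{2j-1}\!.\qtbp$, $p_{2j-1}h^{n/2}\!.\qtbp$, $p_{2j}\!.\qtbp=p_{2j-1}h^n\!.\qtbp$ lie in order on $\hat\A_j$, that $p_{2j-2}h^{-n/2}\!.\qtbp=p_{2j-3}h^{n/2}\!.\qtbp$ is the center of $\mathcal{B}_{j-1}$, that $p_0.\qtbp=\qtbp$, and that going from $p_{2j}\!.\qtbp$ to $p_{2j+1}\!.\qtbp=p_{2j}a_{j+1}\!.\qtbp$ is the image under $p_{2j}$ of the segment from $\qtbp$ to $a_{j+1}\!.\qtbp$, which I call an \emph{$A$--hop}.

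The geometric input is that $A$--hops are coarsely orthogonal to every translate of $\H$. \fullref{lemma:boundedprojection} bounds $d^\pi_{f\H}(\bp,a.\bp)$ uniformly for every $a\in A$ and \emph{every} $f\in G$; its proof uses only that $a.\bp$ realizes the minimal distance in its $\Gamma$--coset, and a short computation shows $\inv a.\bp$ realizes the minimal distance in $\inv a\Gamma$, so the identical argument bounds $d^\pi_{f\H}(\bp,\inv a.\bp)$ as well. Transporting this to $\qt$ via the coarse equivalence $\hat\pi_{g\hat\H}\sim\pi_{g\H}$, using the equivariance $g.\pi_\H(x)=\pi_{g\H}(g.x)$ and axiom (P0)/\fullref{lemma:boundedimage} (bounded projection of one axis to a distinct one), one gets for each $j$ that $d^\pi_{p_{2j-2}\H}(p_{2j-2}.\bp,p_{2j-1}.\bp)=d^\pi_\H(\bp,a_j.\bp)$ and $d^\pi_{p_{2j-1}\H}(p_{2j-2}.\bp,p_{2j-1}.\bp)=d^\pi_\H(\bp,\inv{a_j}.\bp)$ are bounded by a constant independent of $n$ and of the word, while $p_{2j-2}\H\ne p_{2j-1}\H$ because $a_j\notin E(h)$ (by \fullref{corollary:maxsepdist} and the choice of $A$ in \fullref{def:A}, distinct elements of $A$ give distinct translates of $\H$).

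These are exactly the hypotheses of \fullref{lemma:disjointballs} with $g_0=p_{2j-2}$, $g_1=p_{2j-1}$. Iterating it over $j$ as in the proof of \fullref{lemma:lotsofcontractingelements} produces a threshold $N$ independent of the word such that for $n\ge N$ the balls $\mathcal{B}_1,\dots,\mathcal{B}_k$ are pairwise disjoint and linearly ordered in $\qt$, with $p_0.\qtbp,\dots,p_{2j-1}.\qtbp$ on one side of $\mathcal{B}_j$ and $p_{2j}.\qtbp,\dots,p_{2k}.\qtbp$ on the other; in particular each $\mathcal{B}_j$ separates $\qtbp$ from $\phi_n(a_1,\dots,a_k).\qtbp$. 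Reading off this component picture exactly as in \fullref{lemma:lotsofcontractingelements}, $\hat\pi_{\hat\A_1}(\qtbp)$ is uniformly close to $a_1\!.\qtbp$ (via the $\inv{a_1}$--bound) and $\hat\pi_{\hat\A_1}(\phi_n(a_1,\dots,a_k).\qtbp)$ is uniformly close to $p_2\!.\qtbp=a_1h^n\!.\qtbp$ (because $\mathcal{B}_2$ separates $\hat\A_1$ from $\phi_n(a_1,\dots,a_k).\qtbp$ when $k\ge2$, and trivially when $k=1$); hence $d^\pi_{a_1\H}(\qtbp,\phi_n(a_1,\dots,a_k).\qtbp)\gaddmul n$, so this exceeds the projection constant $\xi$ once $n\ge N$. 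Moreover the initial part of a geodesic from $\qtbp$ to $\phi_n(a_1,\dots,a_k).\qtbp$, up to where it first meets $\hat\A_1$, coarsely coincides with a geodesic from $\qtbp$ to $a_1\!.\qtbp$, which by the uniform form of \fullref{lemma:boundedprojection} crosses every translate of $\hat\H$ only boundedly; so no translate is strictly earlier than $a_1\hat\H$ in the betweenness order of the translates $Y$ with $d^\pi_Y(\qtbp,\phi_n(a_1,\dots,a_k).\qtbp)>\xi$. In other words, $a_1\hat\H$ is the \emph{first} such translate, and this is an invariant of the pair $(\qtbp,\phi_n(a_1,\dots,a_k).\qtbp)$ alone. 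The step I expect to take the most care is precisely this uniform (in $n$ and in the word) control: a single $A$--hop can be long and wander through many translates of $\hat\H$, and the crux is that \fullref{lemma:boundedprojection}, holding for \emph{all} $f\in G$, forbids it from crossing any of them substantially, so no spurious early between-translate appears; this mirrors the bookkeeping in \fullref{lemma:lotsofcontractingelements}.

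Injectivity then follows by induction on the length of the first word. Suppose $\phi_n(a_1,\dots,a_k).\bp=\phi_n(b_1,\dots,b_l).\bp$ with $n\ge N$. Point stabilizers are finite, so $\phi_n(a_1,\dots,a_k).\qtbp$ and $\phi_n(b_1,\dots,b_l).\qtbp$ differ by a uniformly bounded amount, which we absorb into $\xi$; hence the first ``substantially between'' translate computed from either side agrees, forcing $a_1\hat\H=b_1\hat\H$ and therefore $a_1=b_1$. Applying the isometry $(a_1h^n)^{-1}$ of $\X$ gives $\phi_n(a_2,\dots,a_k).\bp=\phi_n(b_2,\dots,b_l).\bp$. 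Since $d(\bp,\phi_n(c_1,\dots,c_m).\bp)\ge d^\pi_{c_1\H}(\bp,\phi_n(c_1,\dots,c_m).\bp)\gaddmul n>0$ for every nonempty word and every $n\ge N$, one side of the reduced equation is the empty word only if the other is, and then both are; so by the inductive hypothesis $(a_2,\dots,a_k)=(b_2,\dots,b_l)$, whence $k=l$ and $a_i=b_i$ for all $i$. Thus $\phi_n$ is an injection for every $n\ge N$.
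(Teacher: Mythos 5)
Your proposal takes a genuinely different route from the paper's. The paper's proof does not attempt to recover $a_1$ as "the first translate with large projection"; instead, it defines for each word $\underline{a}$ a nested sequence of components $\vor_1(\underline{a})\supset\vor_2(\underline{a})\supset\dots\supset\vor_k(\underline{a})$ of $\qt$ minus the midpoint balls, shows $\phi_n(\underline{a}).\qtbp\in\vor_k(\underline{a})$, observes that $\vor_i(\underline{a})$ and $\vor_i(\underline{a}')$ are disjoint unless $a_j=a_j'$ for $j\le i$, and handles the passage from an equality of orbit points in $\X$ to a closeness of orbit points in $\qt$ via \fullref{lemma:bigprojectionsamecomponent}. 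That argument only needs the ball-separation picture from \fullref{lemma:disjointballs}; it never needs to single out one translate as "first," so it avoids the comparison you are trying to make between a geodesic from $\qtbp$ to $\phi_n(\underline{a}).\qtbp$ and a geodesic from $\qtbp$ to $a_1.\qtbp$. Your observation that the proof of \fullref{lemma:boundedprojection} applies equally well to $\inv{a}.\bp$ (because $\inv{a}$ also realizes the minimum in its $\Gamma$--coset by normality of $\Gamma$) is correct and is implicitly needed in the paper's argument as well when it verifies the hypotheses of \fullref{lemma:disjointballs}.

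However, the pivotal step of your argument is left at the level of intuition and there is a real gap there. You assert that the initial segment of a geodesic from $\qtbp$ to $\phi_n(\underline{a}).\qtbp$ coarsely coincides with a geodesic from $\qtbp$ to $a_1.\qtbp$, and that since the latter "crosses every translate of $\hat\H$ only boundedly," no translate is strictly earlier than $a_1\hat\H$. But "$d^\pi_Y(\qtbp,a_1.\qtbp)$ bounded for all $Y$" does not by itself give "$d^\pi_Y(\qtbp,\phi_n(\underline{a}).\qtbp)\le\xi$ for $Y$ nominally before $a_1\hat\H$": you are projecting a different endpoint to $Y$, and $\pi_Y(a_1.\qtbp)$ need not be close to $\pi_Y(\phi_n(\underline{a}).\qtbp)$ a priori. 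To close this, one needs an explicit deduction using axiom (P1): if $d^\pi_Y(\qtbp,a_1\hat\H)$ were large then, because $d^\pi_{a_1\hat\H}(\qtbp,\phi_n(\underline{a}).\qtbp)$ is large and $\pi_{a_1\hat\H}(\qtbp)$ is close to $a_1.\qtbp$ while $\pi_{a_1\hat\H}(\phi_n(\underline{a}).\qtbp)$ is close to $a_1h^n.\qtbp$, one can force $d^\pi_Y(\qtbp,a_1.\qtbp)$ to be comparably large (using (P0) to control $\diam\pi_Y(a_1\hat\H)$), contradicting the uniform bound from \fullref{lemma:boundedprojection} once $\xi$ is chosen above that bound. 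None of this is in your write-up; the phrase "which by the uniform form of \fullref{lemma:boundedprojection} crosses every translate of $\hat\H$ only boundedly; so no translate is strictly earlier" papers over exactly the place where the work is. Moreover, the coarse coincidence of the initial segments is itself asserted without justification. The approach can be repaired, but as written it does not establish that $a_1\hat\H$ is the first large-projection translate, which is the load-bearing claim of your induction. I would also note that the paper's route sidesteps a second subtlety your induction creates: after peeling off $a_1h^n$, you need the set of candidate "first" translates for the reduced pair to again be nonempty unless both words are empty, and you justify this by a reverse triangle inequality through $d^\pi_{c_1\H}$, which again assumes the unproved "first translate" identification for the shorter word; this is fine as part of the induction, but it is worth flagging that it rests on the same unfilled step.
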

The map is an injection because we have an action of $G$ on the
quasi-tree $\qt$, and for large enough $n$ we have ``quasi-edges'' of
the form
$[y,yh^n]$.
We have set things up so that the $a$'s do not backtrack across such
edges. See \fullref{fig:embed}.
We make this precise:
\begin{proof}
Let $\underline{a}=(a_1,\dots,a_k)\in A^**\mathbb{Z}_2$.

By \fullref{lemma:boundedprojection}, there is a $\constb$ such that 
$d_{f\H}^\pi(\bp,\bar{g}.\bp)\leqslant K$ for every $f\in G$ and every
$\bar{g}\in\bar{G}$.
The choice of $A\subset\bar{G}$ in
\fullref{def:A} guarantees that the axes $a\H$ for $a\in A$
are distinct.
Let $N$ be the constant of \fullref{lemma:disjointballs} for this $K$,
and choose $n\geqslant N$.

Note that the proof of \fullref{lemma:disjointballs} includes the fact that $d(\bp,h^{n/2}\!.\bp)\geqslant K'+K+2C$, where $K'$ is the constant of
\fullref{lemma:bigprojectionsamecomponent}.
Therefore, if $\phi_n(\underline{a}).\bp=\phi_n(\underline{a}').\bp$ then
\[d^\pi_{\phi_n(\underline{a})\H}(\phi_n(\underline{a}).\bp,\phi_n(\underline{a})h^{-n/2}\!.\bp)-d^\pi_{\phi_n(\underline{a})\H}(\phi_n(\underline{a}).\bp,\phi_n(\underline{a}').\bp)\geq
K'+C>K',\] so \fullref{lemma:bigprojectionsamecomponent} implies
$\phi_n(\underline{a}).\qtbp$ and $\phi_n(\underline{a}').\qtbp$,
though they might not be equal, are at least
contained in the same component of $\qt\setminus\clball{\bottleneck}{\phi_n(\underline{a})h^{-n/2}\!.\qtbp}$.

\begin{figure}[h]
  \centering
\labellist
\small
\pinlabel $h^n\!.\qtbp$ [l] at 126 16
\pinlabel $\qtbp$ [t] at 62 14
\pinlabel $a_1.\qtbp$ [b] at 98 102
\pinlabel $a_1'.\qtbp$ [br] at 74 158
\pinlabel $a_1h^na_2.\qtbp$ [br] at 230 173
\pinlabel $a_1h^n\!.\qtbp$ [tl] at 196 104
\pinlabel $a_1h^na_2h^n\!.\qtbp$ [l] at 286 175
\pinlabel $a_1'h^n\!.\qtbp$ [t] at 148 160
\tiny
\pinlabel $\clball{\bottleneck}{a_1h^na_2h^{n/2}\!.\qtbp}$ [b] at 253 192
\pinlabel $\clball{\bottleneck}{h^{n/2}\!.\qtbp}$ [bl] at 104 28
\pinlabel $\clball{\bottleneck}{h^{-n/2}\!.\qtbp}$ [br] at 19 28
\pinlabel $\clball{\bottleneck}{a_1'h^{n/2}\!.\qtbp}$ [b] at 99 177
\pinlabel $\clball{\bottleneck}{a_1h^{n/2}\!.\qtbp}$ [t] at 154 91
\endlabellist
  \includegraphics[height=2in]{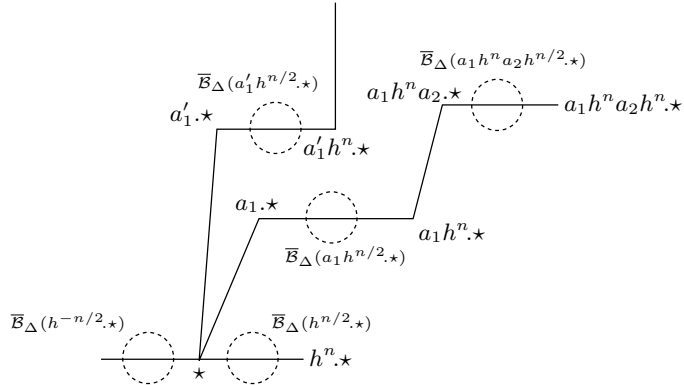}
  \caption{$A$ does not cross $h^n$ quasi-edges}
  \label{fig:embed}
\end{figure}

Define $\vor_i(\underline{a})$ to be the $a_1h^n\cdots a_ih^{\infty}$ component of
$\qt\setminus\clball{\bottleneck}{a_1h^n\cdots a_ih^{n/2}\!.\qtbp}$
for $i\leqslant k$ (recall \fullref{def:complementarycomponent}). 
\fullref{lemma:disjointballs} implies that
$\vor_i(\underline{a})\supset\vor_{i+1}(\underline{a})$ and
$\phi_n(\underline{a}).\qtbp\in\vor_k(\underline{a})$.
Moreover, for $i\leq\min\{k,k'\}$,
$\vor_i(\underline{a})$ and $\vor_i(\underline{a}')$ are disjoint
  unless $a_j=a_j'$ for all $j\leqslant i$.

If $\phi_n(\underline{a}).\bp=\bp$ then 
\fullref{lemma:bigprojectionsamecomponent}
implies $\qtbp\in\vor_k(\underline{a})\subset\vor_1(\underline{a})$.
This contradicts the fact that $\qtbp$ is contained in the
$a_{1}h^{-\infty}$ component of
$\qt\setminus\clball{\bottleneck}{a_{1}h^{n/2}\!.\qtbp}$.
The same argument shows that if
 $\underline{a}$ is a proper prefix of
$\underline{a}'$, that is, if $\underline{a}=(a_1,\dots,a_k)$ and
$\underline{a}'=(a_1,\dots,a_k,a'_{k+1},\dots,a'_{k'})$ with $k'>k$,
then $\phi_n(\underline{a}).\bp\neq\phi_n(\underline{a}').\bp$.

Suppose $\phi_n(\underline{a}).\bp=\phi_n(\underline{a}').\bp$ with
$k\leqslant k'$.
\fullref{lemma:bigprojectionsamecomponent} implies
$\phi_n(\underline{a}).\qtbp
\in\vor_{k'}(\underline{a}')$,
so $a_i=a_i'$ for all $i\leqslant k$.
Since $\underline{a}$ cannot be a proper prefix of $\underline{a}'$, $k=k'$.
Hence, $\phi_n(\underline{a}).\bp=\phi_n(\underline{a}').\bp$ implies
$\underline{a}=\underline{a}'$ for all sufficiently large $n$.
\end{proof}

\section{Growth Gap}
A free product of groups has greater growth exponent than the factor
groups, with respect to a word metric, so we expect that
$\phi_n(A^*\!*\mathbb{Z}_2)$  should have a larger growth exponent
than $A$.
To verify this intuition, one must show that the Poincar\'e series for
$\phi_n(A^*\!*\mathbb{Z}_2)$ diverges at $\rate_A+\epsilon$ for some $\epsilon>0$.
A clever manipulation of Poincar\'e series yields the following criterion:
\begin{lemma}[{\cite[Criterion 2.4]{DalPeiPic11},\cite[Proposition 2.3]{Sam02amalgam}}]\label{lemma:strictgrowthinequality}
  If the map \[\phi_n\from A^*\!*\mathbb{Z}_2\to G:(a_1,\dots, a_k)\mapsto a_1h^n\!\cdots
  a_kh^n\] is an injection, and if
$\exp(|h^n|\cdot
  \rate_A)<\Theta_{A}(\rate_A)$,
 then $\rate_{\phi_n(A^*\!*\mathbb{Z}_2)}>\rate_{A}$.
\end{lemma}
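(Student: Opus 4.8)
The plan is to estimate the Poincaré series of $\phi_n(A^*\!*\mathbb{Z}_2)$ from below by exploiting the near-multiplicativity of the length function $|\param|$ on words of the form $a_1h^n\cdots a_kh^n$. Since $h$ is strongly contracting, we have a quasi-tree $\qt$ on which $G$ acts, and \fullref{proposition:embedded} tells us that for large $n$ the map $\phi_n$ is injective with the concatenations behaving like geodesics in $\qt$: the "quasi-edges" $[y,yh^n]$ are not backtracked, so the balls $\clball{\bottleneck}{a_1h^n\cdots a_ih^{n/2}.\qtbp}$ are nested/linearly separated. This geometric picture should translate into a coarse additivity estimate
\[|\phi_n(a_1,\dots,a_k)| \ladd |a_1|+|h^n|+|a_2|+\cdots+|a_k|+|h^n| = \sum_{i=1}^k|a_i| + k|h^n|,\]
up to an additive error that is linear in $k$; absorbing that error into a slight enlargement of the exponent is the standard manipulation.

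Concretely, I would first write
\[\Theta_{\phi_n(A^*\!*\mathbb{Z}_2)}(s) = \sum_{k=1}^\infty \sum_{(a_1,\dots,a_k)\in (A^*)^k} \exp(-s|\phi_n(a_1,\dots,a_k)|)\]
and, using the coarse additivity of length along the non-backtracking concatenation (valid for $n$ large by the quasi-tree argument behind \fullref{proposition:embedded}), bound each term below by a constant multiple of $\exp\!\big(-s(\sum_i|a_i| + k|h^n|)\big)$, so that
\[\Theta_{\phi_n(A^*\!*\mathbb{Z}_2)}(s) \gmul \sum_{k=1}^\infty \big(\exp(-s|h^n|)\,\Theta_{A^*}(s)\big)^k.\]
This geometric series diverges precisely when $\exp(-s|h^n|)\,\Theta_{A^*}(s) \geqslant 1$. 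Now evaluate at $s=\rate_A$: since $\Theta_{A^*}(\rate_A)$ and $\Theta_A(\rate_A)$ differ only by the single term for $1\in A$, the hypothesis $\exp(|h^n|\cdot\rate_A) < \Theta_A(\rate_A)$ gives (after checking that the "$1$" term is harmless — one may simply use that $\Theta_{A^*}(\rate_A) \geqslant \Theta_A(\rate_A) - 1$, or re-run the argument with $\Theta_A$ directly since dropping $a_i=1$ only loses a bounded factor) that $\exp(-\rate_A|h^n|)\,\Theta_{A^*}(\rate_A) > 1$, forcing the series to diverge at $s=\rate_A$. By continuity of $s\mapsto \exp(-s|h^n|)\Theta_{A^*}(s)$, divergence persists at $s=\rate_A+\epsilon$ for some $\epsilon>0$, whence $\rate_{\phi_n(A^*\!*\mathbb{Z}_2)} \geqslant \rate_A+\epsilon > \rate_A$.

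The main obstacle is making the coarse additivity of $|\phi_n(a_1,\dots,a_k)|$ rigorous with an error that is \emph{uniformly} linear in $k$ and independent of the particular tuple — this is where the quasi-tree geometry from \fullref{proposition:embedded} and \fullref{lemma:disjointballs} is doing the real work, controlling the word through the separating balls $\mathcal{B}_i$. A cleaner route, and the one I would actually follow since the lemma is attributed to Dal'bo–Peigné–Picaud and Sambusetti, is the purely formal Poincaré-series manipulation: build an auxiliary series in which the $h^n$-syllables are tracked by a formal variable, recognize $\Theta_{\phi_n(A^*\!*\mathbb{Z}_2)}$ as dominating $\sum_k (e^{-s|h^n|}\Theta_{A^*}(s))^k$ directly from injectivity of $\phi_n$ plus the one-sided bound $|\phi_n(\underline a)| \ladd \sum|a_i|+k|h^n|$, and avoid any two-sided estimate. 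Either way, the only genuinely delicate point is the uniform-in-$k$ additive defect; everything after that is the geometric-series/continuity argument sketched above.
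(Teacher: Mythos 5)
Your approach is the right one and is essentially the argument behind the cited references: lower-bound the Poincar\'e series of $\phi_n(A^*\!*\mathbb{Z}_2)$ by a geometric series using injectivity and an upper bound on $|\phi_n(\underline a)|$, then push the divergence point past $\rate_A$. However, two points deserve correction.

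First, you substantially over-complicate the length estimate. You invoke the quasi-tree $\qt$, the disjoint balls $\mathcal{B}_i$ of \fullref{lemma:disjointballs}, and a ``coarse additivity up to an error linear in $k$,'' and you flag controlling this error uniformly in $k$ as ``the main obstacle.'' None of that machinery is needed here: the triangle inequality applied to $|g|=d(\bp,g.\bp)$ gives, exactly and with no error term,
\[
|\phi_n(a_1,\dots,a_k)| \;=\; |a_1h^n\cdots a_kh^n| \;\leqslant\; \sum_{i=1}^k |a_i| \;+\; k\,|h^n|,
\]
hence $\exp\bigl(-s|\phi_n(\underline a)|\bigr)\geqslant \prod_i \exp(-s|a_i|)\cdot\exp(-s|h^n|)^k$ with no ``constant multiple'' at all. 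The quasi-tree geometry is what makes $\phi_n$ \emph{injective} (that is \fullref{proposition:embedded}); it plays no role in the length upper bound, which is where you placed your worry. The non-backtracking lower bound on $|\phi_n(\underline a)|$, which would genuinely need the quasi-tree, is never used in this lemma.

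Second, and more seriously, your treatment of the $\Theta_A$ versus $\Theta_{A^*}$ discrepancy does not actually close the argument. Since $1\in A$ with $|1|=0$, one has exactly $\Theta_{A^*}(s)=\Theta_A(s)-1$: the difference is \emph{additive}, not ``a bounded factor'' as you suggest. Your geometric series is $\sum_{k\geqslant 1}\bigl(\exp(-s|h^n|)\Theta_{A^*}(s)\bigr)^k$, and the monotone-convergence/continuity step requires $\exp(-\rate_A|h^n|)\,\Theta_{A^*}(\rate_A)>1$, that is $\Theta_A(\rate_A) > \exp(\rate_A|h^n|)+1$. The stated hypothesis only gives $\Theta_A(\rate_A) > \exp(\rate_A|h^n|)$, which is strictly weaker, so there is a genuine gap of exactly one unit. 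This is not cosmetic: taking $A^*$ a singleton $\{a\}$ with $ah^n$ of infinite order, the hypothesis $\exp(|h^n|\rate_A)<\Theta_A(\rate_A)$ reads $1<2$ and is satisfied, $\phi_n$ is injective, yet $\phi_n(A^*\!*\mathbb{Z}_2)=\{(ah^n)^k\}$ has growth exponent $0=\rate_A$, so the conclusion fails. The correct hypothesis for this proof to go through is $\exp(|h^n|\rate_A)<\Theta_{A^*}(\rate_A)$ (equivalently, shift by $+1$), and you should say so explicitly rather than hoping the discrepancy is harmless. In the paper's actual application the issue evaporates because $A$ is arranged to be divergent, so $\Theta_A(\rate_A)=\Theta_{A^*}(\rate_A)=\infty$ and the inequality is automatic; this is precisely the remark preceding \fullref{def:cge}. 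Once you replace the hand-waving by the exact triangle inequality bound and state the hypothesis with $\Theta_{A^*}$ (or invoke divergence of $A$), the remainder of your argument --- geometric-series divergence at $\rate_A$ and the continuity step using monotone convergence to obtain divergence at $\rate_A+\epsilon$ --- is correct and complete.
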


Because our methods are coarse we have passed to a high power $h^n$ of $h$ and
therefore do not have control over $|h^n|$.
However, the criterion is satisfied automatically if $A$, or, equivalently,
$\bar{G}$, is divergent, which, recalling \fullref{def:divergent}, means
$\Theta_{A}$ diverges at $\rate_{A}$.
The following definition will be used in a condition to guarantee
divergence of $\bar{G}$.

\begin{definition}\label{def:cge}
Let $Comp_{Q,\,r}^{G}\subset G.\bp$ be the set of points $g.\bp$ such
that there exists a geodesic $[x,y]$ of length $r$ with $x\in\clball{Q}{\bp}$ and
$y\in\clball{Q}{g.\bp}$ whose interior is contained in $\thin$.

 Define the $Q$--\emph{complementary growth exponent} of $G$ to be:
\[\rate_G^c:=\limsup_{r\to\infty}\frac{\log
  \# Comp_{Q,\,r}^{G}}{r}\]
\end{definition}

\begin{theorem}\label{maincge}
Let $G$ be a finitely generated, non-elementary group.
  Let $\X$ be a $G$--space.
If $G$ contains a strongly contracting element and there exists a
$Q\geqslant 0$ such that the $Q$--complementary growth exponent of $G$ is strictly less than the
  growth exponent of $G$, then $G\act \X$ is a growth tight action.
\end{theorem}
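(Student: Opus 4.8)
\emph{The plan and reductions.}
I would fix an infinite normal subgroup $\Gamma\normal G$ and aim to prove $\rate_{G/\Gamma}<\rate_G$; since $\Gamma$ is arbitrary, this is exactly growth tightness. As $G$ is non-elementary and contains a strongly contracting element, it has positive growth exponent, so the inequality is automatic when $\Gamma$ has finite index (then $\rate_{G/\Gamma}=0$); hence I would assume $\Gamma$ has infinite index, noting in any case that $\rate_{G/\Gamma}\leqslant\rate_G$ because passing to a quotient does not increase distances. The first step is to invoke \fullref{lemma:lotsofcontractingelements} to obtain a strongly contracting --- equivalently, by \fullref{prop:allthesame}, strongly constricting --- element $h\in\Gamma$, and then to assemble the objects attached to $h$ from the preceding sections: the axis $\H=E(h).\bp$ of \fullref{def:quasiaxis}, the equivariant projections $\pi_{f\H}$ of \fullref{def:equivariantprojections}, the minimal section $\bar G$, so that $\rate_{G/\Gamma}=\rate_{\bar G}$, and the $\constb$--separated subset $A\subseteq\bar G$ of \fullref{def:A}, for which $\rate_A=\rate_{\bar G}=\rate_{G/\Gamma}$, $1\in A$, and the translates $a\H$ are pairwise distinct for distinct $a\in A$.

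\emph{The free product set and the growth gap.}
This part is essentially bookkeeping on top of the cited results. Writing $A^*:=A\setminus\{1\}$, \fullref{proposition:embedded} provides $n_0$ such that for every $n\geqslant n_0$ the map $\phi_n\from A^*\!*\mathbb{Z}_2\to G$, $(a_1,\dots,a_k)\mapsto a_1h^n\!\cdots a_kh^n$, is an injection; therefore $\phi_n(A^*\!*\mathbb{Z}_2)\subseteq G\subseteq\X$ and $\rate_{\phi_n(A^*\!*\mathbb{Z}_2)}\leqslant\rate_G$. By \fullref{lemma:strictgrowthinequality}, one gets $\rate_A<\rate_{\phi_n(A^*\!*\mathbb{Z}_2)}$ as soon as $\exp(|h^n|\cdot\rate_A)<\Theta_A(\rate_A)$ for some $n\geqslant n_0$, and this holds automatically for all such $n$ once $\Theta_A(\rate_A)=\infty$, i.e.\ once $A$ is divergent. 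Chaining the inequalities, divergence of $A$ yields $\rate_{G/\Gamma}=\rate_A<\rate_G$. So the whole theorem reduces to showing that $A$ --- equivalently, since their Poincar\'e series share convergence behaviour, $\bar G$, or the orbit $\bar{\G}=\bar G.\bp$ --- is divergent.

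\emph{The hard part: divergence of the minimal section.}
This is where the hypothesis $\rate_G^c<\rate_G$ is used, and I expect it to be the main obstacle. I would first reduce to the case $\rate_{\bar G}=\rate_G$, since otherwise $\rate_{G/\Gamma}=\rate_{\bar G}<\rate_G$ and there is nothing left to prove. The strategy would then be to read off divergence of $\bar{\G}$ from a coarse dissection of the orbit: for $g\in G$, a geodesic from $\bp$ to $g.\bp$ decomposes into maximal subsegments lying in $\thick$ interleaved with excursions whose interiors lie in $\thin$; I would coarsely replace each thick subsegment by orbit points it passes close to, and record each thin excursion as a path of the kind counted by $Comp_{Q,\,r}^{G}$ (\fullref{def:cge}), so as to compare $\Theta_G(s)$ with a Poincar\'e-type series built from a ``thick'' factor and a complementary factor. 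Since $\rate_G^c<\rate_G$, the complementary factor converges for $s$ near $\rate_G$, so all of the divergence of $\Theta_G$ for $s<\rate_G$ must be carried by the thick factor; pushing this back through the coset decomposition $G=\bigsqcup_{\bar g\in\bar G}\bar g\Gamma$ --- and using \fullref{lemma:boundedprojection} to keep the dissection uniform, since it bounds how a geodesic toward a minimal-section point can run alongside any translate of $\H$ --- should force $\Theta_{\bar{\G}}$ to diverge at $s=\rate_{\bar G}=\rate_G$. Making this dissection precise, controlling the combinatorial multiplicities, and verifying that the thick/complementary splitting is genuinely summable is the technical core of the argument; everything else is an assembly of results already in place. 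Granting divergence of $A$, the growth gap step gives $\rate_{G/\Gamma}<\rate_G$, and as $\Gamma$ was an arbitrary infinite normal subgroup, $G\act\X$ is a growth tight action.
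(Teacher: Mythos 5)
Your top-level architecture matches the paper exactly: fix $\Gamma$, get a strongly contracting $h\in\Gamma$ via \fullref{lemma:lotsofcontractingelements}, build the minimal section $\bar G$ and the separated set $A$, embed the free product set with \fullref{proposition:embedded}, and observe via \fullref{lemma:strictgrowthinequality} that everything reduces to proving $A$ (equivalently $\bar G$) is divergent. Your reduction to the case $\rate_{\bar G}=\rate_G$ is fine; the paper uses the slightly weaker reduction $\rate_{\bar G}>\rate_G^c$, but both suffice. Where you depart from the paper, and where I see a genuine gap, is the divergence argument itself.

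Your sketch proposes to decompose a geodesic $[\bp,g.\bp]$ for a general $g\in G$ into \emph{all} of its thick subsegments interleaved with thin excursions, compare $\Theta_G(s)$ to a product of a ``thick'' factor and a complementary factor, and then ``push back through'' $G=\bigsqcup_{\bar g\in\bar G}\bar g\Gamma$. This misses the crucial mechanism that makes the argument close. The paper never works with $\Theta_G$ at all: it works directly with the annulus counts $V_{r,Q}$ of $\bar{G}.\bp$, takes a geodesic $[\bp,\bar g.\bp]$ for $\bar g\in\bar G$, picks a \emph{single} cut point $[\bp,\bar g.\bp](M_0)$, and considers only the one maximal thin excursion $(x_0,x_1)$ through that point (or no excursion at all if the cut point is thick). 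It then decomposes $\bar g = g_0\cdot g_0^{-1}g_1\cdot g_1^{-1}\bar g$ with $g_i.\bp$ near $x_i$, replaces $g_0$ and $g_1^{-1}\bar g$ by their minimal-section representatives $\bar g_0$ and $\overline{g_1^{-1}\bar g}$ --- which remain in $\bar G$ and have comparable lengths, precisely because passing to a minimal section only shortens --- and records $g_0^{-1}g_1\in Comp^G_{Q,r-(m_0+m_1)}$. This produces the recursive estimate $V_{r,Q}\lmul\sum_{m_0}\sum_{m_1}V_{m_0,Q}V_{m_1,Q}\cdot\#Comp_{Q,r-(m_0+m_1)}^G$ valid for every $M_0+M_1=r$, and after choosing $\xi$ with $\rate_{\bar G}\geqslant 2\xi+\rate_G^c$, this becomes a submultiplicative-type inequality $w_r\lmul W_{M_0}W_{M_1}$ on $w_i:=V_{i,Q}\exp(-i(\rate_{\bar G}-\xi))$; the divergence is then read off by citing \cite[Lemma~4.3]{DalPeiPic11}, a purely elementary lemma about such sequences. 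Your approach has no mechanism to keep the recursion inside $\bar G$: a dissection of $\Theta_G$ into thick and complementary factors does not transfer to a statement about $\Theta_{\bar G}$ via the coset decomposition, since divergence of $\Theta_G$ is not among the hypotheses, and nothing in the coset picture controls the minimal-section representatives. You also invoke \fullref{lemma:boundedprojection} ``to keep the dissection uniform'', but that lemma plays no role in the divergence claim; it is used (via \fullref{def:A} and \fullref{proposition:embedded}) only to set up the free-product embedding. Finally, you do not identify the submultiplicativity lemma \cite[Lemma~4.3]{DalPeiPic11}, which is the actual engine that converts the counting inequality into divergence. In short: the reduction and the growth-gap bookkeeping are right, but the divergence step --- which is the real content --- is not correctly carried out, and the specific single-cut-point decomposition within $\bar G$ plus the Dal'bo--Peign\'e--Picaud--Sambusetti lemma are what you would need.
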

  The proof of \fullref{maincge} follows in part the proof of
  \cite[Theorem~1.4]{DalPeiPic11} for geometrically finite Kleinian
groups.
For the divergence part of the proof, the Kleinian group
ingredients of \cite[Theorem~1.4]{DalPeiPic11} are inessential, and
our changes are mostly cosmetic.
The real generalization is in the use of
\fullref{proposition:embedded} instead of a Ping-Pong argument.
\begin{proof}
  Let $\Gamma$ be an infinite, infinite index normal subgroup of $G$.
By \fullref{lemma:lotsofcontractingelements}, there is a strongly contracting
element in $\Gamma$.
Let $\bar{G}$ be a minimal section of $G/\Gamma$.
If $\rate_{\bar{G}}\leqslant\rate_G^c$ then we are done, since
$\rate_G^c<\rate_G$, so suppose $\rate_{\bar{G}}>\rate_G^c$.
\begin{claim}
  $\bar{G}$ is divergent.
\end{claim}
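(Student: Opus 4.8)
The plan is to show $\bar{G}$ is divergent by contradiction: suppose $\Theta_{\bar{G}}$ converges at $s_0:=\rate_{\bar{G}}$. The key idea is that, because $\bar{G}$ is a minimal section, a geodesic from $\bp$ to $\bar{g}.\bp$ cannot spend too much time deep inside a thick part $\thick$ built from translates of the axis $\H$; this is exactly the content of \fullref{lemma:boundedprojection}, which says $d^\pi_{f\H}(\bp,\bar g.\bp)$ is uniformly bounded for every $f\in G$ and every $\bar g\in\bar G$. Combined with the strong contraction (equivalently strong constriction, via \fullref{prop:allthesame}) and the Bounded Geodesic Image Property, a geodesic $[\bp,\bar g.\bp]$ passes within a uniformly bounded distance of any translate $f\H$ it comes near, and its bounded projection to $f\H$ forces it to leave a fixed neighborhood $\thick=\clnbhd{Q}{G.\bp}$ promptly. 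So, after subdividing $[\bp,\bar g.\bp]$ at the (boundedly many, boundedly long) excursions near translates of $\H$, the bulk of the geodesic lies in the thin part $\thin$, and $\bar g.\bp$ is (coarsely) reachable by a geodesic whose interior avoids $\thick$ — i.e. $\bar g\in Comp^G_{Q,r}$ for $r\eadd d(\bp,\bar g.\bp)$.

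More precisely, first I would fix $Q$ as in the hypothesis, take $\bar g\in\bar G$ with $|\bar g.\bp|=R$, and pick a geodesic $\geo=[\bp,\bar g.\bp]$. Let $f_1\H,\dots,f_m\H$ be the translates of $\H$ that $\geo$ enters a large (to be chosen) neighborhood of, in order along $\geo$. Using \fullref{lemma:boundedprojection} and the Bounded Geodesic Image Property (\fullref{def:boundedgeodesicimage}, \fullref{lemma:boundedgeodesicimage}), each such excursion of $\geo$ near $f_i\H$ has length bounded by a universal constant, and between consecutive excursions $\geo$ stays outside $\thick$. A proper-discontinuity argument (as in the proof of \fullref{lemma:boundedprojection} / \fullref{corollary:Ehexists}) bounds $m$ by a universal constant independent of $\bar g$: if $\geo$ had many long excursions near distinct translates of $\H$ piling up orbit points in a fixed ball, that would contradict proper discontinuity — alternatively one shows $m$ is bounded because each excursion consumes a definite fraction of the projection budget allowed by \fullref{lemma:boundedprojection}. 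Hence, cutting out these $m$ bounded segments from $\geo$ and concatenating short geodesics across the gaps produces a geodesic (or uniform quasi-geodesic, then straightened) of length $r$ with $r\eadd R$ whose interior lies in $\thin$, showing $\bar g\in Comp^G_{Q,r}$. Therefore $\#\{\bar g\in\bar G\mid |\bar g.\bp|\le R\}\lmul \#\bigcup_{r\le R+\mathrm{const}} Comp^G_{Q,r}$, which gives $\rate_{\bar G}\le\rate_G^c$, contradicting $\rate_{\bar G}>\rate_G^c$. So $\Theta_{\bar G}$ diverges at $s_0$, i.e. $\bar G$ is divergent.

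The step I expect to be the main obstacle is the uniform bound on the number $m$ of excursions, i.e. making rigorous that a minimal-section geodesic cannot weave through arbitrarily many translates of $\H$. The bound on the length of each individual excursion is a clean consequence of \fullref{lemma:boundedprojection} plus bounded geodesic image, but ruling out long geodesics that make many short dips into $\thick$ near many different translates requires either a counting/proper-discontinuity argument along the lines of \fullref{lemma:axiom0}–\fullref{lemma:axiom2}, or an argument that any sub-geodesic spending total time $\ge$ (something linear in $R$) inside $\thick$ would, by the projection axioms and minimality, contradict \fullref{lemma:boundedprojection} for some prefix of $\bar g$. Once the linear-in-$R$ estimate $r\le R + O(1)\cdot(\text{number of excursions})$ is made with a universal constant, the growth-exponent inequality $\rate_{\bar G}\le\rate_G^c$ is immediate and the claim follows.
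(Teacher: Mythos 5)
Your proposal has a fundamental gap: you conflate the thick part $\thick=\clnbhd{Q}{G.\bp}$ (a neighborhood of the \emph{entire orbit} $G.\bp$) with neighborhoods of translates of the axis $\H$. \fullref{lemma:boundedprojection} only controls projections of $\bp$ and $\bar g.\bp$ to translates $f\H$; it says nothing about how much time a geodesic $[\bp,\bar g.\bp]$ spends in $\clnbhd{Q}{G.\bp}$, which is a far larger set. In fact a minimal-section geodesic will generically spend almost all of its time in $\thick$, since $G.\bp$ is coarsely dense wherever the action is cocompact. The conclusion you are driving at --- that $\bar g$ is (coarsely) an element of $Comp^G_{Q,r}$ for $r\eadd |\bar g.\bp|$, hence $\rate_{\bar G}\leqslant\rate_G^c$ --- is therefore false in general, and indeed it would have to be: your argument nowhere uses the assumed convergence of $\Theta_{\bar G}$, so if it worked it would prove $\rate_{\bar G}\leqslant\rate_G^c$ unconditionally, contradicting the standing hypothesis $\rate_{\bar G}>\rate_G^c$ and rendering the whole case vacuous rather than proving divergence in it.

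The paper's actual argument is quite different and does not try to show that $[\bp,\bar g.\bp]$ avoids $\thick$. Instead it takes a point at an arbitrary parameter $M_0$ along the geodesic; if that point is in $\thin$ it extracts the \emph{maximal} subsegment $[x_0,x_1]\ni[\bp,\bar g.\bp](M_0)$ with interior in $\thin$, approximates $x_0,x_1$ by orbit points $g_0.\bp,g_1.\bp$, and writes $\bar g$ as a product of $g_0$ (of length $\eadd m_0$), the complementary element $g_0^{-1}g_1\in Comp^G_{Q,\,r-(m_0+m_1)}$, and $g_1^{-1}\bar g$ (of length $\eadd m_1$). Minimality and normality of $\Gamma$ let one replace $g_0$ and $g_1^{-1}\bar g$ by minimal-section representatives without increasing length, giving the recursive estimate $V_{r,Q}\lmul\sum_{m_0,m_1} V_{m_0,Q}V_{m_1,Q}\cdot\#Comp^G_{Q,\,r-(m_0+m_1)}$. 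It is this recursion --- not a linear-in-$r$ excursion count --- together with the sub-exponential bound on $\#Comp$ (from $\rate_{\bar G}>\rate_G^c$) and the Poincar\'e-series manipulation of Dal'bo--Peign\'e--Picaud--Sambusetti that yields divergence of $\Theta_{\bar G}$ at $\rate_{\bar G}$. Neither \fullref{lemma:boundedprojection} nor the Bounded Geodesic Image Property plays any role in this step; they are used elsewhere (for the injectivity of $\phi_n$ in \fullref{proposition:embedded}).
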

Assume the claim, and let $A$ be a maximal separated set in $\bar{G}$ as in \fullref{def:A}.
Then $A$ and $\bar{G}$ have the same critical exponent, and are both divergent.
By \fullref{proposition:embedded}, for sufficiently large $n$ the map
$\phi_n\from A^*\!*\mathbb{Z}_2\to G$ is an injection.
By \fullref{lemma:strictgrowthinequality},
$\rate_A<\rate_{\phi_n(A^*\!*\mathbb{Z})}$. Thus,
$\rate_{G/\Gamma}=\rate_\A<\rate_{\phi_n(A^*\!*\mathbb{Z}_2)}\leqslant\rate_G$.

It remains to prove the claim.

Let $r>0$, and suppose $d(\bp,\bar{g}.\bp)=r$.
Let
$0\leqslant M_0\leqslant r$ and $M_1=r-M_0$.
Choose a geodesic $[\bp,\bar{g}.\bp]$ from $\bp$ to $\bar{g}.\bp$, and
let $[\bp,\bar{g}.\bp](M_0)$ denote the point of $[\bp,\bar{g}.\bp]$ at distance
$M_0$ from $\bp$.

\begin{figure}[h]
  \centering
\labellist
\pinlabel $\ball{Q}{\bp}$ at 23 60
\pinlabel $\bp$ [tr] at 40 40
\pinlabel $\bar{g}.\bp$ [tl] at 288 40
\pinlabel $g_0.\bp$ [tl] at 97 40
\pinlabel $g_1.\bp$ [tr] at 232 40
\pinlabel $x_0$ [b] at 109 76
\pinlabel $x_1$ [b] at 219 76
\pinlabel $[\bp,\bar{g}.\bp]$ [b] at 151 86
\endlabellist
  \includegraphics{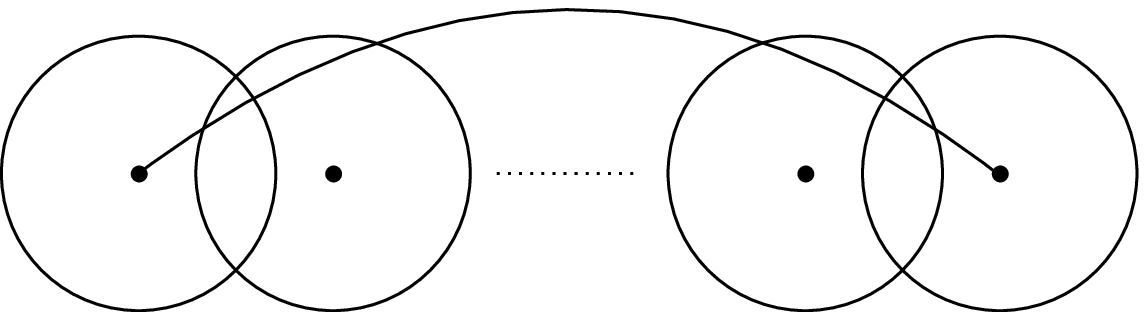}
  \caption{Splitting a geodesic into three subsegments}
  \label{fig:3subsegments}
\end{figure}

First, we suppose that $[\bp,\bar{g}.\bp](M_0)\in\thin$.
Let $[x_0,x_1]\subset [\bp,\bar{g}.\bp]$ be the largest subsegment
containing  $[\bp,\bar{g}.\bp](M_0)$
such that $(x_0,x_1)\subset\thin$.
Let $m_0=d(\bp,x_0)$, and let $m_1=d(x_1,\bar{g}.\bp)$.
There exist group elements $g_i\in G$ such that $d(g_i.\bp,x_i)\leqslant
Q$. See \fullref{fig:3subsegments}.
We have $\bar{g}.\bp=g_0\cdot \inv{g}_0g_1\cdot \inv{g}_1\bar{g}.\bp$.
Now $m_0-Q\leqslant d(\bp,\bar{g}_0.\bp)\leqslant d(\bp,g_0.\bp)\leqslant m_0+Q$, and
$m_1-Q\leqslant d(\bp,\overline{g^{-1}_1\bar{g}}.\bp)\leqslant
d(\bp,g^{-1}_1\bar{g}.\bp)\leqslant m_1+Q$.
Furthermore, $\inv{g}_0g_1\in Comp_{Q,\,r-(m_0+m_1)}^{G}$.
Thus, the point $\bar{g}.\bp$ can be expressed as  a product of an element of $\bar{G}$
of length $m_0\pm Q$, an element of $\bar{G}$ of length $m_1\pm Q$,
and the quotient of an element of $Comp_{Q,\,r-(m_0+m_1)}^{G}$.

\medskip
(\textdagger)\hfill\parbox{.9\textwidth}{
  The same is also true if $[\bp,\bar{g}.\bp](M_0)\in\thick$, in
which case we can take $m_0=M_0$ and $m_1=r-m_0$.
Then choose $g_0=g_1$ so that the
contribution from 
$Comp_{Q,\,r-(m_0+m_1)}^{G}$ is trivial.}
\smallskip

Let $V_{r,Q}:=\#\left(\bar{G}.\bp\cap \clnbhd{r+Q}{\bp}\setminus \ball{r-Q}{\bp}\right)$.
For every $M_0+M_1=r$ we have:
\[V_{r,Q}\lmul\sum_{m_0=0}^{M_0}
\sum_{m_1=0}^{M_1}V_{m_0, Q} \cdot V_{m_1, Q} \cdot \#
Comp_{Q,\,r-(m_0+m_1)}^{G}\]

Choose $\xi>0$ such that $\rate_{\bar{G}}\geqslant 2\xi+\rate_G^c$.
Since \[\# Comp_{Q,\,r-(m_0+m_1)}^{G}\lmul
\exp((r-(m_0+m_1))(\rate_{\bar{G}} -\xi))\] whenever $r-(m_0+m_1)$ is
sufficiently large, it follows that:
\begin{multline}
  \label{eq:2}
  V_{r,Q}\cdot \exp(-r(\rate_{\bar{G}}-\xi))\lmul\\
\left(\sum_{m_0=0}^{M_0} V_{m_0,
  Q}\cdot \exp(-m_0(\rate_{\bar{G}}-\xi))
\right)\cdot\left(\sum_{m_1=0}^{M_1} V_{m_1, Q} \cdot
  \exp(-m_1(\rate_{\bar{G}}-\xi))  \right)
\end{multline}
Set $w_i:=V_{i,Q}\cdot \exp(-i(\rate_{\bar{G}}-\xi))$ and
$W_i:=\sum_{j=1}^iw_i$. 
Then \eqref{eq:2} and \cite[Lemma~4.3]{DalPeiPic11} imply that
$\sum_i w_i\cdot\exp(-is)$ diverges at its critical exponent, which is:
\[\limsup_i\frac{\log w_i}{i}=\left(\limsup_i\frac{\log V_{i,Q}}{i}\right)- (\rate_{\bar{G}}-\xi)=\xi\]
So $\infty=\sum_iw_i\cdot\exp(i\xi)=\sum_iV_{i,Q}\cdot\exp(-i\rate_{\bar{G}})\emul \Theta_{\bar{G}}(\rate_{\bar{G}})$.
\end{proof}

\begin{theorem}\label{mainqc}
Let $G$ be a finitely generated, non-elementary group.
Let $\X$ be a quasi-convex $G$--space.
If $G$ contains a strongly contracting element
then $G\act \X$ is a growth tight action.
\end{theorem}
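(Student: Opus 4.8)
The plan is to run the argument proving \fullref{maincge} almost verbatim, using quasi-convexity to provide, at no cost, the ingredient that \fullref{maincge} draws from its hypothesis on the complementary growth exponent. Fix the basepoint $\bp$ so that the orbit $G.\bp$ is $Q$--quasi-convex, and set $\thick:=\clnbhd{Q}{G.\bp}$ and $\thin:=\X\setminus\thick$. Let $\Gamma\trianglelefteqslant G$ be an infinite normal subgroup. If $\Gamma$ has finite index, then $G/\Gamma$ is finite, so $\rate_{G/\Gamma}=0<\rate_G$, the last inequality because $G$, having a strongly contracting element, has positive growth exponent; hence we may assume $\Gamma$ has infinite index. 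By \fullref{lemma:lotsofcontractingelements}, $\Gamma$ contains a strongly contracting element $h$. Let $\bar G$ be a minimal section of $G/\Gamma$, so that $\rate_{\bar G}=\rate_{G/\Gamma}$, and let $A\subset\bar G$ be as in \fullref{def:A}.

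The heart of the matter is to show that $\bar G$ is divergent in the sense of \fullref{def:divergent}, and this is exactly where quasi-convexity enters. For each $\bar g\in\bar G$, both $\bp$ and $\bar g.\bp$ lie in $G.\bp$, so $Q$--quasi-convexity lets us choose the geodesic $[\bp,\bar g.\bp]$ inside $\thick$. Then for every $0\leqslant M_0\leqslant r:=d(\bp,\bar g.\bp)$ the point $[\bp,\bar g.\bp](M_0)$ lies in $\thick$, so the decomposition of \fullref{maincge} is always in its case~(\textdagger): $\bar g.\bp$ is a product of an element of $\bar G$ of length $M_0\pm Q$ and an element of $\bar G$ of length $(r-M_0)\pm Q$, with no contribution from $Comp_{Q,\,0}^{G}$ since the middle subsegment has length $r-(m_0+m_1)=0$. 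Consequently the counting inequality of that proof collapses to $V_{r,Q}\lmul V_{M_0,Q}\cdot V_{r-M_0,Q}$ for every $0\leqslant M_0\leqslant r$, a submultiplicativity which, by the Poincar\'e-series manipulation of \cite[Lemma~4.3]{DalPeiPic11} used there, forces $\Theta_{\bar G}$ to diverge at $\rate_{\bar G}$. Thus $\bar G$, and therefore $A$, is divergent; in this setting we need neither the dichotomy on $\rate_G^c$ nor the hypothesis $\rate_G^c<\rate_G$ appearing in \fullref{maincge}.

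The rest of the argument is identical to that of \fullref{maincge}. By \fullref{proposition:embedded}, the map $\phi_n\from A^*\!*\mathbb{Z}_2\to G$, $(a_1,\dots,a_k)\mapsto a_1h^n\cdots a_kh^n$, is injective for all sufficiently large $n$; since $A$ is divergent we have $\Theta_A(\rate_A)=\infty$, so \fullref{lemma:strictgrowthinequality} gives $\rate_{\phi_n(A^*\!*\mathbb{Z}_2)}>\rate_A$. As $\phi_n(A^*\!*\mathbb{Z}_2)\subseteq G$ and $\rate_A=\rate_{\bar G}=\rate_{G/\Gamma}$, we conclude $\rate_{G/\Gamma}<\rate_G$. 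Since $\Gamma$ was an arbitrary infinite normal subgroup, $G\act\X$ is a growth tight action.

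I expect no real obstacle, as essentially all the work is already in \fullref{maincge}, \fullref{proposition:embedded}, and \fullref{lemma:strictgrowthinequality}. The one point requiring care is the passage to case~(\textdagger): one must check that a geodesic realizing $d(\bp,\bar g.\bp)$ can be taken inside $\thick$ for the very constant $Q$ used to define $\thick$ (immediate from the definition of a quasi-convex $G$--space, since both endpoints lie on the quasi-convex orbit), and that, once it is, the ``$\thin$'' part of the decomposition disappears so that the complementary-growth count never appears. Everything downstream is then the standard Sambusetti-style embedding-plus-growth-gap machinery recalled above.
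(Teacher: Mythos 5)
Your proposal is correct and is exactly the argument the paper intends: the published proof of this theorem simply says it is ``an easier special case'' of the proof of \fullref{maincge} because quasi-convexity puts one always in case~(\textdagger), and you fill in precisely those details, showing the counting inequality becomes $V_{r,Q}\lmul V_{M_0,Q}\cdot V_{r-M_0,Q}$ with no complementary-growth contribution, so divergence of $\bar G$ comes for free. Your explicit dispatch of the finite-index case and your observation that one can bypass the hypothesis $\rate_G^c<\rate_G$ entirely are accurate and in the spirit of the paper's remark.
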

\begin{proof}
The proof is an easier special case of the proof of \fullref{maincge}.
If $\X$ is $Q$--quasi-convex then we can always choose to be in case (\textdagger) of
the proof.
\end{proof}

\section{Growth of Conjugacy Classes}\label{sec:cc}
Parkkonen and Paulin \cite{ParPau13} ask: given a finitely generated group $G$ with a word metric and an element $h\in G$, what is growth rate of the conjugacy class $[h]$ of $h$?
In a hyperbolic group $G$ there is a finite subgroup, the
\emph{virtual center}, consisting of elements whose centralizer is finite index in $G$. 
The growth exponent of a conjugacy class in the virtual center is clearly zero.
Parkkonen and Paulin show that for every element $h$ not in the virtual center, $\rate_{[h]}=\frac{1}{2}\rate_G$.
This generalized an old result of Huber \cite{Hub56} for the case of $G$ acting cocompactly on the hyperbolic plane and $h$ loxodromic.

Since strongly contracting elements behave much like infinite order
elements in hyperbolic groups, it is natural to ask whether the growth
exponent of the conjugacy class of a strongly contracting element $h$ also
satisfies $\rate_{[h]}=\frac{1}{2}\rate_G$.

We show that the lower bound holds, and the upper bound holds if $h$
moves the basepoint sufficiently far with respect to the contraction
constant for the axis. 

\begin{theorem}\label{thm:conjugacyclassgrowth}
Let $G$ be a non-elementary, finitely generated group, and let $\X$ be a $G$--space.
Let $h$ be a strongly contracting element for $G\act\X$.
Then $\rate_{[h]}\geqslant\frac{1}{2}\rate_{G}$.

Let $\constc:=\diam \gpcenter(h)
\backslash \H$, where $\gpcenter(h)$ is the centralizer of $h$ in $G$.
Suppose $\pi_\H$ is a $(1,\consta)$--strongly constricting,
$\consta$--coarse map. 
If $d(\bp,h.\bp)>15\consta+2\constc$ then $\rate_{[h]}=\frac{1}{2}\rate_{G}$.
\end{theorem}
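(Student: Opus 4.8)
The plan is to establish the two inequalities separately. For the lower bound $\rate_{[h]}\geqslant\frac12\rate_G$, I would exploit the free-product-set embedding already built for the main theorem. Taking $\Gamma=G$ (or just using the construction of \fullref{proposition:embedded} with $A$ a maximal $\constb$--separated subset of a ball-filling section), one has for large $n$ an injection $\phi_n\from A^*\!*\mathbb{Z}_2\to G$, $(a_1,\dots,a_k)\mapsto a_1h^n\cdots a_kh^n$, and $\rate_{\phi_n(A^*\!*\mathbb{Z}_2)}=\rate_G$ since $A$ is coarsely dense in $G.\bp$. The key observation is that for a word $w=a_1h^n\cdots a_kh^n$ in the image, the conjugate $w':=h^n a_1 h^n a_2\cdots h^n a_k$ — a cyclic permutation of the syllables — lies in $[h^n a_1 h^n\cdots a_k]$... more to the point, I would directly compare lengths: for $w\in\phi_n(A^*\!*\mathbb{Z}_2)$, by \fullref{lemma:distancelb} applied along the chain of strongly contracting axes $a_1\H,\,a_1h^n\!\cdot a_2\H,\dots$ (as in the embedding proof), $|w|\eaddmul \sum_i(|a_i|+|h^n|)$, i.e.\ $|w|$ grows linearly in the ``syllable length'' of the decomposition. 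Now given a word of $|w|\leqslant r$ I split it as $w=uv$ with $u$ a prefix consisting of roughly the first half of the syllables, so $|u|\eadd |v|\eadd r/2$; then $vu$ is conjugate to $w$ (hence conjugate into $[h]$ only if $w$ is — so instead I take $w$ itself to range over elements of $[h^n]$-translates). Cleaner: the standard argument of Parkkonen--Paulin runs as follows. Each element of $\phi_n(A^*\!*\mathbb{Z}_2)$ of length $\leqslant r$ determines, by cyclic reduction, an element of the conjugacy class $[h^n w']$ for some $w'$, and the map $w\mapsto$ (conjugacy class of $h^n w$) is boundedly-to-one on the set of such $w$ of length $\leqslant r/2$, so $\#\{[h^n w]\mid |w|\leqslant r/2\}\gmul \exp((r/2)\rate_G)$ up to subexponential error, and each such conjugacy class is $[h]$ if and only if... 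Here I would instead simply note $\rate_{[h]}=\rate_{[h^n]}$ (since $[h^n]\subset[h]$ only fails — actually they are different conjugacy classes, so I must be careful and argue with $h$ directly, taking $n=1$ is not available). The honest route: mimic \cite{ParPau13}. Take $g$ ranging over a maximal separated subset of $G.\bp$ with $|g|\leqslant r/2$ and such that $d^\pi_{g\H}(\bp,g.\bp)$ is bounded (density of such $g$ gives $\rate=\rate_G$ by an argument like \fullref{maincge}); then the elements $g h g^{-1}$ are pairwise non-conjugate-identified only boundedly often, and $|ghg^{-1}|\eaddmul 2|g|+|h|$ by two applications of \fullref{lemma:distancelb}, giving $\#\{[h]\text{-representatives of length}\leqslant r\}\gmul\exp((r/2)\rate_G)$.

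For the upper bound under the hypothesis $d(\bp,h.\bp)>15\consta+2\constc$, the idea is: every element $g'\in[h]$ of length $\leqslant r$ has the form $g h g^{-1}$; the hypothesis on $d(\bp,h.\bp)$ versus the contraction constant forces the geodesic $[\bp,g'.\bp]$ to pass close to the translated axis $g\H$ (because $h$ moving the basepoint far past the contraction threshold means $\pi_\H(\bp)$ and $\pi_\H(h.\bp)=h.\pi_\H(\bp)$ are far apart, so projections are nondegenerate and closest-point projection to $g\H$ of both $\bp$ and $g'.\bp$ is pinned near $g.\bp$ and $g'.\bp=ghg^{-1}.\bp$ respectively). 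Then, writing $g=u\cdot c$ where $c\in Z(h)$ is chosen in the coset $gZ(h)$ to minimize $|g|$ (using $\diam Z(h)\backslash\H=\constc$), one gets $g h g^{-1}=u h u^{-1}$ with $|u|\eaddmul\tfrac12|g'|$ via \fullref{lemma:distancelb} (the geodesic from $\bp$ to $ghg^{-1}.\bp$ decomposes coarsely as: segment to $g.\bp$, segment along $g\H$ of length $\approx|h|$, segment back — so $|g'|\eaddmul 2|g|+|h|$, hence $|u|=|g|\ladd \tfrac12(|g'|-|h|)+\text{const}$). Consequently the number of conjugacy-class representatives $uhu^{-1}$ with $|uhu^{-1}|\leqslant r$ is bounded by a constant times $\#\{u.\bp : |u|\leqslant r/2\}\lmul\exp((r/2)\rate_G)$, giving $\rate_{[h]}\leqslant\tfrac12\rate_G$.

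The steps, in order: (1) set up the auxiliary axis $\H=E(h).\bp$ (or $Z(h).\bp$) and the strongly constricting projection $\pi_\H$ from \fullref{lemma:projection}; record from \fullref{lemma:distancelb} the coarse additivity $|xy^{-1}|\eaddmul d(x,\pi_{g\H}(x))+d^\pi_{g\H}(x,y)+d(\pi_{g\H}(y),y)$ whenever the middle term is large. (2) Lower bound: exhibit many pairwise ``coarsely distinct'' conjugates $ghg^{-1}$, $g$ in a suitable coarsely dense separated family with $|g|\leqslant r/2$, using proper discontinuity to bound multiplicity and using an \fullref{maincge}-style divergence/density count to see this family has growth exponent $\rate_G$; conclude $\rate_{[h]}\geqslant\tfrac12\rate_G$. (3) Upper bound: for $g'\in[h]$, pick the geodesic $[\bp,g'.\bp]$, use $d(\bp,h.\bp)>15\consta+2\constc$ to show $d^\pi_{g\H}(\bp,g'.\bp)$ is large (bounded below by roughly $d(\bp,h.\bp)-O(\consta+\constc)>0$), apply the coarse additivity to get $|g'|\gadd 2|u|+d(\bp,h.\bp)-O(\consta)$ where $u$ is the minimal-length representative of $gZ(h)$; then $\#\{g'\in[h]:|g'|\leqslant r\}\lmul\#\{u:|u|\leqslant r/2\}$. (4) Combine.

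The main obstacle I expect is step~(3): pinning down that $d(\bp,h.\bp)>15\consta+2\constc$ genuinely forces $d^\pi_{g\H}(\bp,g'.\bp)$ to be bounded \emph{below} by a definite positive amount uniformly in $g'\in[h]$ — one must carefully chase the constants through $d^\pi_\H(\bp,h.\bp)\geqslant d(\bp,h.\bp)-d(\bp,\pi_\H(\bp))-d(h.\bp,\pi_\H(h.\bp))\geqslant d(\bp,h.\bp)-2\consta$ and check this exceeds the constriction threshold, and then correctly translate ``projection is nondegenerate'' into the additive length estimate while accounting for the choice of minimal coset representative $u$ and the $\constc$-sized ambiguity from $Z(h)\backslash\H$. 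The lower bound's multiplicity-and-density bookkeeping is more routine but still needs the \fullref{maincge} divergence machinery invoked correctly.
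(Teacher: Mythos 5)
Your overall plan matches the paper's proof in its essential structure: define the set $F=\{g\in G\mid d^\pi_{g\H}(\bp,g.\bp)\leqslant \constb\}$ of elements that ``project to themselves'' along their translated axis, show $\rate_F=\rate_G$, map $F$ to $[h]$ by $\psi\from f\mapsto fh\inv f$, verify surjectivity and bounded multiplicity of $\psi$, get the lower bound from the triangle inequality $|fh\inv f|\ladd 2|f|+|h|$, and get the upper bound by using the hypothesis on $d(\bp,h.\bp)$ to force $d^\pi_{f\H}(\bp,fh\inv f.\bp)>\consta$ and then applying \fullref{lemma:distancelb} for the reverse inequality $|fh\inv f|\gadd 2|f|+|h|$. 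Your constant-chasing for the upper bound (starting from $d^\pi_\H(\bp,h.\bp)\geqslant d(\bp,h.\bp)-2\consta$, passing through the $\constc$-ambiguity from $\gpcenter(h)\backslash\H$) is the right computation, and your observation that the map to conjugates must be treated modulo $\gpcenter(h)$-coset representatives is correct and parallels the paper's use of representatives $e_1,\dots,e_m$ of $\gpcenter(h)\backslash E(h)$.

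The genuine gap is the claim that $\rate_F=\rate_G$. You assert this follows from ``density of such $g$'' via ``an argument like \fullref{maincge}.'' Neither is right. First, $F$ is \emph{not} coarsely dense in $G.\bp$: the canonical $F$-element $g'$ in a coset $gE(h)$ (the one nearest $\bp$ on the axis $g\H$) can be arbitrarily far from $g.\bp$, so density fails. Second, the divergence/complementary-growth machinery of \fullref{maincge} is the wrong tool here; nothing about $F$ is a minimal section of a quotient. The correct argument, which you need but do not supply, is a counting argument: the assignment $f\mapsto f\H$ gives a surjection from $\{f\in F\setminus E(h):|f|\leqslant r\}$ onto $\{\text{translates of }\H\text{ meeting }\clball{r}{\bp}\}$, and the number of such translates is bounded below by $|G.\bp\cap\clball{r}{\bp}|$ divided by the maximal number of orbit points that a single translate $g\H$ can have inside $\clball{r}{\bp}$, which is $O(r)$ since each axis is a quasi-isometrically embedded copy of a virtually cyclic group. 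Thus $|F.\bp\cap\clball{r}{\bp}|\gmul |G.\bp\cap\clball{r}{\bp}|/r$, and the polynomial factor $1/r$ disappears in the $\limsup\frac{1}{r}\log$. Without this step the lower bound $\rate_{[h]}\geqslant\tfrac12\rate_G$ is not established. (Your initial digression through $\phi_n$ and $\Gamma=G$ is also a dead end, as you yourself note: with $\Gamma=G$ the minimal section is trivial.)
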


\begin{corollary}
  For $h$ strongly contracting, $\rate_{[h^n]}=\frac{1}{2}\rate_{G}$ for all sufficiently large $n$.
\end{corollary}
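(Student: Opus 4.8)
The plan is to deduce the corollary directly from \fullref{thm:conjugacyclassgrowth} applied to $h^n$ in place of $h$, the only genuine issue being to see that the constants controlling the hypothesis of that theorem stay bounded while $d(\bp,h^n\!.\bp)$ does not.

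First I would verify that $h^n$ is itself a strongly contracting element for every $n\geqslant 1$. Since $\langle h^n\rangle$ has finite index in $\langle h\rangle$, the orbit $\langle h^n\rangle.\bp$ is coarsely equivalent to $\langle h\rangle.\bp$, so by \fullref{lemma:strongconstrictiscoarseequivinvariant} closest point projection to $\langle h^n\rangle.\bp$ is strongly contracting, and $i\mapsto h^{ni}\!.\bp$ is a quasi-geodesic (being a re-parametrized subpath of the quasi-geodesic $i\mapsto h^i\!.\bp$; alternatively one invokes \fullref{lemma:quasigeodesic} once strong contraction is known). Then \fullref{corollary:Ehexists} gives $E(h^n)=E(h)$, so the quasi-axis of $h^n$ is the \emph{same} set $\H=E(h).\bp$ as for $h$, and the $E(h)$--equivariant strongly contracting projection $\pi_\H$ of \fullref{lemma:projection} serves simultaneously for every power. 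In particular the constriction constant $\consta$ occurring in the hypothesis of \fullref{thm:conjugacyclassgrowth} may be taken to be a single number, independent of $n$.

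Next I would bound $\constc=\diam\gpcenter(h^n)\backslash\H$ uniformly in $n$. Every element commuting with $h$ commutes with $h^n$, so $\gpcenter(h)\leqslant\gpcenter(h^n)$, whence $\diam\gpcenter(h^n)\backslash\H\leqslant\diam\gpcenter(h)\backslash\H=:\constc_0$; and $\constc_0<\infty$ because $\langle h\rangle\leqslant\gpcenter(h)$ acts coarsely cocompactly on $\H$. Thus the threshold $15\consta+2\constc$ appearing in \fullref{thm:conjugacyclassgrowth} is at most the $n$--independent quantity $15\consta+2\constc_0$.

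Finally, the lower bound $\rate_{[h^n]}\geqslant\frac{1}{2}\rate_{G}$ is immediate from \fullref{thm:conjugacyclassgrowth}; and since $i\mapsto h^i\!.\bp$ is a quasi-geodesic we have $d(\bp,h^n\!.\bp)\to\infty$, so for all sufficiently large $n$ one has $d(\bp,h^n\!.\bp)>15\consta+2\constc_0\geqslant 15\consta+2\constc$. Applying the second half of \fullref{thm:conjugacyclassgrowth} to the strongly contracting element $h^n$ then yields $\rate_{[h^n]}=\frac{1}{2}\rate_{G}$, as desired. The only real point — the ``main obstacle'', such as it is — is precisely this uniformity of $\consta$ and $\constc$ against the linear growth of $d(\bp,h^n\!.\bp)$; everything else is formal from results already established.
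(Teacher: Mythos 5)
Your argument is correct and is essentially the paper's proof: both observe that $E(h^n)=E(h)$ (so $\consta$ is unchanged), that $\gpcenter(h)\leqslant\gpcenter(h^n)$ (so $\constc$ can only shrink), and that $d(\bp,h^n\!.\bp)$ grows linearly because $\langle h\rangle$ is quasi-isometrically embedded, then apply \fullref{thm:conjugacyclassgrowth}. You simply spell out a few intermediate facts (that $h^n$ is itself strongly contracting, via \fullref{lemma:strongconstrictiscoarseequivinvariant} and \fullref{corollary:Ehexists}) that the paper leaves implicit.
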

\begin{proof}
For $n$ nonzero, $E(h^n)=E(h)$ and
  $\gpcenter(h^n)\supset\gpcenter(h)$, so the same $\consta$ and
  $\constc$ work for $h^n$ as work for $h$.
On the other hand,  $\langle h\rangle$ is quasi-isometrically embedded,
  so $d(\bp,h^n\!.\bp)\eaddmul n$.
Thus, $d(\bp,h^n\!.\bp)>15\consta+2\constc$ for large enough $n$.
\end{proof}
It would be interesting to know whether the restriction on $d(\bp,h.\bp)$ is
really necessary:

\begin{question}
  Does there exist an action $G\act \X$ such that $h$ is a strongly
  contracting element with $\rate_{[h]}>\frac{1}{2}\rate_G$?
\end{question}

\begin{proof}[Proof of \fullref{thm:conjugacyclassgrowth}]
Define $\constb:=6\consta+\constc$ and $F:=\{g\in G\mid
d^\pi_{g\H}(\bp,g.\bp)\leqslant \constb\}$.

First, we will show $\rate_F=\rate_G$. Then, we will relate
$\rate_{[h]}$ to $\rate_F$.

For any $r\geqslant 0$ consider $\phi\from \{f\in F\setminus E(h)\mid d(\bp,f.\bp)\leqslant r\}\to \{g\H \mid g\in G\setminus E(h)
\text{ and }g\H\cap \clball{r}{\bp}\neq\emptyset\}$ defined by
$\phi(f):=f\H$.
For each axis $g\H$ meeting $\clball{r}{\bp}$ there exists a $g'\in
gE(h)$ such that $d(\bp,g'\!.\bp)=d(\bp,g\H)\leqslant r$.
Since $\pi_{g\H}$ is within $5\consta$ of closest point projection, by
\fullref{lemma:cpp}, we have $d^\pi_{g'\H}(\bp,g'\!.\bp)\leqslant 6\consta\leqslant
\constb$. 
Therefore, $g'\in F$ with $\phi(g')=g\H$, so $\phi$ is surjective. 

We estimate:
\[\#\text{axes meeting $\clball{r}{\bp}$} \geqslant
\frac{|G.\bp\cap\clball{r}{\bp}|\times \#\text{axes per orbit
    point}}{\text{maximum number of orbit points per axis}}\]
The basepoint belongs to
$[\mathrm{Stab}_G(\bp):E(h)\cap\mathrm{Stab}_G(\bp)]$ distinct
translates of $\H$, so the number of axes per orbit point is 
constant.
The maximum number of orbit points in $\clball{r}{\bp}$ contained in a
single axis is proportional to $r$, since each axis is a
quasi-isometrically embedded image of a virtually cyclic group.
Combined with surjectivity of $\phi$ this gives:
 \[|F.\bp\cap\clball{r}{\bp}| \gmul \frac{|G.\bp \cap\clball{r}{\bp}|}{r}\]
Thus:
\begin{align*}
\rate_F&=\limsup_{r\to\infty}\frac{1}{r}\log 
  |F.\bp\cap\clball{r}{\bp}|\\
&\geqslant \limsup_{r\to\infty}\frac{1}{r}\log \frac{|G.\bp 
  \cap\clball{r}{\bp}|}{r}\\
&=\limsup_{r\to\infty}\frac{1}{r}\log |G.\bp \cap\clball{r}{\bp}|=\rate_G  
\end{align*}
  
The reverse inequality is trivial, since $F\subset G$, so $\rate_F=\rate_G$.

Now consider the map $\psi\from F\setminus E(h) \to [h]\setminus E(h)$
defined by $\psi(f):=fh\inv{f}$.
Choose minimal length representatives $e_1,\dots,e_m$ of $\gpcenter(h)\backslash E(h)$.
For each $g\in G\setminus E(h)$ there exists a $g'\in gE(h)$ such that
$d(\bp,g\H)=d(\bp,g'\!.\bp)$.
There exist $z\in\gpcenter(h)$ and $i$ such that $g'=gz e_i$.
Let $f:=g'e_i^{-1}$, so that $fh\inv{f}=gz e_i\inv{e_i}he_i\inv{e_i}\inv{z}\inv{g}=gh\inv{g}$. 
Since $e_i$ has length at most $\constc$ and
$\pi_{g\H}$ is $5\consta$--close to closest point projection, it
follows that $f\in F$, so $\psi$ is surjective.
Furthermore, $d(\bp,fh\inv{f}\!.\bp)\leqslant
2d(\bp,f.\bp)+d(\bp,h.\bp)$, by the triangle inequality.

On the other hand, $\psi$ is boundedly many-to-one, since if
$fh\inv{f}=f'h\inv{f'}$ then $f'\in fE(h)$, so $f\H=f'\H$.
By definition of $F$, we then have $d^\pi_{f\H}(\bp,f.\bp)\leqslant
\constb$ and $d^\pi_{f\H}(\bp,f'\!.\bp)\leqslant
\constb$, so $d(f.\bp,f'\!.\bp)\leqslant 2(\consta+\constb)$.
There are uniformly boundedly many such $f'$ for each $f$.

Hence, $\psi$ is a surjective, boundedly-many-to-one map such
that $d(\bp,\psi(f).\bp)\ladd 2d(\bp,f.\bp)$ for all $f$. 
We excluded $E(h)$ from the domain and range, but its growth exponent
is zero, since it embeds quasi-isometrically into $\X$, so $\rate_{[h]}=\rate_{[h]\setminus
  E(h)}\geqslant\frac{1}{2}\rate_{F\setminus
  E(h)}=\frac{1}{2}\rate_F=\frac{1}{2}\rate_G$. 

Now, $d^\pi_{f\H}(fh.\bp,fh\inv{f}.\bp) =
d^\pi_{f\H}(\bp,f.\bp)\leqslant \constb$ for $f\in F$, so 
 $d^\pi_{f\H}(\bp,fh\inv{f}\!.\bp)>d(f.\bp,fh.\bp)-2(\consta+\constb)$.
If $d(\bp,h.\bp)>15\consta+2\constc=\consta+2(\consta+\constb)$  then
we have
 $d^\pi_{f\H}(\bp,fh\inv{f}\!.\bp)>\consta$, so
by strong constriction, $d(\bp,fh\inv{f}\!.\bp)\geqslant
2d(\bp,f.\bp)+d(\bp,h.\bp)-4(\consta+\constb)$.
Thus,
$d(\bp,\psi(f).\bp)\eadd 2d(\bp,f.\bp)$ and $\rate_{[h]}=\frac{1}{2}\rate_{G}$.
\end{proof}

\part{Examples of Actions with Strongly Contracting Elements}\label{part:sc}
\section{Actions on Relatively Hyperbolic Spaces}\label{sec:relhyp}
Yang \cite{Yan13} proved that the action of a finitely generated group $G$ with a
non-trivial Floyd boundary on any of its Cayley graphs is growth
tight. 
Relatively hyperbolic groups have non-trivial Floyd boundaries by a
theorem of Gerasimov \cite{Ger12}, so the action of a relatively
hyperbolic group on any of its Cayley graphs is growth tight. 
It is an open question whether there exists a group with a non-trivial
Floyd boundary that is not relatively hyperbolic. 

There is also a notion of relative hyperbolicity of metric spaces,
which we will review in \fullref{sectionrelativehyperbolicity}. 
One motivating example of a relatively hyperbolic metric space is a
Cayley graph of a relatively hyperbolic group.
Another is the
universal cover $\tilde{M}$ of a complete, finite volume hyperbolic
manifold $M$. 
The fundamental group $\pi_1(M)$ of such a manifold is a relatively
hyperbolic group, so the action of $\pi_1(M)$ on any of its Cayley
graphs is growth tight by Yang's theorem.
However, this does not tell us whether the action of $\pi_1(M)$ on
$\tilde{M}$ is growth tight. 
This question was addressed for a more general class of manifolds by
Dal'bo, Peign\'e, Picaud, and Sambusetti \cite{DalPeiPic11}, who proved
growth tightness results for geometrically
finite Kleinian groups.
Using our main theorems, \fullref{maincge} and \fullref{mainqc},  we generalize their results to all groups acting on relatively hyperbolic metric spaces.

\subsection{Relatively Hyperbolic Metric Spaces}\label{sectionrelativehyperbolicity}
\begin{definition}[{cf. \cite{Dru09, Sis12}}]\label{def:relativelyhyperbolic}
  Let $\X$ be a geodesic metric space and let
  $\pers$ be a collection of uniformly coarsely
  connected subsets of $\X$. 
We say $\X$ is \emph{hyperbolic relative to the peripheral sets} $\pers$
if the following conditions are satisfied:
\begin{enumerate}
\item For each $A$ there exists a $B$ such that
  $\diam({\clnbhd{A}{\per_0}}\cap{\clnbhd{A}{\per_1}})\leqslant
B$ for distinct
$\per_0,\,\per_1\in\pers$.\label{item:boundedintersection}
\item There exists an $\epsilon\in (0,\frac{1}{2})$ and $M\geqslant 0$ such
  that if $x_0,\,x_1\in\X$ are points such that for some
  $\per\in\pers$ we have
  $d(x_i,\per)\leqslant \epsilon\cdot d(x_0,x_1)$ for each $i$, then
  every geodesic from $x_0$ to $x_1$ intersects
  ${\clnbhd{M}{\per}}$.
\item There exist $\sigma$ and $\delta$ so that for every geodesic
  triangle either:
  \begin{enumerate}
  \item there exists a ball of radius $\sigma$ intersecting all three
    sides, or\label{inradius}
\item there exists a $\per\in\pers$ such that
  ${\clnbhd{\sigma}{\per}}$ intersects all three sides and for
  each corner of the triangle, the points of the outgoing geodesics
  from that corner which first enter ${\clnbhd{\sigma}{\per}}$
  are distance at most $\delta$ apart.
  \end{enumerate}
\end{enumerate}

We say $\X$ is \emph{hyperbolic} if it hyperbolic relative to $\pers=\emptyset$.
\end{definition}

If $\X$ is hyperbolic in the sense of
\fullref{def:relativelyhyperbolic} then the only non-trivial condition
is \ref{inradius}, which is equivalent to the usual definition of
hyperbolic metric space.

\begin{definition}\label{def:relativelyhyperbolicgroup}
  A group $G$ is \emph{hyperbolic relative to a collection of finitely
  generated peripheral subgroups} if a
  Cayley graph of $G$ is hyperbolic relative to the cosets of the
  peripheral subgroups.
\end{definition}

  Sisto \cite{Sis12} shows \fullref{def:relativelyhyperbolicgroup} is
  equivalent to Bowditch's \cite{Bow12} definition of relatively
  hyperbolic groups.

\begin{definition}[cf. \cite{GroMan07}]\label{definitioncobinatorialhoroball}
Let $\X$ be a connected graph with edges of length bounded below. 
  A \emph{combinatorial horoball} based on $X$ with parameter $a>0$ is
  a graph whose vertex set is $\mathrm{Vert}\X\times
(\{0\}\cup\mathbb{N})$, contains an edge of length 1 between $(v,n)$ and
$(v,n+1)$ for all $v\in\mathrm{Vert}\X$ and all
$n\in\{0\}\cup\mathbb{N}$, and for each edge $[v,w]\in\X$ contains an edge $[(v,n),(w,n)]$
of length $e^{-an}\cdot\mathrm{length}([v,w])$.
\end{definition}

  Let $\X$ be hyperbolic relative to $\pers$.
An \emph{augmented space} is a space obtained from $\X$ as follows.
By definition, there exists a constant $C$ such that each
$\per\in\pers$ is $C$--coarsely connected.
For each $\per\in\pers$ choose a maximal subset of points that
pairwise have distance at least $C$ from one another. 
Let these points be the vertex set of a graph. 
For edges, choose a geodesic connecting each pair of vertices at
distance at most $2C$ from each other. 
Use this graph as the base of a combinatorial horoball with parameter $a_{\per}>0$.
The augmented space is the space obtained from the union of $\X$ with
horoballs $\X_\per$ for each $\per\in\pers$ by identifying the base of $\X_\per$ with the graph constructed in $\per$.

\begin{definition}\label{def:cuspuniform}
  Let $\X$ be a hyperbolic $G$--space, and let $\parabs$ be the
  collection of maximal parabolic subgroups of $G$.
 Suppose there exists a $G$--invariant collection of disjoint open
 horoballs centered at the points fixed by the parabolic subgroups. 
The \emph{truncated space} is $\X$ minus the union of these open
horoballs. 
We say $G\act \X$ is \emph{cusp uniform} if $G$ acts cocompactly
on the truncated space.
\end{definition}

If $G$ acts cocompactly on a $G$--space $\X'$ that is hyperbolic
relative to a $G$--invariant peripheral system $\pers$, then an augmented
space $\X$ can be constructed $G$--equivariantly, and $G\act X$ will be a
cusp uniform action.

Several different versions of the following theorem occur in the
literature on relatively hyperbolic groups:
\begin{theorem}[\cite{Bow12,GroMan08,Sis12}]\label{theoremrelativelyhyperbolic}
  If $\X$ is hyperbolic relative to $\pers$ then any augmented space with
  horoball parameters bounded below is hyperbolic.

If $G\act \X$ is a cusp uniform action then $G$ is hyperbolic relative
to the maximal parabolic subgroups and the truncated space is
hyperbolic relative to boundaries of the deleted horoballs.
\end{theorem}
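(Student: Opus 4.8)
The plan is to prove both assertions by comparing geodesics in the glued (resp.\ truncated) space with ``relative geodesics'' in $\X$, so that everything reduces to the three conditions of \fullref{def:relativelyhyperbolic} together with the known hyperbolicity of combinatorial horoballs. The common input is the Groves--Manning estimate: a combinatorial horoball (\fullref{definitioncobinatorialhoroball}) with parameter $a\geqslant a_0>0$ and edge lengths bounded below is $\delta_0$--hyperbolic with $\delta_0=\delta_0(a_0)$, and a geodesic between two vertices of the base at distance $D$ has length and penetration depth logarithmic in $D$; moreover any two such vertices are joined by a geodesic that climbs into the horoball. Fixing a uniform lower bound $a_0$ on all horoball parameters makes $\delta_0$ uniform, which is exactly what ``bounded below'' buys us.

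First I would show the augmented space $\widehat{\X}$ is hyperbolic. Step one: decompose a geodesic $\gamma$ of $\widehat{\X}$ into maximal pieces lying in the horoballs $\X_\per$ and maximal pieces lying in $\X$; by the depth estimate each horoball piece replaces a long excursion of $\X$ near $\per$ by a detour of logarithmic length, so $\gamma$ projects to an efficient path in $\X$ that jumps directly between entry and exit points whenever it would otherwise travel far inside $\clnbhd{M}{\per}$. Conditions (1) and (2) of \fullref{def:relativelyhyperbolic} control this projection: (2) forces a geodesic between two points both near a common $\per$ to enter $\X_\per$, while (1) (bounded coarse intersection of distinct peripherals) prevents a geodesic from making many excursions near many different peripherals. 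Step two: verify the thin--triangle condition for $\widehat{\X}$. Project a geodesic triangle of $\widehat{\X}$ to $\X$ and apply condition (3). If the triangle is $\sigma$--thin in $\X$ via a ball disjoint from every $\clnbhd{M}{\per}$, that ball lies in $\widehat{\X}$ and does the job; if instead the meeting occurs in $\clnbhd{\sigma}{\per}$, the three sides of the $\widehat{\X}$--triangle all enter $\X_\per$ near the net point realizing that meeting, and inside the $\delta_0$--hyperbolic $\X_\per$ the three subsegments form a thin triangle --- patching yields a uniformly thin triangle in $\widehat{\X}$. All constants depend only on the data of \fullref{def:relativelyhyperbolic} and on $a_0$.

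Next I would show that a cusp uniform action (\fullref{def:cuspuniform}) $G\act\X$ makes $G$ hyperbolic relative to the maximal parabolics $\parabs$ and makes the truncated space $\X^{\mathrm{tr}}$ hyperbolic relative to the collection $\H$ of boundary horospheres. Since $G$ acts properly and cocompactly on $\X^{\mathrm{tr}}$, a Cayley graph of $G$ is $G$--equivariantly quasi-isometric to $\X^{\mathrm{tr}}$, with each coset of a maximal parabolic $P$ carried to within bounded Hausdorff distance of the horosphere stabilized by $P$. By quasi-isometry invariance of relative hyperbolicity and \fullref{def:relativelyhyperbolicgroup}, it then suffices to show $\X^{\mathrm{tr}}$ is hyperbolic relative to $\H$, which I would do by checking the three conditions directly. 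Condition (1): distinct horospheres bound disjoint horoballs in the $\delta$--hyperbolic space $\X$, and two horospheres that come within $A$ of each other diverge exponentially, so their neighborhoods in $\X^{\mathrm{tr}}$ have uniformly bounded coarse intersection. Condition (2): if two points lie within $\epsilon\cdot d$ of a common horosphere, an $\X$--geodesic between them plunges deep into the corresponding horoball, so pushing it out to $\X^{\mathrm{tr}}$ produces a path shadowing the horosphere --- the standard fact that $\X^{\mathrm{tr}}$--geodesics track $\X$--geodesics outside the horoballs, with nearby entry and exit points. Condition (3): lift a geodesic triangle of $\X^{\mathrm{tr}}$ to the $\delta$--hyperbolic $\X$; its thin center either sits boundedly far from every horoball, giving the first alternative of condition (3), or deep in a single horoball $B$, in which case all three $\X^{\mathrm{tr}}$--sides run along $\clnbhd{\sigma}{\partial B}$ with boundedly close entry points, giving the peripheral alternative. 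The first assertion, read backwards, identifies $\X$ with an augmented space over $\X^{\mathrm{tr}}$, keeping all constants consistent.

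The hard part in both halves is the same: controlling how geodesics of the glued or truncated space track relative geodesics of the base, with every comparison constant depending only on the relative-hyperbolicity data and on $a_0$ --- in particular, ruling out a geodesic that makes many shallow passes near many distinct peripheral sets. This is precisely where conditions (1) and (2) of \fullref{def:relativelyhyperbolic} and the logarithmic depth of combinatorial horoballs are indispensable; the remaining patching of thin triangles is routine bookkeeping.
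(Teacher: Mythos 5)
The paper does not prove this theorem; it is imported from Bowditch \cite{Bow12}, Groves--Manning \cite{GroMan08}, and Sisto \cite{Sis12}, so there is no internal proof to compare against. Your sketch correctly identifies the shape of the argument in those sources and correctly explains what the hypothesis ``horoball parameters bounded below'' buys (a uniform $\delta_0$ for the combinatorial horoballs). The structure of your two halves is the right one.

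That said, there is a real gap, and it is concentrated in exactly the place you wave at as ``the hard part'' and then treat the rest as ``routine bookkeeping.'' Every decisive step in your sketch silently invokes a geodesic-tracking lemma: that a geodesic of the augmented space (or of the truncated space) stays uniformly close to a relative geodesic of $\X$ outside the horoballs, with boundedly close entry and exit points, and makes at most boundedly many excursions. You use this when you ``project a geodesic triangle of $\widehat{\X}$ to $\X$ and apply condition (3)'' --- but condition (3) of \fullref{def:relativelyhyperbolic} is stated only for \emph{geodesic} triangles in $\X$, and the projected sides of a $\widehat{\X}$--triangle are not geodesics of $\X$ until you prove they track genuine $\X$--geodesics. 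You use it again when you claim the ``$\sigma$--ball does the job'' in $\widehat{\X}$ even though parts of the $\widehat{\X}$--sides sit inside horoballs rather than in $\X$. And for condition (2) of the truncated space you only exhibit \emph{some} path shadowing the horosphere, whereas the definition demands that \emph{every} $\X^{\mathrm{tr}}$--geodesic between such a pair of points come near $\per$; that universal statement is again the tracking lemma. In Groves--Manning and Sisto this tracking result (the preferred-paths construction, the depth/length estimates in horoballs, and the comparison of saturations) is the overwhelming bulk of the proof, not a corollary of conditions (1)--(3). Your sketch is a fair road map of where one is going, but it front-loads as known the single result whose proof the citation actually carries.
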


\subsection{Quasi-convex Actions}\label{sec:qcrh}

\begin{theorem}\label{correlativelyhyperbolic}
  If $\X$ is a quasi-convex, relatively hyperbolic $G$--space and $G$
  does not coarsely fix a peripheral subspace then $G\act \X$ is a
  growth tight action.
\end{theorem}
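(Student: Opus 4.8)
The plan is to derive the statement from \fullref{mainqc}. Since $\X$ is assumed to be a quasi-convex $G$--space and $G$ is, as throughout, finitely generated and non-elementary, the only thing that must be checked is that $G$ contains a strongly contracting element for $G\act\X$.

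Producing such an element is the real work. Recall that $\X$ is hyperbolic relative to a $G$--invariant collection $\pers$ of peripheral sets (\fullref{def:relativelyhyperbolic}) and, by hypothesis, $G$ does not coarsely fix any $\per\in\pers$. The first step is to invoke the structure theory of isometric group actions on relatively hyperbolic spaces: a finitely generated group acting properly discontinuously by isometries on such an $\X$, which is neither virtually cyclic nor coarsely fixes a peripheral set, contains an infinite order \emph{loxodromic} element $h$, meaning $i\mapsto h^i\!.\bp$ is a quasi-geodesic whose image $\H:=\langle h\rangle.\bp$ is not coarsely equivalent to any $\per\in\pers$; moreover $\H$ may be taken to lie within uniformly bounded distance of the complement of every deep peripheral neighborhood. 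By the discussion following \fullref{def:contractinglement}, it then suffices to show that closest point projection $\pi_\H\from\X\to\H$ is strongly contracting, and by \fullref{lemma:boundedgeodesicimage} together with \fullref{prop:allthesame} it is enough to verify the Bounded Geodesic Image Property for $\pi_\H$ (the companion condition, that $d(x,\pi_\H(x))-d(x,\H)$ be uniformly bounded, holds automatically for closest point projection).

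To get the Bounded Geodesic Image Property I would pass to a $G$--equivariant augmented space $\hat\X$ containing $\X$, which is hyperbolic by \fullref{theoremrelativelyhyperbolic}. In $\hat\X$ the image $\hat\H$ of $\H$ is a quasi-geodesic lying a bounded distance from every horoball, so in the genuinely hyperbolic space $\hat\X$ closest point projection onto $\hat\H$ has the Bounded Geodesic Image Property by the standard hyperbolic theory. Now let $\geo$ be a geodesic of $\X$ with $\geo\cap\nbhd{r}{\H}=\emptyset$ and compare it with a geodesic $\hat\geo$ of $\hat\X$ having the same endpoints: by relative hyperbolicity $\geo$ and $\hat\geo$ fellow-travel outside fixed neighborhoods of the horoballs, and since $\H$ is uniformly far from all horoballs, for $r$ large enough $\hat\geo$ misses a neighborhood of $\hat\H$; hence $\pi_{\hat\H}(\hat\geo)$ has bounded diameter, and therefore so does $\pi_\H(\geo)$. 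This establishes that $h$ is strongly contracting, and \fullref{mainqc} completes the proof.

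The step I expect to be the main obstacle is this comparison of geodesics in $\X$ with geodesics in the augmented space $\hat\X$: a geodesic of $\X$ avoiding the axis is, a priori, only a path in $\hat\X$, and one must quantify the error introduced by the horoball shortcuts present in $\hat\X$ while certifying that a true $\hat\X$--geodesic still cannot approach $\hat\H$. This is exactly where the hypothesis that $G$ does not coarsely fix a peripheral subspace is used, through the non-peripheral (hence un-shortcuttable) axis $\H$ it produces; making the bounded-penetration estimates precise is the bulk of the argument.
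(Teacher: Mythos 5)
You take a genuinely different route from the paper. The paper's proof of \fullref{correlativelyhyperbolic} is essentially a citation: it invokes \cite[Lemma~5.4]{Sis12}, which states that every infinite order element of $G$ not coarsely fixing a peripheral subspace is strongly constricting, and then applies \fullref{mainqc}. Your plan to apply \fullref{mainqc} is identical, but instead of citing Sisto's lemma you sketch a from-scratch argument that a loxodromic axis $\H$ is strongly contracting, passing through the augmented space $\hat\X$ and invoking the Bounded Geodesic Image Property in a genuinely hyperbolic space. This is almost certainly in the spirit of how such a lemma is proved, so your outline is not wrong in conception; the paper's approach is simply much shorter because it outsources this work.

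That said, your sketch has a gap that is more serious than you flag. You correctly identify that comparing $\X$--geodesics to $\hat\X$--geodesics requires bounded-penetration estimates, but the deeper issue is the identification of projections: you need to know that closest point projection to $\H$ in the $\X$--metric is coarsely equivalent to closest point projection to $\hat\H$ in the $\hat\X$--metric. This is not automatic. Even though $\H$ lies in the thick part, two points of the thick part can be dramatically closer in $\hat\X$ than in $\X$ (an $\hat\X$--geodesic between them may shortcut through a horoball), so bounded $\hat\X$--diameter of a subset of $\H$ does not by itself give bounded $\X$--diameter, and the closest point on $\H$ to a given $x$ can move when the metric changes. The way out is to use that $i\mapsto h^i\!.\bp$ is a quasi-geodesic with respect to \emph{both} metrics, so the two induced metrics on $\H$ are quasi-isometric via the parametrization, and then to compare the projections using the bounded-penetration fellow-traveling of $\X$-- and $\hat\X$--geodesics near $\H$ itself, not merely near $\geo$. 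This is exactly the content of Sisto's lemma, and without it your step 4 (``therefore so does $\pi_\H(\geo)$'') does not follow. A second, lesser gap is the opening invocation of ``structure theory'' to produce $h$ with a non-peripheral, bounded-penetration axis; this is true but needs a citation (e.g.\ to \cite{Bow12,Sis12,GroMan08}), and the paper also elides this existence step.
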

\begin{proof}
It follows from \cite[Lemma~5.4]{Sis12} that 
  every infinite order element of $G$ that does not coarsely fix a
  peripheral subspace  is
  strongly constricting. We
  conclude by \fullref{mainqc}.
\end{proof}

\fullref{correlativelyhyperbolic} unifies the
existing proofs of growth tightness for cocompact actions on
hyperbolic spaces \cite{Sab13} and for actions of a relatively
hyperbolic group on its Cayley graphs \cite{Yan13}, 
and extends to actions on a
more general class of spaces. 

\begin{corollary}\label{infiniteends}
  The action of a finitely generated group $G$ with infinitely many ends on any one of its Cayley graphs is growth tight.
\end{corollary}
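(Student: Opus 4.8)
The plan is to deduce this from \fullref{correlativelyhyperbolic} by showing that an arbitrary Cayley graph of $G$ is a quasi-convex, relatively hyperbolic $G$--space in which $G$ does not coarsely fix a peripheral subspace. Note first that $G$ is non-elementary: having infinitely many ends, it is infinite and not virtually cyclic.

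The first step is to recall the structure of groups with infinitely many ends. Since $G$ is finitely generated with more than one end, Stallings' Ends Theorem together with Dunwoody's accessibility theorem give a decomposition of $G$ as the fundamental group of a finite graph of groups with finite edge groups in which every vertex group is either finite or one-ended, and since $G$ has infinitely many ends this decomposition is non-trivial. It is standard --- by the combination theorems for relative hyperbolicity over finite edge subgroups --- that $G$ is hyperbolic relative to the collection $\mathcal{P}$ of its one-ended vertex groups. (If $\mathcal{P}=\emptyset$ then $G$ is virtually free, hence hyperbolic, which is the degenerate case $\mathcal{P}=\emptyset$ of \fullref{def:relativelyhyperbolic}.) Fix a finite generating set of $G$ and let $\X$ be the corresponding Cayley graph. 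By \fullref{def:relativelyhyperbolicgroup}, $\X$ is hyperbolic relative to the cosets of the groups in $\mathcal{P}$, and these cosets are the peripheral subspaces.

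Next I would verify the two hypotheses of \fullref{correlativelyhyperbolic}. The space $\X$ is proper and geodesic, $G$ acts on it properly discontinuously and cocompactly by isometries, and, taking $\bp$ to be the identity vertex, the orbit $G.\bp$ is the vertex set of $\X$, which is $\frac12$--coarsely dense and $\frac12$--quasi-convex; hence $\X$ is a quasi-convex $G$--space. For the second hypothesis, suppose $G$ coarsely fixed a peripheral subspace, i.e.\ that some coset $gP$ with $P\in\mathcal{P}$ were coarsely dense in $\X$. Translating by $g^{-1}$, $P$ itself would be coarsely dense in $\X$, and since $\X$ is locally finite this forces $[G:P]<\infty$. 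But the number of ends is a quasi-isometry invariant, so $P$ being finite-index and one-ended would give $\infty = e(G) = e(P) = 1$, a contradiction. Therefore $G$ does not coarsely fix any peripheral subspace, and \fullref{correlativelyhyperbolic} yields that $G\act\X$ is a growth tight action. As $\X$ was an arbitrary Cayley graph, this proves the corollary.

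I expect the only real content to lie in the first step: the fact that a group with infinitely many ends is relatively hyperbolic with respect to the one-ended vertex groups of its accessible finite-edge-group decomposition. This rests on Stallings' theorem, Dunwoody accessibility, and a relative-hyperbolicity combination theorem over finite edge groups; once it is in hand, the verification of the hypotheses of \fullref{correlativelyhyperbolic} is entirely routine. An alternative route, avoiding \fullref{correlativelyhyperbolic}, would be to observe directly that a loxodromic element for the action of $G$ on its Bass--Serre tree is strongly contracting for $G\act\X$ and that $\X$ is quasi-convex, then invoke \fullref{mainqc}; but routing through \fullref{correlativelyhyperbolic} keeps the argument shortest.
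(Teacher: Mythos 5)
Your proof is correct and follows essentially the same route as the paper: Stallings' theorem gives a splitting over a finite group, $G$ is hyperbolic relative to the vertex groups of that splitting, and one then applies \fullref{correlativelyhyperbolic}. However, you invoke substantially more machinery than necessary. The paper uses only a \emph{single} non-trivial splitting over a finite subgroup and takes the peripheral subgroups to be the factor groups of that one splitting, without any accessibility argument; there is no need to pass to a terminal decomposition with one-ended vertex groups, since relative hyperbolicity of $G$ over the vertex groups holds for any splitting with finite edge groups, whatever the ends of the vertex groups. This also simplifies the last verification: the paper observes directly that since the splitting is non-trivial, no factor group is all of $G$, so $G$ does not (coarsely) fix any peripheral coset; your quasi-isometry-invariance-of-ends argument is correct but depends on the peripheral groups being one-ended, which is exactly the extra structure you paid for with Dunwoody accessibility and which is not needed.
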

\begin{proof}
  Stallings' Theorem \cite{Sta71} says that $G$ splits non-trivially over a finite subgroup.
$G$ is hyperbolic relative to the factor groups of this splitting.
Since the splitting is non-trivial, $G$ does not fix any factor group, so
the result follows from \fullref{correlativelyhyperbolic}.
\end{proof}

\fullref{infiniteends} generalizes a result of
Sambusetti~\cite[Theorem~1.4]{Sam02amalgam}, who proved it with
additional constraints on the factor groups. 

\subsection{Cusp Uniform Actions}\label{sectioncuspuniform}
\fullref{correlativelyhyperbolic} and
\fullref{theoremrelativelyhyperbolic} show that if $G\act \X$ is a cusp
uniform action on a hyperbolic space then the action of $G$ on the truncated space is a growth
tight action.
A natural question is whether $G\act \X$ is a growth tight action. 
This action is not quasi-convex if the parabolic subgroups are
infinite, as geodesics in $\X$ will travel deeply into horoballs, and,
indeed, an example of Dal'bo, Otal, and Peign\'e \cite{DalOtaPei00} shows $G\act \X$ need not be growth tight.

To see how growth tightness can fail, consider the combinatorial
horoball from  \fullref{definitioncobinatorialhoroball}.
If $\X$ is, say, the Cayley graph of some group and we build the
combinatorial horoball with parameter $a>0$ based on $\X$, then the
$r$--ball about a basepoint $\bp\in\X$ in the horoball metric
intersected with $\X\times\{0\}$ contains
the ball of radius $C\cdot\exp(\frac{ar}{2})$ in the $\X$--metric, for
a constant $C$ depending only on $a$. 
Thus, if the number of vertices of balls in $\X$ grows faster than
polynomially in the radius, then the growth exponent with respect to
the horoball metric will be infinite. 
Furthermore, even if growth in $\X$ is polynomial we can make the
growth exponent in the horoball be as large as we like by taking $a$
to be sufficiently large. 
Dal'bo, Otal, and Peign\'e construct non-growth tight examples of
relatively hyperbolic groups with two cusps by taking one of the
horoball parameters to be large enough so that the corresponding
parabolic subgroup dominates the growth of the group; that is, the
growth exponent of the parabolic subgroup is equal to the growth
exponent of the whole group. 
Quotienting by the second parabolic subgroup then does not decrease
the growth exponent, so this action is not growth tight.

Not only does this provide an example of a non-growth tight action on
a hyperbolic space, but since augmented spaces with different horoball
parameters are equivariantly quasi-isometric to each other, we
have:
\begin{observation}\label{obs:notqi}
  Growth tightness is not invariant
among equivariantly quasi-isometric $G$--spaces.
\end{observation}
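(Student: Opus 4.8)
The plan is to combine three ingredients, only the last of which requires a direct construction. The discussion preceding this observation, following Dal'bo, Otal, and Peign\'e~\cite{DalOtaPei00}, produces a relatively hyperbolic group $G$ with two peripheral subgroups $\parab_0$ and $\parab_1$ --- which we may take to be virtually abelian, hence of polynomial growth --- together with an augmented space $\X'$ for $G$ in which the horoball parameter on $\parab_0$ is chosen so large that $\rate_{\parab_0}=\rate_G$ in the resulting metric. Quotienting $G$ by the normal closure of $\parab_1$ --- an infinite normal subgroup of infinite index --- then does not decrease the growth exponent, so $G\act\X'$ is \emph{not} a growth tight action. It therefore suffices to build, for the \emph{same} group $G$, an augmented space $\X$ on which $G\act\X$ \emph{is} growth tight and which is equivariantly quasi-isometric to $\X'$.

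First I would let $\X$ be the augmented space obtained from the same cusp-uniform $G$--space, but now with \emph{every} horoball parameter chosen small. A direct estimate with \fullref{definitioncobinatorialhoroball} shows that in a combinatorial horoball with parameter $a$ the ball of radius $r$ about a base vertex meets the base graph in the vertices lying at base-distance $\lesssim e^{ar/2}$; since $\parab_i$ has polynomial growth in the truncated metric, this forces $\rate_{\parab_i}\to 0$ as its horoball parameter tends to $0$. On the other hand $\rate_G$ is bounded below by $\rate_0>0$, the growth exponent of the cocompact, hence parameter-independent, truncated space. Hence for all sufficiently small parameters the Parabolic Gap Condition (\fullref{def:pgc}) holds, and $G\act\X$ is a growth tight action by \fullref{theoremcuspuniform}.

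Finally I would verify that $\X$ and $\X'$ are equivariantly quasi-isometric. They are assembled from the same $G$--space by the same $G$--equivariant procedure and differ only in the horoball parameters, so it suffices to exhibit an equivariant quasi-isometry between two combinatorial horoballs on a common base graph with parameters $a$ and $a'$. The map that is the identity on the base and sends level $n$ to level $\lfloor (a/a')n\rfloor$ multiplies vertical lengths by $a/a'$ and sends a horizontal edge of length $e^{-an}\ell$ at level $n$ to one of length $e^{-a'\lfloor(a/a')n\rfloor}\ell$, which is $e^{-an}\ell$ up to a bounded factor; this is the desired quasi-isometry, and it is equivariant because $G$ permutes the horoballs and preserves their levels. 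Gluing these over all horoballs (and using the identity on the truncated part) yields an equivariant quasi-isometry $\X\to\X'$ between two $G$--spaces, one carrying a growth tight action and the other not, which proves the observation. The one point deserving care is that the per-horoball maps really glue to a quasi-isometry of the whole augmented spaces --- this follows from the uniform hyperbolicity of augmented spaces with parameters bounded below (\fullref{theoremrelativelyhyperbolic}), but should be spelled out; the growth estimate and the invocation of \fullref{theoremcuspuniform} are then routine bookkeeping.
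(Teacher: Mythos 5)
Your proposal is correct and follows essentially the same route the paper takes in the discussion immediately preceding this observation: invoke the Dal'bo--Otal--Peign\'e construction to obtain a cusp-uniform action on an augmented space $\X'$ whose horoball parameters are tuned so that a parabolic dominates the growth, conclude that this action is not growth tight, and then note that augmented spaces built with different horoball parameters are equivariantly quasi-isometric. The paper leaves the last assertion and the existence of a growth tight partner space implicit (the latter is supplied a moment later by \fullref{theoremcuspuniform} and the theorem following it, which shows small parameters force the Parabolic Gap Condition when the parabolics are virtually nilpotent); you spell both of these out, including the explicit level-rescaling map between combinatorial horoballs of different parameters. The one place I would phrase differently is the final gluing step: what you need is not hyperbolicity of the ambient spaces but the observation that the map you define is a single globally-defined bijection of vertex sets (identity on the truncated part, level rescaling on each horoball) with uniform multiplicative and additive distortion on edges, and since the two augmented metrics are the induced path metrics on these graphs, a uniform bound on edge distortion immediately yields a quasi-isometry of the whole spaces. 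With that small repair the argument is complete and matches the paper's.
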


Dal'bo, Peign\'e, Picaud, and Sambusetti \cite[Theorem~1.4]{DalPeiPic11} show that this is essentially
the only way that growth tightness can fail for cusp uniform actions.
Their proof is for geometrically finite Kleinian groups, but our
\fullref{maincge} generalizes this result.

\begin{definition}\label{def:pgc}
Let $G\act\X$ be a cusp uniform action on a hyperbolic space.
Let $\parabs$ be the collection of maximal parabolic subgroups of $G$.
Then $G\act\X$ satisfies the \emph{Parabolic Gap Condition} if $\rate_\parab<\rate_G$ for all $\parab\in\parabs$.
\end{definition}

\begin{theorem}\label{theoremcuspuniform}
 Let $G$ be a finitely generated, non-elementary group.
  Let $G\act \X$ be a cusp uniform action on a hyperbolic space. 
Suppose that $G\act\X$ satisfies the Parabolic
Gap Condition.
Then $G\act \X$ is a growth tight action.
\end{theorem}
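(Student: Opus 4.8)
The plan is to apply \fullref{maincge}, which requires a strongly contracting element for $G\act\X$ together with a constant $Q\geqslant 0$ for which the $Q$--complementary growth exponent satisfies $\rate_G^c<\rate_G$. The first requirement is immediate: since $\X$ is hyperbolic it is hyperbolic relative to the empty peripheral structure (\fullref{def:relativelyhyperbolic}), so \cite[Lemma~5.4]{Sis12}, applied exactly as in the proof of \fullref{correlativelyhyperbolic}, shows that \emph{every} infinite order element of $G$ is strongly constricting, hence strongly contracting by \fullref{prop:allthesame}. Moreover $G$ contains an infinite order element, since it is non-elementary and, by \fullref{theoremrelativelyhyperbolic}, hyperbolic relative to its maximal parabolic subgroups.

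For the second requirement, take the basepoint $\bp$ in the truncated space $\mathcal T$, so that $G.\bp\subset\mathcal T$, and fix $D_0$ with $\mathcal T\subset\clnbhd{D_0}{G.\bp}$ (such a $D_0$ exists because $G$ acts cocompactly on $\mathcal T$). Fix $Q>D_0$; I claim $\rate_G^c\leqslant\max_{\parab\in\parabs}\rate_\parab$. Suppose $g.\bp\in Comp_{Q,\,r}^{G}$, witnessed by a geodesic $\gamma=[x,y]$ of length $r$ with $x\in\clball{Q}{\bp}$, $y\in\clball{Q}{g.\bp}$, and the interior of $\gamma$ contained in $\thin$. Since $Q>D_0$ we have $\thin\cap\mathcal T=\emptyset$, so the interior of $\gamma$, being connected and contained in the disjoint union of the open horoballs, lies in a single one, $\mathcal H$; hence $x,y\in\overline{\mathcal H}$. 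As $\bp,g.\bp\in\mathcal T$ avoid the open horoball $\mathcal H$, the geodesics $[\bp,x]$ and $[g.\bp,y]$ meet the horosphere $\partial\mathcal H$, so both $\bp$ and $g.\bp$ lie within $Q$ of $\partial\mathcal H$. A local finiteness argument, using proper discontinuity of $G\act\X$, cocompactness of $G$ on $\mathcal T$, and cocompactness of $\mathrm{Stab}_G(\mathcal H)$ on $\partial\mathcal H$, shows that only finitely many horoballs $\mathcal H$ can arise, say $N$ of them.

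For each such $\mathcal H$, the subgroup $\mathrm{Stab}_G(\mathcal H)$ is conjugate in $G$ to one of the finitely many maximal parabolics $\parab$ and acts cocompactly on $\partial\mathcal H$, so $\partial\mathcal H$ — hence also $g.\bp$ — lies in a bounded neighbourhood of the orbit $\mathrm{Stab}_G(\mathcal H).\bp$, and $d(\bp,g.\bp)\leqslant r+2Q$. The growth exponent of a parabolic orbit is invariant under conjugation and under change of basepoint, so it equals $\rate_\parab$; therefore, for each of the $N$ horoballs, the number of admissible $g.\bp$ is at most a uniform multiple of $\exp\bigl(r(\rate_\parab+\epsilon)\bigr)$ for every $\epsilon>0$ and all sufficiently large $r$. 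Summing over the $N$ horoballs, $\#Comp_{Q,\,r}^{G}\lmul\exp\bigl(r(\max_\parab\rate_\parab+\epsilon)\bigr)$, so $\rate_G^c\leqslant\max_\parab\rate_\parab$. By the Parabolic Gap Condition (\fullref{def:pgc}), $\rate_\parab<\rate_G$ for each of the finitely many $\parab$, whence $\rate_G^c<\rate_G$, and \fullref{maincge} completes the proof.

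The crux of the argument is the geometric reduction in the second paragraph: recognizing that a long geodesic whose interior avoids $\clnbhd{Q}{G.\bp}$ must be a single deep excursion into one horoball, that only finitely many horoballs are relevant, and that a bounded neighbourhood of a horosphere is coarsely a parabolic orbit. These facts are standard in the theory of relatively hyperbolic spaces, but assembling them while keeping the constants uniform across the finitely many parabolic conjugacy classes takes some care; granted all that, the strict inequality is an immediate consequence of the Parabolic Gap Condition and the counting is elementary.
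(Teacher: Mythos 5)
Your proof takes the same route as the paper's: verify the hypotheses of \fullref{maincge} by producing a strongly contracting element and bounding the $Q$--complementary growth exponent by the maximal parabolic growth exponent, then invoke the Parabolic Gap Condition. The paper's own proof is essentially a one-liner — it simply asserts that the $Q$--complementary growth exponent \emph{is} the maximum of the parabolic growth exponents and leaves the geometric verification to the reader. Your second paragraph supplies exactly the missing argument: a geodesic whose interior avoids $\clnbhd{Q}{G.\bp}$ must dive into a single horoball, only finitely many horoballs come close to the basepoint, and the relevant orbit points are coarsely tracked by a parabolic orbit. That counting is correct (one needs only the inequality $\rate_G^c\leqslant\max_\parab\rate_\parab$, not equality), so your proof is complete modulo one slip noted below.

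The slip is in your first paragraph. Treating $\X$ as hyperbolic relative to the empty peripheral system and invoking Sisto's lemma, you conclude that \emph{every} infinite order element of $G$ is strongly contracting for $G\act\X$. This is false when $G$ has infinite parabolics: if $h$ is parabolic then $i\mapsto h^i\!.\bp$ is not a quasi-geodesic in $\X$, so $h$ fails \fullref{def:contractinglement}. The correct and easier statement is that every loxodromic isometry of a hyperbolic space has a quasi-geodesic axis to which closest-point projection satisfies the Bounded Geodesic Image Property with constant depending only on the hyperbolicity constant, hence is strongly contracting by \fullref{prop:allthesame}; and $G$, being non-elementary and cusp uniform (hence relatively hyperbolic by \fullref{theoremrelativelyhyperbolic}), contains a loxodromic. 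With that fix, the rest of your argument stands.
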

\begin{proof}
  Let $Q$ be the diameter of the quotient of the truncated space.
The $Q$--complementary growth exponent is the maximum of the parabolic
growth exponents, which, by the Parabolic Gap Condition, is strictly
less than the growth exponent of $G$. Apply \fullref{maincge}.
\end{proof}

\begin{theorem}
  Let $G$ be a finitely generated group hyperbolic relative to a 
  collection $\parabs$ of virtually nilpotent subgroups.
Then there exists a hyperbolic $G$--space $\X$ such that $G\act \X$ is
cusp uniform and growth tight.
\end{theorem}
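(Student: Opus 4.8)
The plan is to construct an explicit cusp uniform action of $G$ satisfying the Parabolic Gap Condition and then invoke \fullref{theoremcuspuniform}. (We assume $G$ is non-elementary, as \fullref{theoremcuspuniform} requires; in particular $G$ has exponential growth.) First I would fix a finite generating set $S$ of $G$, so that by \fullref{def:relativelyhyperbolicgroup} the Cayley graph $\mathrm{Cay}(G,S)$ is hyperbolic relative to the $G$--invariant collection $\pers$ of left cosets of the peripheral subgroups, and $G$ acts cocompactly on it. For a parameter $a>0$, to be chosen, let $\X=\X(a)$ be the augmented space obtained by attaching to each coset in $\pers$ a combinatorial horoball (\fullref{definitioncobinatorialhoroball}) with parameter $a$. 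By the remark following \fullref{def:cuspuniform} this can be arranged $G$--equivariantly so that $G\act\X$ is a cusp uniform action whose maximal parabolic subgroups are precisely the $G$--conjugates of the members of $\parabs$, and by \fullref{theoremrelativelyhyperbolic} the space $\X$ is hyperbolic, since all horoball parameters equal $a>0$ and hence are bounded below.

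It then remains to choose $a$ so that the Parabolic Gap Condition of \fullref{def:pgc} holds, i.e.\ $\rate_\parab<\rate_G$ for every maximal parabolic $\parab$, with growth measured in $\X$. Take the basepoint $\bp=1\in G\subset\X$, which I would arrange to be a depth--$0$ vertex of the horoball over each $\parab\in\parabs$. On one hand, a member $\parab$ of $\parabs$ is virtually nilpotent, hence of polynomial growth of some degree $d_\parab$ by the Bass--Guivarc'h theorem, and is undistorted in $G$ (peripheral subgroups of relatively hyperbolic groups are undistorted); combined with the standard distance estimate that the depth--$0$ vertices of a combinatorial horoball with parameter $a$ lying within $\X$--distance $r$ of $\bp$ form a set of base-diameter $\lesssim e^{ar/2}$, this yields $\rate_\parab\leqslant a\,d_\parab/2$. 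On the other hand, passing from $\mathrm{Cay}(G,S)$ to $\X$ only shortens distances on $G.\bp$, so the identity map $G\to G.\bp$ is $1$--Lipschitz from the word metric to $d_\X$, whence $\rate_G\geqslant\rate_G^S$, where $\rate_G^S>0$ is the exponential growth exponent of the non-elementary group $G$ with respect to $S$ and, crucially, does not depend on $a$. Since a relatively hyperbolic group has only finitely many peripheral subgroups, $D:=\max_\parab d_\parab<\infty$; choosing any $a\in(0,\,2\rate_G^S/D)$ then gives $\rate_\parab\leqslant a\,d_\parab/2\leqslant aD/2<\rate_G^S\leqslant\rate_G$ for every maximal parabolic $\parab$. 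Thus $G\act\X$ satisfies the Parabolic Gap Condition, and \fullref{theoremcuspuniform} implies that $G\act\X$ is a growth tight action.

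The step I expect to demand the most care is the growth estimate $\rate_\parab\leqslant a\,d_\parab/2$: one must combine the distance formula in combinatorial horoballs with polynomial growth of the virtually nilpotent peripherals, and also check that $d_\X(\bp,p.\bp)$ for $p\in\parab$ is not made substantially smaller by paths leaving the horoball over $\parab$, which follows from undistortedness of $\parab$ since the word metric dominates $d_\X$ and grows linearly in $|p|_\parab$ while the horoball path grows only logarithmically. The key conceptual point, however, is the decoupling: $\rate_G$ is bounded below \emph{independently} of $a$ (via comparison with the Cayley metric), while $\rate_\parab$ can be made arbitrarily small by shrinking $a$, and this is exactly what forces the Parabolic Gap Condition. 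Checking that $\X(a)$ is a genuine $G$--space — proper and geodesic, with properly discontinuous isometric $G$--action — is routine, as it is the Groves--Manning cusped space of $(G,\parabs)$, and is in any case subsumed by the remark after \fullref{def:cuspuniform} that we invoked.
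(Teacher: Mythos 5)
Your proof is correct and follows exactly the same route as the paper: take the Groves--Manning augmented Cayley graph with horoball parameter $a$, use polynomial growth of the virtually nilpotent peripherals (Gromov) to bound $\rate_\parab$ by a multiple of $a$, observe that $\rate_G$ stays bounded below independently of $a$ because $d_\X\leqslant d_S$ pointwise on $G$, and shrink $a$ so the Parabolic Gap Condition holds before invoking \fullref{theoremcuspuniform}. The one small wobble is your invocation of undistortedness of $\parab$ to justify the lower bound for $d_\X(\bp,p.\bp)$: undistortedness gives $d_S(1,p)\asymp|p|_\parab$, which dominates $d_\X$ from above and hence cannot supply the lower bound you actually need; the correct ingredient is quasi-convexity of the attached horoballs in the cusped space $\X$, which forces $d_\X$ restricted to a horoball to be comparable to the intrinsic horoball distance --- a standard feature of the Groves--Manning construction that the paper's own (terser) proof also uses without comment.
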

\begin{proof}
Construct $\X$ as an augmented space by taking a Cayley graph for $G$ and attaching
combinatorial horoballs to the cosets of the peripheral subgroups. 
Since the peripheral groups are virtually nilpotent, they have
polynomial growth in any word metric \cite{Gro81}. 
It follows that the growth exponent of each parabolic group with
respect to the horoball metric is
bounded by a multiple of the horoball parameter.
By choosing the horoball parameters small enough, we can ensure
$G\act\X$ satisfies the Parabolic Gap Condition, and apply \fullref{theoremcuspuniform}.
\end{proof}

\subsection{Non Relative Hyperbolicity}\label{sec:nrh}
In subsequent sections we provide further examples of growth tight
actions. 
To show these are not redundant we will verify that the groups are not
relatively hyperbolic. 

In this section we recall a technique for showing that a group is not
relatively hyperbolic, due to 
Anderson, Aramayona, and Shackleton \cite{AndAraSha07}.
Another approach to non relative hyperbolicity, contemporaneous to and
more general than \cite{AndAraSha07}, and also implying \fullref{thm:nonrelativelyhyperbolicgog}, was developed by
Behrstock, Dru\c{t}u, and Mosher \cite{BehDruMos09}.

\begin{theorem}[{\cite[Theorem~2]{AndAraSha07}}]\label{thm:nonrelativelyhyperbolic}
  Let $G$ be a finitely generated, non-elementary group, and let
  $S$ be a (possibly infinite) generating set consisting of infinite
  order elements.
Consider the `commutativity graph' with one vertex for each element of
$S$ and an edge
between vertices $s$ and $s'$ if some non-trivial powers of $s$ and
$s'$ commute. 
If this graph is connected and there is at least one pair $s,\, s'\in
S$ such that $\langle s,s'\rangle$ contains a rank 2 free abelian
subgroup, then $G$ is not hyperbolic relative to any finite collection
of proper finitely generated subgroups.
\end{theorem}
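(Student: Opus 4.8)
The plan is to argue by contradiction, using the structure theory of relatively hyperbolic groups in the style of \cite{AndAraSha07}. Suppose $G$ were hyperbolic relative to a finite collection $\mathfrak{H}=\{H_1,\dots,H_k\}$ of proper, finitely generated subgroups. I would invoke two standard facts. First, the parabolic/loxodromic dichotomy: every infinite order $g\in G$ is either \emph{parabolic}, meaning conjugate into some $H_i$, or \emph{loxodromic}, in which case its elementary closure $E(g)$ is the unique maximal virtually cyclic subgroup containing $g$, satisfies $E(g^n)=E(g)$ for $n\neq 0$, and is self-normalizing. Second, the almost malnormality of the two relevant families of subgroups: $gH_ig^{-1}\cap H_j$ is infinite only if $i=j$ and $g\in H_j$, and for loxodromic $h,h'$ the group $gE(h)g^{-1}\cap E(h')$ is infinite only if $E(h)=E(h')=gE(h)g^{-1}$. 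Both facts are standard consequences of the bounded-coarse-intersection property recorded in \fullref{def:relativelyhyperbolic}.

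First I would extract the key local statement: if $u\in G$ has infinite order and $x\in G$ centralizes a nontrivial power $u^m$, then $x$ lies in the peripheral conjugate (if $u$ is parabolic) or the elementary closure (if $u$ is loxodromic) that contains $u$. Indeed $x\langle u^m\rangle x^{-1}=\langle u^m\rangle$ is an infinite subgroup of that subgroup and of its $x$-conjugate, so almost malnormality forces $x$ inside, using $N_G(E(u))=E(u)$ in the loxodromic case. Applying this to an adjacent pair $s,s'$ in the commutativity graph, with $[s^a,(s')^b]=1$ and $a,b\neq 0$: if $s$ is parabolic, say $s\in gH_ig^{-1}$, then $(s')^b$ centralizes $s^a\in gH_ig^{-1}\setminus\{1\}$, so $(s')^b\in gH_ig^{-1}$; hence $(s')^b$ is parabolic, so $s'$ is not loxodromic, so $s'$ is parabolic, and since $\langle(s')^b\rangle$ is an infinite subgroup of $gH_ig^{-1}$ and of any peripheral conjugate containing $s'$, almost malnormality pins $s'$ into the same conjugate $gH_ig^{-1}$. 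Symmetrically, if $s$ is loxodromic the same argument gives $(s')^b\in E(s)$, whence $E(s')=E(s)$ and $s,s'\in E(s)$. Either way, $s$ and $s'$ have the same type and are housed in the same peripheral conjugate, or the same elementary closure.

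Next I would use the distinguished pair $s_0,s_0'\in S$ with $\langle s_0,s_0'\rangle$ containing a rank $2$ free abelian subgroup. They are adjacent, so by the previous step they are either both loxodromic with $E(s_0)=E(s_0')$ --- but then $\langle s_0,s_0'\rangle\leq E(s_0)$ is virtually cyclic and cannot contain $\mathbb{Z}^2$, a contradiction --- or both parabolic and conjugate into a common $gH_ig^{-1}$. So the parabolic alternative holds. Now I would propagate: by the local statement above, adjacency preserves the property ``parabolic and conjugate into the fixed subgroup $gH_ig^{-1}$'', so by connectedness of the commutativity graph every element of $S$ lies in $gH_ig^{-1}$. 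Hence $G=\langle S\rangle\leq gH_ig^{-1}$, forcing $H_i=G$, contrary to the hypothesis that $H_i$ is proper. This contradiction proves the theorem.

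The hard part will be marshalling and citing cleanly the two structural inputs --- the parabolic/loxodromic dichotomy and the almost malnormality of peripheral conjugates and of elementary closures --- since these are not developed here; once granted, the combinatorial core (``same type, same location'' propagation along the connected graph) is short. A secondary point needing care is the bookkeeping with powers: the hypothesis only supplies commuting powers $s^a,(s')^b$, so at each step one must pass between $s'$ and $(s')^b$, using that a power of a loxodromic element is loxodromic with the same elementary closure and that a power of a parabolic element stays in the same peripheral conjugate.
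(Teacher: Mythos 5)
Your overall strategy --- contradiction via showing that $S$ must lie inside a single peripheral conjugate, then using the fact that $S$ generates $G$ to contradict properness of the peripherals --- is exactly the route the paper indicates (the paper only cites \cite{AndAraSha07} and records this one-sentence sketch; it does not reprove the theorem). The structural inputs you marshal (parabolic/loxodromic dichotomy, almost malnormality of peripheral conjugates and of elementary closures, self-normalization of $E(h)$) are the correct ones, and your edge-by-edge local statement --- if $[s^a,(s')^b]=1$ with $a,b\neq 0$, then $s$ and $s'$ have the same type and lie in the same peripheral conjugate or the same elementary closure --- is sound.

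There is, however, a genuine gap where you assert that the distinguished pair $s_0,s_0'$ ``are adjacent'' in the commutativity graph. The hypothesis only provides that $\langle s_0,s_0'\rangle$ contains a rank-$2$ free abelian subgroup; it does \emph{not} follow that some nontrivial powers of $s_0$ and $s_0'$ commute. (In the integer Heisenberg group $\langle a,b\mid [a,[a,b]]=[b,[a,b]]=1\rangle$, the whole group contains $\mathbb{Z}^2 = \langle a,[a,b]\rangle$ yet $[a^n,b^m]=[a,b]^{nm}\neq 1$ for all $n,m\neq 0$, so $a$ and $b$ would not be adjacent.) The fix is a simple reordering: before examining $s_0,s_0'$, propagate your local statement along the connected commutativity graph starting from any vertex, concluding that \emph{all} of $S$ (in particular $s_0'$) lies in a common elementary closure $E$ or a common peripheral conjugate $gH_ig^{-1}$. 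In the first case $\langle s_0,s_0'\rangle\leq E$ is virtually cyclic and cannot contain $\mathbb{Z}^2$; in the second $G=\langle S\rangle\leq gH_ig^{-1}$ forces $H_i=G$, contradicting properness. With that reordering --- propagate first, then case on type --- your argument closes without ever needing adjacency of $s_0$ and $s_0'$.
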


To prove this theorem, one shows that the subgroup generated by $S$ is contained in one
of the peripheral subgroups. 
Since $S$ generates $G$ this gives a contradiction, because the
peripheral subgroups are proper subgroups of $G$.

We will actually use a mild generalization of
\fullref{thm:nonrelativelyhyperbolic} to the case when $S$ generates a
proper subgroup of $G$:

\begin{theorem}\label{thm:nonrelativelyhyperbolicgog}
Let $G$ be a finitely generated, non-elementary group.
  Let $S$ be a set of infinite order elements whose commutativity
  graph is connected and such that there is a pair $s,s'\in S$ such
  that $\langle s,s'\rangle$ contains a rank 2 free abelian subgroup.
Consider the `coset graph' whose vertices are cosets of $\langle S\rangle$,
with an edge connecting $g\langle S\rangle$ and $h\langle S\rangle$ if
$g\langle S\rangle\inv{g}\cap h\langle S\rangle\inv{h}$ is infinite.
If this graph is connected, then $G$ is not hyperbolic relative to any finite collection
of proper finitely generated subgroups.
\end{theorem}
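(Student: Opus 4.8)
The plan is to adapt the proof of \fullref{thm:nonrelativelyhyperbolic}. Suppose for contradiction that $G$ is hyperbolic relative to a finite collection of proper finitely generated subgroups, and call a conjugate of one of them a \emph{maximal parabolic}. I will use the following standard facts about relatively hyperbolic groups in the sense of \fullref{def:relativelyhyperbolicgroup}: (i) the centralizer of an infinite-order element not conjugate into a peripheral subgroup is virtually cyclic; (ii) two distinct maximal parabolics intersect in a finite subgroup; and (iii) each maximal parabolic is self-normalizing. From (ii), an infinite subgroup of $G$ lies in \emph{at most one} maximal parabolic; combining this with (iii) gives the observation used repeatedly below: if $H$ is an infinite subgroup of a maximal parabolic $P$ and $gHg^{-1}\subseteq P$, then $P=gPg^{-1}$ (both are maximal parabolics containing the infinite group $gHg^{-1}$), hence $g\in P$.

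First I would show that $\langle S\rangle$ is contained in a single maximal parabolic $P$, by the induction on the commutativity graph used in \fullref{thm:nonrelativelyhyperbolic}. Choose $s,s'\in S$ with $\langle s,s'\rangle$ containing a rank $2$ free abelian subgroup, so $[s^a,s'^b]=1$ and $\langle s^a,s'^b\rangle\cong\mathbb Z^2$ for some nonzero $a,b$. By (i), $s^a$ is parabolic, so $s^a\in P$ for a maximal parabolic $P$. Whenever an infinite-order element $t$ has a nonzero power $t^c\in P$, the observation above applied to $H=\langle t^c\rangle$ (using $tt^ct^{-1}=t^c$) gives $t\in P$; in particular $s\in P$. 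Now if $t\in S\cap P$ and $t'\in S$ is joined to $t$ in the commutativity graph, pick nonzero $c,d$ with $[t^c,t'^d]=1$; then $t'^d$ conjugates the infinite group $\langle t^c\rangle\subseteq P$ into $P$, so $t'^d\in P$ and hence $t'\in P$. Connectedness of the commutativity graph yields $S\subseteq P$, so $\langle S\rangle\subseteq P$.

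The new step is to upgrade this to $G\subseteq P$ using the coset graph. I claim: if a vertex $x\langle S\rangle$ satisfies $x\langle S\rangle x^{-1}\subseteq P$ and is joined by an edge to $y\langle S\rangle$, then $y\langle S\rangle y^{-1}\subseteq P$ as well. Indeed, $y\langle S\rangle y^{-1}$ is an infinite subgroup of the maximal parabolic $(yx^{-1})P(yx^{-1})^{-1}$, while by the edge condition $x\langle S\rangle x^{-1}\cap y\langle S\rangle y^{-1}$ is infinite and lies in $P\cap(yx^{-1})P(yx^{-1})^{-1}$; by (ii) these two maximal parabolics coincide, so $y\langle S\rangle y^{-1}\subseteq P$. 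Since $\langle S\rangle$ is infinite and $1\cdot\langle S\rangle\cdot 1^{-1}\subseteq P$, the base vertex satisfies the hypothesis, so by connectedness of the coset graph every vertex does: $y\langle S\rangle y^{-1}\subseteq P$ for all $y\in G$. As $y\langle S\rangle y^{-1}$ is also an infinite subgroup of $yPy^{-1}$, fact (ii) forces $yPy^{-1}=P$, and then (iii) forces $y\in P$. Since $y$ is arbitrary, $G=P$, a conjugate of a proper subgroup---the desired contradiction.

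I do not expect a serious obstacle: the architecture is exactly two parallel inductions---one along the commutativity graph (as in \fullref{thm:nonrelativelyhyperbolic}) and one along the coset graph---and the only point requiring care is that facts (i)--(iii), together with the ``unique maximal parabolic containing a given infinite subgroup'' consequence, genuinely hold for relative hyperbolicity in the sense of \fullref{def:relativelyhyperbolic}; these are standard, provable for instance via Bowditch's boundary. The hypotheses enter as in the original statement: the rank $2$ abelian subgroup forces us into a peripheral subgroup rather than a virtually cyclic one, connectedness of the commutativity graph spreads this to all of $S$, and connectedness of the coset graph then spreads it to all of $G$.
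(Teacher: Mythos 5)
Your proof is correct and follows essentially the same route as the paper: show that the commutativity-graph hypotheses push $\langle S\rangle$ into a single maximal parabolic $P$ (this is exactly what the paper borrows from the proof of Theorem~\ref{thm:nonrelativelyhyperbolic}), then propagate along the coset graph using the fact that two conjugates of a peripheral subgroup sharing an infinite subgroup must coincide. The only cosmetic difference is that the paper invokes condition~(1) of Definition~\ref{def:relativelyhyperbolic} directly (finite intersection of $\parab_1$ with $g\parab_1 g^{-1}$ for $g\notin\parab_1$) and carries the invariant ``$g\in\parab_1$'' along the coset graph, whereas you carry the a priori weaker invariant ``$g\langle S\rangle g^{-1}\subseteq P$'' and use self-normalization of $P$ once at the end to convert this back to $g\in P$; both inductions close.
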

\begin{proof}
  Suppose $G$ is hyperbolic relative to $\{\parab_1,\dots,\parab_k\}$.
As in the proof of \fullref{thm:nonrelativelyhyperbolic}, $\langle S\rangle$ is
contained in a conjugate of some $\parab_i$.
We assume, without loss of generality, that $\langle S\rangle\subset \parab_1$. 
Condition (\ref{item:boundedintersection}) of
\fullref{def:relativelyhyperbolic} implies $\parab_i\cap g\parab_i\inv{g}$ is finite for
$g\notin \parab_i$.
Thus, for $g\langle S\rangle$ adjacent to $\langle S\rangle$ in the
coset graph, $g\in \parab_1$ and $g\langle S\rangle\inv{g}\subset\parab_1$.
Connectivity of the coset graph implies that every
element of $G$ is contained in $\parab_1$, contradicting the
hypothesis that $\parab_1$ is a proper subgroup.
\end{proof}

We also note that \fullref{thm:nonrelativelyhyperbolic}
and \fullref{thm:nonrelativelyhyperbolicgog} imply the, a priori,
stronger result that $G$ has trivial Floyd boundary.

\section{Rank 1 Actions on CAT(0) Spaces}\label{sec:cat0}
A metric space is \emph{CAT(0)} if every geodesic triangle is at least
as thin as a triangle in Euclidean space with the same side lengths.
An isometry $\phi$ of a CAT(0) space $\X$ is \emph{hyperbolic} if
$\inf_{x\in\X}d(x,\phi(x))$ is positive and is attained.
See, for example, \cite{BriHae99} for more background.

Let $\X$ be a CAT(0) $G$--space. 
Recall that our definition of `$G$--space' includes the hypothesis that $\X$ is proper, so
an element is strongly contracting if and only if it acts as a
rank 1 isometry:
\begin{theorem}[{\cite[Theorem~5.4]{BesFuj09}}]\label{Thm:rank1}
Let $h$ be a hyperbolic isometry of a proper CAT(0) space $\X$ with
axis $\A$.
Closest point projection to $\A$ is strongly contracting if and only if $\A$ does not bound an isometrically embedded half-flat in $\X$.
\end{theorem}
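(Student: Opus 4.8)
The plan is to prove both implications in contrapositive form. Throughout I use the standard facts about CAT(0) spaces (see \cite{BriHae99}): geodesics are unique; an isometrically embedded complete subspace is convex; nearest-point projection $\pi=\pi_\A$ onto the complete convex set $\A$ is well defined, single-valued and $1$--Lipschitz, with $d(x,\pi(x))=d(x,\A)$ and $\angle_{\pi(x)}(x,a)\geqslant\pi/2$ for every $a\in\A$; the Flat Strip Theorem; and the splitting of a parallel set.

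\textbf{If $\A$ bounds a half-flat then $\pi$ is not strongly contracting.} Let $j\colon\{(s,t)\mid t\geqslant 0\}\to\X$ be an isometric embedding with image $F$ and $j(\mathbb{R}\times\{0\})=\A$. Since $F$ is isometrically embedded and complete, it is convex, and for $(s,t)\in F$ one has $d_\X(j(s,t),\A)=t$, attained at $j(s,0)$; by uniqueness of the nearest point, $\pi(j(s,t))=j(s,0)$. For $R>0$ set $x=j(0,2R)$ and $y=j(R,2R)$, so that $d(x,\A)=2R$, $d(x,y)=R$ and $d^\pi(x,y)=d(j(0,0),j(R,0))=R$. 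For any $C\geqslant 0$, choosing $R>C$ gives a pair with $d(x,y)<d(x,\A)-C$ and $d^\pi(x,y)>C$, so $\pi$ is not $(1,C)$--contracting (see \fullref{def:contracting}); as $C$ was arbitrary, $\pi$ is not strongly contracting.

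\textbf{If $\pi$ is not strongly contracting then $\A$ bounds a half-flat.} Since $d(x,\pi(x))=d(x,\A)$ holds exactly here, \fullref{prop:allthesame} and \fullref{lemma:boundedgeodesicimage} imply that $\pi$ fails the Bounded Geodesic Image Property: for each $n$ there is a geodesic $\sigma_n$ with $\sigma_n\cap\nbhd{n}{\A}=\emptyset$ and $\diam\pi(\sigma_n)>n$; as $\pi$ is $1$--Lipschitz, continuous, and $\A\cong\mathbb{R}$, the set $\pi(\sigma_n)$ is an interval of length $>n$. The goal is to convert this into flat pieces. Using that $\langle h\rangle$ acts cocompactly on $\A$, I would recenter the relevant subsegments of $\sigma_n$ by powers of $h$ so that their projection intervals are centered near $\bp$, and then extract, via properness of $\X$ and the Arzel\`a--Ascoli theorem, a bi-infinite geodesic line $L$ lying at some positive, bounded distance from $\A$; the Flat Strip Theorem \cite{BriHae99} then upgrades $L$ and $\A$ to an isometrically embedded Euclidean strip $\mathbb{R}\times[0,w]$ having $\A$ as a boundary line, and repeating the extraction shows the widths $w$ obtained this way are unbounded. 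Finally I would assemble the strips: the parallel set of $\A$ is isometric to $\A\times Y$ for a complete convex $Y\subset\X$, with $y_0\in Y$ corresponding to $\A$, and a half-flat with boundary $\A$ is exactly the product of $\A$ with a geodesic ray of $Y$ emanating from $y_0$. The flat strips of unbounded width show $Y$ contains points arbitrarily far from $y_0$, so by properness the geodesics $[y_0,\cdot]$ subconverge to such a ray, producing the half-flat.

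\textbf{The main obstacle} is the extraction step in the previous paragraph. The failure of the Bounded Geodesic Image Property only provides geodesics that recede to infinity away from $\A$, so one cannot simply take a limit of them; the real work is to isolate inside this failure a family of segments that genuinely fellow-travel an arbitrarily long, recentered portion of $\A$ at a controlled distance, and then to verify that the Arzel\`a--Ascoli limit is an honest geodesic line parallel to $\A$ (rather than merely a quasi-geodesic), so that the Flat Strip Theorem applies. Once flat strips of unbounded width are available, the remaining steps are routine consequences of the structure theory of CAT(0) spaces.
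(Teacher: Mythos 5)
The paper does not prove this theorem; it cites it directly from Bestvina--Fujiwara, so there is no in-paper proof to compare against. Evaluating your proposal on its own merits: the forward direction (half-flat implies not strongly contracting) is complete and correct. The identification $\pi(j(s,t))=j(s,0)$ follows from convexity of the isometrically embedded half-flat together with the angle criterion for nearest-point projection, and the choice $x=j(0,2R)$, $y=j(R,2R)$ with $R>C$ cleanly defeats $(1,C)$--contraction for every $C$.

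The converse direction, however, is only a sketch, and you say so yourself. The step you flag as ``the main obstacle'' is in fact the entire content of the hard implication, and as written there is no actual argument there. The failure of BGI gives you geodesics $\sigma_n$ with $\sigma_n\cap\nbhd{n}{\A}=\emptyset$ and $\diam\pi(\sigma_n)>n$, but these recede from $\A$ without bound, so no amount of recentering by powers of $h$ alone lets you pass to an Arzel\`a--Ascoli limit that stays at a \emph{fixed} finite distance from $\A$; you need a genuine geometric estimate (e.g., using convexity of $t\mapsto d(\sigma_n(t),\A)$ together with the $1$--Lipschitz property of $\pi$ along the ``vertical'' geodesics $[z,\pi(z)]$) to manufacture, at each fixed height $D$, segments that fellow-travel $\A$ with arbitrarily long projection. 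Without that estimate you cannot invoke the Flat Strip Theorem, and the rest of the argument (parallel set splitting $P(\A)\cong\A\times Y$, unboundedness of $Y$, extracting a ray in $Y$ by properness) is downstream of it. So: the architecture is sound and you have correctly identified where the real work lies, but the extraction step is a genuine gap, not a routine detail, and the proposal as it stands does not establish the converse implication.
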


\fullref{Thm:rank1} and \fullref{mainqc} show:
\begin{theorem}\label{cat0}
  If $G$ is a non-elementary, finitely generated group and $\X$ is a quasi-convex, CAT(0)
  $G$--space such that $G$ contains
an element that acts as a rank 1 isometry on $\X$, then $G\act \X$ is a growth tight action.
\end{theorem}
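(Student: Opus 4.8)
The plan is to deduce the theorem directly from \fullref{Thm:rank1} and \fullref{mainqc}; the only thing that needs checking is that an element of $G$ acting as a rank 1 isometry of $\X$ is a strongly contracting element in the sense of \fullref{def:contractinglement}.

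First I would fix $h\in G$ acting as a rank 1 isometry of $\X$. By definition $h$ is hyperbolic, so it admits an axis $\A\subset\X$: a complete geodesic line on which $\langle h\rangle$ acts cocompactly, by translation through the translation length $\ell(h)>0$. Moreover $\A$ bounds no isometrically embedded half-flat. Since the displacement function $x\mapsto d(x,h.x)$ is $h$--invariant and closest point projection $\pi\from\X\to\A$ to the convex set $\A$ is a well-defined, $1$--Lipschitz, $h$--equivariant map with $d(x,\pi(x))=d(x,\A)$, the orbit $\langle h\rangle.\bp$ lies at constant distance $d(\bp,\A)$ from $\A$ and its consecutive points are at distance at most $d(\bp,h.\bp)$; hence $\langle h\rangle.\bp$ is coarsely equivalent to $\A$ and $i\mapsto h^i\!.\bp$ is a quasi-geodesic. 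By \fullref{Thm:rank1}, because $\A$ does not bound a half-flat, $\pi$ is strongly contracting. Thus $h$, together with the set $\A$ and the map $\pi$, witnesses that $h$ is a strongly contracting element for $G\act\X$. (If one insists on taking the axis to be the orbit $\langle h\rangle.\bp$ rather than $\A$, then \fullref{lemma:strongconstrictiscoarseequivinvariant} transports strong contraction across the coarse equivalence $\A\leftrightarrow\langle h\rangle.\bp$.)

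Finally, $G$ is finitely generated and non-elementary, $\X$ is a quasi-convex $G$--space by hypothesis, and $G$ contains the strongly contracting element $h$; so \fullref{mainqc} applies verbatim and yields that $G\act\X$ is a growth tight action. There is no real obstacle in this argument: the substance is entirely contained in \fullref{Thm:rank1} (the Bestvina--Fujiwara dictionary identifying rank 1 isometries with those whose axial projection is strongly contracting) and in \fullref{mainqc}; what remains is the bookkeeping of matching the ``does not bound a half-flat'' hypothesis to the definition of a rank 1 isometry and of recognizing strong contraction of the axial projection as the element-level notion of \fullref{def:contractinglement}.
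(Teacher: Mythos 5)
Your proof is correct and follows exactly the route the paper takes: the paper derives \fullref{cat0} in one line from \fullref{Thm:rank1} together with \fullref{mainqc}, and you have simply supplied the (straightforward) bookkeeping that is left implicit there — namely that a rank 1 isometry gives a strongly contracting element in the sense of \fullref{def:contractinglement}, via the cocompact $\langle h\rangle$--action on the geodesic axis, the $1$--Lipschitz CAT(0) projection giving the quasi-geodesic bound, and \fullref{lemma:strongconstrictiscoarseequivinvariant} to pass to $\langle h\rangle.\bp$ if desired.
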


Recall from \fullref{sec:rankrigidity} that there are many interesting
classes of CAT(0) spaces that admit rank 1 isometries.
In the remainder of this section we highlight a few examples.

Let $\Theta$ be a simple graph.
The \emph{Right Angled Artin Group $G(\Theta)$ defined by $\Theta$} is the group
defined by the presentation $\langle g_v \text{ for }
v\in\mathrm{Vert}(\Theta)\mid g_vg_wg_v^{-1}g_w^{-1}=1\text{ for }
[v,w]\in\mathrm{Edge}(\Theta)\rangle$.
The graph $\Theta$ also determines a cube complex constructed by
taking a rose with one loop for each vertex of $\Theta$, and then
gluing in a $k$--cube to form a $k$--torus for each complete $k$--vertex subgraph of
$\Theta$. The resulting complex is called the \emph{Salvetti complex},
and its fundamental group is 
$G(\Theta)$.
The universal cover of the Salvetti complex turns out to be a CAT(0)
cube complex.
See \cite{Cha07} for more background on Right Angled Artin Groups.

If $\Theta$ is a single vertex then $G(\Theta)\cong\mathbb{Z}$ is elementary.
If $\Theta$ is a join, that is, if it is a complete bi-partite graph,
then $G(\Theta)$ is a direct product of Right Angled Artin Groups
defined by the two parts.
In all other cases, we find a growth tight action:

\begin{theorem}\label{raag}
Let $\Theta$ be a finite simple graph that is not a join and has more than
one vertex.
The action of the Right Angled Artin Group $G(\Theta)$ defined by $\Theta$ on
the universal cover $\X$ of the
Salvetti complex associated to $\Theta$ is a growth tight action.
\end{theorem}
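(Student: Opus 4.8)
The plan is to verify the hypotheses of \fullref{cat0}: that $G:=G(\Theta)$ is a finitely generated, non-elementary group, that $\X$ is a quasi-convex CAT(0) $G$--space, and that $G$ contains an element acting as a rank 1 isometry on $\X$.

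The first two points are essentially formal. The group $G$ is generated by the finite set $\{g_v\mid v\in\mathrm{Vert}(\Theta)\}$, and it acts on $\X$ properly discontinuously, cocompactly, and by isometries, where $\X$ is the universal cover of the Salvetti complex and is a CAT(0) cube complex \cite{Cha07}; in particular $\X$ is a proper geodesic CAT(0) space, hence a CAT(0) $G$--space. Since the action is cocompact, any orbit $G.\bp$ is $\consta$--coarsely dense for $\consta$ the diameter of the quotient, so every geodesic between points of $G.\bp$ lies in $\X=\clnbhd{\consta}{G.\bp}$; thus $G.\bp$ is $\consta$--quasi-convex and $\X$ is a quasi-convex $G$--space. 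For non-elementarity, pick two distinct vertices $v,w$ of $\Theta$: then $\langle g_v,g_w\rangle$ is isomorphic to $\mathbb{Z}^2$ if $[v,w]\in\mathrm{Edge}(\Theta)$ and to $F_2$ otherwise, and in either case is not virtually cyclic, so $G$ is not elementary.

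The substantive point is the existence of a rank 1 isometry, and this is exactly where the hypothesis that $\Theta$ is not a join is used. By \cite{BehCha12}, since $\Theta$ does not decompose as a nontrivial join, $G(\Theta)$ contains an element $h$ whose axis in $\X$ does not bound an isometrically embedded half-flat; concretely, one may take $h$ represented by a cyclically reduced word with support all of $\Theta$ and which is not a proper power, so that (by Servatius' description of centralizers in right-angled Artin groups together with the fact that $\Theta$ is not a join) the centralizer of $h$ is exactly $\langle h\rangle$, which forces the axis to be rank 1. By \fullref{Thm:rank1}, closest point projection to this axis is then strongly contracting, so $h$ acts as a rank 1 isometry of $\X$, and \fullref{cat0} applies to give that $G\act\X$ is a growth tight action. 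The only genuine obstacle here is the production of the rank 1 element, which is the sole step depending on the internal combinatorics of $\Theta$ and on the structure of right-angled Artin groups rather than on general CAT(0) geometry; the contrast with the case when $\Theta$ is a join — where $G(\Theta)$ splits as a direct product, $\X$ splits as a metric product, and every axis bounds a half-flat — confirms that this hypothesis cannot be dropped. (We also note that when $\Theta$ is connected and has at least one edge, the generating set $\{g_v\}$ has connected commutativity graph, namely $\Theta$ itself, and contains a pair generating a copy of $\mathbb{Z}^2$, so by \fullref{thm:nonrelativelyhyperbolic} such $G(\Theta)$ is not hyperbolic relative to any finite collection of proper finitely generated subgroups, showing that \fullref{raag} produces growth tight actions genuinely outside the relatively hyperbolic setting.)
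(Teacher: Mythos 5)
Your proof correctly follows the paper's argument when $\Theta$ is connected: verify the hypotheses of \fullref{cat0} and invoke Behrstock--Charney for a rank~1 isometry, and your routine verifications (finite generation, CAT(0) and properness of $\X$, cocompactness giving quasi-convexity, non-elementarity) are all fine. However, the hypotheses also allow $\Theta$ to be disconnected --- a disconnected graph with more than one vertex is never a join --- and there your application of Behrstock--Charney is not justified: their characterization of rank~1 isometries of the Salvetti cube complex (rank~1 if and only if not conjugate into a join subgroup) is proved under the assumption that $\Theta$ is connected. This is precisely why the paper splits into two cases. When $\Theta$ is disconnected, $G(\Theta)$ is a free product of the RAAGs on the components of $\Theta$, $\X$ is hyperbolic relative to the subcomplexes corresponding to the components, and $G(\Theta)$ does not coarsely fix any peripheral subspace, so \fullref{correlativelyhyperbolic} applies. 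Without such a supplementary argument, the disconnected case is a genuine gap in your proof.

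A smaller point: the ``concretely'' aside asserts that a cyclic centralizer ``forces the axis to be rank~1,'' but in a general CAT(0) $G$--space that implication fails; cyclic centralizer is necessary for rank~1 but not sufficient. In the RAAG setting the weight is carried by Behrstock--Charney's join-subgroup criterion, of which Servatius' centralizer theorem is only one ingredient. Your argument is correct read purely as a citation of \cite{BehCha12}, but the centralizer sentence should not be presented as an independent justification of the rank~1 property.
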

\begin{proof}
The universal cover $\X$ of the Salvetti complex of $\Theta$ is a
cocompact, CAT(0) $G(\Theta)$--space.
If $\Theta$ is not connected then $\X$ is hyperbolic relative to
subcomplexes defined by the components of $\Theta$, so $G(\Theta)\act\X$ is
growth tight by \fullref{correlativelyhyperbolic}.
If $\Theta$ is connected then $G(\Theta)$ contains a rank 1 isometry by a
theorem of Behrstock and Charney \cite{BehCha12}.
The result follows from \fullref{cat0}.
\end{proof}

The defining graph of a Right Angled Artin Group is a
commutativity graph. If this graph is connected then the group is not relatively hyperbolic by
\fullref{thm:nonrelativelyhyperbolic}.

\bigskip

A \emph{flip-graph-manifold} is a compact three dimensional manifold
$M$ with boundary obtained from a finite collection of Seifert fibered
pieces that are each a product of 
a circle with a compact oriented hyperbolic surface with boundary.
These are glued together along boundary tori by a map exchanging the fiber and base
directions. 
Such manifolds were studied by Kapovich and Leeb \cite{KapLee98}, who
show that the universal cover of $M$ admits a CAT(0) metric, and that
an element of $\pi_1(M)$ that acts hyperbolically is rank 1 if and only if it
is not represented by a loop contained in a single Seifert fibered piece.
Thus, \fullref{cat0} implies:

\begin{theorem}\label{flip}
The action of the fundamental group of a flip-graph-manifold by deck
transformations on its universal cover with its natural CAT(0) metric is a growth tight action.
\end{theorem}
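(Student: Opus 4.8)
The plan is to check that the deck action of $G:=\pi_1(M)$ on $\X:=\widetilde M$, equipped with the Kapovich--Leeb CAT(0) metric, satisfies all the hypotheses of \fullref{cat0}: namely, that $\X$ is a quasi-convex CAT(0) $G$--space, that $G$ is non-elementary, and that $G$ contains an element acting as a rank 1 isometry of $\X$. Once these three points are in hand, \fullref{cat0} gives the conclusion immediately.

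For the first point, Kapovich and Leeb \cite{KapLee98} show that $\X$ carries a complete CAT(0) metric, proper because $M$ is compact, for which the deck action is properly discontinuous and isometric; thus $\X$ is a CAT(0) $G$--space. Since $M=G\backslash\X$ is compact, the action is cocompact, so any orbit $G.\bp$ is $Q$--coarsely dense for $Q:=\diam(G\backslash\X)$, and hence $\X=\clnbhd{Q}{G.\bp}$. In particular every geodesic with endpoints in $G.\bp$ lies in $\clnbhd{Q}{G.\bp}$, so $\X$ is a quasi-convex $G$--space. For non-elementarity, each Seifert fibered piece of $M$ is a product of a circle with a compact hyperbolic surface with boundary, whose fundamental group is a nonabelian free group; this subgroup injects into $G$, so $G$ is not virtually cyclic.

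The remaining, and main, point is to exhibit a rank 1 isometry. First I would observe that $G$ acts geometrically on the proper CAT(0) space $\X$, hence by semisimple isometries \cite{BriHae99}, so every infinite order element of $G$ acts hyperbolically. Next, the flip gluings are performed along boundary tori, so the decomposition of $M$ into Seifert pieces is nontrivial and induces a nontrivial splitting of $G$ as a graph of groups with $\mathbb{Z}^2$ edge groups; an element $h\in G$ acting hyperbolically on the associated Bass--Serre tree (for instance a product of infinite order elements coming from two pieces glued along a torus, or a stable letter in the one-piece case) is not conjugate into any vertex group, hence is not represented by a loop contained in a single Seifert fibered piece. By Kapovich and Leeb's criterion \cite{KapLee98}, such an $h$ acts as a rank 1 isometry of $\X$. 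Applying \fullref{cat0} then shows $G\act\X$ is a growth tight action. The only step that is not a direct citation is this last one: one must record that the Seifert decomposition of a flip-graph-manifold is genuinely nontrivial, so that an element escaping every piece exists, and then feed it into the Kapovich--Leeb characterization; everything else is routine.
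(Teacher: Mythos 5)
Your proposal is correct and follows essentially the same route as the paper: cite Kapovich--Leeb for the CAT(0) metric on $\widetilde M$ and for the characterization of rank 1 isometries as the hyperbolic elements not represented by loops in a single Seifert piece, then invoke \fullref{cat0}. The paper states this in two sentences immediately preceding the theorem; you have simply filled in the routine verifications (properness, cocompactness hence quasi-convexity, non-elementarity via the free subgroups of the base surfaces, semisimplicity and the Bass--Serre argument producing an element escaping every vertex group), all of which are accurate.
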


To see that the fundamental group of a flip-graph-manifold is
not-relatively hyperbolic, apply
\fullref{thm:nonrelativelyhyperbolicgog} where $S$ is the set of
elliptic elements for the action of $G$ on the Bass-Serre tree of the
defining graph of groups decomposition.

\fullref{raag} and \fullref{flip} give the first non-trivial
examples of growth tight actions on spaces that are not relatively hyperbolic.

The idea of the proof for flip-graph-manifolds generalizes to other CAT(0)
graphs of groups via \fullref{corollary:gog}:
\begin{theorem}\label{cat0gog}
Let $G$ be a non-elementary, finitely generated group that splits
non-trivially as a
graph of groups and is not an ascending HNN-extension.
Suppose that the corresponding action of $G$ on the Bass-Serre tree of
the splitting has two edges whose stabilizers have finite intersection.
Suppose there exists a cocompact, CAT(0) $G$--space $\X$.
Then $G\act\X$ is a growth tight action.
\end{theorem}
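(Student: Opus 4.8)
The plan is to use \fullref{corollary:gog} to extract a hyperbolically embedded virtually cyclic subgroup, upgrade its generator to a strongly contracting element for $G\act\X$, and then quote \fullref{mainqc}.

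First I would record two easy consequences of the hypotheses. Since $G$ acts cocompactly on $\X$, the orbit $G.\bp$ is coarsely dense, so $\X$ equals a bounded neighbourhood of $G.\bp$; in particular $\X$ is a quasi-convex $G$--space. Moreover, because the action is properly discontinuous and $\X$ is a proper CAT(0) space, every infinite order element of $G$ is a semisimple, hence hyperbolic, isometry of $\X$ (an infinite order element cannot be elliptic, as a fixed point would have an infinite point stabiliser), so it has a genuine axis. The remaining hypotheses of the theorem are precisely those of \fullref{corollary:gog}, which therefore supplies an infinite order element $h\in G$ such that $E(h)$ is hyperbolically embedded in $G$.

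The heart of the argument is to show that $h$ is strongly contracting for $G\act\X$; the bridge is the Morse property. On the group side, a hyperbolically embedded subgroup is a Morse (strongly quasi-convex) subset of $G$ with respect to any word metric --- this can be seen, for instance, directly from the Bestvina--Bromberg--Fujiwara quasi-tree attached to the axis of $h$ in a Cayley graph, in the same spirit as \fullref{sec:strcontandproj} and \fullref{corollary:Morse} --- and since $\langle h\rangle$ has finite index in $E(h)$, the quasi-geodesic $i\mapsto h^i$ is Morse in $G$. Cocompactness makes the orbit map $g\mapsto g.\bp$ a $G$--equivariant quasi-isometry from $G$ to $\X$, and the Morse property of a subset is a quasi-isometry invariant, so $\langle h\rangle.\bp$ is a Morse quasi-geodesic in $\X$; being at finite Hausdorff distance from it, the axis $\A$ of $h$ in $\X$ is Morse as well. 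Finally, a Morse axis in a CAT(0) space cannot bound an isometrically embedded half-flat: the boundary geodesic together with a ``staircase'' path across the flat is a uniform quasi-geodesic with endpoints on $\A$ that escapes every neighbourhood of $\A$, while conversely a non-half-flat-bounding axis is even strongly contracting by \fullref{Thm:rank1}, hence Morse by \fullref{corollary:Morse}. Thus $\A$ does not bound a half-flat, so by \fullref{Thm:rank1} closest point projection to $\A$ is strongly contracting, i.e.\ $h$ is a strongly contracting element for $G\act\X$.

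With a strongly contracting element in hand and $\X$ a quasi-convex $G$--space, \fullref{mainqc} (equivalently \fullref{cat0}) finishes the proof. The step I expect to cost the most work is the third one: cleanly isolating and justifying ``$E(h)$ hyperbolically embedded $\Rightarrow$ $h$ is a Morse element of $G$'' and then transferring this through the cocompact action to a genuine strong-contraction statement in $\X$; everything else is bookkeeping, and the CAT(0)-specific input --- Morse implies strongly contracting for axes --- is already packaged in \fullref{Thm:rank1}.
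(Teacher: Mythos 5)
Your overall strategy is exactly the one the paper uses: invoke \fullref{corollary:gog} to get $h$ with $E(h)$ hyperbolically embedded, upgrade to a Morse axis, observe that a Morse axis in a CAT(0) space cannot bound a half-flat, and finish with \fullref{Thm:rank1} and \fullref{mainqc} (equivalently \fullref{cat0}). The one place you diverge is precisely the step you flag as costly: the paper does not re-derive ``hyperbolically embedded $\Rightarrow$ Morse'' but simply cites a theorem of Sisto \cite{Sis13} for the statement that any axis of $h$ is a Morse quasi-geodesic, and then immediately concludes $h$ is rank~1. Your instinct that this is the nontrivial input is correct, and the intended resolution is that citation rather than an in-house argument.

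A word of caution on your proposed re-derivation: you suggest building the Bestvina--Bromberg--Fujiwara quasi-tree ``in the same spirit as \fullref{sec:strcontandproj} and \fullref{corollary:Morse}.'' But the construction in \fullref{sec:strcontandproj} \emph{starts} from a strongly contracting element and uses that hypothesis to verify the projection axioms --- which is exactly what you are trying to prove about $h$, so you cannot appeal to that section's verification. You would instead need to extract the projection axioms from the hyperbolic embeddedness of $E(h)$ (which is indeed how hyperbolically embedded subgroups are characterized) and then argue that cobounded orbits in the resulting quasi-tree pull back to Morse subsets of $G$; that is a genuine argument, and it is essentially the content of Sisto's theorem. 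Your explicit quasi-isometry transfer from the Cayley graph to $\X$ (using cocompactness and the QI-invariance of the Morse property) is correct and makes a step explicit that the paper leaves implicit, and your staircase argument for why a Morse axis cannot bound a half-flat is fine. Everything else, including the observation that infinite-order elements are hyperbolic isometries in a cocompact proper CAT(0) setting, is sound bookkeeping.
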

\begin{proof}
By \fullref{corollary:gog}, $G$ contains an infinite order
element $h$ such that $E(h)$ is hyperbolically embedded.
A theorem of Sisto \cite{Sis13} implies that any axis of $h$ is a
Morse quasi-geodesic.
An element with an axis that bounds a half-flat is not Morse, so $h$
is rank 1, and 
the result follows by \fullref{cat0}.
\end{proof}

\section{Mapping Class Groups}\label{sec:mcg}

Let $\s=\s_{g,p}$ be a connected and oriented surface of genus $g$ with
  $p$ punctures. We require \s to have negative Euler characteristic. 
  
  Given two orientation-preserving homeomorphisms $\phi, \psi \colon \s \to
  \s$, we will consider $\phi$ and $\psi$ to be equivalent if $\phi \circ
  \psi^{-1}$ is isotopic to the identity map on \s. Each equivalence class
  is called a \emph{mapping class} of \s, and the set \mcg of all
  equivalence classes naturally forms a group called the \emph{mapping
  class group} of \s.   

  A mapping class $f \in \mcg$ is called \emph{reducible} is there exists
  an $f$--invariant curve system on \s and \emph{irreducible} otherwise. By
the  Nielsen-Thurston classification of elements of \mcg, a mapping class is
  irreducible and infinite order if and only if it is pseudo-Anosov
  \cite{Thu86b}.  

Let $\X$ be the Teichm\"uller space of marked hyperbolic structures on $\s$, equipped with the
Teichm\"uller metric. See \cite{Hub06} and \cite{Pap07} for more information.

  \begin{theorem}[\cite{Min96}]\label{thm:minsky}
    Every pseudo-Anosov element is strongly contracting for $\mcg\act\X$.
  \end{theorem}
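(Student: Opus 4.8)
The statement to prove, \fullref{thm:minsky}, asserts that every pseudo-Anosov element $f\in\mcg$ is strongly contracting for the action $\mcg\act\X$ on Teichmüller space with the Teichmüller metric. By \fullref{prop:allthesame}, it suffices to produce a subset $\A\subset\X$ on which $\langle f\rangle$ acts cocompactly together with a strongly contracting (equivalently, strongly constricting) coarse map $\pi\from\X\to\A$; and by the remarks following \fullref{def:contractinglement}, it is enough to check that $i\mapsto f^i\!.\bp$ is a quasi-geodesic and that closest point projection to $\langle f\rangle.\bp$ is strongly contracting.

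\textbf{First I would recall the relevant structure on $\X$.} A pseudo-Anosov $f$ acts as a hyperbolic isometry of $(\X,d_{\mathrm{Teich}})$ with a Teichmüller geodesic axis $\A_f$, the unique invariant geodesic, along which $\langle f\rangle$ acts cocompactly by translation; this is classical (Bers). The orbit map $i\mapsto f^i\!.\bp$ fellow-travels this axis and is therefore a quasi-geodesic, since $\A_f$ is a genuine geodesic and the translation length is positive. So the first hypothesis of \fullref{def:contractinglement} is immediate, and by \fullref{lemma:strongconstrictiscoarseequivinvariant} we are free to work with $\A_f$ itself rather than with the orbit $\langle f\rangle.\bp$; the two differ by a bounded Hausdorff distance.

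\textbf{The core input is Minsky's theorem \cite{Min96} on contraction of Teichmüller geodesics.} Minsky proved that for a pseudo-Anosov axis $\A_f$, the region of $\X$ near $\A_f$ looks hyperbolic: specifically, closest point projection to $\A_f$ (in the Teichmüller metric) has bounded image on balls disjoint from a neighborhood of $\A_f$ — equivalently, $\A_f$ satisfies the contraction property in the sense of \fullref{def:contracting}, with $\constm=1$. This is sometimes packaged as the statement that axes of pseudo-Anosovs are ``strongly contracting'' or that Teichmüller space is ``hyperbolic away from the thin part'' along such axes. Concretely, Minsky shows there is a constant $b$ depending only on $f$ such that any geodesic segment staying outside the $b$--neighborhood of $\A_f$ projects to a set of diameter at most $b$; and closest point projection to a geodesic is automatically within bounded distance of the nearest-point function. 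Translating this into the language of \fullref{def:boundedgeodesicimage}, the projection $\pi_{\A_f}$ has the Bounded Geodesic Image Property, and since $d(x,\pi_{\A_f}(x))-d(x,\A_f)$ is uniformly bounded (it is closest point projection), \fullref{lemma:boundedgeodesicimage} gives that $\pi_{\A_f}$ is strongly contracting. Hence $f$ is a strongly contracting element.

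\textbf{The main obstacle is simply citing Minsky's work in a form that matches our definitions.} Minsky's paper phrases contraction in terms of the geometry of quadratic differentials and the product-region structure of the thin part, rather than in the axiomatic coarse-geometric language of \fullref{def:contracting}. So the work is to extract from \cite{Min96} precisely the inequality ``$d(x_0,x_1)<d(x_0,\A_f)-C$ implies $d^\pi(x_0,x_1)\leqslant C$'' (or its Bounded Geodesic Image reformulation) and to note that the constants are uniform once $f$ is fixed. Once that extraction is done, \fullref{prop:allthesame} and \fullref{lemma:boundedgeodesicimage} do the rest mechanically, and combining with the quasi-geodesic property of the orbit map completes the verification that $f$ is strongly contracting for $\mcg\act\X$.
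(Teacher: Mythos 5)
Your proposal is correct and takes the same route as the paper: the paper simply cites Minsky's contraction theorem \cite{Min96} for this statement, offering no further proof. Your write-up is essentially a careful unpacking of that citation — identifying the pseudo-Anosov axis as a cobounded Teichmüller geodesic, invoking Minsky's bound on projections of balls (equivalently the Bounded Geodesic Image Property), and then routing through \fullref{lemma:boundedgeodesicimage} and \fullref{prop:allthesame} to match the paper's definitions — which is exactly what the paper is implicitly relying on.
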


For each $\epsilon>0$ there is a decomposition of $\X$ into a `thick
part' $\X^{\geqslant \epsilon}$ and a `thin part' $\X^{<\epsilon}$
according to whether the hyperbolic structure on $\s$ corresponding to
the point $x\in\X$ has any closed curves of length $<\epsilon$. 
This decomposition is $\mcg$--invariant, and
$\mcg\act\X^{\geqslant\epsilon}$ is cocompact, see \cite{Mum71} and
\cite{FarMar12}.
Geodesics between points in the thick part can travel deeply into the
thin part, so the action of $\mcg$ on Teichm\"uller space is not quasi-convex. 
To prove growth tightness need a bound on the
complementary growth exponent. 
Such a bound is provided by a recent theorem of Eskin, Mirzakhani, and Rafi \cite[Theorem~1.7]{EskMirRaf12}.

\begin{theorem}\label{mcgisgrowthtight}
  The action of the mapping class group $\mcg$ of $\s=\s_{g,p}$ on its
  Teichm\"uller space $\X$ with
  the Teichm\"uller metric is a growth tight action. 
\end{theorem}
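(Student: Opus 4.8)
The plan is to apply \fullref{maincge} to the action $\mcg\act\X$: we must produce a strongly contracting element and a constant $Q\geqslant 0$ for which the $Q$--complementary growth exponent $\rate_{\mcg}^{c}$ is strictly smaller than $\rate_{\mcg}$. (That $\mcg$ is finitely generated and non-elementary is classical, once we discard the degenerate surfaces for which $\X$ is a point.) The first ingredient is immediate from \fullref{thm:minsky}: any pseudo-Anosov mapping class, and these exist, is strongly contracting for $\mcg\act\X$. So the work is entirely in producing $Q$.

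For that I would use the thick--thin decomposition. Fix $\epsilon>0$ small enough that $\X^{<\epsilon}\neq\emptyset$ and recall that $\mcg\act\X^{\geqslant\epsilon}$ is cocompact; choose the basepoint $\bp$ in $\X^{\geqslant\epsilon}$. By cocompactness there is a $Q\geqslant 0$ with $\X^{\geqslant\epsilon}\subset\clnbhd{Q}{\mcg.\bp}$, so the complement $\X\setminus\clnbhd{Q}{\mcg.\bp}$ is contained in the $\epsilon$--thin part $\X^{<\epsilon}$. Now suppose $g.\bp\in Comp_{Q,\,r}^{\mcg}$. By definition there is a geodesic segment of length $r$ from a point of $\clball{Q}{\bp}$ to a point of $\clball{Q}{g.\bp}$ whose interior lies in $\X\setminus\clnbhd{Q}{\mcg.\bp}$, hence in $\X^{<\epsilon}$. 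Appending the two end segments, each of length at most $Q$, gives a path from $\bp$ to $g.\bp$ of length at most $r+2Q$ that lies in $\X^{<\epsilon}$ except on a set of length at most $2Q$; in particular $d(\bp,g.\bp)\leqslant r+2Q$.

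I would then invoke the theorem of Eskin, Mirzakhani, and Rafi \cite[Theorem~1.7]{EskMirRaf12}, which bounds precisely this sort of count: the number of $g\in\mcg$ with $d(\bp,g.\bp)\leqslant R$ whose associated Teichm\"uller geodesic spends at least a fixed proportion $\theta$ of its length in $\X^{<\epsilon}$ grows with exponential rate at most $\rate_{\mcg}-\kappa$ for some $\kappa=\kappa(\epsilon,\theta)>0$. Since point stabilizers in $\mcg$ are finite, $\# Comp_{Q,\,r}^{\mcg}$ is, up to a bounded multiplicative constant, at most the number of such $g$ with $R=r+2Q$ and with $\theta$ arbitrarily close to $1$ as $r\to\infty$; letting $r\to\infty$ yields $\rate_{\mcg}^{c}\leqslant\rate_{\mcg}-\kappa<\rate_{\mcg}$. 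Both hypotheses of \fullref{maincge} then hold, and $\mcg\act\X$ is a growth tight action.

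The substantive input is \cite[Theorem~1.7]{EskMirRaf12}; the part of this argument that requires care is the bookkeeping that turns the complementary-growth-exponent hypothesis of \fullref{maincge} into the quantity counted there. Concretely, one must pass from the ``path that is $\epsilon$--thin except near its endpoints'' produced above to the genuine (unique, by Teichm\"uller's theorem) Teichm\"uller geodesic $[\bp,g.\bp]$, and argue, using Rafi's analysis of the thick--thin behaviour of Teichm\"uller geodesics, that the existence of such a path forces $[\bp,g.\bp]$ itself to spend a definite proportion of its length in the thin part once $r$ is large. One should also check the minor points that $\X\setminus\clnbhd{Q}{\mcg.\bp}$ is honestly contained in a thin part and dismiss the low-complexity surfaces where the statement is vacuous.
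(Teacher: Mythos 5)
Your overall structure matches the paper's: both use \fullref{thm:minsky} to supply the strongly contracting element, both use the thick--thin decomposition to choose $Q$, and both invoke \cite[Theorem~1.7]{EskMirRaf12} to bound the complementary growth exponent. The place where your argument diverges from the paper's, and where the genuine gap lies, is the intermediate step that matches $Comp_{Q,\,r}^{\mcg}$ with the quantity that Eskin--Mirzakhani--Rafi actually count.

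You want to pass from the witnessing geodesic $[x,y]$ (whose endpoints lie in $\clball{Q}{\bp}$ and $\clball{Q}{g.\bp}$ and whose interior lies in the thin part) to the honest Teichm\"uller geodesic $[\bp,g.\bp]$, and then to invoke a ``proportion $\theta$ of time in the thin part'' version of \cite[Theorem~1.7]{EskMirRaf12}. You acknowledge that this passage needs Rafi's thick--thin analysis, but this is not merely bookkeeping: Teichm\"uller geodesics with endpoints that are $Q$--close do not, in general, fellow-travel, and the failure is worst precisely in the thin part (Minsky's product regions), which is exactly the regime you are in. So it is not clear that the existence of $[x,y]$ forces $[\bp,g.\bp]$ to spend a definite fraction of its length in $\X^{<\epsilon}$, and this step does not obviously go through. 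Your paraphrase of \cite[Theorem~1.7]{EskMirRaf12} as a ``proportion'' bound for geodesics anchored at $\bp$ is also not the statement that the paper uses; the cited result bounds $N_1(\mathcal{Q}_{1,\epsilon},a,b,r)$, a count of mapping classes whose connecting geodesic of length $r$ lies in the $\epsilon$--thin locus, in terms of data $G(a),G(b)$ attached to two \emph{fixed} reference points $a,b$.

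The paper avoids this entire issue by never leaving the witnessing geodesic $[x,y]$. It chooses an $r_0$--net lifting a maximal separated set in moduli space and extracts a finite subset $\{a_1,\dots,a_n\}$ that covers the annular region $\clball{Q}{\bp}\setminus\nbhd{Q}{\mcg.\bp}$, where the endpoints $x,y$ necessarily lie. By equivariance, $x\in\ball{r_0}{a_i}$ and $y\in\ball{r_0}{g.a_j}$ for some $i,j$, so $[x,y]$ itself is one of the geodesics counted by $N_1(\mathcal{Q}_{1,\epsilon},a_i,a_j,r)$, and the EMR bound applies directly. Summing over the finitely many pairs $(i,j)$ gives $\#Comp_{Q,\,r}^{\mcg}\lmul\exp(r(\zeta-\tfrac12))$, hence $\rate_{\mcg}^c\leqslant\zeta-\tfrac12<\zeta=\rate_{\mcg}$. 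If you adopt the net trick, the fellow-traveling question disappears and your proof becomes the paper's.
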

\begin{proof}
  Let $\zeta=6g-6+2p\geqslant 2$. 
The growth exponent of $\mcg$ with respect to its action on $\X$ is
$\zeta$ \cite{AthBufEsk12}. (We remark that the result of \cite{AthBufEsk12} is stated for closed surfaces, but their proof works in general. For our interest, it is enough that the growth exponent of $\mcg$ is bounded below by $\zeta$. This can be obtained from \cite{Ham13} and \cite{EskMirRaf12}.)

Choose an $r_0$ and a maximal $r_0$--separated set in moduli space
$\mcg\backslash\X$, and let $\A$ be its full lift to $\X$.
Given $r_0$ as above and $\delta=\frac{1}{2}$, let $\epsilon$ be
sufficiently small as in
\cite[Theorem~1.7]{EskMirRaf12}.
Let $Q$ be the smallest number such
that the $\epsilon$--thick part of $\X$ is contained in $\clnbhd{Q}{\mcg.\bp}$.
Choose a finite subset $\{a_1,\dots,a_n\}\subset \A$ such that:
\[\clball{Q}{\bp}\setminus\nbhd{Q}{\mcg.\bp}\subset\bigcup_{i=1}^n\ball{r_0}{a_i}\]

Suppose that $g\in\mcg$ is such that there exists a geodesic $[x,y]$ between
$\clball{Q}{\bp}$ and $\clball{Q}{g.\bp}$ whose interior stays
in $\X\setminus\clnbhd{Q}{\mcg.\bp}$.
Then there are indices $i$ and $j$ such that $x\in \ball{r_0}{a_i}$ and
$y\in \ball{r_0}{g.a_j}$. 
This means that every element contributing to  $Comp^{\mcg}_{Q,\,r}$ of
\fullref{def:cge} also contributes to some
$N_1(\mathcal{Q}_{1,\epsilon},a_i,a_j,r)$ of
\cite[Theorem~1.7]{EskMirRaf12}.
The conclusion of \cite[Theorem~1.7]{EskMirRaf12} is that 
$N_1(\mathcal{Q}_{1,\epsilon},a_i,a_j,r)\leqslant
G(a_i)G(a_j)\exp(r\cdot(\zeta-\frac{1}{2}))$ for all sufficiently
large $r$, where $G$ is a particular function on $\X$.
There are finitely many such sets, and the function $G$ is bounded on
$\{a_1,\dots,a_n\}$, so there is a constant $C$ such that
$Comp^{\mcg}_{Q,\,r}\leqslant C \cdot\exp(r\cdot(\zeta-\frac{1}{2}))$ for all sufficiently
large $r$.
Thus, the $Q$--complementary growth exponent is at most 
$\zeta-\frac{1}{2}<\zeta$.
The theorem now follows from \fullref{thm:minsky} and \fullref{maincge}.
\end{proof}

  When the genus of $\s$ is at least 3 then there does
not exist a cocompact, CAT(0) $\mcg$--space \cite{Bri10}.
The fact that such an $\mcg$ is not relatively hyperbolic (in fact, has
trivial Floyd boundary) is an application of
\fullref{thm:nonrelativelyhyperbolic} appearing in \cite{AndAraSha07}.
Therefore, \fullref{mcgisgrowthtight} does not follow from
the results of the previous sections.

A natural question is whether the action of a mapping class group on
its Cayley graphs is growth tight. 
There is also a combinatorial model for the mapping class group known
as the \emph{marking complex}.
Finally, a mapping class group acts cocompactly on a thick part of
Teichm\"uller space.
All of these spaces are quasi-isometric, and 
 Duchin and Rafi \cite{DucRaf09} show that pseudo-Anosov elements are
 contracting for the action of a mapping class group on any one of
 its Cayley graphs, but we do not know whether one of these actions
 admits a strongly contacting element.
 \begin{question}
   Is the action of a mapping class group of a hyperbolic surface on one of its Cayley graphs/marking
   complex/thick part of Teichm\"uller space growth tight?
 \end{question}

The outer automorphism group of a finite rank non-abelian free group,
$\mathrm{Out}(F_n)$ is often studied in analogy with $\mcg$.
Algom-Kfir \cite{Alg11} has proven an analogue of Minsky's theorem
that says that a \emph{fully irreducible} outer automorphism class
is strongly contracting for the action of $\mathrm{Out}(F_n)$ on its
Outer Space, which is the analogue of the Teichm\"uller space. 
However, we lack the analogue of the Eskin-Mirzakhani-Rafi theorem
that was used to control the complementary growth exponent in the
mapping class group case.
 
There is also an analogue of the thick part of Teichm\"uller space
called the \emph{spine} of the Outer Space, on which
$\mathrm{Out}(F_n)$ acts cocompactly.

\begin{question}
  Is the action of $\mathrm{Out}(F_n)$ on one of its Cayley
  graphs/Outer Space/spine of Outer Space growth tight?
\end{question}

\section{Snowflake Groups}\label{sec:snowflake}
Let $G:=BB(1,r)=\langle a,b,s,t\mid ab\inv{a}\inv{b}=1,\, \inv{s}as=a^rb,\,\inv{t}at=a^r\inv{b}\rangle $
be a Brady-Bridson snowflake group with $r\geqslant 3$. 
Let $L:=2r$. 
These groups have an interesting mixture of positive and negative
curvature properties.
$G$ splits as an amalgam of $\mathbb{Z}^2=\langle a,b\rangle $ by two
cyclic groups $\langle a^rb\rangle $ and $\langle a^r\inv{b}\rangle $, and the
action of $G$ on the Bass-Serre tree $\mathcal{T}$ of this splitting
satisfies \fullref{corollary:gog}, so $G$ has hyperbolically embedded subgroups.
However, we can not automatically conclude that such a hyperbolically
embedded subgroup gives rise to a strongly contracting element, as there does not exist a
cocompact, CAT(0) $G$--space.
If such a space existed, then the Dehn function
of $G$ would be at most quadratic, but Brady and Bridson
\cite{BraBri00} have shown that the Dehn function of $BB(1,r)$ is $n^{2\log_2 L}>n^2$. 

We will fix a $G$--space $\X$ and demonstrate two different elements of $G$ that act
hyperbolically on $\mathcal{T}$ such that the pointwise stabilizer of any
length 3 segment of their axes is finite.
One of these elements will be strongly contracting for the action on $\X$, and the other will
not.
Hence:
\begin{theorem}\label{snowflakegrowthtight}
  $G$ admits a cocompact growth tight action.
\end{theorem}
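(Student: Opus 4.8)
The plan is to produce an explicit cocompact $G$--space $\X$ and an explicit element of $G$ that is strongly contracting for the action $G\act\X$, so that \fullref{mainqc} (or \fullref{cat0gog}'s proof strategy adapted by hand) applies. Since no cocompact CAT(0) $G$--space exists, $\X$ will be built from the Bass--Serre tree $\mathcal{T}$ of the amalgam $G=\mathbb{Z}^2*_{\langle a^rb\rangle}\cdots*_{\langle a^r\inv b\rangle}$ together with the standard Cayley $2$--complex (or the Brady--Bridson model) so that $G$ acts properly, cocompactly, and by isometries. First I would fix this $\X$ and record that $\X$ is a $G$--space in the sense of the paper, i.e.\ proper and geodesic with properly discontinuous isometric action.

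Next I would exhibit the two hyperbolic-on-$\mathcal{T}$ elements. Both are chosen so that they translate along an axis in $\mathcal{T}$ and the pointwise stabilizer of any length-$3$ subsegment of that axis is finite; this is exactly the hypothesis of \fullref{corollary:gog}, so both have hyperbolically embedded elementary closures in $G$. The key distinction is geometric in $\X$: for one element $h$ I would show that its quasi-axis in $\X$ does not fellow-travel any ``flat'' direction coming from the $\mathbb{Z}^2$ vertex groups — concretely, that closest point projection of $\X$ onto $\gen{h}.\bp$ has the Bounded Geodesic Image Property — and hence, by \fullref{lemma:boundedgeodesicimage} and \fullref{prop:allthesame}, that $h$ is strongly contracting. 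For the other element $h'$ I would show its axis runs (coarsely) inside a single $\mathbb{Z}^2$ piece for long stretches, so that large quasi-flats project to large subsets of $\gen{h'}.\bp$, violating Bounded Geodesic Image; thus $h'$ is \emph{not} strongly contracting, which is the promised contrast but is not logically needed for the theorem's conclusion.

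Having produced a strongly contracting element $h$ for $G\act\X$, I would check the remaining hypotheses of \fullref{mainqc}: $G$ is finitely generated and non-elementary (it contains a rank-$2$ free subgroup, e.g.\ by \cite[Proposition~3.23]{BesBroFuj10} applied to the quasi-tree of \fullref{def:axesquasitree}, or directly from the amalgam structure), and $\X$ is a quasi-convex $G$--space, which I would verify from the specific model: geodesics between orbit points can be taken to stay in a bounded neighborhood of $G.\bp$ because the complex is built so that vertex-group cosets and tree edges are uniformly quasi-convex. Then \fullref{mainqc} gives that $G\act\X$ is growth tight, and by construction the action is cocompact, proving \fullref{snowflakegrowthtight}.

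The main obstacle will be the strong-contraction verification for $h$: one must control closest point projection to $\gen{h}.\bp$ in a space that is genuinely non-CAT(0) and has superquadratic Dehn function, so the usual CAT(0) toolkit (\fullref{Thm:rank1}) is unavailable. The cleanest route is to combine the tree projection — projection of $\X$ to the axis of $h$ in $\mathcal{T}$ is strongly contracting because $\mathcal{T}$ is a tree — with a transversality statement showing that geodesics in $\X$ which avoid a neighborhood of $\gen{h}.\bp$ also avoid (or project boundedly into) the corresponding neighborhood in $\mathcal{T}$; equivalently, one shows that the obstruction to contraction, namely large flats, cannot be parallel to $h$'s axis because $h$ is not conjugate into a vertex group and its axis crosses infinitely many edges of $\mathcal{T}$ transversally. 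Making this ``tree projection controls $\X$ projection'' comparison precise, with uniform constants, is the technical heart of the argument; once it is in place, \fullref{lemma:boundedgeodesicimage} converts it immediately into strong contraction.
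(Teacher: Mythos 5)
Your high-level strategy matches the paper's: build a cocompact model space, find a strongly contracting element by verifying the Bounded Geodesic Image Property, and apply \fullref{mainqc}. The paper's $\X$ is the Cayley graph on $\{a,\,a^rb,\,a^r\inv{b},\,s,\,t\}$ with the $a^rb$ and $a^r\inv{b}$ edges rescaled to length $L=2r$, so quasi-convexity and cocompactness are immediate, and the winning element is $\inv{s}t$, which is indeed hyperbolic on the Bass--Serre tree $\mathcal T$.

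However, the heart of your argument---the ``tree projection controls $\X$ projection'' transversality statement, justified because ``$h$ is not conjugate into a vertex group and its axis crosses infinitely many edges of $\mathcal{T}$ transversally''---is exactly the heuristic that the paper's $\beta$--example is designed to refute. The element $s$ is \emph{also} hyperbolic on $\mathcal T$, also translates along an axis whose length-$3$ segments have finite pointwise stabilizer (so $E(s)$ is also hyperbolically embedded), also is not conjugate into $\langle a,b\rangle$, and also crosses infinitely many tree edges transversally---yet closest point projection to $\beta(n)=s^{-n}$ fails the Bounded Geodesic Image Property. Concretely, the stable letter relation $\inv{s}as=a^rb$ makes $s$ a ``stretching'' direction, so $\pi_\beta(a^{L^j})=\beta(j)$ for all $j\ge0$: points deep in a single flat project to points arbitrarily far along $\beta$, while there are geodesics $\sigma_{j,k}$ from $a^{L^j}$ to $a^{L^k}$ that stay far from $\beta$ but have large $\pi_\beta$--image. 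Your description of the failing element as one whose ``axis runs (coarsely) inside a single $\mathbb{Z}^2$ piece for long stretches'' is therefore not what actually goes wrong: $\beta$ escapes every vertex group, and the obstruction is an interaction between the exponential distortion of $\langle a\rangle$ and the wall structure, invisible in $\mathcal T$.

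What the paper actually does for $\alpha(2n)=(\inv{s}t)^n$ is a hands-on combinatorial argument in the specific rescaled Cayley graph: a careful description of geodesics from $1$ to points of $\langle a,b\rangle$ (pushing paths across $s$-- and $t$--walls, exploiting that $a^rb$, $a^r\inv b$, $a^{2r}$ form an equilateral triangle of side $L$) leads to the key fact (Lemma \ref{walls}) that for the incoming $t$--wall $V$ and the outgoing $s^{-1}$--wall $W$ at $1$, every pair $v\in V$, $w\in W$ is joined by a geodesic through the vertex $1$. This funneling property fails for two $s$--walls, which is why $s$ is not contracting. So your plan would need to replace the tree-projection comparison with a model-specific geodesic analysis---the transversality principle as stated is false, and the distinction between $\inv{s}t$ and $s$ is the substance of the proof, not a side remark.
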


Observe that
\fullref{thm:nonrelativelyhyperbolicgog} with $S:=\{a,b\}$ shows that $G$
is not relatively hyperbolic.

\subsection{The Model Space $\X$}
Let $\X$ be the Cayley graph for $G$ with respect to the generating
set $\{a,\, a^rb,\, a^r\inv{b},s,t\}$, where the edges corresponding
to $a^rb$ and  $a^r\inv{b}$ have been rescaled to have length $L:=2r$.
The point of scaling these edges is that $a^rb$, $a^r\inv{b}$, and
$a^{2r}$ form an equilateral triangle of side length $L$, which will
facilitate finding geodesics in this particular model.

It is also useful to consider $G$ as the fundamental group of the topological space obtained from a torus by gluing on two annuli.
Choose a basepoint for the torus and for each boundary component of the annuli.
For one annulus, the $s$--annulus,  glue the two boundary curves to the curves $a$ and $a^rb$ in the torus, gluing basepoints to the basepoint of the torus.
For the other annulus, the $t$--annulus, glue the two boundary curves to the curves $a$ and $a^r\inv{b}$ of the torus.
The resulting space is a graph of spaces \cite{ScoWal79} associated to
the given graph of groups decomposition of $G$.
The fundamental group of this space is $G$, which acts freely by deck
transformations on the universal cover $\X'$. 
Choose the basepoint $\bp$ of $\X'$ to be a lift of the basepoint of the torus. 
The correspondence between a vertex $g\in \X$ and the point $g.\bp\in\X'$ inspires the following terminology:
A \emph{plane} is a coset $g\langle a,b\rangle \in G/\langle a,b\rangle $, which corresponds to a lift of the torus at the point $g.\bp\in\X'$.
An $s$-\emph{wall} is the set of outgoing $s$--edges incident to a
coset $g\langle a\rangle  \in G/\langle a\rangle$. 
This corresponds to a lift of the $s$--annulus at the point $g.\bp\in\X'$.
A $t$-\emph{wall} is the set of outgoing $t$--edges incident to a
coset $g\langle a\rangle  \in G/\langle a\rangle $. 
This corresponds to a lift of the $t$--annulus at the point $g.\bp\in\X'$.
 Each wall separates $\X$ (and $\X'$) into two complementary components.
Notice that the origins of consecutive edges in an $s$--wall are connected by a single $a$--edge of length $1$, while the termini of those edges are connected by a single $a^rb$--edge of length $L$. 
We say that crossing an $s$--wall in the positive direction scales distance by a factor of $L$.
The same is true for the $t$--walls.

\subsection{Geodesics Between Points in a Plane}\label{geodesics}
We will define a family of $\X$--geodesics joining $1$ to every point of $\langle a,b\rangle $.
This is similar to the argument of \cite{BraBri00}.

From the fact that $\langle a,b\rangle $ is abelian, for every point $a^xb^y$ there is a geodesic from $1$ to $a^xb^y$ of the form:
\[[1,(a^rb)^m]+(a^rb)^m[1,(a^r\inv{b})^n]+(a^rb)^m(a^r\inv{b})^n[1,a^p]\]
where $[g,h]$ indicates a geodesic from $g$ to $h$.

For a point of the form $(a^rb)^m$ there is an $a^rb$--edge path from
$1$ to $(a^rb)^m$ of length $mL$.
This path is clearly inefficient, as it lies along the boundary of an
$s^{-1}$--wall that scales distance by $1/L$, so we can push the original edge
path across the wall to a path $s^{-1}a^ms$ of length $2+m$.
We claim there is a geodesic from $1$ to $(a^rb)^m$ of the form $[1,\inv{s}]+\inv{s}[1,a^m]+\inv{s}a^m[\inv{s},1]$.
We have already exhibited a wall-crossing path of length $2+m$, which is shorter than any path from $1$ to $(a^rb)^m$ that stays in the plane $\langle a,b\rangle$.
Thus, a geodesic must cross some walls. 
Every path from $1$
to $(a^rb)^m$ can, by rearranging subsegments and eliminating backtracking, be replaced by a path
of at most the same length and having the form
$\gamma_s+\gamma_t+\gamma'$ where:
\begin{itemize}
\item $\gamma_s=[1,\inv{s}]+\inv{s}[1,a^n]+\inv{s}a^n[\inv{s},1]$ if non-trivial.
  \item $\gamma_t=\inv{s}a^ns[1,\inv{t}]+\inv{s}a^ns\inv{t}[1,a^p]+\inv{s}a^ns\inv{t}a^p[\inv{t},1]$
  if non-trivial.
\item $\gamma'=\inv{s}a^ns\inv{t}a^pt[1,a^q]$ if non-trivial.
\end{itemize}
 
The path $\gamma=\gamma_s+\gamma_t+\gamma'$ is a path from $1$ to $\inv{s}a^ns\inv{t}a^pta^q=(a^rb)^n(a^r\inv{b})^p a^q=a^{r(n+p)+q}b^{n-p}=a^{rm}b^m$, so $p=n-m$ and $q=-Lp$.
Since $p$ and $q$ are proportional, $\gamma_t$ and $\gamma'$ are either
both trivial or both non-trivial. Suppose they are non-trivial. 
There is a symmetry that exchanges $\gamma_t$ with a path
$\gamma_t'=\inv{s}a^ns[1,\inv{s}]+\inv{s}a^ns\inv{s}[1,a^{-p}]+\inv{s}a^ns\inv{s}a^{-p}[\inv{s},1]$
of the same length.
However, $\gamma_t'$ and $\gamma_t+\gamma'$ have the same endpoints,
and $\gamma_t'$ is shorter,
so $\gamma$ could not have been geodesic if $\gamma_t$ and $\gamma'$
are non-trivial. 
Thus, if $\gamma$ is geodesic then $\gamma=\gamma_s$.
This reduces the problem of finding a geodesic from $1$ to $(a^rb)^m$
to finding a geodesic from $1$ to $a^n$.

A similar argument holds for geodesics from $1$ to $(a^rb^{-1})^m$, so
we can find geodesics from 1 to any point in $\langle a,b\rangle$ if
we know geodesics from 1 to powers of $a$.

For powers of $a$ the idea is that $a^{mL}$, $(a^rb)^m$, and
$(a^r\inv{b})^m$ form an equilateral triangle in the plane, but the
latter two can be shortened by a factor of $L$ by pushing across a
wall.
Since $L\geq 6$, the savings of a factor of $\sfrac{L}{2}$ in length
outweighs the added overhead from crossing walls.

For small powers of $a$ we can find geodesics by inspection of the
Cayley graph.
For $0\leqslant |p|\leqslant \sfrac{L}{2}+3$, the edge path $a^p$ from $1$ to $a^p$ is a geodesic of length $|p|$. 
For $\sfrac{L}{2}+3\leqslant p \leqslant L$ the edge path $\inv{s}as\inv{t}ata^{p-L}$ is a geodesic from $1$ to $a^p$ of length $6+L-p$.
We conclude that for $m>0$ and $-\sfrac{L}{2}+3\leq p\leq\sfrac{L}{2}+3$ there is a geodesic from $1$ to $a^{mL+p}$ of the form:
\begin{align*}
  [1,\inv{s}]&+\inv{s}[1,a^m]+\inv{s}a^m[\inv{s},1]\\
&+\inv{s}a^ms[1,\inv{t}]+\inv{s}a^ms\inv{t}[1,a^m]+\inv{s}a^ms\inv{t}a^m[\inv{t},1]\\
&+\inv{s}a^ms\inv{t}a^mt[1,a^p]
\end{align*}

We can now find geodesics from $1$ to powers of $a$ by induction, and
from these we know a geodesic from 1 to any $a^xb^y$.
We see an example in \fullref{fig:snowflake}, where trapezoids are
walls and triangles are contained in planes.
The top half boundary and bottom half boundary of the figure each give geodesics of
length $5\cdot 2^5-4$ between $1$ and $a^{L^5}$. (This form of geodesic
loop bears witness to the Dehn function \cite{BraBri00}, and inspired the name
`snowflake group' \cite{BraBriFor09}.)

\begin{figure}[h]
  \centering
\labellist
\pinlabel $1$ [r] at 111 245
\pinlabel $a^{L^5}$ [l] at 343 245
\pinlabel $(a^rb)^{L^4}$ [b] at 233 442
\pinlabel $(a^r\inv{b})^{L^4}$ [t] at 233 43
\pinlabel $\inv{s}$ [tr] at 116 309
\pinlabel $\inv{s}a^{L^4}$ [tr] at 171 408
\pinlabel $\inv{t}$ [br] at 117 178
\pinlabel $\inv{t}a^{L^4}$ [br] at 167 80
\endlabellist
\includegraphics[width=.6\textwidth]{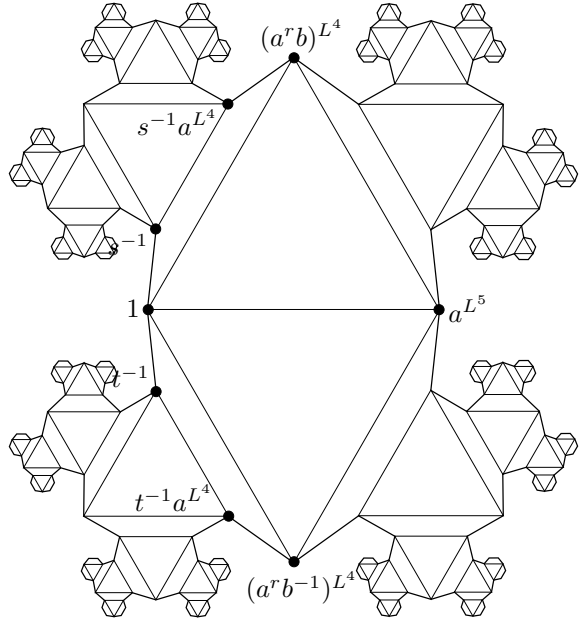}
  \caption{Snowflake - The boundary is a geodesic loop of length $2(5\cdot 2^5-4)$}\label{fig:snowflake}
\end{figure}

\subsection{Projections to Geodesics in $\X$}
In this section we consider two different geodesics:
$\alpha(2n)=(\inv{s}t)^n$ and 
$\beta(n)=s^{-n}$.
These are geodesics since for each of these paths, every edge crosses a distinct wall.
Let $\mathcal{T}$ be the Bass-Serre tree of $G$, and let $\bp\in\mathcal{T}$
be the vertex fixed by the subgroup $\langle a,b\rangle $.
The orbit map $g\mapsto g.\bp$ sends each of $\alpha$ and $\beta$ isometrically to a geodesic in $\mathcal{T}$.
We will use $\pi_\alpha$ to denote closest point projection to
$\alpha$, both in $\X$ and in $\mathcal{T}$, and similarly for $\beta$.

Both of these geodesics have the property that for any vertices at
distance at least three in the corresponding geodesic of the
Bass-Serre tree, the pointwise stabilizers of the pair of vertices is trivial.
We might hope, in analogy to \fullref{cat0gog}, that these would be
strongly contracting geodesics.
As in \fullref{cat0gog}, $\langle \inv{s}t\rangle $ and $\langle s\rangle $
are hyperbolically embedded subgroups in $G$, but, of the two, we will
see only
$\inv{s}t$ is strongly contracting.

\subsubsection{$\alpha$} 
We claim that closest point projection $\pi_\alpha\from \X\to\alpha$ is coarsely well defined and strongly contracting.
First, consider $\pi_\alpha$ on $\langle a,b\rangle $. 
The geodesic $\alpha$ enters $\langle a,b\rangle $ through the incoming $t$--wall $V$ at 1, and exits through the outgoing $\inv{s}$--wall $W$ at 1.
\begin{lemma}\label{walls}
  For every $v\in V$ and every $w\in W$ there exists a geodesic from $v$ to $w$ that includes the vertex 1.
\end{lemma}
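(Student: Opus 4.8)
Reduce the lemma to the single distance identity
\[
  d(v,w)=d(v,1)+d(1,w)\qquad\text{for all }v\in V,\ w\in W,
\]
and then extract this identity from the description of geodesics obtained in \fullref{geodesics}. The reduction is immediate: the triangle inequality gives ``$\leqslant$'', and conversely the concatenation of any geodesic from $v$ to $1$ with any geodesic from $1$ to $w$ is a path of length $d(v,1)+d(1,w)$ through the vertex $1$, hence a geodesic through $1$ once the displayed equality holds. To fix notation, recall $V$ is the $t$--wall at the coset $t^{-1}\langle a\rangle$: its short side is the line $\{t^{-1}a^i\}_{i\in\mathbb Z}$, its long side is $\ell_t:=\{(a^rb^{-1})^i\}_{i\in\mathbb Z}\subset\langle a,b\rangle$, and the $t$--edges join $t^{-1}a^i$ to $t^{-1}a^it=(a^rb^{-1})^i$; symmetrically $W$ is the $s$--wall at $s^{-1}\langle a\rangle$, with long side $\ell_s:=\{(a^rb)^j\}_{j\in\mathbb Z}\subset\langle a,b\rangle$, and $1\in\ell_t\cap\ell_s$. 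Thus every vertex of $V$ has the form $v=t^{-1}a^it^{\varepsilon}$ and every vertex of $W$ the form $w=s^{-1}a^js^{\varepsilon'}$ with $\varepsilon,\varepsilon'\in\{0,1\}$; the cases $v=1$ or $w=1$ are trivial, so assume otherwise, and write $\ell(n):=d(1,a^n)=d(1,a^{-n})$.

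First I would treat the case $\varepsilon=\varepsilon'=1$, where $v=(a^rb^{-1})^i$ and $w=(a^rb)^j$ lie in the plane $\langle a,b\rangle$. Since $\langle a,b\rangle$ is a subgroup, $d(v,w)=d(1,v^{-1}w)$ with $v^{-1}w=a^{r(j-i)}b^{i+j}$, and \fullref{geodesics} computes this distance: the expression $(a^rb)^{j}(a^rb^{-1})^{-i}a^{0}$ exhibits a geodesic of length $d(1,(a^rb)^{j})+d(1,(a^rb^{-1})^{-i})=(2+\ell(j))+(2+\ell(i))$, which is optimal by the analysis there, while \fullref{geodesics} also gives $d(v,1)=d(1,(a^rb^{-1})^{-i})=2+\ell(i)$ and $d(1,w)=2+\ell(j)$. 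Hence $d(v,w)=d(v,1)+d(1,w)$ in this case.

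The remaining cases reduce to the one just settled. The key observation is that a short--side vertex $v=t^{-1}a^i$ (with $i\neq 0$) lies on a geodesic from $v':=(a^rb^{-1})^i$ to $1$: indeed $d(v',v)=1$ (one $t$--edge), $d(v',1)=2+\ell(i)$ by \fullref{geodesics}, and $d(v,1)=1+\ell(i)$ — the latter because the path $v\to t^{-1}\to 1$ gives $d(v,1)\leqslant d(v,t^{-1})+1=\ell(i)+1$ while the triangle inequality gives $d(v,1)\geqslant d(v',1)-d(v',v)=1+\ell(i)$ — so $d(v',1)=d(v',v)+d(v,1)$. The analogous statement holds at $W$. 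Now suppose $\varepsilon=\varepsilon'=0$ and put $v'=(a^rb^{-1})^i$, $w'=(a^rb)^j$; using the case above for $v',w'$, the two betweenness statements, and the triangle inequality,
\[
  d(v,1)+d(1,w)+2=d(v',1)+d(1,w')=d(v',w')\leqslant d(v',v)+d(v,w)+d(w,w')=d(v,w)+2,
\]
so $d(v,w)\geqslant d(v,1)+d(1,w)$, and equality follows with the triangle inequality. The mixed cases $\varepsilon\neq\varepsilon'$ are identical with one of the two reductions suppressed, and the degenerate sub-cases $i=0$ (so $v=t^{-1}$) or $j=0$ are checked directly by the same kind of computation.

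\textbf{The main obstacle} is the input from \fullref{geodesics}: the exact plane distances $d(1,a^{r(j-i)}b^{i+j})$ and $d(1,(a^rb^{\pm1})^i)$. This is where the snowflake geometry is essential — geodesics of $\X$ are not contained in planes but exploit the shortcuts one gets by crossing walls into neighbouring planes, the very mechanism behind the super-quadratic Dehn function of $G$, so the naive in--plane path is far from optimal and the distance formulas above rest on the recursive construction of geodesics in \fullref{geodesics}, notably the recursion $d(1,a^{mL+p})=4+2\,d(1,a^{m})+d(1,a^{p})$ valid for $m\geqslant 1$ and $|p|\leqslant L/2+3$. Because $L=2r\geqslant 6$, this recursion is precisely what makes routing through the common point $1$ the cheapest way to pass from $\ell_t$ to $\ell_s$; granting those computations, the rest is the triangle-inequality bookkeeping above.
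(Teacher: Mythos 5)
Your proof is correct and makes explicit the argument the paper leaves implicit: the paper's one-sentence proof simply defers to the distance formulas of \fullref{geodesics}, and your reduction to the identity $d(v,w)=d(v,1)+d(1,w)$ --- checked first on the long sides of $V$ and $W$ via the canonical-form geodesics, then propagated to the short sides by betweenness --- is the natural way to spell that reference out. The one place you lean a bit hard on ``the analysis there'' is in asserting that the decomposition $(m,n,p)=(j,-i,0)$ is optimal among all decompositions $(j+k,\,-i+k,\,-Lk)$ of $a^{r(j-i)}b^{i+j}$; \fullref{geodesics} only asserts that \emph{some} canonical-form geodesic exists, without pinning down $(m,n,p)$, so the optimality of $k=0$ deserves a short separate check. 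It does hold: writing $\ell(n):=d(1,a^n)$, subadditivity of $\ell$ (from the triangle inequality) gives $\ell(i)\leqslant\ell(i-k)+\ell(k)$ and $\ell(j)\leqslant\ell(j+k)+\ell(k)$, while the recursion gives $\ell(Lk)=4+2\ell(k)$ for $k\neq 0$, and together these show the length for parameter $k$ is at least that for $k=0$. With that supplied, the remaining bookkeeping in your argument --- the betweenness reduction from short side to long side and the degenerate $i=0$ or $j=0$ sub-cases --- is sound.
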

\begin{proof}
  The lemma follows from the discussion of geodesics in \fullref{geodesics}.
\end{proof}

\begin{lemma}
  The orbit map $\X\to\mathcal{T} : g\mapsto g.\bp$ coarsely commutes
  with closest point projection to $\alpha$. In particular, closest
  point projection to $\alpha$ in $\X$ is coarsely well defined.
\end{lemma}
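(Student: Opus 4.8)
The plan is to transport closest point projection in $\X$ to closest point projection in the tree $\mathcal{T}$, where projection to a geodesic is automatically coarsely well defined. I would first record the properties of the orbit map $o\from\X\to\mathcal{T}$, $g\mapsto g.\bp$: it is $G$--equivariant and $1$--Lipschitz, since $s$ and $t$ are sent to edges of $\mathcal{T}$ while $a$, $a^rb$, $a^r\inv{b}$ fix $\bp$ and so collapse their edges (of length $1$, resp.\ $L$) to points. Because each edge of $\alpha$ crosses a distinct wall, $o$ sends $\alpha$ to a non--backtracking unit--speed edge path, hence maps $\alpha$ isometrically onto a bi--infinite geodesic $\bar{\alpha}:=\alpha.\bp$ of $\mathcal{T}$, so closest point projection $\pi_{\bar{\alpha}}$ to $\bar{\alpha}$ in $\mathcal{T}$ is coarsely well defined. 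Finally $\langle\inv{s}t\rangle$ preserves $\alpha$ and $\bar{\alpha}$, acts cocompactly on each, and commutes with $o$ and with projection to either. Given all this, it suffices to produce a constant $C$ with $d_\mathcal{T}\bigl(o(p),\pi_{\bar{\alpha}}(o(g))\bigr)\leqslant C$ for every $g$ and every closest point $p\in\alpha$ of $\alpha$ to $g$ in $\X$: this says precisely that $o\circ\pi^{\X}_\alpha$ and $\pi_{\bar{\alpha}}\circ o$ coarsely agree, and since the right--hand side does not depend on $p$ while $o$ is an isometry on $\alpha$, any two such $p,p'$ satisfy $d_\X(p,p')=d_\mathcal{T}(o(p),o(p'))\leqslant 2C$, which is the ``in particular''.

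To prove the displayed claim I would use the equivariance above to translate $g$ by an element of $\langle\inv{s}t\rangle$ so that $\pi_{\bar{\alpha}}(o(g))$ lies within bounded distance of $\bp$ (for notational simplicity assume $\pi_{\bar{\alpha}}(o(g))=\bp$, the general case differing only in constants), and show that $1$ is, up to bounded error, a closest point of $\alpha$ to $g$ in $\X$; by the isometry $o|_\alpha$ this forces $d_\mathcal{T}(o(p),\bp)$ bounded for every closest point $p$. The inequality $d_\X(g,\alpha)\leqslant d_\X(g,1)$ is free. For the reverse, $\pi_{\bar{\alpha}}(o(g))=\bp$ means exactly that $o(g)$ lies in the component of $\mathcal{T}$ cut off by the two edges of $\bar{\alpha}$ at $\bp$ that contains $\bp$; equivalently, $g$ lies in the same component of $\X\setminus W$ as $1$, where $W$ is the outgoing $\inv{s}$--wall at $1$, and likewise for the incoming $t$--wall $V$. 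Fix $\alpha(n)$; by the symmetry of $\alpha$ assume $n\geqslant 1$, so $\alpha(n)$ lies on the far side of $W$, whence any $\X$--geodesic from $g$ to $\alpha(n)$ crosses $W$ and so runs through a vertex of the coset $\langle a\rangle\subset P:=\langle a,b\rangle.\bp$. The point I would establish is that there is a uniform $C_0$ so that every such geodesic passes within $C_0$ of $1$ (the case $n\leqslant -1$ is identical with $V$ in place of $W$, and $n=0$ is vacuous). Granting this, if $p=\alpha(n_0)$ is a closest point then a geodesic $[g,\alpha(n_0)]$ meets $\clball{C_0}{1}$, so $d_\X(g,\alpha(n_0))\geqslant d_\X(g,1)+|n_0|-2C_0$ (using that $\alpha$ has unit speed); comparing with $d_\X(g,\alpha(n_0))=d_\X(g,\alpha)\leqslant d_\X(g,1)$ gives $|n_0|\leqslant 2C_0$, hence $d_\mathcal{T}(o(p),\bp)=|n_0|\leqslant 2C_0$, and equivariance restores the general statement.

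The main obstacle is that last step: showing an $\X$--geodesic from $g$ to a far point of $\alpha$ leaves $P$ coarsely through $1$. The tree carries no information here, because $o$ collapses the planes and is super--polynomially distorting (the Brady--Bridson lower bound $n^{2\log_2 L}$ on the Dehn function of $G$), so the argument must be run inside $\X$ using the geometry of $P$. The available tools are \fullref{walls}, which presents $1$ as a gate between the two walls $V$ and $W$ of $P$ used by $\alpha$, and the explicit description of geodesics in a plane from the previous subsection: edge--path geodesics $a^p$ to small powers of $a$, the wall--crossing shortcuts $\inv{s}a^ms$ for larger powers, and the symmetry trading the $s$-- and $t$--detours. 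Using these in the spirit of the geodesic analysis of \cite{BraBri00}, I would argue that a geodesic crossing $W$ at a vertex far from $1$ can be rerouted through $1$ without lengthening, and feed this into the wall--crossing decomposition of $[g,\alpha(n)]$ (and its mirror for $n\leqslant -1$) to obtain $C_0$. Once this is in place, the reductions of the first two paragraphs close the argument.
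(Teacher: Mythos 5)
Your high-level reduction (translate by $\langle\inv{s}t\rangle$ so that $\pi_{\bar\alpha}(o(g))=\bp$, then show $1$ is coarsely a closest point of $\alpha$ to $g$) is reasonable, but the key step you flag as the main obstacle contains a real error, and it is not just a matter of ``being in the spirit of the geodesic analysis.'' The claim that ``a geodesic crossing $W$ at a vertex far from $1$ can be rerouted through $1$ without lengthening'' is false. Since $a^rb=\inv{s}as$ stabilizes $W$, translating the single $s$--edge $[1,\inv{s}]$ by $(a^rb)^m$ gives a geodesic $[(a^rb)^m,\inv{s}a^m]$ of length $1$ which crosses $W$ at $(a^rb)^m$ (distance $\eaddmul m$ from $1$), whereas any path through $1$ has length roughly $2m$; similarly $[a^{L^k},\inv{s}a^{L^{k-1}}]$ has length $\approx 5\cdot 2^{k-1}$ but rerouting through $1$ gives $\approx 15\cdot 2^{k-1}$. \fullref{walls} is a statement about geodesics with \emph{one endpoint incident to each of $V$ and $W$}; it says nothing about a geodesic that merely touches $W$. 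Your normalization puts $g$ on the $1$--side of both $V$ and $W$, so a geodesic $[g,\alpha(n)]$ with $n\geqslant 1$ crosses only $W$ (not $V$), and \fullref{walls} has no subsegment to act on. (A minor side issue: the $W$--crossing passes through a vertex in the coset $\langle a^rb\rangle$, not $\langle a\rangle$.)

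The paper's proof deliberately places the given vertex $z$ strictly on the far side of $V$, i.e.\ it proves the sharper statement that if $z$ is separated from $1$ by $V$ then any closest point $\alpha(n)$ with $n\geqslant 0$ has $n=0$. In that configuration a geodesic $[z,\alpha(n)]$ must cross $V$ before it can reach $W$, so the subsegment from the first $V$--crossing to the first $W$--touch is exactly the kind of segment that \fullref{walls} reroutes through $1$ at no cost; this replacement plus minimality of $d(z,\alpha(n))$ forces $n=0$. To deduce the lemma for arbitrary $g$ one then applies this statement (and its $V\leftrightarrow W$ mirror) after shifting one step along $\alpha$: if $g$ is on the $1$--side of $W$, then $g$ is on the far side of $V_1:=(\inv{s}t)V$, the wall between $\alpha(1)$ and $\alpha(2)$, so the translated statement bounds the positive closest indices by $2$; the reflected version bounds the negative ones, giving $\pi_\alpha(g)\subset\alpha([-2,2])$ and hence the error of $4$ quoted in the paper. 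To repair your argument you should not aim to show every geodesic $[g,\alpha(n)]$ passes near $1$ via a rerouting of a single $W$--crossing; instead, move the reference plane by $\inv{s}t$ so that $g$ lies on the far side of a wall, and then run the two--wall decomposition that \fullref{walls} actually supports.
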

\begin{proof}
  Suppose $z\in\X$ is some vertex that is separated from 1 by $V$, and
  suppose there is an $n\geqslant 0$ such that $\alpha(n)\in\pi_\alpha(z)$.
Let $\sigma$ be a geodesic from $z$ to $\alpha(n)$.
Write $\sigma=\sigma_1+\sigma_2+\sigma_3$, where $\sigma_2$ is the subsegment of $\sigma$ from the first time $\sigma$ crosses $V$ until the first time $\sigma$ reaches $W$.
By \fullref{walls}, we can replace $\sigma_2$ by a geodesic segment $\sigma_2'+\sigma_2''$ where the concatenation point is 1.
This means that $z$ is connected to $1=\alpha(0)$ by a path $\sigma_1+\sigma_2'$. 
By hypothesis, the length of this path is at least the length of $\sigma$, so $\sigma_2''$ and $\sigma_3$ are trivial and $n=0$.
It follows immediately that
the orbit map $\X\to\mathcal{T}$ commutes with $\pi_\alpha$ up to
an error of 4.
(In fact, a little more work will show the error is at most 2.)
\end{proof}

\begin{lemma}[Bounded Geodesic Image Property for $\pi_\alpha$]
  For any geodesic $\sigma$ in $\X$, if the diameter of $\pi_\alpha(\sigma.\bp)$ is at least 5, then $\sigma\cap\alpha\neq \emptyset$.
\end{lemma}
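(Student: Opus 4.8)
The plan is to transfer the statement to the Bass--Serre tree $\mathcal{T}$ of the splitting of $G$, and then pull the conclusion back to $\X$. For the descent I would use the preceding lemma (that $g\mapsto g.\bp$ coarsely commutes with closest point projection to $\alpha$): the orbit map $o\from\X\to\mathcal{T}$, $g\mapsto g.\bp$, is continuous (in fact $1$--Lipschitz, since $a,a^rb,a^r\inv{b}\in\langle a,b\rangle$), carries $\alpha$ onto a bi-infinite geodesic of $\mathcal{T}$ --- this is where one uses that consecutive edges of $\alpha$ cross distinct walls --- and coarsely intertwines the two closest point projections to $\alpha$. Since closest point projection to a geodesic in a tree is $1$--Lipschitz, $\pi_\alpha(\sigma.\bp)$ is a connected subset of $\alpha\subset\mathcal{T}$, hence a subinterval; choosing $u,v\in\sigma.\bp$ whose projections realize $\diam\pi_\alpha(\sigma.\bp)$, the subpath of $\sigma.\bp$ from $u$ to $v$ contains the $\mathcal{T}$--geodesic $[u,v]$, which in turn contains $[\pi_\alpha(u),\pi_\alpha(v)]$. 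Thus the hypothesis $\diam\pi_\alpha(\sigma.\bp)\geqslant 5$ forces $\sigma.\bp$ to contain a length-$5$ sub-segment of $\alpha$: equivalently, $\sigma$ meets six consecutive planes $P_j:=\alpha(j)\langle a,b\rangle$, $k\leqslant j\leqslant k+5$, and crosses each of the five walls $W_k,\dots,W_{k+4}$ separating consecutive ones (the $\mathcal{T}$--edge dual to $W_j$ lies on $\sigma.\bp$).

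The heart of the argument is to show that a geodesic of $\X$ crossing five consecutive walls of $\alpha$ must pass through a vertex of $\alpha$. Among $P_{k+2}$ and $P_{k+3}$ exactly one, say $P_b$, is a ``big'' plane --- one at which $\alpha(b)$ is a local maximum of the $\X$--metric; equivalently, the two $\alpha$--walls $W_{b-1},W_b$ incident to $P_b$ meet it in lines of edges of length $L=2r\geqslant 6$, crossing at $\alpha(b)$ --- and then $P_{b-2},\dots,P_{b+2}$ all occur among our six planes. I would cut $\sigma$ at suitable crossing points on $W_{b-2},W_{b-1},W_b,W_{b+1}$ and reroute each resulting piece: inside a core plane along the $a$--line joining the two wall feet, and inside $P_b$ through the vertex $\alpha(b)$, the latter being legitimate by \fullref{walls} applied equivariantly at $\alpha(b)$. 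Any piece of $\sigma$ that leaves its plane and returns costs an extra pair of wall crossings, so it only contributes to the length of $\sigma$ and may be discarded in the lower bound. Tracking lengths in terms of the offsets $e_j$ of $\sigma$'s crossing points from the $\alpha$--vertices --- an offset $e_j$ costs only $|e_j|$ inside the adjacent core plane but $|e_j|\,L$ inside the adjacent big plane --- and using $L\geqslant 6$ exactly as in the computation of geodesics in \fullref{geodesics} and the Brady--Bridson Dehn-function estimate, one finds that if $\sigma\cap\alpha=\emptyset$ then the rerouted path is strictly shorter than $\sigma$, contradicting that $\sigma$ is geodesic. Hence $\sigma\cap\alpha\neq\emptyset$.

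I expect the first part to be essentially formal once the preceding lemma is granted, and the genuine obstacle to be the length bookkeeping in the second part: organizing $\sigma$'s passage through the alternating core and big planes strung along $\alpha$, splitting into cases according to the signs of the offsets $e_j$, and checking in each case that the factor $L=2r\geqslant 6$ makes routing through an $\alpha$--vertex strictly beat every detour. This is exactly the self-similar ``snowflake'' phenomenon, so the bookkeeping should run parallel to \cite{BraBri00} and to the geodesic analysis of \fullref{geodesics}.
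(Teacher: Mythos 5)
Your core idea—reroute $\sigma$ through a big-plane vertex of $\alpha$ via the preceding ``walls'' lemma—is the same one the paper uses, and your tree-transfer preamble is a reasonable (if not strictly necessary) way to locate the wall crossings. But the rerouting you propose does not yet yield the conclusion. The walls lemma produces \emph{some} geodesic from $v\in W_{b-1}$ to $w\in W_b$ passing through $\alpha(b)$; it does not say that \emph{every} geodesic with those endpoints does. So after you reroute the $P_b$-piece of $\sigma$ through $\alpha(b)$, you only learn that the rerouted piece has the same length as the original piece (it cannot be longer because $\sigma$ is geodesic, and it cannot be shorter by the lemma). That tells you nothing about whether $\sigma$ itself visits $\alpha(b)$. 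To convert this into a contradiction, you would need a \emph{strengthened} walls lemma (``every geodesic from $V$ to $W$ passes through $1$''), which is false in general, or some source of strict savings; your plan to supply strict savings via offset bookkeeping with the cost model ``$|e_j|L$ in the big plane'' underestimates the subtlety, because distances in the big plane can be far smaller than $|e_j|L$ precisely by shortcutting through adjacent walls—this is exactly the snowflake phenomenon, and accounting for it would amount to reproving the full geodesic analysis of \fullref{geodesics}.

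The paper sidesteps all of this by applying the walls lemma \emph{twice}, at two consecutive big-plane vertices $1$ and $\inv{s}t$, and crucially by comparing across the intermediate core plane rather than within a single big plane. Writing $\sigma=\sigma_1+\sigma_2+\sigma_3+\sigma_4+\sigma_5$ with $\sigma_2$ from the first $V$-crossing to the last $W$-crossing and $\sigma_4$ from the first $\inv{s}tV$-crossing to the last $\inv{s}tW$-crossing, one replaces $\sigma_2$ and $\sigma_4$ by walls-lemma geodesics $\sigma_2'+\sigma_2''$ and $\sigma_4'+\sigma_4''$ concatenated at $1$ and $\inv{s}t$ respectively, and then replaces the whole middle $\sigma_2+\sigma_3+\sigma_4$ by $\sigma_2'+[1,\inv{s}t]+\sigma_4''$. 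Since $\sigma_3$ must cross an $\inv{s}$-wall and a $t$-wall, $|\sigma_3|\geqslant 2=|[1,\inv{s}t]|$, so the replacement saves at least $|\sigma_2''|+|\sigma_4'|$; geodesity of $\sigma$ forces $|\sigma_2''|=|\sigma_4'|=0$, i.e.\ the endpoint of $\sigma_2$ and the start of $\sigma_4$, which are genuine points of $\sigma$, are $1$ and $\inv{s}t$, hence $[1,\inv{s}t]\subset\sigma\cap\alpha$. The two-vertex rerouting is the missing ingredient in your plan: it is what makes the ``equality forces the original path to touch $\alpha$'' step work, because the forced-trivial pieces $\sigma_2''$, $\sigma_4'$ sit between actual points of $\sigma$ and the $\alpha$-vertices, rather than being internal to a replacement that $\sigma$ itself may ignore. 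It also entirely avoids the offset bookkeeping and the $L\geqslant 6$ case analysis, which enter only through the black-box use of the walls lemma.
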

\begin{proof}
 Suppose $\alpha([-1,3]).\bp\subset\pi_\alpha(\sigma.\bp)$.
Then $\sigma$ crosses the walls $V$, $W$, $\inv{s}tV$ and $\inv{s}tW$.
Write $\sigma$ as a concatenation of geodesic subsegments
$\sigma_1+\sigma_2+\sigma_3+\sigma_4+\sigma_5$, where $\sigma_1$ is
all of $\sigma$ prior to the first $V$ crossing, $\sigma_2$ is the
part of $\sigma$ between the first $V$ crossing and the last $W$
crossing, $\sigma_3$ is the part between the last $W$ crossing and the
first $\inv{s}tV$ crossing, which included 
edges labeled $\inv{s}$ and $t$, $\sigma_4$ is the part from the first
$\inv{s}tV$ crossing until the last $\inv{s}tW$ crossing, and $\sigma_5$ is the remainder of $\sigma$.
We can apply \fullref{geodesics} to replace $\sigma_2$ by a geodesic $\sigma_2'+\sigma_2''$ with the same endpoints and concatenated at 1. 
Similarly, we can replace $\sigma_4$ by a geodesic $\sigma_4'+\sigma_4''$ with the same endpoints and concatenated at $\inv{s}t$.
But then we can replace the subsegment $\sigma_2+\sigma_3+\sigma_4$ of $\sigma$ by the path $\sigma_2''+[1,\inv{s}t]+\sigma_4''$ with the same endpoints.
This path is strictly shorter unless $\sigma_2''$ and $\sigma_4''$ are trivial. 
This means that $[1,\inv{s}t]\subset\sigma\cap \alpha$.
\end{proof}
By \fullref{prop:allthesame}, this means:
\begin{corollary}
  The element $\inv{s}t$ is strongly contracting for $G\act\X$.
\end{corollary}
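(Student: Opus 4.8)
The plan is to verify the three requirements of \fullref{def:contractinglement} for the element $h=\inv{s}t$, taking $\A:=\alpha.\bp$ and $\pi:=\pi_\alpha$, and then to invoke the equivalences of \fullref{prop:allthesame}. First I would record that $\alpha$ is a geodesic in $\X$ --- which was already noted, since consecutive edges of $\alpha$ cross distinct walls --- so $i\mapsto (\inv{s}t)^i\!.\bp$ is a (quasi-)geodesic, and $\langle \inv{s}t\rangle$ acts cocompactly on $\A=\alpha.\bp$ by translation along it. This gives the first requirement.

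For the second requirement I would cite the two lemmas just established. The lemma that the orbit map $\X\to\mathcal{T}$ coarsely commutes with projection to $\alpha$ shows that closest point projection $\pi_\alpha\from\X\to\A$ is coarsely well defined, hence a $\consta$--coarse map for a uniform $\consta$; and since $\pi_\alpha$ is a \emph{closest} point projection, $d(x,\pi_\alpha(x))-d(x,\A)$ vanishes up to the coarseness constant, so it is uniformly bounded. The Bounded Geodesic Image lemma says precisely that if $\geo$ is a geodesic with $\diam\pi_\alpha(\geo)\geqslant 5$ then $\geo\cap\alpha\neq\emptyset$; contrapositively, a geodesic lying outside a neighborhood of $\A$ has projection of diameter $<5$, which is the Bounded Geodesic Image Property of \fullref{def:boundedgeodesicimage}.

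Now I would apply \fullref{prop:allthesame}: statement (\ref{item:thebgi}) there --- the Bounded Geodesic Image Property together with uniform boundedness of $d(x,\pi(x))-d(x,\A)$ --- is equivalent to statement (\ref{item:strcontraction}), that $\pi$ is strongly contracting. Hence $\pi_\alpha\from\X\to\A$ is strongly contracting, and combined with the first paragraph this says exactly that $h=\inv{s}t$ is a strongly contracting element for $G\act\X$ in the sense of \fullref{def:contractinglement}.

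The one point requiring care --- and essentially the only obstacle --- is matching the quantitative hypotheses across these statements: the Bounded Geodesic Image lemma is phrased with the explicit constant $5$ bounding the diameter of the projected image, whereas \fullref{def:boundedgeodesicimage} wants a single constant $\consta$ serving both as the radius of the neighborhood of $\A$ that $\geo$ avoids and as the bound on $\diam\pi(\geo)$. One takes $\consta$ to be the maximum of $5$, the coarseness constant of $\pi_\alpha$, and the error constant in the orbit-map lemma (there observed to be at most $4$). With that choice all the hypotheses line up and the remaining verification is bookkeeping.
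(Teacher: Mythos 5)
Your proof is correct and is essentially the same as the paper's, which simply cites \fullref{prop:allthesame} after establishing the two lemmas (coarse well-definedness of $\pi_\alpha$ and the Bounded Geodesic Image property for $\pi_\alpha$). You have merely made explicit the bookkeeping the paper leaves implicit: that $\alpha$ is a geodesic so $i\mapsto(\inv{s}t)^i.\bp$ is a quasi-geodesic with $\langle \inv{s}t\rangle$ acting cocompactly on it, that closest point projection automatically has $d(x,\pi(x))-d(x,\A)$ bounded, and that the projection constant in $\X$ differs from the one in $\mathcal{T}$ only by the orbit-map error; all of this is routine and your treatment of the constants is fine.
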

Together with \fullref{mainqc}, this proves \fullref{snowflakegrowthtight}.

\subsubsection{$\beta$}\label{sec:beta}
Using out knowledge of geodesics from \fullref{geodesics}, we see that
the closest point of the $s^{-1}$--wall at 1 to the point $a^{L^k}$ is
$(a^rb)^{L^{k-1}}$, which is the midpoint of a geodesic from $1$ to
$a^{L^k}$.
This geodesic coincides with $\beta$ on the
interval from 1 to $s^{-k}$.
It follows that $\pi_\beta(a^{ L^j})=\beta(j)$ for all $j\geqslant 0$.

\begin{figure}[h]
  \centering
\labellist
\small
\pinlabel $\beta(0)=s^0=1$ [r] at 110 217
\pinlabel $a^{L^3}$ [bl] at 175 215
\pinlabel $a^{L^5}$ [l] at 343 215
\tiny
\pinlabel $s^{-3}$ [l] at 76 285
\pinlabel $s^{-5}$ [l] at 85 275 
\endlabellist
 \includegraphics[width=.6\textwidth]{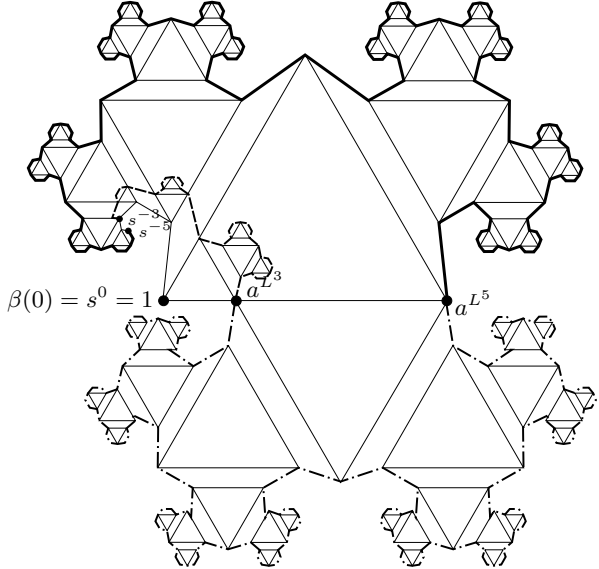}
\caption{Geodesics {$[a^{L^3},\pi_\beta(a^{L^3})]$ (dashed)},
  {$[a^{L^5},\pi_\beta(a^{L^5})]$~(solid)}, and
  {$\sigma_{3,5}=[a^{L^3},a^{L^5}]$ (dash-dot)}}\label{fig:notbgi}
\end{figure}

For $0< j<k$ there is a geodesic $\sigma_{j,k}$ from $a^{L^j}$ to
$a^{L^k}$ such that $d(\sigma_{j,k},\beta)=d(a^{L^j},\beta)$. See \fullref{fig:notbgi}.
Letting $j$ and $k-j$ grow large, the geodesics $\sigma_{j,k}$ for
stay outside large neighborhoods of $\beta$ but have large projections
to $\beta$.
Therefore, $\pi_\beta$ is not strongly contracting, since it does
not enjoy the Bounded Geodesic Image Property.



\bibliographystyle{hyperamsplain}
\bibliography{GrowthTightActions}

\end{document}